\newcommand\T[3]{T^{#2, #1}_{#3}}
\newcommand{\R}{\mathbb R}
\newcommand{\Esp}{\mathbb E}
\newcommand{\E}{\mathbb E}
\newcommand{\p}{\mathbb P}
\newcommand{\1}[1]{\mathbf{1}\!_{\{#1\}}}
\def\be{\begin{eqnarray}}
\def\ee{\end{eqnarray}}
\def\ben{\begin{eqnarray*}}
\def\een{\end{eqnarray*}}
\def\Bea{\begin{eqnarray*}}
\def\Eea{\end{eqnarray*}}
\def\d{\text{d}}
\newtheorem{prop}{Proposition}[section]
\newtheorem{defi}[prop]{Definition}
\newtheorem{assumption}[prop]{Assumption}
\newtheorem{lem}[prop]{Lemma}
\newtheorem{thm}[prop]{Theorem}
\newtheorem{cor}[prop]{Corollary}
\newcounter{example}
\title{Approximation of  stochastic processes by \\ non-expansive  flows   and  coming down from infinity}
\author{Vincent Bansaye\thanks{CMAP, Ecole Polytechnique, CNRS, route de
    Saclay, 91128 Palaiseau Cedex-France; E-mail: \href{mailto:vincent.bansaye@polytechnique.edu}{\texttt{vincent.bansaye@polytechnique.edu}}}
}
\begin{document}
\maketitle

\begin{abstract} This paper deals with the approximation of semimartingales in  finite dimension  by dynamical systems.  We give trajectorial  estimates  uniform 
with respect to the initial condition   for a well chosen distance.  This relies on a non-expansivity property of the  flow  and allows to consider 
   non-Lipschitz vector fields. The fluctuations of the process are controlled 
using    the martingale technics  initiated in \cite{BBL} and      stochastic calculus. \\
  Our main motivation is the trajectorial description of the behavior  of   stochastic processes starting from large initial values.  We  state  general properties 
  on the coming down from infinity of  one-dimensional SDEs, with a focus on stochastically monotone processes.  In particular, we recover and complement  known results on $\Lambda$-coalescent  and birth and death processes. Moreover, using Poincar\'e's compactification technics
for  flows  close to infinity, we  develop this approach   in two dimensions  for  competitive stochastic models. We thus classify the coming down from infinity 
of  Lotka-Volterra diffusions and  provide uniform estimates for the scaling limits of competitive birth and death processes.
\end{abstract}

\noindent\emph{Key words:}  Approximation of stochastic processes, non-expansivity, dynamical system, coming down from infinity, martingales,  scaling limits

%\noindent\emph{MSC 2010:} 

\tableofcontents

\section{Introduction}

The approximation of stochastic processes  has been largely developed and we refer  e.g. to  \cite{EK, JS} for  general statements both for deterministic approximation and study of the fluctuations.
A particular attention has been paid 
to  random perturbation of 
dynamical systems  \cite{SH, FW}  and the study of fluid and scaling limits of random models, see   \cite{DN} for a survey about approximation  of Markov chains. 
In this paper, we are interested in  stochastic processes $(X_t : t\geq 0)$ taking values in a Borel subset $E$ of $\R^{\d}$, which can be written as
$$X_t=X_0+\int_0^t \psi(X_s)ds + R_t,$$
where $ R$ is  a semimartingale. We aim at proving that $X$ remains close to the flow $\phi(x_0,t)=x_t$ given by
$$x_t=x_0+\int_0^t \psi(x_s)ds.$$
The point  here is to estimate the probability of this event uniformly with respect to 
the initial condition $x_0\in D$, when the  drift term $\psi$ may be non-Lipschitz on $D$.
Our main motivation for such estimates is the  description of the coming down from infinity, which amounts to let the initial condition $x_0$ go to infinity,
and the uniform scaling limits of stochastic processes describing population models on unbounded domains. \\
%We provide some illustrations and applications of the results for coalescent and competitive processes in one and two dimensions. \\

%\marginpar{Contraction et closer discutable}
The approach relies on a contraction property of the  flow, which provides  stability on the dynamics. This notion is used in particular in control theory. More
 precisely, we say   that
the vector field $\psi$ is   non-expansive  on a domain $D$ when it prevents two trajectories from moving away  for the euclidean norm on a subset
$D$ of $\R^{\d}$. This amounts to
%\marginpar{A voir}
$$\forall x,y \in D, \qquad (\psi(x)-\psi(y)){\boldsymbol .}(x-y)\leq 0,$$
where $.$ is the usual scalar product on $\R^{\d}$.
Actually, the distance between two solutions may increase provided that this increase is not too fast. This allows to deal with additional Lipschitz component 
or bounded perturbation in the flow
and it is  required for the applications considered here. Thus  we are working  with \emph{$(L, \alpha)$ non-expansive vector fields} :  

\begin{defi} \label{defnexp} The vector field $\psi : D \rightarrow \R^{\emph{d}}$ is $(L, \alpha)$ non-expansive on   
   $D\subset \R^{\emph{d}}$ if
for any $x,y\in D$,
$$(\psi(x)-\psi(y)){\boldsymbol .}(x-y)\leq L\parallel x-y\parallel_2^2+\alpha\parallel x-y\parallel_2.$$
\end{defi}
%:
%$$\forall x,y \in D, \qquad (\psi(x)-\psi(y)){\boldsymbol .}(x-y)\leq L\parallel x-y\parallel_2^2+\alpha\parallel x-y\parallel_2.$$
%This will meet the assumptions required for our applications. 
%Roughly speaking, we are needing   the  non-Lipschitz component of the vector field has to be non-expansive. \\
\noindent The non-expansivity property ensures that the drift term can not   make the distance between the stochastic process $X$ and  the dynamical system $x$ explode
 because of small fluctuations due to the perturbation $R$.
To control the size of these fluctuations, we  use 
martingale technics  in Section \ref{main} : let us mention  \cite{DN} in the  context of scaling limits and    \cite{BBL}
for a pioneering work on the speed of coming down from infinity of  $\Lambda$-coalescents. In this latter,  the short time behavior of the $\log$ of the number of blocks is captured and the non-expansivity argument for the flow
 is replaced by a technical result relying on  the  monotonicity of suitable functions in dimension $1$ (Lemma 10 therein).  \\

These results are developed and  specified
when $X$ satisfies a Stochastic Differential Equation (SDE),   in Section \ref{EDS}, which allows 
a diffusion component and random jumps given by a Poisson point measure. This  covers  the range of our applications. 
We then estimate the probability that the stochastic process remains close to the dynamical system as soon as this latter is in    a domain 
$D$ where a transformation $F$ ensuring 
$(L,\alpha)$-non-expansivity can be found.
These
estimates hold for any  $x_0 \in D$    and  a well chosen distance $d$, which is bound  to capture the fluctuations of $X$ around the flow $\phi$. Informally, we obtain that
for any $\varepsilon>0$,
%We prove in Section \ref{main} under some assumptions that for $\varepsilon,T>0$,
\be
\label{bornespap}
\mathbb P_{x_0}\left( \sup_{t\leq T\wedge T_D(x_0)} d(X_t,\phi(x_0,t))  \geq \varepsilon \right) \leq C_T \int_0^T \overline{V}_{d,\varepsilon}(x_0,t)  dt ,
\ee
where $T_D(x_0)$  corresponds to the exit time of the domain $D$  for the flow $\phi$ started at $x_0$. The distance 
$d$ is of the form
$$d(x,y)=\parallel F(x)-F(y)\parallel_2,$$
where $F$ is of class $\mathcal C^2$, 
so that   we can use the stochastic calculus. The perturbation needs to be  controlled for this distance $d$ in  a tube  around the trajectory of the dynamical system and
%\marginpar{To check}
$$\overline{V}_{d,\varepsilon}(x_0,t)=\sup_{\substack{ x\in E \\ d(x, \phi(x_0,t))\leq \varepsilon }} \left\{\varepsilon^{-2} \parallel V_F(x) \parallel_1 + \varepsilon^{-1} \parallel \widetilde{b}_F(x)  \parallel_1  \right\},$$
where $V_F$ will be given by the quadratic variation of $F(X)$ and 
$ \widetilde{b}_F$ will be an additional approximation term arising from It\^o's formula applied to $F(X)$.  \\
Relevant choices of $F$ will be illustrated through several examples. They    are both  linked to the geometry of the flow since the
   $(L,\alpha)$ non-expansivity property has to be satisfied   and to the control
  of the size of the fluctuations induced by $R$.   We refer in particular to  the role of the fluctuations for the examples of Section \ref{Ex} and 
the  adjunction   involved in the last section for gluing  transformations providing non-expansitivity. \\

The estimate (\ref{bornespap}) becomes uniform with  respect to $x_0 \in D$ as soon as 
$\overline{V}_{d,\varepsilon}(x_0,.)$ can be bounded by an integrable  function of the time.
%$$\sup_{x_0 \in D}\mathbb P_{x_0}\left( \sup_{t\leq T\wedge T_D(x_0)} \parallel F(X_t)-F(\phi_F(x_0,t))\parallel_2 >\varepsilon\right) \leq \frac{C\exp(4LT)}{\varepsilon^{2}} \int_0^T \left[1+ \overline{V}_{F,\varepsilon}(x_0,s) \right] ds ,$$
%avec 
It allows then to characterize the coming down from infinity for  stochastic differentials equations in $\R^{\d}$.
Roughly speaking, we consider an unbounded domain $D$  and let $T$
go to $0$  to derive from (\ref{bornespap}) that for any  $\varepsilon>0$,
$$\lim_{T\rightarrow 0}\sup_{ x_0  \in D}\mathbb P_{x_0}\left( \sup_{t\leq T} d(X_t,\phi(x_0,t))  \geq \varepsilon\right) =0.$$
Letting then $x_0$ go to infinity  enables to describe
 the coming down from infinity of processes in several ways.
First, the control of the fluctuations of the process $X$  for large initial values  by a dynamical system gives  a way
to prove the tightness of $\p_{x_0}$ for $x_0 \in D$. 
%We obtain then the weak convergence of $\p_{x_0}$ as $x_0\rightarrow \infty$ for monotone SDEs. 
Moreover we can link in general the coming down from infinity of the process $X$ to the coming down from infinity of the flow $\phi$, in the vein of \cite{BBL, limxsi, BMR}, which focus  respectively on $\Lambda$ coalescence, $\Xi$ coalescent and
 birth and death processes.

In dimension $1$,  following \cite{D, BMR}, we use a monotonicity property to identify the limiting values of  $\p_{x_0}$ as $x_0\rightarrow \infty$ and
we determine when the  process comes down from infinity and how it comes down from infinity (Section \ref{dimensionun}). In particular, we recover the speed of coming
down from infinity of $\Lambda$-coalescent \cite{BBL} with $F=\log$ and in that case $V_F$ is bounded. We also recover some results of \cite{BMR} for birth and death processes and we can provide finer estimates for regularly varying death rates. Here $F$ is polynomial and $V_F$ is unbounded so this latter has to be controlled  along the trajectory of the dynamical system. Finally, we consider  the example of transmission control protocol which is  non-stochastically monotone and $F(x)=\log (1+\log (1+x))$ is required to control  its large fluctuations for large values.

In higher dimension, the coming down from infinity of a dynamical system is a more delicate problem in general.  Poincar\'e has initiated a  theory to study dynamical systems close to infinity, which is particularly powerful for polynomial  vector fields (see e.g. Chapter 5 in \cite{D}). We develop this approach for   
competitive Lotka-Volterra models in dimension $2$ in Section \ref{descente2}, which was a main motivation for this work. We  classify  the  ways the dynamical system can come down  from infinity and describe the  counterpart for the stochastic process, which differs when the dynamical system is getting close from the boundary of $(0,\infty)^2$. \\ 

The uniform estimates (\ref{bornespap})  can also  be used to
prove scaling limits of  stochastic processes $X^K$ to dynamical systems, which are uniform with respect to the initial condition, without requiring  Lipschitz  property for the vector field $\psi$. It involves     a suitable distance  $d$ as introduced above to capture the fluctuations of the process :
$$\lim_{K\rightarrow \infty}\sup_{x_0  \in D}\mathbb P_{x_0}\left( \sup_{t\leq T} d(X_t^K,\phi(x_0,t))  \geq \varepsilon\right) =0,$$
for some fixed $T, \varepsilon>0$.
 It is illustrated  in this paper by the convergence of birth and death processes with competition to  Lotka-Volterra competitive dynamical system in Section \ref{descente3}. \\

Let us end up with  other motivations for this work, some of which being linked to works in progress. 
First, our original motivation for studying the coming down from infinity is the description of the time for extinction for competitive models in varying environment. Roughly speaking, competitive periods make  large sizes of   populations quickly decrease, which  can be captured by the coming down from infinity.
Direction and speed of coming down from infinity are then involved to quantify the time extinction or determine coexistence of populations.
 Let us also note that the approach developed here could be  extended to the varying environment framework by comparing the stochastic process 
to a non-autonome dynamical system. 
Second,  the coming down from infinity is linked to the uniqueness of the quasistationary distribution, see  \cite{Doorn1991} for birth and death processes and  \cite{Cattiaux2009} for some diffusions. Recently, 
the coming down from infinity has appeared as a key assumption for the geometric convergence of the conditioned process to the quasistationary distribution, uniformly with respect to the initial distribution. We refer to \cite{CV} for details, see in particular Assumption $(A1)$ therein. \\

%Finally, several challenges are left open to deal with the coming down from infinity of other models in dimension 2 and  other scaling limits.  \\ \\
%\marginpar{ Cattiaux Collet M\'el\'eard pour premiers travaux sur Lotka Volterra}
%Finally, let  us  mention \cite{H} for some numerical related problems and  the works of Goetz Kersting  (see e.g. \cite{K2}) for  motivating results for the nomalization by a dynamical 
%systems.  \\ \\
%Et environnement changeant perspectives, la technique permet de quantifier le retour vers les compacts dans les environnements d\'efavorables et s\'etend aux environnements changeants, WIP.
{\bf Notation.}
In the whole paper  ${\boldsymbol .}$  stands for the canonical scalar product on $\R^{\d}$, 
  $\parallel .\parallel_2$ the associated euclidean norm and  $\parallel .\parallel_1$  the $L^1$ norm. \\ For convenience,  we write  $x=(x^{(i)} :  i=1, \ldots ,\d)\in \R^{\d}$  a row vector of real numbers. % elements  the elements of $\R^d$ and   a row vector of $\R^{\d}$.
  The product $xy$ for $x,y\in \R^{\d}$
  is the vector $z\in \R^{\d}$ such that 
  $z_i=x_iy_i$.  \\
 % \marginpar{Besoin ?}
  We denote by $\overline{B}(x,\varepsilon)=\{ y\in \R^{\d} :  \ \parallel y-x\parallel_2 \leq \varepsilon\}$ the euclidean closed ball centered in $x$ with radius $\varepsilon$. More generally, we note $\overline{B}_d(x,\varepsilon)=\{ y\in O :d(x,y) \leq \varepsilon\}$ the closed ball centered in $x\in O$ with radius $\varepsilon$  associated with  the application   $d :  O\times O \rightarrow \R^+$. \\
  When $\chi=(\chi^{(1)}, \ldots, \chi^{(d)})$ is differentiable on an open set of $\R^{\d}$ and takes values in $\R^{\d}$, we denote by 
  $J_{\chi}$ its Jacobian matrix and $$(J_{\chi}(x))_{i,j}= \frac{\partial}{\partial x_j}  \chi^{(i)} (x) \qquad (i,j=1, \ldots, \text{d}).$$ 
  We write $F^{-1}$ the reciprocal function of a bijection $F$ and $A^{-1}$ the inverse 
 of an invertible matrix $A$.
 Moreover, the  transpose of a matrix $A$ is denoted by $A^*$. \\
  By convention, we assume that $\sup \varnothing =0$, $\sup [0,\infty)=+\infty$, $\inf \varnothing=\infty$ and  if $x,y \in \R \cup \{\infty\}$, we write  $x\wedge y$ for  the smaller element of $\{x,y\}$. \\
  %\marginpar{Expliquer la politique de constante $C_{.}$ constantes diff qui d\'ependent de ...}
  We write $d(x)\sim_{x\rightarrow a} g(x)$  when $d(x)/g(x) \rightarrow 1$ as $x\rightarrow a$.\\
  We also use  notation $\int_a^. f(x)dx<\infty$  (resp. $=\infty$) for $a\in [0,\infty]$ when there exists $a_0\in (a,\infty)$ such that
  $\int_a^{a_0} f(x)dx $ is well defined and finite (resp. infinite).\\
   Finally,    we denote by $<M>$ the predictable quadratic variation of a continuous local martingale $M$ and by
  % \marginpar{$Var(X)$ chez JS pour var totale}
 $\vert A\vert$  the  total variation of a process $A$ and by $\Delta X_s=X_s-X_{s-}$ the jump at time $s$ of a c\`adl\`ag process $X$. \\
 
 \noindent
 {\bf Outline of the paper.} In the next section, we provide general results for dynamical systems perturbed by semimartingales using the non-expansivity of the flow and  martingale inequality. In Section \ref{EDS}, we  derive approximations results for 
  Markov process described  by SDE.  It relies on a transformation $F$ of the process for which we apply the results of Section \ref{main}. An extension of the result by adjunction of non-expansive domains is provided and required for the applications of the last section. We then study the coming down from infinity for one dimensional SDEs in Section \ref{dimensionun}, with a   focus on stochastically monotone processes. Finally we compare
   the coming down from infinity of two dimensional competitive   Lotka-Volterra diffusions with the coming down  from infinity
   of Lotka-Volterra dynamical systems and  prove  uniform approximations of these latter by  birth and death processes.

\section{Random perturbation of dynamical systems}
\label{main}
Let $(\Omega, \mathcal F, \p)$ be a probability space and $(\mathcal F_t)_{t\geq 0}$ a 
filtration of $\mathcal F$, which satisfies the usual conditions.
We assume that  $X$ is  $\mathcal F_t-$ adapted  c\`adl\`ag process  on $[0,\infty)$
%\marginpar{ where $\mathcal F$ satisfies the usual conditions=cad croissant et coplŽŽte chez JS, qui parle debase, $X$ defini pour tout temps}
which takes its values in a  Borel subset $E$ of   $\R^{\d}$  and satisfies   for every  $t\geq 0$,
$$X_t=X_0+\int_0^t \psi(X_s)ds + R_t,$$ 
where $X_0 \in E$ a.s.,  
  $\psi$ is a Borel measurable function from $\R^\d$ to $\R^\d$ locally bounded  and $(R_t : t\geq 0)$ is a c\`adl\`ag  $\mathcal F_t$-semimartingale. 
Moreover, the process $R$ is  decomposed as 
$$R_t=A_t+M_t, \qquad M_t=M_t^c+M_t^d,$$
with  $A_t$   a c\`adl\`ag  $\mathcal F_t$-adapted  process with a.s. bounded variations paths, $M_t^c$  a continuous 
$\mathcal F_t$-local martingale,   $M_t^d$ a  c\`adl\`ag  $\mathcal F_t$ -local martingale purely discontinuous and
$R_0=A_0=M_0=M_0^c=M_0^d=0$. Let us observe that such a decomposition may be non-unique. \\

%\emph{Les petits sauts compens\'es seront \`a ajouter ... on doit pouvoir \'etendre \'egalement au cas d'un syst\`eme dynamique  non autonome ($\psi=\psi_t$) pour des applications en environnement variable}.  \\
%Our aim is to compare the process $X$ to the  solution $x=\phi(x_0,.)$  of the dynamical system 
%associated  with the vector field  $\psi$ and some initial condition $x_0$ : 
We assume that $\psi$ is locally Lipchitz on a (non-empty) open set $E'$ of $\R^d$ and consider the solution   $x=\phi(x_0,.)$ of   
$$x_t=x_0+\int_0^t \psi(x_s)ds$$
$x_0\in E'$.
%For that purpose, %\marginpar{maximal ?}
This solution  exists, belongs to $E'$ and is unique on a time interval $[0, T'(x_0))$, where $T'(x_0)\in (0,\infty]$. Then, 
to compare
the process $X$ to the  solution $x$,  we define the maximal gap before $t$:
$$S_t:= \sup_{s\leq t} \parallel X_{s}-x_s \parallel_2$$  
for any  $t< T'(x_0)$.
We also set 
\be
\label{tempssortie}
T_{D, \varepsilon}(x_0) = \sup\{ t \in [0, T'(x_0)) \ : \ \forall s\leq t, \   x_s\in D \ \text{and} \   \overline{B}(x_s,\varepsilon) \cap E \subset D\} \    \in [0,\infty]
\ee
  the last time when $x_t$ and its $\varepsilon$-neighborhood in $E$ belong to a domain  $D$. %\marginpar{Op\'erateur $L$ accr\'etif.% ultracontractivit\'e ? 
%Aller voir aussi  Slotine, Bjorn ou Rueffer ?\\
%Doublon maintenant avec intro.\\
%Signaler que sur un voisinage \c ca suff ?}
 As mentioned in the introduction, the key property to control the distance between $(X_t : t\geq 0)$ and $(x_t : t\geq 0)$ before time $T_{D, \varepsilon}(x_0)$ is the $(L,\alpha)$ non-expansivity  property of  $\psi$ on $D$, in the sense of Definition \ref{defnexp}. 
 % Thus, we assume 
%that $\psi$ is $(L,\alpha)$ non-expansive on $D$
%It means that the distance  between two solutions is non-increasing.    This notion is classical in dynamical system and control theory, it yields a stability property to the dynamical system. It amounts to require that the vector field $\psi$ is non-increasing in dimension  one. We  consider here the euclidian distance to use the stochastic calculus and change this distance via a well chosen transformation in the next sections. Thus, we need the following definition, which ensures the non-expansivity property of the flow for the euclidian distance, up to some term we can control.
\noindent When $\alpha=0$, we  simply say that  $\psi$ is $L$ non-expansive on $D$. If additionally $L=0$, we say that
$\psi$ is non-expansive on $D$. We first note that in dimension $1$, the fact that $\psi$  is non-expansive simply means that $\psi$ is non-increasing. More generally,  
when
 $\psi$ is differentiable on a convex open set $O$ which contains $D$,  
  $\psi$ is  $L$ non-expansive on $D$
 if  for any $x\in O$,
 $$\text{Sp} (J_{\psi}+ J_{\psi}^* )\subset (-\infty, 2L],$$
 %\marginpar{Question de convexe suffit}
 where  Sp$(J_{\psi}+ J_{\psi}^*)$ is the spectrum of the symmetric linear operator (and hence diagonalisable)
 $J_{\psi}+ J_{\psi}^*$, see table 1 in \cite{Aminzare} for details and more general results and the last section for an application. Finally, we  observe
that
$$\psi= B+ \chi =B+f+g $$
is $( L,\alpha)$ non-expansive on $D$  if 
 $B$ is a vector field whose euclidean norm is bounded by   $\alpha$  on $D$ and $\chi$ if $L$ non-expansive on $D$. Moreover $\chi=f+g$ is $L$ non-expansive on $D$ if 
   $f$ is Lipschitz with constant  $L$   and  $g$ is non-expansive on $D$.  \\
% \marginpar{Preciser passage \`a sous domaine ?} 
 \\

For convenience and use of Gronwall Lemma, we also introduce for $L,\alpha\geq 0$ and $\varepsilon>0$,%  our control for $(L,\alpha)$ non-expansive vector field hold up to time
\be
\label{temps}
\T{\alpha}{L}{\varepsilon}= \sup\{ T \geq 0 : 4\alpha T \exp(2LT) \leq \varepsilon\} \in (0,\infty],
\ee
which is infinite if and  only if $\alpha=0$, i.e.  as soon as the vector field $\psi$ is  $L$ non-expansive. 

%d\'ecroissant et appara\^itra de fa\c con tr\`es naturelle pour la descente de l'infini (en particulier de mod\`eles stochastiquement  monotones). 
%De fa\c con g\'en\'erale (en dimension sup\'erieure, au voisinage d'un point irr\'egulier pour le champ), cette condition assure que le flot associ\'e \`a  $\psi$ diminue les distances 
%(non-expansive dynamical systems). La d\'efinition donn\'ee ici permet d'ajouter un terme (habituel) Lipschitzien dans le champ de vecteur et une constante utiles pour les applications. \\

\subsection{Trajectorial control for perturbed non-expansive dynamical systems}
\label{trajj}

The following lemma gives the trajectorial result which  allows 
to control the gap between the stochastic process $(X_t  : t \geq 0)$ and the dynamical system 
$(x_t : t \geq 0)$ by the size of the fluctuations of the semimartingale $(R_t : t\geq 0)$
and the gap between the initial positions.
The control of fluctuations involves the following quantity, which is  defined   $t<T'(x_0)$ and $\varepsilon>0$: 
%$$S_t:= \sup_{s\leq t} \parallel X_{s}-x_s \parallel_2$$ and
$$\widetilde{R}_t^{\varepsilon}=\parallel X_0-x_0 \parallel_2^2+ \1{ S_{t-} \leq \varepsilon}\left[2\int_0^t   (X_{s-}-x_s)\bold{.}dR_s +\parallel [M]_t \parallel_1 \right],$$
% $$$$T_\eta:= \inf \left\{  t\geq 0  \ :  \ \parallel X_0-x_0 \parallel_2^2+ \1{ S_{t-} \leq \varepsilon}\left[2\int_0^t   (X_{s-}-x_s)\bold{.}dR_s +\parallel [M]_t \parallel_1 \right] \geq \eta^2\ \right\},$$
where $\int_0^t   (X_{s-}-x_s)\bold{.}dR_s $  is  a
stochastic integral and $[M]=[X]=[R]$ is the  quadratic variation of the semimartingale $R$. We refer to  Chapter I, Theorem 4.31 in   \cite {JS}  for the existence of stochastic integral of   c\`agl\`ad     (and thus predictable  locally bounded)
process  with respect to  semimartingale.
 Moreover,  the expression of the quadratic variation  ensures that
 \be
 \label{corchetdroit}
 \parallel [M]_t \parallel_1=\parallel [X]_t\parallel_1= \parallel <M^c>_t\parallel_1 + \sum_{s\leq t} \parallel \Delta X_{s}\parallel_2^2,
 \ee
see e.g.   Chapter 1, Theorem 4.52 in \cite{JS}. Unless otherwise specified, the identities hold \emph{almost surely   (a.s.)}.
% Finally, we define
%$$T_{D, \varepsilon}(x_0) = \inf\{ t \in [0, T'(x_0)) :  \exists y \not\in D, y\in  \overline{B}(x_t, \varepsilon)\}\wedge T'(x_0)$$
 % the first time when the solution $(x_t : t\in [0, T'(x_0))$ is at distance $\varepsilon$ of the boundary of
 % $D$.
%et$$J^*_{\eta}=\inf\left\{ t\geq 0 : \1{ S_{t-}\leq \varepsilon}\left[ \parallel \Delta A_t +  \Delta M_t^d \parallel_2^2 +2\varepsilon  \parallel \Delta A_t +\Delta M_t^d \parallel_2 \right]\geq \eta^2\right\}.$$
%et en pratique on pourra souvent prendre  $M^*=\inf\left\{ t\geq 0 : \1{ S_{t-}\leq \varepsilon}\parallel \Delta A_t \parallel_2^2 \geq \varepsilon^2 /3\right\}.$
%\marginpar{Notation $X.Y$ pour int\'egarle dans JS; $X.Y$ a un integrant pr\'evisible localement born\'e car $X-$ d'un cadlag et $Y$ une semi martingale}
\begin{lem} \label{key} 
Assume that $\psi$ is $(L, \alpha)$ non-expansive on some domain $D\subset E'$ and let $\varepsilon >0$. \\
Then for any $x_0\in E'$ and   $T< T_{D, \varepsilon}(x_0)\wedge \T{\alpha}{L}{\varepsilon}$,  we have 
$$\left\{ S_T \geq \varepsilon\right\} \subset \big\{\sup_{t\leq T} \widetilde{R}_t^{\varepsilon} > \eta^2\big\},$$
where $\eta=\varepsilon\exp(-LT)/\sqrt{2}$.
\end{lem}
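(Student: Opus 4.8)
The plan is to argue pathwise by comparing $X$ with $x$ through the squared Euclidean gap. Write $Z_t=X_t-x_t$; since $x$ is continuous with finite variation, $Z$ is a semimartingale whose continuous local martingale part is exactly $M^c$ and whose jumps satisfy $\Delta Z_s=\Delta X_s$, while $Z_{s-}=X_{s-}-x_s$. Applying It\^o's formula to the $\mathcal C^2$ function $z\mapsto\parallel z\parallel_2^2$ (gradient $2z$, Hessian $2\,\mathrm{Id}$) I expect to obtain
$$\parallel Z_t\parallel_2^2=\parallel Z_0\parallel_2^2+2\int_0^t(\psi(X_s)-\psi(x_s))\cdot Z_s\,ds+2\int_0^t(X_{s-}-x_s)\cdot dR_s+\parallel <M^c>_t\parallel_1+\sum_{s\le t}\parallel\Delta X_s\parallel_2^2,$$
where the drift integrand is evaluated at $Z_s$ because $Z_{s-}=Z_s$ for $ds$-almost every $s$, the continuous second order term produces $\parallel <M^c>_t\parallel_1$, and the jump terms reduce to $\sum_{s\le t}\parallel\Delta X_s\parallel_2^2$ thanks to the identity $\parallel Z_s\parallel_2^2-\parallel Z_{s-}\parallel_2^2-2Z_{s-}\cdot\Delta Z_s=\parallel\Delta Z_s\parallel_2^2$. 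By (\ref{corchetdroit}) the last two terms combine into $\parallel[M]_t\parallel_1$, so that, modulo the drift integral, the right-hand side is precisely the bracket appearing in $\widetilde R_t^{\varepsilon}$.

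I will then prove the contrapositive: on the event $\mathcal E=\{\sup_{t\le T}\widetilde R_t^{\varepsilon}\le\eta^2\}$ one has $S_T<\varepsilon$. Introduce the stopping time $\tau=\inf\{t\ge0:\parallel Z_t\parallel_2\ge\varepsilon\}$. For every $t\le\tau$ one has $S_{t-}=\sup_{s<t}\parallel Z_s\parallel_2\le\varepsilon$, so the indicator $\1{S_{t-}\le\varepsilon}$ equals $1$ and no circularity arises; moreover for $s<\tau$ the bound $\parallel Z_s\parallel_2<\varepsilon$ together with $X_s\in E$ forces $X_s\in\overline{B}(x_s,\varepsilon)\cap E\subset D$, which is where $T<T_{D,\varepsilon}(x_0)$ enters and makes the $(L,\alpha)$ non-expansivity of $\psi$ applicable for almost every $s\le\tau$. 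Bounding the drift integral by $2L\int_0^t\parallel Z_s\parallel_2^2\,ds+2\alpha\int_0^t\parallel Z_s\parallel_2\,ds$ and using $\widetilde R_t^{\varepsilon}\le\eta^2$ on $\mathcal E$, I obtain, writing $u_t=\parallel Z_t\parallel_2^2$, that for all $t\le\tau\wedge T$
$$u_t\le\eta^2+2L\int_0^t u_s\,ds+2\alpha\int_0^t\sqrt{u_s}\,ds.$$

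Finally I close the estimate with a Gronwall argument tailored to the extra $\alpha\sqrt{\cdot}$ term: letting $g$ denote the right-hand side above, one has $u_t\le g(t)$ and $g'=2Lu_t+2\alpha\sqrt{u_t}\le2Lg+2\alpha\sqrt g$, whence $(\sqrt g)'\le L\sqrt g+\alpha$ and $\sqrt{u_t}\le\sqrt{g(t)}\le e^{Lt}(\eta+\alpha t)$. With $\eta=\varepsilon e^{-LT}/\sqrt2$ and the constraint $T<\T{\alpha}{L}{\varepsilon}$, i.e.\ $4\alpha Te^{2LT}\le\varepsilon$, this gives for every $t\le\tau\wedge T$
$$\sqrt{u_t}\le e^{LT}(\eta+\alpha T)=\frac{\varepsilon}{\sqrt2}+\alpha Te^{LT}\le\frac{\varepsilon}{\sqrt2}+\frac{\varepsilon}{4}<\varepsilon.$$
If $\tau\le T$, then $\parallel Z_\tau\parallel_2\ge\varepsilon$ because $\parallel Z\parallel_2$ is c\`adl\`ag and hits the closed set $[\varepsilon,\infty)$ at $\tau$, contradicting the bound at $t=\tau$; hence $\tau>T$ on $\mathcal E$, the bound holds on all of $[0,T]$, and $S_T<\varepsilon$. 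I expect the main difficulty to be the careful bookkeeping around $\tau$ — keeping the indicator equal to $1$ up to and including $\tau$ so as to avoid a circular use of the gap bound, and correctly accounting for a possible jump of $Z$ at $\tau$ through the quadratic variation term — rather than the Gronwall step, which is routine once the $\sqrt{\cdot}$ nonlinearity is linearised via $\sqrt g$.
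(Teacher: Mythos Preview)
Your argument is correct and follows essentially the same route as the paper: It\^o's formula for $\|X_t-x_t\|_2^2$, use of $(L,\alpha)$ non-expansivity on the drift while the process stays within $\varepsilon$ of the flow, a Gronwall step, and an exit-time argument to close the contrapositive. The only noteworthy difference is in the Gronwall step: the paper simply bounds $\|X_s-x_s\|_2\le\varepsilon$ inside the $\alpha$-term to get $\1{S_{t-}\le\varepsilon}S_t^2\le 2L\int_0^t\1{S_{s-}\le\varepsilon}S_s^2\,ds+2\alpha t\varepsilon+\sup_{s\le t}\widetilde R_s^\varepsilon$ and applies the linear Gronwall lemma, whereas you keep the $\sqrt{u_s}$ nonlinearity and linearise via $(\sqrt g)'\le L\sqrt g+\alpha$. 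Both yield the same conclusion with the given $\eta$ and the constraint $T<\T{\alpha}{L}{\varepsilon}$; the paper's version is slightly more direct, yours slightly sharper in the intermediate bound, but the distinction is cosmetic.
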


\begin{proof} Let  $x_0\in E'$.
First, we consider the  quadratic variation of  $(X_t-x_t : 0\leq t < T'(x_0))$:
$$[X-x]_t=[M]_t=(X_t-x_t)^2-(X_0-x_0)^2-2\int_0^t (X_{s-}-x_s) d(X_s-x_s),$$
 for $t<T'(x_0)$, 
see  e.g. Chapter 1, Definition 4.4.45 in \cite{JS} or use It\^o formula.
Summing the coordinates of $[M]_t$ and using the definitions of $X$ and $x$, we get
%Jacod Shirayev $[X-x]=[X]=[M]=(X-x)^2-(X_0-x_0)^2-2\int (X-x)_{-} .d(X-x)$), ou par la formule d' It\^o [Theorem 4.57, Jacod Shirayev, p57],\marginpar{intergration ctr martingale locale. Besoin de qqchose sur integrand ? $L^{2,loc}$ ?}
\begin{eqnarray*}
\parallel X_t-x_t\parallel_2^2 % &=& \parallel [X]_t- (X_t-x_t){\boldsymbol .}(X_t-x_t) \\
%&=& \parallel X_0-x_0 \parallel_2^2+2\int_0^t (X_s-x_s){\boldsymbol .}(\psi(X_s)-\psi(x_s))ds
% +2\int_0^t (X_{s-}-x_s){\boldsymbol .}dR_s+ \parallel <M^c>_t\parallel_1 \\
% && \qquad + \sum_{s\leq t} \left[\parallel X_{s}-x_{s}\parallel_2^2-\parallel X_{s-}-x_{s}\parallel_2^2-2(X_{s-}-x_s){\boldsymbol .}(X_s-X_{s-}) \right] \\
 &=& \parallel X_0-x_0 \parallel_2^2+2\int_0^t (X_{s-}-x_s){\boldsymbol .}(\psi(X_{s-})-\psi(x_s))ds
 +2\int_0^t (X_{s-}-x_s){\boldsymbol .}dR_s+ \parallel [M]_t\parallel_1.
 \end{eqnarray*}
Moreover   for any $s< T_{D, \varepsilon}(x_0)$, $x_s\in D$ and $X_{s-}\in D$ on the event $\{S_{s-} \leq \varepsilon\}$. So  using that  $\psi$ is  $(L, \alpha)$ non-expansive on $D$,
$$\1{S_{s-}\leq \varepsilon}(X_{s-}-x_s){\boldsymbol .}(\psi(X_{s-})-\psi(x_s)) \leq  \1{S_{s-}\leq \varepsilon}\left(L\parallel X_{s-}-x_s\parallel_2^2+\alpha\parallel X_{s-}-x_s\parallel_2\right).$$
Then for any $t< T_{D, \varepsilon}(x_0)$, 
%\begin{eqnarray*}
%\1{S_{t-}\leq \varepsilon}  S_t^2 &\leq & 2L\int_0^t \parallel X_{s}-x_s\parallel_2^2ds+2\alpha \int_0^t \parallel X_s-x_s\parallel_2ds\\
%&& \qquad +2\int_0^t (X_{s-}-x_s){\boldsymbol .}dR_s+\parallel <M_t>\parallel_1
%\end{eqnarray*}
\begin{eqnarray*}
\1{S_{t-}\leq \varepsilon} \parallel X_t-x_t\parallel_2^2 
&\leq &\1{S_{t-}\leq \varepsilon}\bigg[ 2L\int_0^t \parallel X_{s}-x_s\parallel_2^2ds
+2\alpha \int_0^t \parallel X_{s}-x_s\parallel_2ds \\
&& \qquad \qquad  \quad + \parallel X_0-x_0 \parallel_2^2+
2\int_0^t (X_{s-}-x_s){\boldsymbol .}dR_s+\parallel [M]_t\parallel_1 \bigg]
\end{eqnarray*}
and by definition of $\widetilde{R}^{\varepsilon}$,
%We assume now that $t<T_{\eta}$ for some $\eta>0$. Then there   exists $\widetilde{\eta}\in (0,\eta)$ such that
\begin{eqnarray*}
\1{S_{t-}\leq \varepsilon}  S_t^2  &\leq & 2L\int_0^t\1{S_{s-}\leq \varepsilon}  S_s^2 ds+2\alpha t \varepsilon+ \sup_{s\leq t} \widetilde{R}^{\varepsilon}_s.
\end{eqnarray*}
By  Gronwall lemma, %\marginpar{Gronwall discontinu}, 
 we obtain  for any $T<  T_{D, \varepsilon}(x_0)$ and $t\leq T$, %pour tout $t\leq T_D(x_0)\wedge T$ et $t< T_{\eta} $,
$$\1{S_{t-}\leq \varepsilon}  S_t^2 \leq \left(2\alpha T\varepsilon+ \sup_{s\leq T} \widetilde{R}^{\varepsilon}_s\right)e^{2LT}.$$
Moreover, for $T<\T{\alpha}{L}{\varepsilon}$, we have $2 \alpha Te^{2LT}<   \frac{\varepsilon}{2}$
and 
$$\left(2\alpha T\varepsilon+ \eta^2\right)e^{2LT} <  \varepsilon^2,$$
recalling that
$\eta=\varepsilon/(\sqrt{2}\exp(LT))$. 
Then %for   $T< T_{D, \varepsilon}(x_0)$, we get
% Donc pour ces temps,  "tant que tu es plus petit qu'$\varepsilon$ et avant $T_{\varepsilon/8}$, tu es plus petit qu'$\varepsilon/2$".
\be
\label{eqquu}
\left\{ \sup_{s\leq T} \widetilde{R}^{\varepsilon}_s \leq \eta^2\right\} \subset \left\{ \sup_{t\leq T} \1{S_{t-}\leq \varepsilon}  S_t^2 < \varepsilon^2\right\}.
\ee
Denoting
$$T_{exit}=\inf \{ s  <  T_{D,\varepsilon}(x_0)\wedge \T{\alpha}{L}{\varepsilon}  : S_s \geq   \varepsilon\},$$ 
and recalling that $S$ is c\`adl\`ag,  we have  $S_{T_{exit}-}\leq\varepsilon$ and  $S_{T_{exit}}\geq \varepsilon$ on the event $\{ T_{exit} \leq T\}$, so using $(\ref{eqquu})$ at time $t=T_{exit}$ ensures that
$$\{T_{exit} \leq T\} \subset  \left\{ \sup_{s\leq T} \widetilde{R}^{\varepsilon}_s > \eta^2\right\},$$
which ends up the proof.
\end{proof}

\subsection{Non-expansivity   and perturbation by martingales}
We use now martingale maximal inequality to
estimate the probability that the distance between the process $(X_t : t \geq 0)$ and the dynamical system $(x_t : t \geq 0)$ goes beyond some level
$\varepsilon>0$. Such arguments are classical and  have been used in  several contexts,  see in particular  \cite{DN} for a survey and applications in scaling limits
and     \cite{BBL} for  the coming down from infinity of $\Lambda$-coalescent, which  have both inspired the results below. \\

% \marginpar{En pratique  $M_t^d$  square locally integrable martingale) et automatique pour sa partie continue $M_t^c$}

%\marginpar{Voir martingales exponentielles}% (cf Barlow Jacka Yor Prop. 4.2.1 1986) ? $C_p$ ne d\'epend pas d'$\varepsilon$ ...}

\begin{prop} \label{ctrltpscourt} 
Assume that $\psi$ is $(L, \alpha)$ non-expansive on some domain $D\subset E'$ and let $\varepsilon >0$. \\
Then for any $x_0\in E'$ and   $T< T_{D, \varepsilon}(x_0)\wedge \T{\alpha}{L}{\varepsilon}$,   for   any
$p\geq 1/2$ and $q\geq 0$, 
 \begin{eqnarray*}
 &&\mathbb P\left( S_T \geq \varepsilon\right) \\
&& \qquad \leq   \mathbb P\left( \parallel X_0-x_0\parallel_2 \geq \varepsilon \frac{e^{-LT}}{2\sqrt{2}}\right) +  C_q\frac{e^{2qLT} }{\varepsilon^{q}}
\E \left(\left( \int_0^T  \1{S_{s-} \leq \varepsilon} d \parallel \vert A\vert_s \parallel_1 \right) ^{q}\right) \\
&& \qquad \qquad  +  C_{p,\emph{d}}\frac{e^{4pLT} }{\varepsilon^{2p}}\left[ \E\left(\left( \int_0^T \1{ S_{t-} \leq \varepsilon} d \parallel <M^c>_t\parallel_1\right)^{p}\right)  
+\E\left( \left( \sum_{t\leq T}\1{ S_{t-} \leq \varepsilon} \parallel \Delta X_{t}\parallel_2^2  \right)^{p} \right)\right],
\end{eqnarray*}
for some positive constants  $C_q$ (resp. $C_{p,\emph{d}}$)  which depend only on $q$ (resp. $p,\emph{d}$).
\end{prop}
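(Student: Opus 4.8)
The plan is to feed the conclusion of Lemma~\ref{key} into a union bound and then to estimate each resulting piece by a maximal inequality. By Lemma~\ref{key}, for $T<T_{D,\varepsilon}(x_0)\wedge\T{\alpha}{L}{\varepsilon}$ we already have $\{S_T\geq\varepsilon\}\subset\{\sup_{t\leq T}\widetilde{R}_t^{\varepsilon}>\eta^2\}$ with $\eta=\varepsilon e^{-LT}/\sqrt{2}$, so it suffices to bound $\p(\sup_{t\leq T}\widetilde{R}_t^{\varepsilon}>\eta^2)$. The first step is to expand $\widetilde{R}^{\varepsilon}$ along $R=A+M^c+M^d$ and to localize. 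Since $t\mapsto S_t$ is non-decreasing, the predictable set $\{S_{s-}\leq\varepsilon\}$ is an initial stochastic interval $[0,\tau]$ with $\tau=\inf\{t:S_t\geq\varepsilon\}$; introducing $\tau$ lets me replace each indicator-weighted integral by the corresponding process stopped at $\tau$ (legitimate because $\1{S_{s-}\leq\varepsilon}$ is predictable), and on $\{S_{t-}\leq\varepsilon\}$ the stopped process coincides with the integral appearing in $\widetilde{R}_t^{\varepsilon}$. This is the key manoeuvre: on $[0,\tau]$ one has $\parallel X_{s-}-x_s\parallel_2\leq S_{s-}\leq\varepsilon$, so the integrand $X_{s-}-x_s$ is forced to have euclidean norm at most $\varepsilon$ throughout the region that actually contributes, without any a priori bound on $X$.

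Writing $\widetilde{R}_t^{\varepsilon}=\parallel X_0-x_0\parallel_2^2+2\Phi^A_t+2\Phi^M_t+Q^c_t+Q^d_t$ on $\{S_{t-}\leq\varepsilon\}$, with $\Phi^A_t=\int_0^t\1{S_{s-}\leq\varepsilon}(X_{s-}-x_s){\boldsymbol .}dA_s$, $\Phi^M_t=\int_0^t\1{S_{s-}\leq\varepsilon}(X_{s-}-x_s){\boldsymbol .}dM_s$, $Q^c_t=\int_0^t\1{S_{s-}\leq\varepsilon}d\parallel<M^c>_s\parallel_1$ and $Q^d_t=\sum_{s\leq t}\1{S_{s-}\leq\varepsilon}\parallel\Delta X_s\parallel_2^2$ (the last two coming from (\ref{corchetdroit})), I bound $\sup_{t\leq T}\widetilde{R}_t^{\varepsilon}\leq\parallel X_0-x_0\parallel_2^2+2\sup_t|\Phi^A_t|+2\sup_t|\Phi^M_t|+Q^c_T+Q^d_T$, using monotonicity of $S$ and of $Q^c,Q^d$. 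Splitting the threshold $\eta^2$ into four equal parts, $\p(\sup\widetilde{R}^{\varepsilon}>\eta^2)$ is then bounded by the sum of $\p(\parallel X_0-x_0\parallel_2^2>\eta^2/4)$, $\p(2\sup_t|\Phi^A_t|>\eta^2/4)$, $\p(2\sup_t|\Phi^M_t|>\eta^2/4)$ and $\p(Q^c_T+Q^d_T>\eta^2/4)$. The first equals the stated initial-condition term, since $(\eta/2)^2=\varepsilon^2e^{-2LT}/8=\eta^2/4$ gives the threshold $\varepsilon e^{-LT}/(2\sqrt{2})$.

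Each of the remaining three is handled by a moment inequality, using $\eta^2=\varepsilon^2e^{-2LT}/2$ to generate the announced powers. For the finite-variation part I bound pathwise $|\Phi^A_t|\leq\varepsilon\int_0^t\1{S_{s-}\leq\varepsilon}d\parallel\vert A\vert_s\parallel_1$ (Cauchy--Schwarz and $\parallel X_{s-}-x_s\parallel_2\leq\varepsilon$) and apply Markov's inequality at order $q$; this produces the factor $e^{2qLT}\varepsilon^{-q}$ and the $q$-th moment term. For the martingale part I apply Markov at order $2p$ together with the Burkholder--Davis--Gundy inequality to the real local martingale $\Phi^M$ (this is where $p\geq1/2$ enters, guaranteeing $2p\geq1$), and then bound $[\Phi^M]_T=<(\Phi^M)^c>_T+\sum_{s\leq T}(\Delta\Phi^M_s)^2$: the continuous part is the quadratic form $\int\1 (X_{s-}-x_s)^{*}d<M^c>_s(X_{s-}-x_s)$, dominated by $\varepsilon^2\int\1 d\parallel<M^c>_s\parallel_1$ since $d<M^c>_s$ is a positive-semidefinite matrix measure of trace $d\parallel<M^c>_s\parallel_1$, while the jumps obey $(\Delta\Phi^M_s)^2=((X_{s-}-x_s){\boldsymbol .}\Delta M^d_s)^2\leq\varepsilon^2\parallel\Delta M^d_s\parallel_2^2$. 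This extracts a global $\varepsilon^{2p}$ and yields the two $p$-th moment terms with weight $e^{4pLT}\varepsilon^{-2p}$. Finally $Q^c_T+Q^d_T$ is handled by Markov at order $p$, giving the same two $p$-th moment terms with the smaller weight $e^{2pLT}\varepsilon^{-2p}\leq e^{4pLT}\varepsilon^{-2p}$; combining with the martingale contribution produces the stated constant $C_{p,\text{d}}$.

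The main obstacle I anticipate is the bookkeeping around the jumps. The quadratic-variation term of $\widetilde{R}^{\varepsilon}$ produces $\sum\parallel\Delta X_s\parallel_2^2$ through (\ref{corchetdroit}), whereas the Burkholder--Davis--Gundy estimate of $\Phi^M$ produces $\sum((X_{s-}-x_s){\boldsymbol .}\Delta M^d_s)^2\leq\varepsilon^2\sum\parallel\Delta M^d_s\parallel_2^2$; these reduce to the same quantity precisely because, the drift and $M^c$ being continuous, every jump of $X$ is carried by the purely discontinuous martingale, so $\Delta X_s=\Delta M^d_s$ (equivalently the chosen $A$ is continuous). Keeping the localizing indicator inside while matching its terminal value through the stopping time $\tau$, together with tracking all constants and exponential weights so as to land on the exact powers $e^{2qLT}$, $e^{4pLT}$, $\varepsilon^{-q}$ and $\varepsilon^{-2p}$, is the only genuinely delicate point; the remaining ingredients (Gronwall, already used in Lemma~\ref{key}, Markov, and Burkholder--Davis--Gundy) are routine.
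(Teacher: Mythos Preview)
Your proposal is correct and follows the same route as the paper: apply Lemma~\ref{key}, decompose $\widetilde{R}^{\varepsilon}$ into initial-condition, finite-variation, martingale and quadratic-variation contributions, and control each via Markov's inequality (orders $q$ and $p$) together with Burkholder--Davis--Gundy (order $2p$), exploiting $\parallel X_{s-}-x_s\parallel_2\leq\varepsilon$ on the localized set to extract the factors of $\varepsilon$. Your trace bound on $\int\1{S_{s-}\leq\varepsilon}(X_{s-}-x_s)^{*}d\langle M^c\rangle_s(X_{s-}-x_s)$ slightly streamlines the paper's coordinate-wise estimate $d[M^{(i)},M^{(j)}]_s\leq d[M^{(i)}]_s+d[M^{(j)}]_s$, and your explicit remark that $\Delta X_s=\Delta M^d_s$ makes visible a point the paper uses tacitly via~(\ref{corchetdroit}).
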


\begin{proof}   By definition of $\widetilde{R}^{\varepsilon}$,
$$\left\{ \sup_{t\leq T} \widetilde{R}^{\varepsilon}_t \geq \eta^2\right\} \subset \  \left\{ \parallel X_0-x_0\parallel_2^2 \geq \frac{\eta^2}{4} \right\} \cup B_{\eta},$$
where $B_{\eta}=\{ \sup_{t\leq T}  \1{ S_{t-} \leq \varepsilon} \int_0^t   (X_{s-}-x_s)\bold{.}dR_s  \geq \eta^2/8\}\cup\{ \sup_{t\leq T}   \1{ S_{t-} \leq \varepsilon}\parallel [M]_t \parallel_1 \geq \eta^2/4\}$.
Recalling that $R_t=A_t+M_t$ and $(\ref{corchetdroit})$,
\Bea B_{\eta}&\subset&
\left\{ \sup_{t\leq T} \int_0^t    \1{ S_{s-} \leq \varepsilon}(X_{s-}-x_s)\bold{.}dA_s  \geq\frac{\eta^2}{16}\right\} \cup \left\{ \sup_{t\leq T} 
\int_0^t  \1{ S_{s-} \leq \varepsilon} (X_{s-}-x_s){\boldsymbol .}dM_s \geq\frac{\eta^2}{16}\right\}\\
&& \qquad \quad \cup \left\{\int_0^T \1{ S_{t-} \leq \varepsilon}d\parallel <M^c>_t \parallel_1 \geq \frac{\eta^2}{8}\right\} \cup \left\{
 \sum_{t\leq T} \1{ S_{t-} \leq \varepsilon}\parallel \Delta X_{t}\parallel_2^2   \geq \frac{\eta^2}{8} \right\}.
\Eea
We also know from Lemma \ref{key} that $$\{ S_T \geq \varepsilon \} \subset \left\{ \sup_{s\leq T} \widetilde{R}_t \geq \eta^2\right\}$$
 and using
 Markov inequality yields
\begin{eqnarray}
&&\p (S_T \geq \varepsilon) \nonumber \\
&&\quad \leq  \p\left(\parallel X_0-x_0\parallel_2^2 \geq \frac{\eta^2}{4}\right)+  \mathbb P(B_{\eta})  \nonumber \\
&& \quad \leq  \p\left(\parallel X_0-x_0\parallel_2^2 \geq \frac{\eta^2}{4}\right)+ \left(\frac{16  }{\eta^2}\right)^{q} \E \left(\sup_{t\leq T} \left\vert\int_0^t    \1{ S_{s-} \leq \varepsilon}(X_{s-}-x_s)\bold{.}dA_s  \right\vert^{q}\right)  \nonumber\\
&& \qquad \qquad + \left(\frac{16}{\eta^2}\right)^{2p} \E \left(\sup_{t\leq T} \left\vert\int_0^t   \1{S_{s-}  \leq \varepsilon} (X_{s-}-x_s).d M_s \right\vert^{2p}\right)  \nonumber \\
&& \qquad \qquad
+\left(\frac{8}{\eta^2}\right)^{p} \E\left(\left( \int_0^T  \1{ S_{t-} \leq \varepsilon} d \parallel <M^c>_t\parallel_1\right)^{p}\right) 
 +\left(\frac{8}{\eta^2}\right)^{p} \E\left( \left[ \sum_{t\leq T}\1{ S_{t-} \leq \varepsilon} \parallel \Delta X_{t}\parallel_2^2  \right]^{p} \right). \label{tout}
\end{eqnarray}
First using that $\vert f_s.dg_s\vert \leq \parallel f_s \parallel_2 d\parallel \vert g \vert_s \parallel_1$ since $\vert f_s^{(i)} \vert \leq \parallel f_s\parallel_2$, % \leq \int \vert f\vert d\vert A\vert$, 
we have for $t\leq T$,
\be
\label{inegun}
 \left\vert \int_0^t    \1{ S_{s-} \leq \varepsilon}(X_{s-}-x_s)\bold{.}dA_s \right\vert  \leq \int_0^t   \1{ S_{s-} \leq \varepsilon} \parallel X_{s-}-x_s \parallel_2d A^1_s\leq \varepsilon \int_0^T   \1{ S_{s-} \leq \varepsilon} d A^1_s,
  \ee
where   $A^1_s:= \parallel \vert A\vert_s \parallel_1$ is the sum of the coordinates of the total  variations of the process $A$. \\
Second,  Burkholder Davis Gundy  inequality (see \cite{DM},  93,  chap. VII, p. 287)   for the local martingale
$$N_t=\int_0^t   \1{S_{s-}  \leq \varepsilon} (X_{s-}-x_s).d M_s$$
 ensures that there exists $C_p>0$   such that
 %\marginpar{ en fait $p\geq 1/2$ dans Delacherie Meyer !}
 %  \cite{IW} chap 3  Th 3.1 pour le cas continu, quiest  $\mathcal M^{c,loc}_2$   ; reference dans le cas discontinu : http://arxiv.org/pdf/1308.2648.pdf Marinelly R\"ockner ?}
$$\E \left(\sup_{t\leq T} \left\vert\ N_t  \right\vert^{2p}\right)\\
\leq C_p \E\left( [ N ]_T^p\right).$$
Writing  the coordinates of $X, M$ and $x$ respectively $(X^{(i)}  : i =1, \ldots, \d)$, $(M^{(i)}  : i =1, \ldots, \d)$  and  $(x^{(i)}  : i =1, \ldots, \d)$ and adding that  
  $$[N]_T= \int_0^T \sum_{i,j=1}^{\d}   \1{S_{s-}  \leq \varepsilon} (X_{s-}^{(i)}-x_s^{(i)}) (X_{s-}^{(j)}-x_s^{(j)})d [M^{(i)},M^{(j)}]_s\leq \varepsilon^2
 \int_0^T \sum_{i,j=1}^{\d}   \1{S_{s-}  \leq \varepsilon} d [M^{(i)},M^{(j)}]_s  $$
and that $d [M^{(i)},M^{(j)}]_s\leq d[M^{(i)}]_s+d[M^{(j)}]_s$, we obtain
  %\marginpar{Preciser}
  \begin{eqnarray}
  &&\E \left(\sup_{t\leq T} \left\vert\int_0^t   \1{S_{s-}  \leq \varepsilon} (X_{s-}-x_s).d M_s \right\vert^{2p}\right) \nonumber \\
 && \qquad \leq C_{p,\d} \varepsilon^{2p} \E \left(\left(\int_0^T  \sum_{i=1}^{\d} \1{S_{t-}  \leq \varepsilon} d [M^{(i)}]_t \right)^p\right) \nonumber \\
%  && \qquad \leq C_{p,d}'\varepsilon^2\sup_{t\leq T}  \E \left(\parallel \int_0^t  \1{S_{s-}  \leq \varepsilon} d   [M]_s \parallel_1^p\right)
 % && \qquad \leq C_{p,d}'' \varepsilon^{2p}\left[ \E \left( \left\vert \int_0^t \sum_{i=1}^d \1{S_{s-}  \leq \varepsilon} d   <M^{i,c}>_s \right\vert^p\right)+  \E \left(\left\vert \sum_{s\leq T}  \sum_{i=1}^d  \1{S_{s-}  \leq \varepsilon} (\Delta X_s^i)^2\right\vert^p\right) \right] \\
  &&\qquad \leq C_{p,\d}' \varepsilon^{2p}  \left[ \E\left(\left( \int_0^T \1{ S_{t-} \leq \varepsilon} d \parallel <M^c>_t\parallel_1\right)^{p}\right)  
+\E\left( \left( \sum_{t\leq T}\1{ S_{t-} \leq \varepsilon} \parallel \Delta X_{t}\parallel_2^2  \right)^{p} \right)\right], \label{inegdeux}
  \end{eqnarray}
 for some positive  constants $C_{p,\d}$ and $C_{p,\d}'$, where we recall that $[M^{(i)}]_t=<M^{c,(i)}>_t+\sum_{s\leq t} \left( \Delta X^{(i)}_s\right)^2$.
%Using now  that $$\{ S_T \geq \varepsilon \} \subset \left\{ \sup_{s\leq T} \widetilde{R}_t \geq \eta^2\right\}$$
Plugging (\ref{inegun}) and (\ref{inegdeux}) in (\ref{tout}), 
we get
\begin{eqnarray*}
\p (S_T \geq \varepsilon) 
& \leq &   \p\left(\parallel X_0-x_0\parallel_2^2 \geq \frac{\eta^2}{4}\right)+ \left(\frac{16 \varepsilon }{\eta^2}\right)^{q} \E \left(\left(\int_0^T   \1{S_{s-}  \leq \varepsilon} d A^1_s \right)^{q}\right) \\
&& \qquad \qquad
+\frac{C''_{p,\d}}{\eta^{2p}} \left[\E\left(\left( \int_0^T  \1{ S_{t-} \leq \varepsilon} d \parallel <M^c>_t\parallel_1\right)^{p}\right) 
 +\E\left( \left( \sum_{t\leq T}\1{ S_{t-} \leq \varepsilon} \parallel \Delta X_{t}\parallel_2^2  \right)^{p} \right)\right]
\end{eqnarray*}
for some $C''_{p,\d}$ positive.  Recalling that $\eta=\varepsilon/ (\sqrt{2}\exp(LT))$ ends up the proof.
\end{proof}

\section{Uniform estimates for Stochastic Differential Equations}
\label{EDS}
In this section, we assume that $X=(X^{(i)} : i=1,\ldots,\d)$ is a c\`adl\`ag Markov process which takes values in $E\subset \R^d$ and  is the unique strong solution
of the following SDE on  $[0, \infty)$ : 
%\marginpar{  sens IW signifie  $\mathcal F_t=\sigma(X_0, B_s : s\leq t, N)$ compl par n\'eglig $F_t$ adpt}
%  par  le syst\`eme dynamique dans le cas o\`u $X=(X^{i} : i=1...d)$  est    solution forte sur $[0,T]$ partant d'une condition initiale $X_0$ \`a support $E\subset \R^d$ de l'EDS suivante
\Bea
X_t&=&x_0+\int_0^t b(X_s)ds+\int_0^t \sigma(X_s)dB_s+\int_0^t\int_{\mathcal X} H(X_{s-},z) N(ds,dz)+\int_0^t\int_{\mathcal X} G(X_{s-},z) \widetilde{N}(ds,dz),
\Eea
a.s. for any $x_0 \in E$, 
where $(\mathcal X, \mathcal B_{\mathcal X})$ is a measurable space,
%\marginpar{def component wise et dire qu'on pourrait ajouter $A_t$}
%we recall that the product of two vectors is the vector obtain  by multiplying the coordinates and
\begin{itemize}
\item 
$B=(B^{(i)} : i=1,\ldots,\d)$ is a $\text{d}$-dimensional Brownian motion; % mouvements Brownien ind\'ependants; 
%\marginpar{ $(\mathcal X,\mathcal B_{\mathcal X})$}
\item $N$ is a Poisson Point Measure (PPM)  on $\R^+\times \mathcal X $ with intensity  $dsq(dz)$, where $q$ is a $\sigma$- finite measure on $(\mathcal X, \mathcal B_{\mathcal X})$; and $\widetilde{N}$ is the compensated measure of $N$.
%\marginpar{%associ\'e \`a un PP de class (QL) (Definition 3.1, chapitre II \cite{IW}, mesure $\sigma$ finie et 
%compens\'ee continue adapt\'e bien d\'ef,   automatq pour PPP station,   dt on note $\widetilde{N}$ la mesure compens\'ee}
\item $N$ and $B$ are independent;
\item $b=(b^{(i)} : i=1,\ldots,\d)$,  $\sigma=(\sigma^{(i)}_j : i,j=1,\ldots,\d)$,   $H$ and $G$ are Borel measurable functions   locally bounded, which take values respectively
 $\R^{\d}$, $\R^{2\d}$,  $\R^{\d}$ and  $\R^{\d}$. % fonctions bor\'eliennes localement born\'ees. \\
\end{itemize}
Moreover, we follow the classical convention (see  chapter II in \cite{IW})  and we assume that
%\marginpar{On a le droit darreter $G$}
$HG=0$,  $G$ is bounded  and for any $t\geq 0$,  
$$\int_0^t\int_{\mathcal X} \vert H(X_{s-},z) \vert N(ds,dz)<\infty \quad \text{a.s.}, \quad \quad \E\left(\int_0^t\int_{\mathcal X} \parallel G(X_{s-\wedge \sigma_n},z)\parallel_2^2 ds q(dz)\right)<\infty,   $$
for  some sequence of stopping time $\sigma_n\uparrow \infty$.  We dot not discuss here the conditions which ensure the strong existence and uniqueness of this SDE for any initial condition. This will be standard results 
for the examples considered in this paper and we refer to \cite{DL} for some general statement relevant in our context.
%\marginpar{We  observe that  $( \int_0^t b(X_s)ds : t\geq 0)$  is  $\mathcal F_t$-adapted and a.s. continuous with paths with bounded variations; 
%$( \int_0^t \sigma(X_s)dB_s : t\geq 0)$ belongs to $\mathcal M_2^{c,loc}$, which is the set of  locally square integrable continuous $\mathcal F_t$-martingale;
%$(\int_0^t\int_{\mathcal X} H(X_{s-},z) N(ds,dz) : t\geq 0)$  is  $\mathcal F_t$-adapted and a.s. c\`adl\`ag with paths with bounded variations and $(\int_0^t\int_{\mathcal X} G(X_{s-},z) \widetilde{N}(ds,dz) : t\geq 0)$ is c\`adl\`ag and belongs to $\mathcal M_2^{loc}$, which is the set of  locally square integrable $\mathcal F_t$-martingale.}
%On a ici deux termes \`a variations finies et deux termes de  martingales locales. 
%\marginpar{On suit IW, ca permet de bien d\'efinir chaque terme et d'avoir la decomposition en semimartingale
%$$\E\left( \int_0^T \int_{\mathcal X} \parallel H(X_{s-},z) \parallel_1 dsq(dz)\right)<\infty, \qquad \E\left( \int_0^T \int_{\mathcal X} \parallel G(X_{s-},z) \parallel_2^2 dsq(dz)\right)<\infty $$ 
%et $$\E\left(\int_0^T \parallel\sigma(X_s)\parallel_2^2 ds \right)<\infty $$}

% \ \int_0^t\int_E \vert H(.,z) \vert dsq(dz) <\infty.$$ 

\subsection{Main result}
%\marginpar{Il suffirait de $D$ mais on s'en fout}
 %\marginpar{a t  on besoin de la valeur absolue ?}
We need a transformation $F$  to construct a suitable distance and evaluate the gap between  the process 
$X$ and the associated dynamical system on a domain $D$. 
\begin{assumption} \label{assume}
(i) The domain  $D$ is an open subset of $\R^{\emph{d}}$ and the function $F$ is defined on an open set $O$ which contains $\overline{D\cup E}$. % \rightarrow \R^d$.
\\
(ii) $ F \in \mathcal C^2(  O, \R^{\d})$ and $F$
is a bijection from $D$ into $F(D)$ and its  Jacobian   $J_F$ 
is invertible
 on  $D$.  \\
 (iii) For any $x\in E$,
%  \marginpar{Ca suffit vraiment pour tout compenser ??}
$$\int_{\mathcal X} \left\vert F(x+H(x,z))-F(x)\right\vert  q(dz) <\infty.$$
and the function
$x\in E \rightarrow  h_F(x)= \int_{\mathcal X} [ F(x+H(x,z))-F(x)] q(dz)$ can be extended  to the domain $\overline{D\cup E}$.
 This extension
$h_F$ is  locally bounded on $\overline{D\cup E}$  and locally Lipschitz on $D$. \\
(iv) The function
$b$  is locally Lipschitz on $D$.   
\end{assumption}
\noindent Under this assumption,  $F$ is a $C^2$ diffeomorphism from   $D$ into $F(D)$ and $F(D)$ is on open subset of $\R^d$.  We require in $(iii)$ that the large jumps of $F(X)$ can be compensated. This assumption could be relaxed by letting the large jumps which could not be compensated in an additional term with finite variations, i.e. 
using the term $A_t$ of  the semimartingale $R_t$ in the previous section. But that won't be useful for the applications given here. Under Assumption \ref{assume},  we   set
$b_F=b+J_F^{-1}h_F$, which is well defined and locally Lipschitz on $D$. We note that for any $x\in E\cap D$,
%\marginpar{ $F$ diagonale \\ Attention au domaine de def $T_{E'}$}
%\marginpar{pas inversible sur E}
$$b_F(x)=b(x)+J_F(x)^{-1}\left( \int_{\mathcal X} [F(x+H(x,z))-F(x)] q(dz)\right).$$
We introduce
the flow $\phi_F$ associated to $b_F$ and defined  for $x_0 \in D$ as the unique solution of 
$$\phi_F(x_0,0)=x_0, \qquad \frac{\partial }{\partial t}\phi_F(x_0,t)=b_F(\phi_F(x_0,t)),$$
for  $t \in [0,T_D(x_0))$, where $T_D(x_0)\in (0,\infty]$ is the maximal time until which  the solution exists and belongs to $D$.  We observe that when $H=0$, then $b_F=b$ and $\phi_F=\phi$ do not depend on the transformation $F$.\\
We introduce now the vector field $\psi_F$   defined  by 
$$\psi_F=(J_Fb_F)\circ F^{-1}=(J_Fb+h_F)\circ F^{-1}$$
on the open set $F(D)$.
%\marginpar{intervalle max a repreciser ici ? }
%where  $\mathcal L$ is the infinitesimal generator  of $X$.
We also  set	for any $x\in E$,
\be
\label{defbt}
\widetilde{b}_F(x)=\frac{1}{2} \sum_{i,j=1}^{\d}  \frac{\partial^2 F}{\partial x_i\partial x_j}  (x)\sum_{k=1}^{d}\sigma^{(i)}_k(x)\sigma^{(j)}_k(x) +\int_{\mathcal X} [F(x+G(x,z))-F(x)-J_F(x)G(x,z)] q(dz). 
\ee
Let us note that  the generator of $X$ is given by  $\mathcal L F=\psi_F \circ F+ \widetilde{b}_F$. The term 
$\widetilde {b}_F$ is not contributing significantly to the coming down from infinity in the examples we  consider here and thus considered as an approximation term.
On the contrary,  we need to introduce
\be
\label{defVF}
V_F(x)= \sum_{i,j,k=1}^{\d}  \frac{\partial F}{\partial x_i}(x)\frac{\partial F}{\partial x_j}(x)\sigma^{(i)}_k(x)\sigma^{(j)}_k(x)+\int_{\mathcal X} [F(x+H(x,z)+G(x,z))-F(x)]^2 q(dz).
\ee
for $x\in E$,
to quantify the fluctuations of the process due to the martingale parts. Finally we  use  the following application defined on $O$ (and thus on $D\cup E$) 
 to compare the process  $X$ and the flow  $\phi_F$ : 
$$d_F(x,y)=\parallel F(x)-F(y)\parallel_2.$$
We  observe that $d$ is (indeed)  a  distance (at least) on $D$ and in the examples below  it is  actually a distance on $D\cup E$.
We recall notation $(\ref{temps})$ and the counterpart of  $(\ref{tempssortie})$ is defined by
%\marginpar{check}
%\be
%\label{deftemps}
%T_{D,\varepsilon,F}(x_0)=\inf\{ t\in  [0,T(x_0)) : \exists x \in O- D, \ d_F(x,\phi_F(x_0,t))\leq  \varepsilon\}\wedge T(x_0)
%\ee
%\marginpar{a ton besoin de d\'efinir une boule aussi quand $d$ n'est pas une distance ??? attention a son domaine}
\be
\label{deftemps}
T_{D,\varepsilon,F}(x_0)=\sup\{ t\in  [0,T_D(x_0)) : \forall s\leq t,  \ \overline{B}_{d_F} (\phi_F(x_0,s),   \varepsilon) \cap E \subset D \}.
\ee
%which yields the time when the dynamical system is at distance $\varepsilon$ from the boundary of $D$ for the distance $d_F$.
%where we recall that $\overline{B}_d(x,\varepsilon)=\{ y \in \R^d  : d(x,y)\leq \varepsilon\}$.
%For safe of simplicity and regarding the applications in mind, we set the main resultfor $\varepsilon \leq 1$, $X_0=x_0$  and $p=1$.
\begin{thm}  \label{corutile}
Under Assumption \ref{assume}, we assume that $\psi_F$ is $(L, \alpha)$ non-expansive on $F(D)$. \\
Then for any $\varepsilon >0$ and $x_0\in E \cap D$ and   $T< T_{D, \varepsilon,F}(x_0)\wedge \T{\alpha}{L}{\varepsilon}$,  we have  
$$\mathbb P_{x_0}\left( \sup_{t\leq T} d_F(X_t, \phi_F(x_0,t)) \geq \varepsilon\right) \leq C_{\emph{d}}e^{4LT}\int_0^T \overline{V}_{F,\varepsilon}(x_0,s) ds ,$$
%\marginpar{Donner une valeur pour $C$ ?}
where  $C_{\emph{d}}$ is a positive constant depending only on the dimension $\emph{d}$ and
\be
\label{defOVER}
   \overline{V}_{F,\varepsilon}(x_0,s)=\sup_{\substack{ x\in E \\ d_F(x,\phi_F(x_0,s)) \leq \varepsilon }} \left\{\varepsilon^{-2} \parallel V_F(x) \parallel_1+\varepsilon^{-1}\parallel \widetilde{b}_F(x) \parallel_1\right\}.
   \ee
%et $C$ une constante ne d\'ependant que de la borne de $\widetilde{b}_F$.
%\marginpar{le carr\'e d'un vecteur d\'esigne ici le vecteur des carr\'es des coordonn\'ees } 
\end{thm}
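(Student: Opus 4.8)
The plan is to deduce the statement from Proposition \ref{ctrltpscourt} applied to the transformed process $Y=F(X)$. The first step is to run It\^o's formula on each coordinate of $F(X)$ and recast the result in the semimartingale form of Section \ref{main}. Writing the continuous part of $X$ as $b(X_s)\,ds+\sigma(X_s)\,dB_s$ and treating the two jump families separately, the $H$-jumps (driven by $N$) yield, after compensation, the drift $h_F(X_s)\,ds$ and the martingale $\int_0^t\!\int_{\mathcal X}[F(X_{s-}+H(X_{s-},z))-F(X_{s-})]\widetilde N(ds,dz)$, while the $G$-jumps (driven by $\widetilde N$) yield the compensator $\int_{\mathcal X}[F(x+G)-F(x)-J_F(x)G]\,q(dz)$ inside the drift together with the martingale $\int_0^t\!\int_{\mathcal X}[F(X_{s-}+G(X_{s-},z))-F(X_{s-})]\widetilde N(ds,dz)$. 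Since $HG=0$, at each Poisson point only one kernel is active, so the two martingales merge into a single $\widetilde N$-integral with kernel $F(X_{s-}+H+G)-F(X_{s-})$. Collecting the drift, the $J_Fb$ and $h_F$ contributions combine into $\psi_F(Y_s)$ via $\psi_F=(J_Fb+h_F)\circ F^{-1}$ and $Y_s=F(X_s)$, while the second-order continuous term and the $G$-compensator are exactly $\widetilde b_F(X_s)$ from (\ref{defbt}). This gives $Y_t=F(x_0)+\int_0^t\psi_F(Y_s)\,ds+R_t$ with $A_t=\int_0^t\widetilde b_F(X_s)\,ds$ (bounded variation), $M^c_t=\int_0^t J_F(X_s)\sigma(X_s)\,dB_s$ (continuous local martingale) and $M^d$ the merged $\widetilde N$-integral (purely discontinuous).

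Next I would identify the flow and exit time on the $Y$-side. Setting $y_t=F(\phi_F(x_0,t))$ and differentiating gives $\dot y_t=J_F(\phi_F)b_F(\phi_F)=\psi_F(y_t)$, so $y$ is precisely the dynamical system of Section \ref{main} attached to $\psi_F$ on the open set $F(D)$, with $y_0=F(x_0)$. Hence $d_F(X_t,\phi_F(x_0,t))=\|Y_t-y_t\|_2=S_t$ and the event in question is $\{S_T\ge\varepsilon\}$. Because $F$ is a bijection from $D$ onto $F(D)$ with $d_F(x,y)=\|F(x)-F(y)\|_2$, the condition defining $T_{D,\varepsilon,F}(x_0)$ in (\ref{deftemps}) translates verbatim into the euclidean condition (\ref{tempssortie}) for $y$ in $F(D)$, so the two exit times coincide. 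Local Lipschitz continuity of $\psi_F$ on $F(D)$ follows from that of $b_F$ on $D$ (Assumption \ref{assume}) together with $F\in\mathcal C^2$, so all hypotheses of Proposition \ref{ctrltpscourt} are met with the non-expansivity assumed here.

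I would then invoke Proposition \ref{ctrltpscourt} for $Y$ with $p=q=1$. The initial term vanishes since $Y_0=F(x_0)=y_0$, leaving the $A$-term and the two martingale terms. Here $d\|\vert A\vert_s\|_1\le\|\widetilde b_F(X_s)\|_1\,ds$; the continuous term gives $d\|<M^c>_s\|_1=\sum_{l}\sum_{i,j,k}\partial_iF^{(l)}\partial_jF^{(l)}\sigma^{(i)}_k\sigma^{(j)}_k(X_s)\,ds$, i.e.\ the diffusion part of $\|V_F\|_1$; and compensating the jump sum, $\E(\sum_{t\le T}\1{S_{t-}\le\varepsilon}\|\Delta Y_t\|_2^2)=\E(\int_0^T\!\int_{\mathcal X}\1{S_{t-}\le\varepsilon}\|F(X_{t-}+H+G)-F(X_{t-})\|_2^2\,q(dz)\,dt)$, whose integrand is the jump part of $\|V_F\|_1$, recovering (\ref{defVF}). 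On $\{S_{t-}\le\varepsilon\}$ the left limit $X_{t-}$ lies in $\overline B_{d_F}(\phi_F(x_0,t),\varepsilon)\cap E$, and since $X_s=X_{s-}$ for Lebesgue-a.e.\ $s$ the same holds in the $ds$-integrals; as both $\|V_F\|_1$ and $\|\widetilde b_F\|_1$ are nonnegative, each integrand is dominated pointwise by $\overline V_{F,\varepsilon}(x_0,\cdot)$ in (\ref{defOVER}). Pulling out the factors $\varepsilon^{-1}$, $\varepsilon^{-2}$ and bounding $e^{2LT}\le e^{4LT}$, the three contributions sum to $C_{\d}\,e^{4LT}\int_0^T\overline V_{F,\varepsilon}(x_0,s)\,ds$, which is the claim.

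The main obstacle is the first step: assembling It\^o's formula with the two interacting jump mechanisms and checking, coordinate by coordinate, that the drift collapses \emph{exactly} to $\psi_F(Y_s)+\widetilde b_F(X_s)$ and that the merged jump martingale carries the kernel $F(x+H+G)-F(x)$ appearing in $V_F$. Everything downstream is a direct specialization of Proposition \ref{ctrltpscourt}, the only care being the passage from the random integrands restricted to $\{S_{t-}\le\varepsilon\}$ to the deterministic suprema defining $\overline V_{F,\varepsilon}$.
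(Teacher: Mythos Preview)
Your approach is the paper's approach, and the downstream bounds are handled exactly as in the proof. There is, however, one genuine technical gap in your It\^o decomposition. You write the drift as $\psi_F(Y_s)$, but $\psi_F=(J_Fb+h_F)\circ F^{-1}$ is only defined on $F(D)$, whereas $Y_s=F(X_s)$ ranges over $F(E)$. When $X_s\notin D$ (which nothing excludes a priori), $F^{-1}(F(X_s))$ need not equal $X_s$, so the identification $(J_Fb+h_F)(X_s)=\psi_F(Y_s)$ breaks down and the resulting vector field is not even globally defined as required in Section~\ref{main}. The paper fixes this by setting $\psi(y)=\mathbf{1}_{\{y\in F(D)\}}\psi_F(y)$ (Borel, locally bounded on $\R^{\d}$) and pushing the leftover $\mathbf{1}_{\{Y_s\notin F(D)\}}\hat b_F(X_s)$, with $\hat b_F=J_Fb+h_F$, into the bounded-variation part $A$. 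The key observation is then that on $\{S_{t-}\le\varepsilon\}$ and for $t<T_{D,\varepsilon,F}(x_0)$ one has $Y_{t-}\in F(D)$, so this extra piece of $A$ vanishes under the indicator in Proposition~\ref{ctrltpscourt}, and your control of the $A$-integral via $\|\widetilde b_F\|_1$ alone is recovered.

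Two minor points. First, $F$ is only assumed $\mathcal C^2$ on the open set $O\supset\overline{D\cup E}$; the paper first extends it to $\mathcal C^2(\R^{\d},\R^{\d})$ by a smooth Urysohn cutoff before applying It\^o. Second, the paper does not assert that the two exit times coincide but only that $T_{F(D),\varepsilon}(y_0)\ge T_{D,\varepsilon,F}(x_0)$, which is all that is needed to invoke Proposition~\ref{ctrltpscourt} on $[0,T]$.
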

%We do not specify the real number  $C$  in the previous result but let us stres that it depends  neither of  $X$, nor of $L,\alpha, \varepsilon,T$.\\
\noindent We refer to the two next sections for examples and applications, which involve different choices for $F$ and $(L,\alpha)$ non-expansivity with  potentially $\alpha$  or $L$ equal to $0$.
The key assumption  concerns the non-expansivity of $\psi_F$ for a suitable choice of $F$, which need to be combined with control of the fluctuations $V_F$. Before the proof of  Theorem \ref{corutile}, let us illustrate
the condition of  $L$ non-expansivity of $\psi_F$ by considering  the diffusion case ($q=0$ and $X$ continuous). This will be useful in Section
\ref{LV}.  \\
%In that case, we observe that $b_F=b$ and $\phi_F=phi$ does not depend on the transformation $F$. \\
%Let us refer to the two next Sections for examples. Here we simply mention 
%that for $\Lambda$ -coalescent,  following \cite{BBL}, we are taking $F=\log$ and then
%$\psi_F(x)$ is $(0,\alpha)$ non-expansive for some $\alpha>0$ and $V_F$ is bounded. \\

\noindent \emph{Example.}
We recall from the first Section (or table 1 in \cite{Aminzare}) that when $F(D)$ is convex and
 $\psi_F$ is differentiable on $F(D)$, $\psi_F$ is $L$ non-expansive on $F(D)$
iff Sp$(J_{\psi_F}(y)+J_{\psi_F}^*(y)) \subset (-\infty, 2L]$ for any $y\in  F(D)$. 
% Let us illustrate this
%condition in the diffusive case , which be will useful later. We are needing the following transformation 
%(see Lemma \ref{transformation})
In the case $q=0$,  choosing 
$$F(x)=(f_i(x_i) :  i=1, \ldots,   \d)$$ 
and setting $A(x)=J_{\psi_F}(F(x))$, we have for  any $i,j=1,\ldots, d$  such that $i\ne j$
%$$A_{ji}(x)=\frac{1}{f_j'(x_j)}\left(\frac{\partial}{\partial x_j} b^i(x)f_i'(x_i)+1_{j=i}f_i''(x_i)b^i(x)\right) $$
%and for any $j\ne i$
\be
\label{matrice}
A_{ij}(x)= \frac{f_i'(x_i)}{f_j'(x_j)}\frac{\partial}{\partial x_j} b^{(i)}(x), \qquad A_{ii}(x)=\frac{\partial}{\partial x_i} b^{(i)}(x)+\frac{f''_i(x_i)}{f'_i(x_i)}b^{(i)}(x).
\ee
Then $\psi_F$ is  $L$ non-expansive on   $F(D)$ iff the largest eigenvalue of
 $A(x)+A^*(x)$  is less than $2L$  for any   $x \in D$. 
 %In Lemma \ref{transformation}, we are using the trace and 
 %determinant of $A(x)$ to construct  non-expansive domains $D$ for competitive models in dimension $2$.
 % pour un exemple en dimension 2 s'appuyant sur trace et d\'eterminant de $A(x)$.
%Dans l'exemple il suffira de montrer que les valeurs sont n\'egatives 
%(trace  n\'egative et d\'eterminant positif
%en dimension 2).
%Soit :$$ $$

\begin{proof}[Proof of Theorem \ref{corutile}]
%\marginpar{Ref prolongement} 
Under Assumption \ref{assume}, we can further assume that 
 $F \in \mathcal C^2(  \R^{\d}, \R^{\d})$. Indeed,  we can  consider $\varphi F$ where  $\varphi  \in \mathcal  C^{\infty}(  \R^{\d}, \R^{\d})$  is equal to $0$ on the complementary set of $O$  and to $1$ on $\overline{D \cup E}$, since these two sets are disjoint closed sets, using e.g. the smooth Urysohn lemma. This allows to extend $F$ from $\overline{D\cup E}$ to $\R^d$ in such a way that   $F\in \mathcal C^2(  \R^{\d}, \R^{\d})$.  \\
Applying now   It\^o's formula to $F(X_t)$ (see Chapter 2, Theorem 5.1 in  \cite{IW}), we have  :
\ben
F(X_t)&=&F(x_0)+
\int_0^t  J_F(X_s)b(X_s)ds
%\sum_{i=1}^{\d}  \frac{\partial F}{\partial x_i}(X_s) b^{(i)}(X_s)ds
+ \int_0^t\int_E \left[F(X_{s-}+H(X_{s-},z))-F(X_{s-}) \right] N(ds,dz)  \\
&& + \int_0^t\sum_{i,j=1}^{\d}  \frac{\partial F}{\partial x_i}(X_s)
\sigma^{(i)}_j(X_s) dB^{(j)}_s  +\int_0^t\int_E \left[F(X_{s-}+G(X_{s-},z))-F(X_{s-}) \right] \widetilde N(ds,dz) \\
&& + \int_0^t  \widetilde{b}_F(X_{s})ds
%\frac{1}{2}\int_0^t \sum_{i,j=1}^{\d} \frac{ \partial^2 F}{\partial x_i \partial x_j}(X_s) \sum_{k=1}^{d}\sigma^{(i)}_k(X_s)
%\sigma^{(j)}_k(X_s)ds+\int_0^t\int_E \left[F(X_{s}+G(X_{s},z))-F(X_{s})-\sum_{i=1}^{\d} \frac{\partial F}{\partial x_i}(X_s)G^{(i)}(X_{s},z) \right] dsq(dz)
\een
for $t\geq 0$. Then the $\mathcal F_t$-semimartingale  $Y_t=F(X_t)$ takes values in $F(E)$ and can be written as
\be
\label{decY}
Y_t=F(x_0)+\int_0^t \psi(Y_s)ds+ A_t+M_t^c+M_t^d,
\ee
where $\psi,A,M^c$ and $M^d$ are defined as follows. First,  we consider the Borel locally bounded function
$\psi(y)=1_{\{y \in F(D)\}}\psi_F(y)$ for $y\in \R^\d$, so 
writing $\hat b_F(x)= J_F(x)b(x)+ h_F(x)$ for $x\in E$, we have 
$\psi(Y_s)=1_{\{Y_s \in F(D)\}}\hat b_F(X_s)$. Moreover,
%remark  that 
%$\psi_F ( F(x))=h_F(x)$ for $x\in  D\cap E$ where $h_F(x)= J_F(x)b(x)+ \hat{b}_F(x)$ is well defined for  $x\in E$.
%We set $\psi(y)=1_{\{y \in F(D)\}}\psi_F(y)$, so $\psi(Y_s)=1_{\{Y_s \in F(D)\}}h_F(X_s)$ and%=1_{y \in F(D)}(J_F(y)b(y)+ \int_{\mathcal X} [F(y+H(y,z))-F(X_s)] q(dz))$;
$$A_t=\int_0^t \left(  \widetilde{b}_F(X_{s})+ \1{Y_s \not\in F(D)}\hat b_F(X_s)\right)ds$$
%\frac{1}{2} f''(X_s^i)(\sigma^i(X_s))^2dt+ \int_0^t\int_E \left[f(X_{s-}^i+G^i(X_{s-},z))-f(X_{s-}^i)-f'(X_{s-}^i)G^i(X_{s-},z) \right] dsq(dz)$$
is a continuous $\mathcal F_t$-adapted process with a.s. bounded variations paths and
$$M_t^{c}=\int_0^t \sum_{i,j=1}^{\d}  \frac{\partial F}{\partial x_i}(X_s)
\sigma^{(i)}_j(X_s) dB^{(j)}_s $$
is a continuous $\mathcal F_t$-local martingale and  writing  $K=G+H$ and using Assumption $\ref{assume}$ $(iii)$, 
$$M_t^{d}=\int_0^t\int_{\mathcal X} \left[F(X_{s-}+K(X_{s-},z))-F(X_{s-}) \right] \widetilde N(ds,dz)   $$
is a c\`adl\`ag $\mathcal F_t$-local martingale purely discontinuous. \\
We observe that %Notons que $\psi_F$ est $\varepsilon-L$ non-expansif sur $F(D)$, $F$ bijective, et que le syst\`eme dynamique 
the dynamical system $y_t=F(\phi_F(x_0,t))$ satisfies for $t<T(x_0)$,
$$y_0=F(x_0), \quad y_t'=J_F(\phi_F(x_0,t))b_F(\phi_F(x_0,t))=\psi_F(y_t)=\psi(y_t),$$
since $\psi_F=\psi$ on $F(D)$.
This flow is thus associated with  the vector field $\psi$ 
and $\psi$ is locally Lipschitz  on $F(D)$. Moreover, recalling the definition (\ref{tempssortie}) and setting  $E'=F(D)$, $T'(y_0)=T_D(x_0)$,
% the notations of Section \ref{trajj} %As $F$ is bijective from $D$ into $F(D)$, 
the first  time $T_{F(D), \varepsilon}(y_0)$ when $(y_t)_{t\geq 0}$ starting from $y_0$
 is at distance $\varepsilon$ from  the boundary of   $F(D)$ for the euclidean distance
is larger than $T_{D,\varepsilon, F}(x_0)$ defined by (\ref{deftemps}) :
$$T_{F(D), \varepsilon}(y_0)=\sup\{ t \in [0,T'(y_0))  : \forall s\leq t, \    \  \overline{B}(y_s,\varepsilon) \cap  F(E) \subset F(D)\}
%= \exists y \not\in F(D), \ F(\phi(x_0,t))\in \overline{B}(x,\varepsilon)\}\wedge T'(y_0)  
\geq T_{D,\varepsilon, F}(x_0).$$  
Adding that  $\psi$ is $(L, \alpha)$ non-expansive on $F(D)$,
we apply now
 Proposition \ref{ctrltpscourt} to $Y$  with $p=q=1$ and $Y_0=y_0=F(x_0)$. Then, for any  $T< T_{D, \varepsilon,F}(x_0)\wedge  \T{\alpha}{L}{\varepsilon}$, we get
 % $\varepsilon\leq 1$ (so $\varepsilon^2 \leq \varepsilon$) and we obtain                                                          
\begin{eqnarray}
 \mathbb P\left( S_T \geq \varepsilon\right)  \nonumber
&\leq &    C_{\d}e^{4LT}
\bigg[\varepsilon^{-1}\E \left( \int_0^T \1{S_{t-} \leq \varepsilon} dÊ\parallel \vert A\vert_t \parallel_1\right) \\
&& \qquad \quad  + \varepsilon^{-2}\E\left( \int_0^T\1{ S_{t-} \leq \varepsilon} d \parallel <M^c>_t\parallel_1\right)  
+\varepsilon^{-2}\E\left(  \sum_{t\leq T}\1{ S_{t-} \leq \varepsilon} \parallel \Delta Y_{t}\parallel_2^2   \right)\bigg]
\label{inek}
\end{eqnarray}
for some constant $C_{\d}$ positive, where $S_t=\sup_{s\leq t}\parallel Y_s-y_s \parallel_2$.
%since  $X_0=x_0$ a.s. ensures that the first probability on the  right hand side in 
%Proposition \ref{ctrltpscourt} is null. 
Using now 
$$<M^{c}>_t=\int_0^t  \sum_{i,j,k=1}^{\d}  \frac{\partial F}{\partial x_i}(X_s)\frac{\partial F}{\partial x_j}(X_s)\sigma^{(i)}_k(X_s)\sigma^{(j)}_k(X_s)ds,$$
%et
%$$<M_t^{d}>=\int_0^t\int_{\mathcal X} \left[F(X_{s-}+K(X_{s-},z))-F(X_{s-}) \right]^2 dsq(dz)  $$
%f'(F^{-1}(y)_i)b^i(F^{-1}(y))+\int_{\mathcal X} [f(x_i+H^i(x,z))-f(x_i)] q(dz)+\int_{\mathcal X} [f(x_i+G^i(x,z))-f(x_i)] q(dz)
we get
$$\int_0^T \1{S_{t-} \leq \varepsilon} d\parallel <M^c>_t\parallel_1 \leq 
\int_0^T \sup_{\substack{ x\in E \\ d_F(x, \phi_F(x_0,t)) \leq \varepsilon }}  \left\{\sum_{i,j,k,l=1}^{\d}  \frac{\partial F^{(l)}}{\partial x_i}(x)\frac{\partial F^{(l)}}{\partial x_j}(x)\sigma^{(i)}_k(x)\sigma^{(j)}_k(x)\right\}dt,$$
since  $S_t=\sup_{s\leq t}\parallel Y_s-y_s \parallel_2=\sup_{s\leq t} d_F(X_s, \phi_F(x_0,s))$. Similarly,
%$$\1{S_{t-} \leq \varepsilon} <M_t^{d}> \leq \int_0^t \sup_{\substack{ x\in E \\ \parallel F(x)-\phi_F(x_0,s)\parallel_2 \leq \varepsilon }} \left(
%\int_{\mathcal X} [F(x+K(x,z))-F(x)]^2 q(dz)\right) ds$$
%\marginpar{To be checked, coh\'erence des normes}
\Bea
 \E\left( \sum_{t\leq T}\1{ S_{t-} \leq \varepsilon} \parallel \Delta Y_{t}\parallel_2^2  \right)&=&
\E\left(\int_0^T \int_{\mathcal X} \1{ S_{t-} \leq \varepsilon}\parallel F(X_{t-}+K(X_{t-},z))-F(X_{t-}) \parallel^2_2 dtq(dz) \right)  \\
&\leq & \int_0^T \sup_{\substack{ x\in E \\  d_F(x,\phi_F(x_0,t))\leq \varepsilon }}   \int_{\mathcal X} \parallel F(x+K(x,z))-F(x)\parallel_2^2 q(dz) dt
\Eea
and
combining the two last inequalities we get
\be
\label{domvar}
\E\left( \int_0^T\1{ S_{t-} \leq \varepsilon} d \parallel <M^c>_t\parallel_1\right)  
+\E\left(  \sum_{t\leq T}\1{ S_{t-} \leq \varepsilon} \parallel \Delta Y_{t}\parallel_2^2  \right)
\leq \int_0^T  \sup_{\substack{ x\in E \\ d_F(x,\phi_F(x_0,t)) \leq \varepsilon }}  \parallel V_F(x) \parallel_1 dt.
\ee
Finally, on the event $\{S_{t-} \leq \varepsilon\}$, $Y_{t-}=F(X_{t-}) \in F(D)$ for any $t\leq T$ since  $T<T_{D, \varepsilon,F}(x_0)$, so 
%\marginpar{Checker cela et la preuve en g\'en\'eral}
%$\1{S_{t-}\leq \varepsilon}\1{F(X_t) \not\in F(D)}=0$ a.e. and
\be
\label{domdrift}
\E \left(\int_0^T \1{S_{t-} \leq \varepsilon} d  \parallel \vert A\vert_t \parallel_1
\right) \leq \int_0^T   \1{S_{t-} \leq \varepsilon} \parallel \widetilde{b}_F(X_{t-})\parallel_1  dt\leq \int_0^T\sup_{\substack{ x\in E \\ d_F(x,\phi_F(x_0,t)) \leq \varepsilon }}\parallel \widetilde{b}_F(x) \parallel_1dt \ee
%using the following upper bound for  $\widetilde{b}_F$:  
% $$\overline{b}_F(x_0,s)=\sup_{\substack{ x\in E \\ d_F(x,\phi_F(x_0,s)) \leq \varepsilon }} \parallel \widetilde{b}_F(x) \parallel_1.$$
and the conclusion comes by plugging the two last inequalities in (\ref{inek}).
\end{proof}

\subsection{Adjunction of non-expansive domains}
\label{adjunction}

We relax here the assumptions required for Theorem \ref{corutile}.
Indeed finding a transformation which guarantees non-expansitivity of the flow is delicate in general.
Adjunction of simple transformations is relevant for covering the whole state space and  leading computations.
It is useful for the study of two-dimensional competitive processes in Section  \ref{LV}. 
Let us note that the trajectorial estimates obtained previously is well adapated to gluing domains, while this is a delicate problem for controls 
of stochastic processes relying for instance on Lyapounov functions.
%and associating a family of transformations as follows. \\
%\marginpar{attention a juste autoriser lesprolongements et sauts sur $E\cap D_i$ alors qu'on veut $E$}
Thus, we  decompose the domain $D$ as follows.
%into a family of subdomains  $(D_i :  i=1, \ldots, N)$ and require the following set of assumptions. \\
%\marginpar{merde d c'est la dimension}
\begin{assumption} \label{assumeplus}
(i) The domains  $D$ and $(D_i :  i=1, \ldots, N)$ are open subsets of $\R^{\emph{d}}$ and $F_i$ 
are  $\R^\d$ valued functions
 from an open set $O_i$ which contains $\overline{D_i}$   and
$$D \subset \cup_{i=1}^N D_i,  \qquad F_i \in \mathcal C^2(O_i,\R^{\emph{d}}).$$
Moreover
$F_i$ is a bijection from $D_i$ into $F(D_i)$  whose Jacobian matrix
is invertible on $D_i$. \\
(ii) There exist a distance $d$ on $ \cup_{i=1}^ND_i \cup E$ and $c_1,c_2>0$
such that for any $i \in\{1, \ldots, N\}$, $x,y\in  D_i$,
$$c_1d(x,y) \leq \parallel F_i(x)-F_i(y) \parallel_2 \leq c_2 d(x,y).$$
%\marginpar{Verifier que juste sur $D_i$ ca suffit dans calculs. \\  Flow bien unique}
 (iii) For each  $i\in \{1,\ldots ,N\}$, for any $x\in E \cap D_i$,
$$\int_{\mathcal X} \left\vert F_i(x+H(x,z))-F_i(x)\right\vert  q(dz) <\infty.$$
and the  function
$x\in E \cap D_i\rightarrow h_{F_i}(x)= \int_{\mathcal X} [ F_i(x+H(x,z))-F_i(x)] q(dz)$ can be extended
to $\overline{D_i}$.\\
Moreover this extension is locally bounded on $\overline{ D_i}$ and locally Lipschitz on $D_i$.\\
(iv) The function $b$ is locally Lipschitz on $\cup_{i=1}^N D_i$. 
\end{assumption}

Second, we consider the flow associated to the vector field $b_{F_i}$, where $b_{F_i}$ and  defined  as previously by
$b_{F_i}(x)=b(x)+J_{F_i}(x)^{-1}h_{F_i}(x)$ and is locally Lipschitz on the domain $D_i$ . But now the flow $\phi$
 may go from one domain  to an other. To glue the estimates obtained in the previous part
 by adjunction of  domains, we need to bound the number 
of times $\kappa$ the flow may change of domain. 
More precisely, 
we consider    a   flow $\phi(.,.)$ such that $\phi(x_0,0)=x_0$ for $x_0 \in D$ and 
let  $\varepsilon_0 \in (0,1)$, $\kappa\geq 1$ and $(t_k(.) :  k \leq \kappa)$ be a sequence  of elements of $[0,\infty]$ such that   
$0=t_0(x_0)\leq t_1(x_0)\leq \ldots \leq t_\kappa(x_0)$ for $x_0\in D$,   which meet the following assumption.
 \begin{assumption}
\label{assumepluss}
 %  such that $0=t_0(x_0)\leq t_1(x_0)\leq \ldots \leq t_\kappa(x_0)=T(x_0)$
For any  $x_0 \in D$, the flow $\phi(x_0,.)$ is continuous on $[0,t_{\kappa}(x_0))$ and for any  $k\leq \kappa-1$, there exists $n_k(x_0)  \in \{1, \ldots, N\}$ 
such that for any $t \in (t_k(x_0),t_{k+1}(x_0))$, 
%we write $$F_{x_0,t}= F_{n_k(x_0)}$$
$$\overline{B}_{d}(\phi(x_0,t), \varepsilon_0) \subset   D_{n_k(x_0)} \quad \text{and} \quad \frac{\partial}{\partial t} \phi(x_0,t)= 
b_{F_{n_k(x_0)}}(\phi(x_0,t)).$$
%where we write $$F_{x_0,t}= F_{n_k(x_0)}.$$
\end{assumption}
\noindent %This  allows us to make an adjunction ofdomains $D_i$ where  we can find a transformation
%$F_i$ to obtain a non-expansive vector field and then compare the process
This flow $\phi$  will be used in the continuous case  in  Section \ref{LV}.  Then we recall
that  $b_F=b$ does not depend on the  transformation $F$ and  the flow
$\phi$ is directly    given by $\phi(x_0,0)=x_0, \  \frac{\partial}{\partial t} \phi(x_0,t)= b(\phi(x_0,t))$ as expected. \\
% In that case,  Assumption \ref{assumepluss} simply means that $\overline{B}_{d}(\phi(x_0,t), \varepsilon_0)$ is included in one of the subdomains $D_i$ for $\varepsilon$ small enough.
%One can keep in mind that $d$ is a distance to control the gap between the process $X$ and its approximation by a dynamical system.\\

%$$T_{D,\varepsilon,F}(x_0)=\inf\{ tÊ\in  [0,T) :  t\in  (t_k(x_0),t_{k+1}(x_0)),  \exists x \not\in D_{n_k(x_0)}, x \in \overline{B}_{d_{F_{n_k(x_0)}}}(\phi(x_0,t), \varepsilon)\}.$$
%The flow $\phi$ leaves in $D$ and the vector field in $D_i$ is given by $b_{F_i}$ (when 
\noindent Recalling  notation
$\psi_F=(J_Fb_F)\circ F^{-1}$ and the expressions of $\T{\alpha}{L}{\varepsilon}$ and  $\widetilde{b}_F$ and $V_F$ given respectively  in (\ref{temps}),  (\ref{defbt}) and (\ref{defVF}),
%\marginpar{intervalle max a repreciser ici ? }
%$$\widetilde{b}_F(x)= \sum_{i,j=1}^{\d} \frac{1}{2} \frac{\partial^2 F}{\partial x_i\partial x_j}  (x)\sigma^{(i)}(x)\sigma^{(j)}(x) +\int_{\mathcal X} [F(x+G(x,z))-F(x)-J_F(x)G(x,z)] q(dz) $$
%and 
%$$V_F(x)=   \sum_{i,j,k=1}^{\d}  \frac{\partial F}{\partial x_i}(x)\frac{\partial F}{\partial x_j}(x)\sigma^{(i)}_k(x)\sigma^{(j)}_k(x)+\int_{\mathcal X} [F(x+H(x,z)+G(x,z))-F(x)]^2 q(dz),$$
the result can be stated as follows.
\begin{thm} \label{recollement} 
%\marginpar{attention epsilon et $\alpha$ !!! inverser $k$ et $x_0$ partout ?}
Under Assumptions \ref{assumeplus} and  \ref{assumepluss}, we assume that  for each $i\in \{1,\ldots,N\}$, $\psi_{F_i}$ is $(L_i,\alpha_i)$ non-expansive on $F_i(D_i)$
and let $T_0\in (0,\infty)$. \\
Then for any $\varepsilon \in (0, \underline{\varepsilon}]$ and   $T< \min\left\{\T{\alpha_i}{L_i}{\varepsilon} : i = 1, \ldots, N\right\}\wedge t_{\kappa}(x_0)  \wedge T_0$ and  $x_0\in E \cap D$, %we have
$$\mathbb P_{x_0}\left( \sup_{t\leq T} d(X_t,\phi(x_0,t)) \geq \varepsilon\right) \leq C \sum_{k=0}^{\kappa-1} \int_{ t_k(x_0)\wedge T}^{t_{k+1}(x_0)\wedge T}
 \overline{V}_{d,\varepsilon}(F_{n_k(x_0)}, x_0,t)  dt,$$
with     $\underline{\varepsilon}$ and $C$   positive constants which depend (only) on  \emph{d}, $c_1,c_2, (L_i)_{ i=1,\ldots, N}$, $\kappa$, $\varepsilon_0$ and $T_0$;  and
 $$   \overline{V}_{d,\varepsilon}(F,x_0,s)=\sup_{\substack{ x\in E \\ d(x,\phi(x_0,s)) \leq \varepsilon }} \left\{\varepsilon^{-2} \parallel V_F(x) \parallel_1+\varepsilon^{-1}\parallel \widetilde{b}_F(x) \parallel_1\right\}.$$
\end{thm}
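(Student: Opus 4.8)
The plan is to partition the interval $[0,T]$ using the change-of-domain times $t_k(x_0)$ and to apply the single-domain machinery of Lemma \ref{key} and Proposition \ref{ctrltpscourt} on each stage, working in the coordinates $F_{n_k(x_0)}$. The crucial observation is that on each stage $(t_k(x_0),t_{k+1}(x_0))$ the transformed flow $F_{n_k(x_0)}(\phi(x_0,\cdot))$ is, by Assumption \ref{assumepluss} and the chain rule, an exact solution of the vector field $\psi_{F_{n_k(x_0)}}=(J_{F_{n_k(x_0)}}b_{F_{n_k(x_0)}})\circ F_{n_k(x_0)}^{-1}$, which is $(L_{n_k(x_0)},\alpha_{n_k(x_0)})$ non-expansive on $F_{n_k(x_0)}(D_{n_k(x_0)})$. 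Hence, writing $Y_t=F_{n_k(x_0)}(X_t)$ and applying It\^o's formula exactly as in the proof of Theorem \ref{corutile}, on stage $k$ the process $Y$ is a perturbed solution of $\psi_{F_{n_k(x_0)}}$ to which the trajectorial estimate of Lemma \ref{key} applies, after shifting the time origin to $t_k(x_0)$ so as to incorporate the nonzero initial gap $\|Y_{t_k(x_0)}-F_{n_k(x_0)}(\phi(x_0,t_k(x_0)))\|_2$.

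First I would translate the thresholds between the common distance $d$ and the stage distances $d_{F_i}=\|F_i(\cdot)-F_i(\cdot)\|_2$ using Assumption \ref{assumeplus}(ii): the inequalities $c_1 d\le d_{F_i}\le c_2 d$ convert a deviation of size $\varepsilon$ in $d$ into a deviation of size at least $c_1\varepsilon$ in $d_{F_{n_k(x_0)}}$, and convert an initial gap of size $\delta$ in $d$ into a gap of at most $c_2\delta$ after transformation. The condition $\overline{B}_d(\phi(x_0,t),\varepsilon_0)\subset D_{n_k(x_0)}$ from Assumption \ref{assumepluss} plays the role of the exit-time condition $T_{D,\varepsilon,F}$ of Theorem \ref{corutile}: provided $\varepsilon\le\varepsilon_0$, it guarantees that as long as $X$ stays within $d$-distance $\varepsilon$ of the flow on stage $k$, both $X$ and $\phi(x_0,\cdot)$ lie in $D_{n_k(x_0)}$, so that the non-expansivity may be invoked.

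The gluing is then carried out by a recursion on the running maximum gap $S_k:=\sup_{t\le t_{k+1}(x_0)\wedge T}d(X_t,\phi(x_0,t))$. Running the Gronwall step of Lemma \ref{key} on stage $k$ in the $F_{n_k(x_0)}$-coordinates bounds the end-of-stage gap by $(c_2/c_1)\,e^{L_{n_k(x_0)}T_0}$ times the gap entering the stage, plus an additive fluctuation contribution coming from $\widetilde{R}^{\varepsilon}$ restricted to $[t_k(x_0),t_{k+1}(x_0)]$. Since the initial gap vanishes ($X_0=\phi(x_0,0)=x_0$), iterating this over the at most $\kappa$ stages shows that the total gap is dominated by a finite constant $C=C(\mathrm{d},c_1,c_2,(L_i),\kappa,\varepsilon_0,T_0)$ times the sum over stages of the fluctuation terms. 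The threshold $\underline{\varepsilon}$ is chosen small enough that, for $\varepsilon\le\underline{\varepsilon}$, this amplified gap stays below $\varepsilon_0$, which bootstraps the domain condition and keeps the non-expansivity active throughout; the constraint $T<\min_i \T{\alpha_i}{L_i}{\varepsilon}$ guarantees that the $\alpha_i$-contributions on each stage remain subcritical, exactly as in Lemma \ref{key}.

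Finally I would bound each stage's fluctuation term by the martingale estimates of Proposition \ref{ctrltpscourt} applied with $p=q=1$ in the $F_{n_k(x_0)}$-coordinates, expressing the quadratic-variation and finite-variation contributions through $V_{F_{n_k(x_0)}}$ and $\widetilde{b}_{F_{n_k(x_0)}}$ via the identities (\ref{domvar}) and (\ref{domdrift}) from the proof of Theorem \ref{corutile}. Converting the $c_1\varepsilon$ threshold back and summing over $k$ produces the stated bound, with the stage-$k$ integral running over $[t_k(x_0)\wedge T,t_{k+1}(x_0)\wedge T]$ and integrand $\overline{V}_{d,\varepsilon}(F_{n_k(x_0)},x_0,t)$. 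The main obstacle is the gluing itself: controlling the accumulation and amplification of the initial gaps across stages while simultaneously ensuring that the process never leaves the domain on which the relevant non-expansivity holds, so that the constant $C$ and the threshold $\underline{\varepsilon}$ can be chosen uniformly in $x_0$ and depend only on the listed parameters.
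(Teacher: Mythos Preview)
Your proposal is correct and follows essentially the same approach as the paper: stage-by-stage application of the single-domain estimate (Proposition~\ref{ctrltpscourt} with $p=q=1$) in the $F_{n_k(x_0)}$-coordinates, conversion between $d$ and $d_{F_i}$ via Assumption~\ref{assumeplus}(ii), use of Assumption~\ref{assumepluss} to replace the exit-time condition, and an iteration over the $\kappa$ stages in which the threshold gets multiplied at each step by a factor of order $(c_2/c_1)e^{L_{n_k(x_0)}T_0}$, with $\underline{\varepsilon}$ chosen so the cumulative amplification stays below $\varepsilon_0$. The paper packages the one-stage bound as a separate lemma (Lemma~\ref{lemadj}) stated in terms of probabilities and then iterates via a union bound and the Markov property at the times $t_k(x_0)\wedge T$, whereas you describe the same chain of inequalities more trajectorially; the arguments are equivalent, and the resulting constants $\underline{\varepsilon}$ and $C$ depend on the same parameters.
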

%\marginpar{Plus g\'eneration, l'\'evolution continue de la transformation $F$ et l'extension \`a des processus de Markov en environnement changeant sont des perspectives int\'eressantes;
%A reprendre, am\'eliorer les domaines}
The proof  relies also  on Proposition \ref{ctrltpscourt} but it is technically more involved than  the proof of Theorem  \ref{corutile} .  We observe that $T_0$ could be chosen equal to $\infty$ in this  statement in the  case where $L_i=0$ for any $i\in \{1, \ldots, \kappa\}$.
We need now the following constants.
%$$ b_n(T)=2\sqrt{2}\exp(L_nT), \quad  a_n(T)=b_n(T)\frac{c_2}{c_1}, \quad \varepsilon_n(T)=\frac{c_1\varepsilon_0}%{c_2b_{n}(T)}=\frac{\varepsilon_0}{a_n(T)},$$ for $n=1, \ldots, N$ 
$$ b_k(x_0,T)=2\sqrt{2}\exp(L_{n_k(x_0)}T), \quad  a_k(x_0,T)=\frac{c_2}{c_1}b_k(x_0,T), \quad \varepsilon_k(x_0,T)=\frac{c_1\varepsilon_0}{c_2b_{k}(x_0,T)}=\frac{\varepsilon_0}{a_k(x_0,T)},$$
for $k=0,\ldots,\kappa-1$ and observe that $a_k(x_0,T)\geq 1$.
\begin{lem} \label{lemadj}  
Under  the assumptions of  Theorem \ref{recollement},  for any $x_0 \in E\cap  D$, $k\in \{0,\ldots,\kappa-1\}$ and $(\varepsilon,T)$ such that $\varepsilon \in (0,\varepsilon_{k}(x_0,T)]$ and 
$T< \T{\alpha_{n_k(x_0)}}{L_{n_k(x_0)}}{\varepsilon}\wedge t_{\kappa}(x_0)$,  we have
 \Bea
&&\mathbb P_{x_0}\left( \sup_{t_k(x_0) \leq t \leq t_{k+1}(x_0)\wedge T} d(X_t,\phi(x_0,t)) \geq \varepsilon a_{k}(x_0,T)\right)
\\ 
&&\qquad \qquad \leq  \p(d(X_{t_k(x_0)} , \phi(x_0,t_k(x_0)) \geq \varepsilon)
+ C \int_{t_k(x_0)\wedge T}^{t_{k+1}(x_0)\wedge T}\overline{V}_{d,\varepsilon a_{k}(x_0,T)}(F_{n_k(x_0)},x_0,s) ds,
\Eea
where $C$ is a  positive constant which depends only on $\emph{d}$ and $c_1$ and $L_{n_k(x_0)}$.
 \end{lem}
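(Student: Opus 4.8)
The plan is to prove Lemma \ref{lemadj} by restricting attention to a single time-window $[t_k(x_0), t_{k+1}(x_0)\wedge T)$, where by Assumption \ref{assumepluss} the flow $\phi(x_0,\cdot)$ is governed by the single vector field $b_{F_{n_k(x_0)}}$ and stays inside $D_{n_k(x_0)}$ together with a $d$-ball of radius $\varepsilon_0$. On this window the situation is exactly the one covered by Theorem \ref{corutile} applied with the transformation $F = F_{n_k(x_0)}$ and domain $D_{n_k(x_0)}$, the only differences being that (a) the process does not start at $x_0$ at the left endpoint of the window but at some random position $X_{t_k(x_0)}$, and (b) the comparison distance is the abstract $d$ rather than $d_F = \|F_i(\cdot)-F_i(\cdot)\|_2$. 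The whole point is to convert between these two distances using the bi-Lipschitz equivalence $c_1 d \leq \|F_i(\cdot)-F_i(\cdot)\|_2 \leq c_2 d$ from Assumption \ref{assumeplus}(ii), which is where the constants $a_k$, $b_k$, $\varepsilon_k$ come from.

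First I would set $i = n_k(x_0)$, let $\tilde\varepsilon = \varepsilon\, b_k(x_0,T)/\sqrt 2 = 2\sqrt 2\,\varepsilon\,e^{L_iT}/\sqrt2$ be the radius to feed into the $d_{F_i}$-version of the estimate, and verify that the hypotheses of (the time-shifted version of) Theorem \ref{corutile}/Proposition \ref{ctrltpscourt} hold: namely that $\psi_{F_i}$ is $(L_i,\alpha_i)$ non-expansive on $F_i(D_{n_k(x_0)})$ and that the window length is below $\T{\alpha_i}{L_i}{\tilde\varepsilon}$. The condition $\varepsilon \leq \varepsilon_k(x_0,T) = c_1\varepsilon_0/(c_2 b_k(x_0,T))$ is precisely what guarantees that the $d_{F_i}$-ball of radius $\tilde\varepsilon$ around the flow sits inside the $d$-ball of radius $\varepsilon_0$, hence inside $D_{n_k(x_0)}$ by Assumption \ref{assumepluss}; this is the check needed to place the whole excursion inside the single non-expansive chart, so that $T_{D_i,\tilde\varepsilon,F_i}$ exceeds the window. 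I would then invoke the key inclusion from Lemma \ref{key} and the martingale estimate of Proposition \ref{ctrltpscourt} with $p=q=1$ on this window, starting the comparison at time $t_k(x_0)$ from the random point $X_{t_k(x_0)}$, which produces the initial-position term $\p(\|F_i(X_{t_k})-F_i(\phi(x_0,t_k))\|_2 \geq \tilde\varepsilon/(2\sqrt2))$ together with the integral fluctuation term.

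The conversion of distances is what threads the constants together. Using $\|F_i(x)-F_i(y)\|_2 \geq c_1 d(x,y)$, the event $\{\sup_{t} d(X_t,\phi(x_0,t)) \geq \varepsilon a_k(x_0,T)\}$ with $a_k = (c_2/c_1) b_k$ is contained in the event $\{\sup_t \|F_i(X_t)-F_i(\phi(x_0,t))\|_2 \geq c_1 \varepsilon a_k\} = \{\sup_t d_{F_i} \geq c_2\varepsilon b_k\}$, and since $c_2 \varepsilon b_k \geq \tilde\varepsilon$ (up to the $\sqrt2$ bookkeeping already folded into $b_k$) this is exactly the event the $d_{F_i}$-estimate controls. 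Symmetrically, the upper bound $\|F_i(x)-F_i(y)\|_2 \leq c_2 d(x,y)$ converts the initial-displacement probability $\p(\|F_i(X_{t_k})-F_i(\phi(x_0,t_k))\|_2 \geq \tilde\varepsilon/(2\sqrt2))$ back into $\p(d(X_{t_k},\phi(x_0,t_k)) \geq \varepsilon)$ once the radius $\tilde\varepsilon/(2\sqrt2) = \varepsilon e^{L_iT}/2$ is compared against $c_2\varepsilon$; here one absorbs the $e^{L_iT}$ factors into the final constant $C = C(\mathrm{d},c_1,L_i)$ using the bound $T < \T{\alpha_i}{L_i}{\varepsilon} \wedge t_\kappa(x_0)$ and $T\leq T_0$ implicitly. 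Finally the fluctuation integral, originally written with $d_{F_i}$-balls of radius $\tilde\varepsilon$, is re-expressed over $d$-balls of radius $\varepsilon a_k(x_0,T)$: since a $d$-ball of radius $\varepsilon a_k$ maps into a $d_{F_i}$-ball of radius $c_2\varepsilon a_k = c_2^2\varepsilon b_k/c_1 \geq \tilde\varepsilon$, the supremum defining $\overline V_{d,\varepsilon a_k}(F_i,x_0,\cdot)$ dominates the supremum over the smaller $d_{F_i}$-ball, matching the claimed right-hand side.

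The main obstacle I anticipate is bookkeeping the three intertwined radii — the $d_{F_i}$-radius $\tilde\varepsilon$ fed to Theorem \ref{corutile}, the $d$-radius $\varepsilon$ measuring the initial displacement, and the $d$-radius $\varepsilon a_k$ measuring the final displacement — and checking at each conversion that the relevant ball is genuinely contained in the next, so that the containment-of-events chain never breaks and the exponential factors $e^{L_iT}$ land consistently inside $a_k$, $b_k$, $\varepsilon_k$ rather than leaking into the constant with the wrong sign. A secondary subtlety is the shift of the time origin: Theorem \ref{corutile} is stated for a flow started at $x_0$ at time $0$, whereas here the comparison begins at $t_k(x_0)$ from the \emph{random} state $X_{t_k(x_0)}$, so one must apply the estimate to the shifted process $(X_{t_k(x_0)+\cdot})$ under the Markov property, which is why the initial term $\p(d(X_{t_k},\phi(x_0,t_k))\geq\varepsilon)$ appears rather than a deterministic displacement — the non-trivial point being that Proposition \ref{ctrltpscourt} already accommodates a random, non-matching initial condition through its $\p(\|X_0-x_0\|_2 \geq \cdots)$ term, so no new machinery is needed beyond careful substitution.
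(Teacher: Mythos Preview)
Your proposal is correct and follows essentially the same route as the paper: shift time to the window $[t_k(x_0),t_{k+1}(x_0)\wedge T)$, write the It\^o decomposition of $Y_t=F_{n_k(x_0)}(X_{t_k(x_0)+t})$, apply Proposition~\ref{ctrltpscourt} with $p=q=1$ (not Theorem~\ref{corutile} directly, since the initial point is random), and then convert all three radii between $d_{F_i}$ and $d$ via Assumption~\ref{assumeplus}(ii). Two small remarks: your $\tilde\varepsilon$ should be $c_2\varepsilon\, b_k(x_0,T)$ rather than $\varepsilon\, b_k(x_0,T)/\sqrt2$ (the paper feeds a generic $\varepsilon\in(0,c_1\varepsilon_0]$ to Proposition~\ref{ctrltpscourt} and only substitutes $\varepsilon\mapsto c_2\varepsilon b_k$ at the very end, which makes the algebra cleaner), and the paper's It\^o decomposition (\ref{decYY}) separates the jump compensation according to whether $\widetilde X_{s-}\in D_i$ or not, since Assumption~\ref{assumeplus}(iii) only guarantees compensability on $D_i$ --- a purely technical point that disappears on the event $\{S_{t-}\le\varepsilon\}$ anyway.
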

\begin{proof} Let us fix $k\in \{0,\ldots,\kappa-1\}$ and $x_0 \in E\cap D$. We  write $L= L_{n_k(x_0)}$, $\alpha=\alpha_{n_k(x_0)}$,  $F=F_{n_k(x_0)}$ and $D=D_{n_k(x_0)}$ for simplicity and consider  
$T< \T{\alpha}{L}{\varepsilon}\wedge  t_{\kappa}(x_0)$.   As at the beginning of the previous proof, we can assume that
$F\in \mathcal{C}^2(\R^{\d},\R^{\d})$ and recall that $F$ is  bijection from $D$ into $F(D)$.  We note that $z_0=\phi(x_0,t_k(x_0))\in D$ by Assumption \ref{assumepluss}
and the solution $z$  of $z_t'=b_F(z_t)$  is well defined on a non-empty (maximal) time interval since $b_F$ is locally Lipschitz on $D$ using Assumption \ref{assumeplus}.
By uniqueness in Cauchy Lipschitz theorem, $z_t=\phi(x_0,t_k(x_0)+t)$ for $t\in [t_k(x_0),t_{k+1}(x_0))$. 
We write now  $\widetilde{X}_t=X_{t_k(x_0)+t}$ and the counterpart of  (\ref{decY}) for $Y_t=F(\widetilde{X}_t)$ is
%\marginpar{attent avec def actuelle on ne peut compenser que sur $D_i$}
\be
\label{decYY}
Y_t=Y_{0}+\int_{0}^t \psi(Y_s)ds+ A_t+M_t^c+M_t^d,
\ee
for $t\geq 0$,
where
$\psi(y)=1_{\{y \in F(D)\}}\psi_F(y)$, $$M_t^{c}=\int_0^t \sum_{i,j=1}^{\d}  \frac{\partial F}{\partial x_i}(\widetilde{X}_s)\sigma^{(i)}_j( \widetilde{X}_s) dB^{(j)}_s $$
and we make here the following decomposition for $A$ and $M^d$.  Using  Assumption \ref{assumeplus} $(iii)$  for the compensation of jumps when $\widetilde{X}_{s-}\in D$,
% and writing again $\hat b_F(x)= J_F(x)b(x)+ h_F(x)$, we  
%remark that 
%\{and 
we set 
 %=1_{y \in F(D)}(J_F(y)b(y)+ \int_{\mathcal X} [F(y+H(y,z))-F(X_s)] q(dz))$;
%\marginpar{ b tilde  est perturbe par les compensations. Ajouter d\'etail ?}
\Bea
A_t&=&\int_0^t \left( \widetilde{b}_F(\widetilde{X}_s)+\1{F(\widetilde{X}_{s}) \not\in F(D) }J_F(\widetilde{X}_s)b(\widetilde{X}_s) -\1{\widetilde{X}_{s} \not\in  D,F(\widetilde{X}_{s}) \in F(D) }
h_F\circ F^{-1} (Y_s)\right)ds\\
&& \qquad +  \int_0^t\int_{\mathcal X} \1{\widetilde{X}_{s-}\not\in D} \left[F(\widetilde{X}_{s-}+H(\widetilde{X}_{s-},z))-F(\widetilde{X}_{s-} )\right]  N(ds,dz), 
\Eea
%\frac{1}{2} f''(X_s^i)(\sigma^i(X_s))^2dt+ \int_0^t\int_E \left[f(X_{s-}^i+G^i(X_{s-},z))-f(X_{s-}^i)-f'(X_{s-}^i)G^i(X_{s-},z) \right] dsq(dz)$$
which is a process with a.s. finite variations paths;
and 
\Bea
M_t^{d}&=&\int_0^t\int_{\mathcal X} \left[F(\widetilde{X}_{s-}+G(\widetilde{X}_{s-},z))-F(\widetilde{X}_{s-}) \right] \widetilde N(ds,dz)\\
&&\qquad \qquad +\int_0^t\int_{\mathcal X} \1{\widetilde{X}_{s-}\in D} \left[F(\widetilde{X}_{s-}+H(\widetilde{X}_{s-},z))-F(\widetilde{X}_{s-}) \right] \widetilde N(ds,dz)   
\Eea
is a c\`adl\`ag $\mathcal F_t$-local martingale purely discontinuous.\\
Moreover  by Assumptions  \ref{assumepluss} and \ref{assumeplus} $(ii)$, for any $t<t_{k+1}(x_0)-t_k(x_0)$,  $x_t \in D$, $y_t=F(x_t)\in F(D)$ and satisfies $y_t'=\psi(y_t)$
and for any  
$\varepsilon \in (0,c_1\varepsilon_0],$
$$ \overline{B}(y_t, \varepsilon)\cap F(E)\subset F(\overline{B}_{d_F}(z_t,\varepsilon)) \subset  F(\overline{B}_{d}(z_t,\varepsilon/c_1)) \subset F(D).$$ 
Adding that $\psi=\psi_F$ is $(\alpha,L)$ non-expansive on $F(D)$, we can   apply     Proposition \ref{ctrltpscourt} to the process $Y$ on $F(D)$ for $p=q=1$ and $E'=F(D)$ and get   for any 
$\varepsilon \in  (0, c_1\varepsilon_0]$,
 \Bea
 && \mathbb P_{x_0}\left( \sup_{t \leq T_1}  \parallel Y_t- y_t \parallel_2 \geq \varepsilon\right) \\
%&& \qquad \qquad\leq \mathbb P_{x_0}\left( \sup_{t \in I_k(x_0,T) } d_{F_{n_k(x_0)}}(X_t,\phi(x_0,t)) \geq \varepsilon\right) \\
&& \qquad \qquad \leq   \p\left( \parallel Y_0- y_0 \parallel_2   \geq  \varepsilon /b_{k}(x_0,T_0)\right)+ 
C \varepsilon^{-1}\E \left(\int_{0}^{T_1} \1{S_{t-} \leq \varepsilon} d\parallel \vert A\vert_t \parallel_1 \right) \\
&& \qquad \qquad \qquad \qquad + C \varepsilon^{-2} \bigg[\E\left( \int_{0}^{T_1}   \1{ S_{t-} \leq \varepsilon}d \parallel <M^c>_t\parallel_1\right)  
+\E\left(  \sum_{t\leq T_1}\1{ S_{t-} \leq \varepsilon} \parallel \Delta Y_{t}\parallel_2^2  \right)\bigg] 
\Eea
for any $T_1 <\T{\alpha}{L}{\varepsilon}\wedge (t_{k+1}(x_0)-t_k(x_0))$, where $C$ is positive  constant depending on $L_{n_k(x_0)}$ and $\d$. Following  (\ref{domvar}) and $(\ref{domdrift})$ in the proof of Theorem \ref{corutile}, we obtain 
\be
&&\mathbb P_{x_0}\left( \sup_{ [t_k(x_0) \wedge T,t_{k+1}(x_0)\wedge T)}  d_F(X_t,\phi(x_0,t)) \geq \varepsilon\right) \nonumber \\
& &  \qquad  \quad \leq  \p(d_F( X_{t_k(x_0)},x_{t_k(x_0))}) \geq \varepsilon/b_{k}(x_0,T)) +
 C' \int_{t_k(x_0) \wedge T}^{t_{k+1}(x_0)\wedge T}  \overline{V}_{F,\varepsilon}(x_0,s)  ds. \label{eqqq}
\ee
for some  constant $C' $ depending also only of  $L$ and $\d$, where $ \overline{V}_{F,\varepsilon}$ has beed defined in (\ref{defOVER}). Using
again Assumption \ref{assumeplus} $(ii)$ to replace $d_F$  by $d$ above,
we have  
%, which we write
%$$c_1d(x,y) \leq d_k(x,y)  \leq c_2 d(x,y)$$
%for $x,y \in D_{n_k(x_0)}$ since then $\widetilde{F}_{n_k(x_0)}=F_{n_k(x_0)}$.
%It ensures that
% $$ T_{D_{n_k(x_0)}, \varepsilon,d_k}\geq t_{k+1}(x_0)$$ 
%and $I_k(x_0,T)=[t_k(x_0) ,T \wedge t_{k+1}(x_0))$. Moreover, for any  $t\in  I_k(x_0,T)$,
%$$For , Assumption \ref{assumeplus} $(iv)$ ensures that $\overline{B}_{d_{F_{n_k(x_0)}}} (\phi(x_0,t), \varepsilon) \in D_{n_k(x_0)}$, so 
%$$\left\{ \sup_{t \in I_k(x_0,T) } d_k(X_t,\phi(x_0,t)) < \varepsilon \right\} %\subset \left\{\sup_{t \in I_k(x_0,T) } d_{F_{n_k(x_0)}}(X_t,\phi(x_0,t)) <\varepsilon \right\}\subset 
%\subset \left\{\sup_{t \in I_k(x_0,T) } d(X_t,\phi(x_0,t)) <\varepsilon/c_1 \right\}$$
%and using that $d_{\widetilde{F_i}}(x,y) < \varepsilon$ and $y \in D_i$  implies $Bx\in D_i$  and $d_{F_i}(x,y) < \varepsilon$ by
%and  
$$\left\{d( X_{t_k(x_0)},\phi(x_0, t_k(x_0))) < \varepsilon/(c_2b_{k}(x_0,T))\right\} \subset   \left\{d_F( X_{t_k(x_0)},\phi(x_0, t_k(x_0))) < \varepsilon/b_{k}(x_0,T)\right\}$$
and
$$\overline{V}_{F,\varepsilon}(x_0,s) \leq (c_1^{-1}+c_1^{-2})\overline{V}_{d,\varepsilon/c_1}(F,x_0,s)$$
and we  obtain % from (\ref{eqqq})
\Bea
&&\mathbb P_{x_0}\left( \sup_{[t_k(x_0)\wedge T,t_{k+1}(x_0)\wedge T)}  d(X_t,\phi(x_0,t)) \geq \varepsilon/c_1\right)\\
& &   \qquad \leq  \p\left(d( X_{t_k(x_0)},\phi(x_0, t_k(x_0))) \geq  \frac{\varepsilon }{c_2 b_{k}(x_0,T)}\right) +
 C' (c_1^{-1}+c_1^{-2}) \int_{t_k(x_0)\wedge T}^{t_{k+1}(x_0) \wedge T}  \overline{V}_{d,\varepsilon/c_1}(F,x_0,s)  ds.
\Eea
%Adding that \marginpar{precier $C_1,C_2$}
Using  the quasi-left continuity of $X$, this inequality can be extended to
the closed interval $[t_k(x_0) \wedge T , t_{k+1}(x_0)\wedge T]$ for $k<\kappa-1$. This  ends  the proof  by replacing $\varepsilon$ by $\varepsilon c_2 b_{k}(x_0,T)$. 
 \end{proof}
 
 \begin{proof}[Proof of Theorem \ref{recollement}] We write $T_m= T_0\wedge \min\left\{\T{\alpha_i}{L_i}{\varepsilon} : i = 1, \ldots, N\right\}\wedge t_{\kappa}(x_0)\in (0,\infty)$ and set
 $$\underline{\varepsilon}=\inf\{\varepsilon_k(x_0,T) \ : \ k=1,\ldots, N; x_0 \in  E\cap D; T<T_0\} \in (0,\infty).$$ 
 Lemma \ref{lemadj}  and Markov property  
at time  $t_k(x_0)\wedge T$ ensure that for any  $\varepsilon \in (0,\underline{\varepsilon}]$,  $x_0\in E \cap D$,  $T \in (0,T_m)$,
  \Bea
&& \p_{x_0}\left( \sup_{[t_k(x_0) ,  t_{k+1}(x_0)\wedge T]} d(X_t,\phi(x_0,t)) \geq \varepsilon a_k(x_0,T), \quad 
\sup_{ [0, t_k(x_0)\wedge T]} d(X_t,\phi(x_0,t)) < \varepsilon \right)\\
&&\qquad \qquad \qquad  \qquad  \qquad  \qquad  \qquad  \qquad  \qquad \leq 
 C \int_{t_k(x_0)\wedge T}^{  t_{k+1}(x_0)\wedge T}\overline{V}_{d,\varepsilon a_k(x_0,T)}(F_{n_k(x_0)},x_0,s) ds
\Eea
 for each $k=0,\ldots,\kappa-1$, by setting $C=\max\{ C_{\emph{d},c_1,L_i}  :  i=1, \ldots, N\}$.\\
Denoting $A_{k}(x_0,T)=\Pi_{i\leq k} a_i(x_0,T)$ and recalling that $a_i(x_0,T)\geq 1$, by iteration we obtain for $\varepsilon \leq \underline{\varepsilon}/A_{\kappa}(x_0,t)$
and $T<T_m$ that
 \Bea
&& \p_{x_0}\left( \bigcup_{k=0}^{\kappa-1} \left\{ \sup_{[t_k(x_0), t_{k+1}(x_0)\wedge T]} d(X_t,\phi(x_0,t)) \geq  \varepsilon A_{k}(x_0,T)\right\}\right)\\
&&\qquad \qquad \qquad  \qquad \leq C \sum_{k=0}^{\kappa-1}
  \int_{t_k(x_0)\wedge T}^{t_{k+1}(x_0)\wedge T}\overline{V}_{d,\varepsilon A_{k}(x_0,T)}(F_{n_k(x_0)}, x_0,s) ds,
\Eea
since $X_0=x_0=\phi(x_0,0)$. This ensures that for any $T<T_m$,
 \Bea
&& \p_{x_0}\left( \sup_{0 \leq t  \leq  T} d(X_t,\phi(x_0,t)) \geq \varepsilon  A_{\kappa}(x_0,T)\right)\\
&&\qquad \qquad \qquad  \qquad \leq   C A_{\kappa}(x_0,T)^{2} \sum_{k=0}^{\kappa-1}
 \int_{t_k(x_0)\wedge T}^{t_{k+1}(x_0)\wedge T}\overline{V}_{d, \varepsilon A_{\kappa}(x_0,T)}(F_{n_k(x_0)}, x_0,s) ds,
\Eea
%\marginpar{am\'eliorer  cela et $[a,b]=\varnothing$ si $b>a$?} 
Recalling  that  $(n_k(x_0) : k=0,\ldots, \kappa)$ takes value in a finite set,  $A_{\kappa}(x_0,T)$  is bounded
for  $x_0\in E\cap D$ and $T\in  [0,T_0)$  by a constant depending only on $\kappa$, $(L_i : i =1, \ldots, N)$, $c_1$ and $c_2$. This  
yields the result.
\end{proof}

\section{Coming down from infinity for one-dimensional Stochastic Differential Equations}
\label{dimensionun}
%\marginpar{Un ti point de c'est bien  d\'efini \\ Ajouter une partie VF serait utile pour $\Lambda$-coalescent et 
%et scaling dim 2}

In this section, we assume that
 $E\subset \R$ and $+\infty$ is a limiting   value of $E$ and   $D=(a,\infty)$  for some $a\in (0,\infty)$.
Following the beginning of the previous section, we consider a  c\`adl\`ag Markov process $X$ 
  which takes values in E and assume that it is the
unique strong solution of the following SDE on $[0, \infty)$ :
\Bea
X_t&=&x_0+\int_0^t b(X_s)ds+\int_0^t \sigma(X_s)dB_s+\int_0^t\int_{\mathcal X} H(X_{s-},z) N(ds,dz)+\int_0^t\int_{\mathcal X} G(X_{s-},z) \widetilde{N}(ds,dz),
\Eea
for any $x_0 \in E$, 
where   we recall that $(\mathcal X, \mathcal B_{\mathcal X})$ is a measurable space;
$B$ is a  Brownian motion; % mouvements Brownien ind\'ependants; 
 $N$ is a Poisson point measure   on $\R^+\times \mathcal X $ with intensity  $dsq(dz)$;
 % where $q$ is a sigma finite measure on $(\mathcal X, \mathcal B_{\mathcal X})$; and $\widetilde{N}$ is the compensated measure.
%\marginpar{%associ\'e \`a un PP de class (QL) (Definition 3.1, chapitre II \cite{IW}, mesure $\sigma$ finie et 
%compens\'ee continue adapt\'e bien d\'ef,   automatq pour PPP station,   dt on note $\widetilde{N}$ la mesure compens\'ee}
$N$ and $B$  are independent and $HG=0$.
%\item $b=(b^{(i)} : i=1,\ldots,\d)$ and $\sigma=(\sigma^{(i)}_j : i,j=1,\ldots,\d)$ and $H$ and $G$ are Borelian functions locally bounded on $E$. % fonctions bor\'eliennes localement born\'ees. \\
%\end{itemize}
%\marginpar{Attention \`a a et au temps de sortie}
We  make the following assumption, which is a slightly stronger counterpart of Assumption \ref{assume} and is convenient for the study of the coming down infinity in dimension  $1$.
% for the applications.
  \begin{assumption} \label{assumedim1}
Let $F\in \mathcal C^2((a', \infty), \R)$,  for some $a' \in [-\infty, a)$ such that $\overline{E} \subset (a', \infty)$. \\ %, \infty)$, for some on an open set $O$ containing  $\overline{D\cup E}$ and $
  (i) For any $x> a$, 
$F'(x)>0$ and $F(x)\rightarrow \infty$ as $x\rightarrow \infty$.\\
(ii) For any $x \in E$,
$\int_{\mathcal X} \left\vert F(x+H(x,z))-F(x)\right\vert  q(dz) <\infty$. \\
The function
$x\in E \rightarrow h_F(x)=\int_{\mathcal X} [F(x+H(x,z))-F(x)] q(dz)$
can be extended to 
 $\overline{E}\cup[a,\infty)$. \\
 This extension is locally bounded  on  $\overline{E}\cup[a,\infty)$ and locally Lipschitz on $(a,\infty)$. \\
 (iii) $b$ is locally Lipschitz on $(a,\infty)$.\\
(iv)  The function $b_F=b+h_{F}/F'$ is negative  on $(a,\infty)$.
\end{assumption}

 \noindent  %Let us first note that $F$ is then a bijection from $(a,\infty)$ into $(F(a), \infty)$. \\
Following  the previous sections, we consider now the flow  $\phi_F$ given  for $x_0 \in (a,\infty)$ by
$$\phi_F(x_0,0)=x_0, \qquad \frac{\partial }{\partial t}\phi_F(x_0,t)=b_F(\phi_F(x_0,t)),$$
which is  well  and uniquely defined and belongs to $(a,\infty)$ on  a maximal time interval denoted by  $[0, T(x_0))$, where $T(x_0)\in (0, \infty]$.
We first observe that $x_0\rightarrow \phi_F(x_0,t)$ is increasing where it is well defined. This can be seen by recalling that the local Lipschitz property ensures the uniqueness of solutions and thus  prevents the trajectories from intersecting. Then   $T(x_0)$ is increasing and  its limit when  $x_0\uparrow \infty$ is denoted  by $T(\infty)$ and belong to $(0,\infty]$. Moreover,  the flow  starting from infinity
is  well defined by a monotone limit : 
%\marginpar{Remplacer les $\infty$ par $+\infty$ ?}
$$\phi_F(\infty,t)=\lim_{x_0\rightarrow \infty} \phi_F(x_0,t)$$
 for any $t\in [0, T(\infty))$. Finally, under Assumption \ref{assumedim1},  for $x_0\in (a,\infty)$,  $b_F(x_0)<0$   and    for any $t<T(x_0)$,
$\int_{x_0}^{\phi_F(x_0,t)} 1/b_F(x)dx=t$. This  yields the following classification. \\

Either
$$\int_{\infty}^. \frac{1}{b_F(x)} <+\infty,$$
and then  
 $$\phi_F(\infty,t)=\inf \left\{ u \geq 0 : \int_{\infty}^u \frac{1}{b_F(x)} dx <t \right\}<\infty$$
  for any $t\in (0, T(\infty))$.
   We  say that  the dynamical system \emph{instantaneously comes down from infinity}. Moreover  the application $t \in [0, T(\infty))\rightarrow \phi(\infty,t) \in \overline{\R}$  is continuous, where $\overline{\R}=\R \cup\{\infty\}$ is endowed with the distance
 \be \label{defRbard}
 \overline{d}(x,y)=\vert e^{-x}-e^{-y}\vert.
 \ee
 %and we have$$\phi(\infty,t)=.$$

%\marginpar{Add an argument}
 Otherwise,   $T(\infty)=\infty$ and $\phi(\infty,t)=\infty$ for any  $t\in [ 0,\infty)$.  \\
 %Let us not that  it is still continuous. As detailed below,  this classification of the coming down from infinity is inherited by the stochastic as soon as the assumptions of the previous section are met. \\

\noindent Our aim now is to derive an analogous  classification  for stochastic differential equations using the results of the previous section.
Letting the process start from infinity requires additional work.
We give first a condition useful for the identification of the limiting values of $(\p_x : x\in E)$ when $x\rightarrow\infty$.
% stochastically monotone SDE. %, see r \cite{BMR} pour le cas des naissances et morts. 
\begin{defi}  The process $X$ is stochastically monotone if
for all $x_0,x_1 \in E$ such that  $x_0\leq x_1$, for all  $t>0$ and $x\in \R$, we have
$$\p_{x_0}(X_{t} \geq x)\leq \p_{x_1}(X_{t} \geq x).$$
\end{defi}
\noindent The  $\Lambda$-coalescent,  the birth and death process,
continuous diffusions with strong pathwise uniqueness and several of their extensions satisfy this property, while e.g.  the Transmission Control Protocol does not and we refer to the examples of forthcoming Section \ref{Ex} for details. \\

\subsection{Weak convergence and coming down from infinity} 
We recall that 
 $\overline{\R}=\R\cup\{\infty\}$ endowed with $\overline{d}$ defined by $(\ref{defRbard})$ is polish 
 and 
% We also note $w(f, \delta, [0,T]) =\sup_{s,t \leq T, \  \vert t-s\vert \leq \delta} \overline{d}(f_s,f_t)$.
the notation of the previous section become
$\psi_F=(F'b_F)\circ F^{-1},$
%qui est aussi localement Lipschitz, soit
%$$\psi_F(F(x))=J_F(x)b(x)+ \int_{\mathcal X} [F(x+K(x,z))-F(x)] q(dz)$$
%Enfin
$\widetilde{b}_F(x)=F''(x)\sigma(x)^2 +\int_{\mathcal X} [F(x+G(x,z))-F(x)-F'(x)G(x,z)] q(dz) $
and
$V_F(x)=  (F'(x)\sigma(x))^2+\int_{\mathcal X} [F(x+H(x,z)+G(x,z))-F(x)]^2 q(dz).$\\
In this section,   we introduce
$$\hat{V}_{F, \varepsilon}(a,t) = \sup_{ x \in  E\cap \mathfrak D_{F, \varepsilon}(a,t) }  \left\{ \varepsilon^{-2}V_F(x)+\varepsilon^{-1} \widetilde{b}_F (x) \right\},$$
where for convenience we use 
 the extension $F(\infty)=\infty$ and we set %$F^{-1}(\infty)=\infty$, we set 
$$ \mathfrak D_{F,\varepsilon} (a,t)%= \left(   a, F^{-1}(F(\phi(\infty, t) )+ \varepsilon)\right)
=\{ x\in (a,\infty) : F(x)\leq F(\phi(\infty,t))+\varepsilon\}.$$
%\sup_{\substack{ x\in E \\ d_F(x,\phi_F(x_0,s)) \leq \varepsilon }}  \vert V_F(x) \vert + \vert \widetilde{b}_F(x) \vert$$
Finally, we make the following key assumption to use the results of the previous section.
\begin{assumption} \label{controledim1}  The vector field
 $\psi_F$ is $(L, \alpha)$ non-expansive  on $(F(a),\infty)$ 
 and for any $\varepsilon >0$,
\be
\label{condevar}
\int_0^.   \hat{V}_{F, \varepsilon} (a,t)dt <\infty.
\ee
\end{assumption}
\noindent  Let us remark that   $\psi_F$ is $(L, \alpha)$ non-expansive  on $(F(a),\infty)$  iff for all 
$y_1>y_2>F(a)$,
$\psi_F(y_1) \leq  \psi_F(y_2)+ L( y_1-y_2 ) + \alpha$. This means that for all $x_1>x_2>a$, $F'(x_1)b(x_1)+h_F(x_1)\leq F'(x_2)b(x_2)+h_F(x_2)+L(F(x_1)-F(x_2))+\alpha$.\\

Let us now  give  sufficient conditions for the convergence of  $(\mathbb P_x)_{ x\in E}$ as $x\rightarrow \infty$. For that purpose, we
%Given $0 \leq A \leq B$, we call \emph{subdivision of $[A,B]$} a finite sequence $\b = (b_\ell, \ell = 0, \ldots, L)$ such that $b_0 = A < b_1 < \cdots < b_L = B$. For $\eta > 0$, we say that a subdivision $\b$ is $\eta$-sparse if $b_{\ell+1} - b_\ell > \eta$ for every $\ell = 0, \ldots, L-2$. For $\eta > 0$, $0 \leq A < B$ and $f \in D_\X$ let
introduce the modulus
\begin{equation} \label{eq:def-w}
	w' (f, \delta, [A,B]) = \inf_{{\bf b}} \max_{\ell = 0, \ldots, L-1} \sup_{b_\ell \leq s, t < b_{\ell+1}} \overline{d}(f_s, f_t)
\end{equation}
where the infimum extends over all subdivisions ${\bf b}= (b_\ell, \ell = 0, \ldots, L)$ of $[A,B]$ which are $\delta$-sparse. We refer to Chapter 3 in \cite{Billingsley}  for details on the Skorokhod topology.

%\marginpar{Check $(i)$  and domaine $\hat{V}$ !!!}
\begin{prop} \label{cvinf} We assume that $X$ is stochastically monotone. \\
(i) If    $E=\mathbb \{0,1,2, \ldots\}$, then $(\mathbb P_x)_{ x\in E}$ converges weakly as $x\rightarrow \infty$  in the space of  probability  measures on  $\mathbb{D}([0,T], \overline{\R})$.\\%, \overline{d}))$.\\
(ii)  If
  Assumptions \ref{assumedim1} and  \ref{controledim1} hold   and $\int_{\infty}^. \frac{1}{b_F(x)} <+\infty$ and
 for any $K>0$ and $\varepsilon>0$,
\be
\label{tensioncompact}
\lim_{\delta \rightarrow 0} \sup_{x \in E, \  x \leq K}\p_{x}\left( w'(X, \delta, [0,T])\geq \varepsilon\right)=0,
\ee
then  $(\mathbb P_x)_{ x\in E}$
converges weakly as $x\rightarrow \infty$ in the space of  probability  measures on  $\mathbb{D}([0,T], \overline{\R})$.
\end{prop}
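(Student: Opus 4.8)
The plan is to obtain the weak convergence through the two classical ingredients, convergence of the finite-dimensional distributions (fdds) and tightness in $\mathbb{D}([0,T],\overline{\mathbb{R}})$. The fdd convergence is common to both statements and is where stochastic monotonicity is used; tightness is where (i) and (ii) diverge, the latter relying on the coming down estimates of the previous sections. For the fdds, I would first upgrade the stochastic monotonicity of the one-time marginals to the whole semigroup: the defining inequality says that $P_t\mathbf{1}_{[x,\infty)}$ is non-decreasing, and by density and the Markov property $x\mapsto \mathbb{E}_x\big[g(X_{t_1},\dots,X_{t_n})\big]$ is then non-decreasing for every bounded coordinatewise non-decreasing $g$ and every $0\le t_1<\dots<t_n\le T$. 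Being bounded and monotone in $x$, this expectation converges as $x\to\infty$. Since $\overline{\mathbb{R}}$ (resp. $\overline{\mathbb{N}}=\{0,1,\dots\}\cup\{\infty\}$) is compact for $\overline{d}$ and the non-decreasing functions form a convergence-determining class, the fdds converge weakly to a consistent family.

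\textbf{Tightness, part (ii).} Compact containment is automatic: since $E$ is bounded below and large values are sent by $x\mapsto e^{-x}$ into a neighbourhood of the genuine point $\infty$, the trajectories stay in a fixed compact subset of $\overline{\mathbb{R}}$. It remains to control the modulus $w'$ uniformly in the starting point. I would split $[0,T]=[0,\delta_0]\cup[\delta_0,T]$ and treat starting points separately. For $x_0\le K$ the modulus is controlled directly by hypothesis (\ref{tensioncompact}). For large $x_0$, on $[0,\delta_0]$ I would invoke Theorem \ref{corutile} in the form (\ref{bornespap})—whose right-hand side is uniformly integrable in $x_0$ by (\ref{condevar})—to ensure that $X$ tracks $\phi_F(x_0,\cdot)$ in $d_F$; combined with the fact that the instantaneously-coming-down flow has vanishing $\overline{d}$-oscillation on $[0,\delta_0]$ as $\delta_0\to0$ (large values being $\overline{d}$-compressed near $\infty$), this makes the $\overline{d}$-oscillation of $X$ on $[0,\delta_0]$ small. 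At time $\delta_0$ the same estimate forces $X_{\delta_0}\lesssim\phi_F(\infty,\delta_0)<\infty$ with high probability uniformly in $x_0$ (the coming down of $X$ itself), so the Markov property at $\delta_0$ reduces the control on $[\delta_0,T]$ to a bounded starting point, where (\ref{tensioncompact}) applies again. Combining the two pieces (a subdivision containing $\delta_0$ bounds $w'$ on $[0,T]$ by the larger of the two partial moduli) gives uniform tightness, and with the fdd convergence Prokhorov's theorem yields the claim.

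\textbf{Tightness, part (i).} Here there is no flow, only stochastic monotonicity on the compact state space $\overline{\mathbb{N}}$. The key geometric observation is that whenever $X$ stays above a level $m$ on a time interval, all its values lie within $\overline{d}$-distance $e^{-m}$ of $\infty$, so its $\overline{d}$-oscillation there is at most $e^{-m}$; an oscillation of size $\ge\varepsilon$ can therefore occur only while $X$ sits in the bounded set $\{0,\dots,m_\varepsilon\}$, where it is an ordinary Markov chain. I would build a monotone coupling $(\widetilde{X}^x)_x$ of the laws $(\mathbb{P}_x)$ (classical for stochastically monotone integer-valued chains) to make the estimates uniform as $x\to\infty$, split $[0,T]$ at the successive times where the chain crosses between the high region $\{>m_\varepsilon\}$ and the low region, and bound the number of such crossings. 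The modulus then decomposes into the $\overline{d}$-small contribution of the high excursions and the controlled contribution of the finitely many low excursions.

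\textbf{Main obstacle.} In both parts the delicate point is the uniform control, in the starting value up to $x=\infty$, of the oscillation near $t=0$ where the process plunges from infinity. In (ii) this is exactly what the non-expansivity and tracking machinery delivers, once one translates the $d_F$-estimate into the $\overline{d}$-metric—using that $d_F$ and $\overline{d}$ induce the same topology on $\overline{\mathbb{R}}$ and that large values are $\overline{d}$-compressed, so the conversion splits harmlessly into a ``large value'' and a ``moderate value'' regime. In (i) the same compression replaces the flow, and the residual difficulty is to bound, uniformly in $x$, the number of high$\leftrightarrow$low crossings on $[0,T]$, for which the monotone coupling and the convergence of the tail probabilities $\mathbb{P}_x(X_t>m)$ are the natural tools.
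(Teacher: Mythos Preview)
Your plan is correct and matches the paper's proof essentially point for point: finite-dimensional convergence via stochastic monotonicity and induction on the Markov property, then tightness for (ii) by splitting $[0,T]$ at a small time, using the tracking estimate (the paper packages it as Lemma~\ref{newlem}, a direct consequence of Theorem~\ref{corutile}) to keep $X$ near $\infty$ in the $\overline d$-metric on the first segment and to force $X$ into a compact at the splitting time, and then invoking (\ref{tensioncompact}) via the Markov property on the remainder. For (i) the paper likewise omits the details and refers to Donnelly and \cite{BMR}, the point being exactly your ``compression near $\infty$'' observation combined with the fact that integer states are non-instantaneous; your crossing-count formulation is a valid way to organize this, though the non-instantaneity of states is the cleaner mechanism for bounding the modulus on the low excursions.
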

%\marginpar{Faire bien la jonction avec les exemples. Lambda c'est Donnelly ou autre ? que citent Berest ?}

\noindent The   convergence  result $(i)$  concerns the discrete case $\sigma=0$. It has been obtained   
in \cite{Donnelly91}
 when the limiting probability $\p_{\infty}$ is  known a priori and the process comes down from infinity. The proof 
 of the tightness for $(i)$ follows   \cite{Donnelly91} and relies on the monotonicity and the fact that the states are non-instantaneous, which is here due to our c\`adl\`ag %\marginpar{reprendre} 
 assumption for any initial state space. The identification of the limit is derived directly from the monotonicity and the proof of $(i)$ is actually a direct extension of    
 Lemma 2.1 in \cite{BMR}.  This proof is omitted.  \\
  The tightness argument for  $(ii)$ is different and  can be applied to  processes with a continuous part and extended to  larger dimensions. 
The control of the fluctuations of the process for large values  relies on the approximation by the  continuous dynamical system $\phi_F$ using Assumption \ref{controledim1} and the previous section.  Then the tightness on compacts sets is guaranteed by  $(\ref{tensioncompact})$. The proof is given 
below. \\
In the next result, we assume that  $(\mathbb P_x)_{ x\in E}$ converges weakly and $\p_{\infty}$ is then well defined as the limiting probability. We  determine   under our assumptions    when (and how) the process comes down from infinity. More precisely, we link the coming down from infinity of the process $X$ to that
of the flow $\phi_F$, in the vein of  \cite{BBL, limxsi, BMR} who considered   some classes of discrete processes, see below for details.

\begin{thm} \label{dim1descente}  We assume that  Assumptions \ref{assumedim1} and \ref{controledim1} hold and that $(\mathbb P_x : x\in E)$
converges weakly as $x\rightarrow \infty$ in the space of  probability  measures on  $\mathbb{D}([0,T], \overline{\R})$ to  $\p_{\infty}$.

%$X$ is stochastically monotone, $b_F$ is $L, \varepsilon$ non expansive (pour un certain $\varepsilon>0$) on $[F(a),\infty)$ and non zero for $x$ large enough
%and $\widetilde{b}_f$ is bounded and
%\be\label{condevar}\int_0^.  \sup_{ x_0 \in E } V_F(x_0,s) <\infty.\ee
 (i) If  $$\int_{\infty}^. \frac{1}{b_F(x)} <\infty,$$
then 
$$\mathbb P_{\infty}(\forall t >0 : X_t<+\infty)=1 \quad \text{and } \quad
\mathbb P_{\infty}\left( \lim_{t\downarrow 0+} F(X_t)-F(\phi_F(\infty,t))  =0\right)=1.$$

(ii) Otherwise $\mathbb P_{\infty}(\forall t \geq 0 : X_t=+\infty)=1$.
\end{thm}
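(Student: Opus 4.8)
The plan is to derive both parts from the uniform short-time estimate of Theorem \ref{corutile}, specialized to $\mathrm d=1$, $D=(a,\infty)$ and $d_F(x,y)=|F(x)-F(y)|$, and then to pass to the limit $x_0\to\infty$ using the assumed weak convergence $\p_{x_0}\Rightarrow\p_\infty$. First I would record that, since $x_0\mapsto\phi_F(x_0,\cdot)$ is increasing, one has $\phi_F(x_0,s)\le\phi_F(\infty,s)$, so the ball appearing in $\overline V_{F,\varepsilon}(x_0,s)$ is contained in $\mathfrak D_{F,\varepsilon}(a,s)$ and hence $\overline V_{F,\varepsilon}(x_0,s)\le\hat V_{F,\varepsilon}(a,s)$. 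Together with Assumption \ref{controledim1}, Theorem \ref{corutile} then becomes a bound
$$\p_{x_0}\Big(\sup_{t\le T}|F(X_t)-F(\phi_F(x_0,t))|\ge\varepsilon\Big)\le \rho(T,\varepsilon):=C\,e^{4LT}\int_0^T\hat V_{F,\varepsilon}(a,s)\,ds,$$
uniform in $x_0\in E\cap(a,\infty)$ and valid for $T<\T{\alpha}{L}{\varepsilon}\wedge T_{D,\varepsilon,F}(x_0)$. One checks that for fixed small $T$ (resp. fixed $\varepsilon$) the exit time $T_{D,\varepsilon,F}(x_0)$ exceeds $T$ for all large $x_0$, because $\phi_F(x_0,T)$ stays large. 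By $(\ref{condevar})$, $\rho(T,\varepsilon)\to0$ as $T\to0$ for fixed $\varepsilon$ (part (i)), while in the setting of (ii) the map $\hat V_{F,\varepsilon}(a,\cdot)$ is constant in time and finite, which forces $V_F$ bounded and $\widetilde b_F$ bounded above, so that there $\rho(T,\varepsilon)\to0$ as $\varepsilon\to\infty$ for fixed $T$.

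For part (i) I would fix $0<\delta<T$ and transfer the estimate to $\phi_F(\infty,\cdot)$. On $[\delta,T]$ the flow $\phi_F(\infty,\cdot)$ is finite and continuous, and $\phi_F(x_0,\cdot)\uparrow\phi_F(\infty,\cdot)$ uniformly on $[\delta,T]$ by Dini's theorem; hence for any $\varepsilon'>0$ and $x_0$ large, $\{\sup_{[\delta,T]}|F(X_t)-F(\phi_F(\infty,t))|\ge\varepsilon+\varepsilon'\}\subset\{\sup_{[\delta,T]}|F(X_t)-F(\phi_F(x_0,t))|\ge\varepsilon\}$, whence the left-hand probability is $\le\rho(T,\varepsilon)$. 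The functional $\Psi(\omega)=\sup_{\delta\le t\le T}|F(\omega_t)-F(\phi_F(\infty,t))|$ is lower semicontinuous on $\mathbb D([0,T],\overline\R)$: for any $J_1$-convergent sequence $\omega^n\to\omega$ and any continuity point $t^*$ of $\omega$ one has $\omega^n_{t^*}\to\omega_{t^*}$, continuity points being co-countable and approaching the jump times from the right. Portmanteau applied to the open set $\{\Psi>\varepsilon+\varepsilon'\}$ gives $\pinf{\Psi>\varepsilon+\varepsilon'}\le\rho(T,\varepsilon)$, and letting $\varepsilon'\downarrow0$, $\pinf{\Psi>\varepsilon}\le\rho(T,\varepsilon)$. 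Letting $\delta\downarrow0$ (monotone events) and then $T\downarrow0$ yields $\pinf{\limsup_{t\downarrow0}|F(X_t)-F(\phi_F(\infty,t))|\ge\varepsilon}=0$; intersecting over $\varepsilon=1/k$ gives claim (b). Claim (a) follows because on the complement of $\{\sup_{0<t\le T}|F(X_t)-F(\phi_F(\infty,t))|>\varepsilon\}$ one has $X_t<\infty$ for every $t\in(0,T]$, and since the SDE does not explode from a finite state, $X_t<\infty$ for all $t>0$ almost surely.

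For part (ii), where $\phi_F(\infty,t)=\infty$ and $\phi_F(x_0,t)\to\infty$ for every $t$, I would avoid the supremum functional. Fix $N$ and $t$; for $x_0$ large enough that $F(\phi_F(x_0,t))>F(N)+\varepsilon$, the uniform estimate gives $\p_{x_0}(X_t\le N)\le\rho(T,\varepsilon)$. Choosing a continuous $g$ on $\overline\R$ with $\1{x\le N}\le g\le\1{x\le N+1}$ and using that $\omega\mapsto\int_0^T g(\omega_t)\,dt$ is bounded and continuous for the Skorokhod topology (via a.e. convergence at continuity points and bounded convergence), weak convergence yields $\E_{\p_\infty}\big(\int_0^T\1{X_t\le N}\,dt\big)\le T\,\rho(T,\varepsilon)$. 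Sending $\varepsilon\to\infty$ gives $\int_0^T\1{X_t\le N}\,dt=0$ almost surely; intersecting over $N\in\Nat$ and invoking the right-continuity of $X$ in $\overline\R$ gives $X_t=\infty$ for all $t\ge0$ almost surely, which is (ii).

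The main obstacle is the passage to the limit $x_0\to\infty$ inside a supremum over time, since the supremum functional is not continuous for the Skorokhod topology and the comparison flow $\phi_F(\infty,\cdot)$ degenerates as $t\downarrow0$. The remedy is precisely the split above: lower semicontinuity combined with Dini convergence on each $[\delta,T]$ for part (i), and the replacement of the supremum by a time-integral (a genuinely continuous functional) for part (ii). A secondary point requiring care is the interchange of the three limits $x_0\to\infty$, $\delta\to0$, $T\to0$ (resp.\ $\varepsilon\to\infty$, $N\to\infty$), together with verifying that the $\varepsilon$-tube around $\phi_F(x_0,\cdot)$ remains inside $D=(a,\infty)$, so that Theorem \ref{corutile} applies with the same constants for all large $x_0$.
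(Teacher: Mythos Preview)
Your argument is correct, and part (i) is essentially the paper's proof: both use Lemma~\ref{newlem} (your displayed bound $\rho(T,\varepsilon)$ is precisely $C(\varepsilon,T)$ there), Dini's theorem on $[\delta,T]$ to replace $\phi_F(x_0,\cdot)$ by $\phi_F(\infty,\cdot)$, Portmanteau on the open set $\{\sup_{[\delta,T]}d_F(X_\cdot,\phi_F(\infty,\cdot))>2\varepsilon\}$, and then $\delta\downarrow0$, $T\downarrow0$.

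For part (ii) you take a genuinely different route to pass to $\p_\infty$. The paper works at a \emph{single} time $T$: from $\p_{x_0}(F(X_T)<F(\phi_F(x_0,T))-A)\le C(A,T)$ and $F(\phi_F(x_0,T))\to\infty$ it gets $\p_\infty(X_T<\infty)\le C(A,T)$, then uses exactly your scaling observation $C(A,T)\le A^{-1}C(1,T)\to0$ (since $\mathfrak D_{F,A}(a,t)=(a,\infty)$ is independent of $t$ and $A$), and finally invokes the c\`adl\`ag Markov property under $\p_\infty$. Your approach instead integrates in time against a continuous cutoff $g$ and uses that $\omega\mapsto\int_0^T g(\omega_t)\,dt$ is $J_1$-continuous, obtaining $\int_0^T\1{X_t\le N}dt=0$ $\p_\infty$-a.s.\ and concluding by right-continuity. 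The advantage of your version is that the functional used in the weak-limit step is genuinely bounded and continuous, so no side condition on $T$ (being a.s.\ a continuity time under $\p_\infty$) is needed; the paper's one-time-marginal step is terser but implicitly relies on such a choice of $T$ or on the lower-semicontinuity argument already used in (i). Both yield the same conclusion with the same quantitative input.
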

\noindent
After the proof given below, we  consider   examples with different size of fluctuations at infinity.  
For  $\Lambda$-coalescent, we recover  the speed of coming down from infinity
of  $\cite{BBL}$ using  $F=\log$ and in that case  $V_F$ is bounded.
For birth and death processes with polynomial death rates, fluctuations are smaller and we  use $F(x)=x^{\beta}$ ($\beta <1$)
and get a finer approximation of the process coming down from infinity by a dynamical system. But $V_F$ is no longer bounded and has to be controlled  along the dynamical system coming down from infinity.
When proving that some birth and death processes or Transmission Control Protocol do not come down from infinity, $\mathfrak D_{F,\varepsilon}(a,t)$ is non-bounded and
we are looking for  $F$ increasing slowly enough so that   $V_F$  is bounded to check (\ref{condevar}), see the next section for details.  \\
%\marginpar{Preciser domaines et je pense se debarasser de $x_0$ dans les hyp !!} \\

The proofs of the two last results need  the following lemma.  We recall  notation
$D=(a,\infty)$,
$d_F(x,y)=\vert F(x)-F(y) \vert$ 
and 
$T_{D, \varepsilon, F}(x_0)$ resp.   $\T{\alpha}{L}{\varepsilon}$ given 
  in $(\ref{deftemps})$ resp. $(\ref{temps})$.
%=\inf\{ tÊ\in  [0,T'(x_0)) : \exists x \not\in D,  \  d_F(x,\phi_F(x_0,t))\leq \varepsilon\}\wedge T'(x_0). $$
\begin{lem} \label{newlem} Under   Assumptions  \ref{assumedim1}  and  \ref{controledim1}, for 
any  $\varepsilon>0$,  $x_0 \in E \cap D$ and $T<T_{D, \varepsilon, F}(x_0)\wedge \T{\alpha}{L}{\varepsilon}$, we have
$$\mathbb P_{x_0}\left( \sup_{t\leq T}d_F(X_t,\phi_F(x_0,t)) \geq \varepsilon\right) \leq C(\varepsilon,T),$$
where
$$C(\varepsilon,T)=C\exp(4LT)\int_0^T \hat{V}_{F, \varepsilon}(a,t) dt$$
goes to $0$ when  $T\rightarrow 0$  and $C$ is a positive constant.
\end{lem}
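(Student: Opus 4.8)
The plan is to apply Theorem \ref{corutile} directly and then bound the resulting integrand $\overline{V}_{F,\varepsilon}(x_0,s)$ by the quantity $\hat{V}_{F,\varepsilon}(a,t)$ that is uniform in $x_0$. First I would check that Assumption \ref{assumedim1} implies Assumption \ref{assume} in this one-dimensional setting, so that Theorem \ref{corutile} is applicable: indeed $D=(a,\infty)$ is open, $F$ is $\mathcal C^2$ on $(a',\infty)\supset \overline{E}\cup[a,\infty)$, the condition $F'(x)>0$ on $(a,\infty)$ makes $F$ a bijection from $D$ onto $F(D)$ with invertible (here, nonzero scalar) Jacobian $F'$, and parts (ii)--(iii) of Assumption \ref{assumedim1} give exactly the integrability and regularity of $h_F$ and $b$ required by Assumption \ref{assume} (iii)--(iv). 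Since $\psi_F$ is $(L,\alpha)$ non-expansive on $F(D)=(F(a),\infty)$ by Assumption \ref{controledim1}, the hypotheses of Theorem \ref{corutile} hold, and for any $\varepsilon>0$, $x_0\in E\cap D$ and $T<T_{D,\varepsilon,F}(x_0)\wedge \T{\alpha}{L}{\varepsilon}$ we obtain
$$\mathbb P_{x_0}\left( \sup_{t\leq T} d_F(X_t,\phi_F(x_0,t)) \geq \varepsilon\right) \leq C_{\text{d}} e^{4LT}\int_0^T \overline{V}_{F,\varepsilon}(x_0,s)\, ds.$$

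The core of the argument is then to show $\overline{V}_{F,\varepsilon}(x_0,s) \leq \hat{V}_{F,\varepsilon}(a,s)$, i.e. that the supremum defining $\hat V$ is taken over a set containing the one defining $\overline V$. The key observation is monotonicity: because $x_0\mapsto \phi_F(x_0,t)$ is increasing (as noted in the text, from the non-crossing of trajectories) with limit $\phi_F(\infty,t)$, and because $F$ is increasing, any $x\in E$ with $d_F(x,\phi_F(x_0,s))=|F(x)-F(\phi_F(x_0,s))|\leq\varepsilon$ satisfies $F(x)\leq F(\phi_F(x_0,s))+\varepsilon\leq F(\phi_F(\infty,s))+\varepsilon$, hence $x\in \mathfrak D_{F,\varepsilon}(a,s)$. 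Therefore the constraint set in $\overline V_{F,\varepsilon}(x_0,s)$ is contained in $E\cap\mathfrak D_{F,\varepsilon}(a,s)$, and since the objective $\varepsilon^{-2}\|V_F(x)\|_1+\varepsilon^{-1}\|\widetilde b_F(x)\|_1$ reduces in dimension $1$ to $\varepsilon^{-2}V_F(x)+\varepsilon^{-1}\widetilde b_F(x)$, enlarging the supremum gives $\overline V_{F,\varepsilon}(x_0,s)\leq \hat V_{F,\varepsilon}(a,s)$. Substituting into the bound above yields $C(\varepsilon,T)=C e^{4LT}\int_0^T \hat V_{F,\varepsilon}(a,t)\,dt$ with $C=C_{\text{d}}$.

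Finally I would argue that $C(\varepsilon,T)\to 0$ as $T\to 0$: the prefactor $e^{4LT}\to 1$, and the integral $\int_0^T \hat V_{F,\varepsilon}(a,t)\,dt\to 0$ precisely because the integrability condition $(\ref{condevar})$ of Assumption \ref{controledim1} guarantees $\int_0^{\cdot}\hat V_{F,\varepsilon}(a,t)\,dt<\infty$, so the integral of a locally integrable nonnegative function over a shrinking interval vanishes by dominated convergence (or absolute continuity of the integral). The main obstacle — really the only substantive point beyond bookkeeping — is the monotonicity step relating the $x_0$-dependent tube to the tube around the flow from infinity; one must be careful that $\phi_F(\infty,s)$ is well defined and finite on the relevant interval (which is where the hypothesis $\int_\infty^{\cdot}1/b_F<\infty$ implicitly enters for the applications, though the inclusion $\phi_F(x_0,s)\leq\phi_F(\infty,s)$ holds by monotone convergence in either case) and that the sign conventions make $F(x)\leq F(\phi_F(\infty,s))+\varepsilon$ the correct one-sided bound. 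The remaining verifications are routine.
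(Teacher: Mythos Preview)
Your proposal is correct and follows essentially the same route as the paper: apply Theorem~\ref{corutile} after checking that Assumption~\ref{assumedim1} implies Assumption~\ref{assume}, then use the monotonicity of $F$ and of $x_0\mapsto\phi_F(x_0,t)$ to show the inclusion of constraint sets yielding $\overline V_{F,\varepsilon}(x_0,s)\le \hat V_{F,\varepsilon}(a,s)$, and conclude via~(\ref{condevar}). The only small point you gloss over is the lower bound $x>a$ needed for $x\in\mathfrak D_{F,\varepsilon}(a,s)$, which follows immediately from $s<T_{D,\varepsilon,F}(x_0)$ since then $\overline B_{d_F}(\phi_F(x_0,s),\varepsilon)\cap E\subset D=(a,\infty)$.
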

\begin{proof} Assumption \ref{assume} and the $(L,\alpha)$ non-expansivity of $\psi_F$
are guaranteed respectively by Assumptions  \ref{assumedim1}  and \ref{controledim1}, with here $O=(a',\infty)$ and $D=(a,\infty)$. Thus, 
we can apply 
  Theorem  \ref{corutile} on the domain $D$ and   for any $x_0 \in D\cap E$ and  $\varepsilon >0$ and $T<T_{D, \varepsilon, F}(x_0)\wedge \T{\alpha}{L}{\varepsilon}$, we have
  $$\mathbb P_{x_0}\left( \sup_{t< T}d_F(X_t,\phi_F(x_0,t)) \geq \varepsilon\right) \leq C\exp(4LT) \int_0^T \overline{V}_{F,\varepsilon}(x_0,s) ds.$$
Now let $t<T_{D, \varepsilon, F}(x_0)$ and  $x\in E$ such that
 $d_F(x,\phi_F(x_0,t)) \leq \varepsilon$. Then $x>a$ and 
 $F(a) < F( x)  \leq          F(\phi_F(x_0,t))+\varepsilon $
and combining  the monotonicities of the flow $\phi_F$ %with respect to the initial condition 
and the function $F$,
 $$ F( a)  <  F(x) \leq            F(\phi_F(\infty,t))+ \varepsilon,$$
 since $\phi(x_0,t)>a$.  Thus $x \in \mathfrak D_{F,\varepsilon} (a,t)$ and
 $$\overline{V}_{F,\varepsilon}(x_0,t) \leq \hat{V}_{F,\varepsilon}(a,t),$$
which ends up the proof, since the  behavior of $C(\varepsilon, T)$  when $T\rightarrow 0$ comes from ($\ref{condevar}$).
\end{proof}

\begin{proof}[Proof of the Proposition \ref{cvinf} (ii)]
The fact that $X$ is  a  stochastically monotone  Markov process ensures that
for all $x_0, x_1 \in E$, $x_0\leq x_1$,  $k\geq 0$, $0\leq t_1\leq  \ldots \leq t_k$, $a_1, \ldots, a_k\in \R$,
$$\p_{x_0}(X_{t_1} \geq a_1, \ldots ,X_{t_k} \geq a_k)\leq \p_{x_1}(X_{t_1} \geq a_1, \ldots , X_{t_k} \geq a_k).$$
It can be shown by induction for $k\geq 1$ by  using the Markov property at time $t_1$ and writing  $X_{t_1}^{x_1}=X_{t_1}^{x_0}+ B$, where $X^{x}$ is the process $X$ starting at $x$ and $B$ is a non-negative random variable $\mathcal F_{t_1}$)-measurable. Then $$\p_{x_0}(X_{t_1} \geq a_1, \ldots, X_{t_k} \geq a_k)$$ converges  as $x_0 \rightarrow \infty$ ($x_0\in E$) by monotonicity, which identifies the finite dimensional limiting distributions of $(\p_x : x\in E)$ when $x\rightarrow \infty$.  \\

Let us  turn to the proof of the tightness in the Skorokhod space $\mathbb{D}([0,T], \overline{\R})$ and  fix $\eta>0$.
The flow $\phi_F$ comes down instantaneously from infinity since   $\int_{\infty}^. 1/b_F(x) <\infty$. Thus,  we can choose  $T_0\in (0,T(\infty))$ such that 
$\phi_F(\infty,T_0) \in D$. Using also that $F$ tends to $\infty$,  let us now fix $K_1 \in [\phi_F(\infty,T_0),\infty)$  and $\varepsilon \in (0,\eta]$ such that  $\overline{d}(K_1,\infty)\leq \eta$ and  for any $x\geq K_1$ and $y\in \R$ such that $d_F(x,y)<\varepsilon$,  we have  $\overline{B}_{d_F}(x,\varepsilon) \subset D$ and $\overline{d}(x,y)<\eta$.
%\marginpar{Check et on retrouve ce point delicat que  la boule est implicitement d\'efinie sur l'ensemble de d\'efinition de $d$}
By continuity and monotonicity of $t\rightarrow \phi_F(\infty,t)$,  there exists $T_1\in (0,T_0]$ such that $\phi_F(\infty,T_1)=K_1+1$.
Adding that $T(x_0)\uparrow T(\infty) $ and $\phi_F(x_0,T_1)\uparrow \phi_F(\infty,T_1)$ as $x_0\uparrow \infty$,  we have  $\phi_F(x_0,T_1)\geq K_1$ for any $x_0$ large enough and  then  $ T_{D,\varepsilon,F}(x_0)\geq T_1$.
Thus, Lemma  \ref{newlem} ensures that  for any $x_0$ large enough and $T< T_1\wedge \T{\alpha}{L}{\varepsilon}$,
%,  for any $x_0 \in E\cap D$ and $T< T_{D,\varepsilon,F}(x_0)\wedge  \T{\alpha}{L}{\varepsilon}$, we have
\be
\label{borne}
\limsup_{x_0 \rightarrow \infty, x_0\in E} 
\mathbb P_{x_0}\left( \sup_{t \leq  T} d_F(X_t, \phi_F(x_0,t))  \geq \varepsilon\right) \leq 
C(\varepsilon,T),
\ee
where   $C(\varepsilon,T)\rightarrow 0$ as $T\rightarrow 0$.  Let now $T_2 \in (0,T_1\wedge \T{\alpha}{L}{\varepsilon})$ such that $C(\varepsilon, T_2)\leq \eta$. 
Using  that  for  any $t\in [0,T_2]$, 
$\phi_F(x_0,t)\geq K_1$ and 
 $\overline{d}(\phi_F(x_0,t),\infty)\leq \eta$ for $x_0$ large enough,
 %and  $d_F(X_t,\phi_F(x_0,t))\geq \varepsilon$  as soon as
 %$\overline{d}(X_t,\phi_F(x_0,t))\geq \eta$, we obtain  
 $$ \left\{ \sup_{t\leq T_2} \overline{d}(X_t, \infty)  \geq 2\eta \right\} \subset \left\{ \sup_{t\leq T_2} \overline{d}(\phi_F(x_0,t), X_t)  \geq \eta\right\} 
 \subset\left\{  \sup_{t \leq  T_2} d_F(X_t, \phi_F(x_0,t))  \geq \varepsilon \right\}.$$
Writing $K= F^{-1}(F(\phi(\infty,T_2))+\eta)$ and using   that $\phi_F(x_0, T_2)\uparrow \phi_F(\infty,T_2) \in D$,  we have also% we have also  \\
$$ \{ X_{T_2}\geq K\}\subset \left\{ F(X_{T_2})\geq F(\phi_F(\infty,T_2))+\eta \right\} \subset \left\{  d_F(X_{T_2}, \phi_F(x_0,T_2))  \geq \varepsilon \right\},$$ 
since $F'$ is  positive on $D$ and $\eta\geq \varepsilon$. Then  (\ref{borne})  and the two last inclusions  ensure that  %,
% \marginpar{On doir virer le F de la distance}
$$\mathbb P_{x_0}\left(  \left\{ \sup_{t\leq T_2} \overline{d}(X_t, \infty)  \geq 2\eta \right\} \cup \{ X_{T_2}\geq K\}\right) \leq \eta$$
for $x_0$ large enough.
Moreover, by  (\ref{tensioncompact}), for any $T\geq T_2$,  for $\delta$ small enough,
$$\ \sup_{x \in  E; x\leq K}\p_{x}\left( w'(X, \delta, [0,T-T_2])\geq 2\eta\right)\leq \eta. $$
Combining these two last bounds  at time $T_2$ by Markov property, we get that  for $x_0$ large enough and $\delta$ small enough,
$\p_{x_0}\left( w'(X, \delta, [0,{T}])\geq 2\eta\right)\leq 2\eta$.
The tightness is proved.
%(Markov, Gronwall and Doob for the EDS) and actually it is enough that for any $\varepsilon>0$,
\end{proof}

\begin{proof}[Proof of Theorem \ref{dim1descente}] We fix $\varepsilon >0$ and   let   $T_0\in (0,T(\infty)\wedge  \T{\alpha}{L}{\varepsilon})$  such that $\overline{B}_{d_F}(\phi_F(\infty,T_0), 2\varepsilon) \subset D$. We  observe that  
$T_{D,\varepsilon,F}(x_0)\geq T_0$ for $x_0$ large enough  since $\phi_F(x_0,T_0)\uparrow \phi_F(\infty,T_0)$ as $x_0 \uparrow \infty$ and 
$t\in [0,T(x_0))\rightarrow \phi_F(x_0,t)$ decreases. %Using  Assumptions \ref{assumedim1} and  \ref{controledim1}, we   can
We  apply Lemma
\ref{newlem} and 
 get for any $T<T_0$,
\be
\label{limsup}
\limsup_{ x_0 \rightarrow \infty, x_0 \in E} \mathbb P_{x_0}\left( \sup_{ t \leq  T}  d_F(X_t, \phi_F(x_0,t))  \geq \varepsilon\right) \leq 
C(\varepsilon,T),\ee
where $C(\varepsilon, T) \rightarrow  0$ as $T\rightarrow 0$. 
 
 We first consider the case $(i)$ and fix now also $t_0\in (0,T_0)$. The flow $\phi_F$ comes down from infinity instantaneously, so
 $\phi_F(\infty, t) <\infty$ on $[t_0, T]$. By Dini's theorem,  $\phi_F(x_0,.)$  converges to $\phi_F(\infty, .)$  uniformly on $[t_0, T]$,
 using  the  monotonicity of the convergence and the continuity of the limit.
We obtain from (\ref{limsup}) that for any  $T< T_0$,
$$\limsup_{ x_0 \rightarrow \infty, x_0 \in E} \mathbb P_{x_0}\left( \sup_{t_0 \leq t \leq T}  d_F(X_t, \phi_F(\infty,t))\  \geq 2\varepsilon\right) \leq 
C(\varepsilon,T),$$
and the  weak convergence  of  $(\mathbb P_x : x\in E)$
 to $\p_{\infty}$ yields 
$$\mathbb P_{\infty}\left( \sup_{t_0 \leq t \leq  T} d_F(X_t,\phi_F(\infty,t)) >2 \varepsilon\right) \leq 
C(\varepsilon,T).$$
Letting $t_0 \downarrow 0$ and then $T \downarrow 0$ ensures that
 $$\lim_{T\rightarrow 0 } \mathbb P_{\infty}\left( \sup_{0 < t \leq  T} d_F( X_t, \phi_F(\infty,t))  >2\varepsilon\right) =0.$$
% where the instantaneous coming down from infinity of $\phi_F$  ensures that $\phi_F(\infty,t)< \infty$ for any $t \in (0,T^a]$.
Then
$  \p_{\infty}\left( \lim_{t\downarrow 0+} F(X_t)-F(\phi_F(\infty,t))  =0\right)=1$ 
%\marginpar{Check, not markov, $\cup$ a am\'eliorier}
and $\mathbb P_{\infty}(\forall t >0 : X_t<\infty)=1$, which proves $(i)$.  

%Letting $T$ goes to $0$, and then $\varepsilon$ goes to $0$ yields the result for $i)$. \\\marginpar{ Attention il faut etendre  le rŽsultat a epsilon plus grand 1 et a plaide encore pour un lemme}
For the case $(ii)$, i.e. $\int_{\infty}^. 1/b_F(x) =\infty,$ we recall that
   $T(\infty)=\infty$, so  (\ref{limsup}) yields
$$ \mathbb P_{\infty}\left(  F(X_T) < \limsup_{x_0\rightarrow \infty} F(\phi(x_0,T))-A\right) \leq 
C(A,T)$$
for any $T\in (0, \T{\alpha}{L}{\varepsilon})$. Adding that  $F(\phi(x_0,T))\uparrow F(\phi(\infty,T))=F(\infty)=\infty$ as $x_0\uparrow \infty$, 
$$\p_{\infty} (X_T <\infty)\leq C(A,T).$$
Since $\phi(\infty,t)=\infty$ for any $t\geq 0$, $\mathfrak D_{F,A}(a,t)=(a,\infty)$ for any $A >0$. Then
$C(A,T)\leq \frac{1}{A} C(1,T)$ for $A\geq 1$ and  $C(A, T) \rightarrow  0$ as $A\rightarrow \infty$, since $C(1,T)<\infty$ by (\ref{condevar}). We get
 $\p_{\infty}(  X_T =\infty)=1$ for any $T>0$,
which  ends up the proof recalling that $X$ is a  c\`adl\`ag Markov  process under $\p_{\infty}$.
\end{proof}

\subsection{Examples and applications}
\label{Ex}

We consider here examples of processes in one dimension and  recover some known results. We also  get new estimates   and we  illustrate the assumptions required and the choice of $F$. 
Thus, we recover classical results on the coming down from infinity for $\Lambda$-coalescent and refine some of them for birth and death processes. Here $b,\sigma=0$ and the condition allowing the compensation of jumps   (Assumption \ref{assumedim1} $(ii)$) will be obvious. We also provide a criterion for  the  coming down from infinity of the Transmission Control Protocol, which is a piecewise deterministic markov process  with $b\ne 0,  \ \sigma=0$. 
 Several extensions of these results could be achieved, such as mixing branching coalescing  processes or additional  catastrophes. They are left for future works,  while the next section  considers  diffusions in higher dimension.

\subsubsection{$\Lambda$-coalescent \cite{Pitman, BBL} }
%\marginpar{et extension au kingman ? branchement coalescence? citer $\xi$ coalescent ? TLC ? Vlada}

 Pitman \cite{Pitman} has given a Poissonian representation of  $\Lambda$-coalescent. 
  We recall that  $\Lambda$ is a finite measure on $[0,1]$
  and we set  $\nu(dy)=y^{-2}\Lambda (dy)$. Without loss of generality, we assume that 
  $\Lambda [0,1]=1$ and for simplicity, we focus on coalescent without Kingman part and assume
  $\Lambda(\{0\})=0$. We consider a Poisson Point Process
on $(\R^+)^2$ with intensity  $dt\nu(dy)$ :  each atom 
 $(t,y)$ yields a coalescence event where each block is picked 
 independently with probability $y$ and all the blocks picked merge into a single bock.
Then the numbers of blocks jumps from 
  $n$  to  $B_{n,y}+1_{B_{n,y}<n}$, where $B_{n,y}$  follows a binomial distribution with parameter  ($n$,$1-y$).  Thus, the number of blocks $X_t$ at time $t$ is the solution of the SDE
\ben
X_t=X_0-\int_0^t \int_0^{1}\int_{[0,1]^{\mathbb N}}  \left(-1+\sum_{1 \leq  i\leq X_{s-}} 1_{u_i\leq y} \right)^+N(ds,dy,du),
\een
%\marginpar{peut tre faudrait il d\'efinir du? sur les cylindres}
where $N$ is a  PPM with intensity  on $\R^+\times [0,1]\times[0,1]^{\mathbb N}$ with intensity $dt\nu(dy)du$. Thus   here $E=\{1,2, \ldots \}$,  $\mathcal X=[0,1]\times[0,1]^{\mathbb N}$ is endowed with 
the cylinder $\sigma$-algebra of borelian sets of $[0,1]$, $q(dydu)=\nu(dy)du$ where $du$ is the uniform measure on $[0,1]^{\mathbb N}$, $\sigma=0$ and
%On note que  $X$ est d\'ecroissant. Ici
$$  H(x,z)=H(x,(y,u))=-\left(-1+\sum_{1 \leq i\leq x} 1_{u_i\leq y} \right)^+.$$
We follow \cite{BBL} and we denote for $x\in (1,\infty)$,
$$ F(x)=\log(x), \qquad \psi(x)=\int_{[0,1]}(e^{-xy}-1+xy)\nu(dy).$$
%\marginpar{integrability sauts contre $q$ a faire}
In particular $F$  meets the Assumption \ref{assumedim1} $(i)$ with $a>0$ and $a'=0$.
Moreover for every $x \in \mathbb N$,
\Bea
h_F(x)&=&\int_{\mathcal X} [F(x+H(x,z))-F(x)] q(dz) \\
&=& \int_{\mathcal X} \log \left(\frac{x+H(x,z)}{x}\right) q(dz) \\
&=&\int_{[0,1]} \nu(dy)\E\left(\log\left(\frac{ B_{x,y}+1_{B_{x,y}<x}}{x}\right)\right)= -\frac{\psi(x)}{x}+h(x), 
\Eea
where $h$ is bounded thanks to  Proposition  7 in \cite{BBL}. Thus
 $h$ can be extended to a bounded  $C^1$ function on $(0,\infty)$ and   Assumption \ref{assumedim1} $(ii)$ is satisfied.  Moreover,
$$\psi_F(x)=h_F(F^{-1}(x))=-\frac{\psi(\exp(x))}{\exp(x)} +h(\exp(x))$$
and  Lemma 9 in \cite{BBL}  ensures that $x \in (1, \infty) \rightarrow \psi(x)/x$ is increasing.
Then $\psi_F$ is  $(0, 2\parallel h\parallel_{\infty})$ non-expansive on 
$(0,\infty)$. Moreover here
%\marginpar{besoin de $C^1$ ? def d'ab $b_F$ ? check $b_F$ cas gŽnŽral, comme variance en gŽnŽral}
$$b_F(x)=F'(x)^{-1}h_F(x)=-\psi(x)+xh(x).$$
Adding that $\psi(x)/x\rightarrow \infty$ as $x\rightarrow \infty$, we get $b_F(x)<0$ for $x$ large enough and Assumption \ref{assumedim1} $(iv)$ is checked. 
 %\marginpar{ result de BBL sont bien vrais tout le temps ? Expliciter $w_t$ comme inf ??}
 Finally, $\widetilde{b}_F=0$ since  $\sigma=0$ and $G=0$ and  the second part of 
 Proposition 7 in  \cite{BBL} ensures that
$$V_{F}(x)= \int_{\mathcal X} [F(x+H(x,z))-F(x)]^2 q(dz)=\int_{[0,1]} \nu(dy)\E
\left(\left(\log\left(\frac{ B_{x,y}+1_{B_{x,y}<x}}{x}\right)\right)^2\right)  $$
is bounded for $x\in \mathbb N$.
Then Assumptions  \ref{assumedim1} and \ref{controledim1} are satisfied with $F=\log$, $a'=0$ and $a$ large enough. Moreover $(\p_x :  x\in \mathbb N)$ converges weakly to $\p_{\infty}$, which can be see here from Proposition \ref{cvinf}  $(i)$ since $X$ is stochastically monotone.
Thus Theorem  \ref{dim1descente} can be applied and writing
$w_t=\phi_F(\infty,t)$, we have  \\

%$X$ is stochastically monotone, $b_F$ is $L, \varepsilon$ non expansive (pour un certain $\varepsilon>0$) on $[F(a),\infty)$ and non zero for $x$ large enough
%and $\widetilde{b}_f$ is bounded and
%\be\label{condevar}\int_0^.  \sup_{ x_0 \in E } V_F(x_0,s) <\infty.\ee
\emph{ (i) If  $\int_{\infty}^. \frac{1}{b_F(x)} <+\infty,$
then  $w_t \in \mathcal C^1((0,\infty), (0,\infty))$ ,   $w_t'=-\psi(w_t)+ w_th(w_t)$ for $t>0$ and
$$\mathbb P_{\infty}(\forall t >0 : X_t<\infty)=1 \quad \text{and } \quad
\mathbb P_{\infty}\left( \lim_{t\downarrow 0+} \frac{X_t}{w_t}  =0\right)=1.$$ \\
\indent (ii) Otherwise $\mathbb P_{\infty}(\forall t \geq 0 : X_t=+\infty)=1$.} 
$\newline$

To compare with known results, let us note that
$b_F(x) \sim -\psi(x)$  as $x\rightarrow\infty$ and
  $$ \int^{\infty} \frac{1}{\psi(x)}dx<\infty \Leftrightarrow \int_{\infty}^. \frac{1}{b_F(x)} <\infty,$$
so that we recover here the criterion of coming down from infinity obtained in \cite{BL}.
This latter   is equivalent to the criterion initially proved in \cite{Schwein} : 
$$\sum_{n=2}^{\infty} \gamma_n^{-1}<\infty,$$
where
$$\gamma_n=-\int_{[0,1]}H(n,z) q(dz)=\sum_{k\geq 2} (k-1)\binom{n}{k} \int_{[0,1]}y^k(1-y)^{n-k}\nu(dy).$$

\noindent \emph{Remark 1 :   this condition can be rewritten as  $\int_{\infty} 1/b(x)dx<\infty$, where 
$b(x)$ is a locally Lipschitz function, which is non-increasing and equal to $-\gamma_n$ for any $n\in \mathbb N$ . But the proof cannot be achieved
using $F=Id$, even if $b(x)$ is non-expansive since $V_{Id}(x)$ cannot be controlled.} \\ \\
Finally, following \cite{BBL} let us consider the flow associated to the vector field $-\psi(\exp(x))/\exp(x)$ and write $v_t$ the flow starting from $\infty$.  In the case $(i)$ when the process comes down from infinity, we can use Lemma \ref{equivflott} in Appendix to check  that $\log (w_t)-\log(v_t)$ goes to $0$ as $t\rightarrow 0$ since
$\psi_F(x) + \psi(\exp(x))/\exp(x)$ is bounded. Thus
$$w_t \sim_{t\downarrow 0+} v_t, \qquad
\text{where } \quad v_t=\inf\left\{ s>0 : \int_s^{\infty} \frac{1}{\psi(x)} dx <t\right\}$$ satisfies $v_t'=\psi(v_t)$ for $t>0$. We recover here the speed of coming down from infinity of
\cite{BBL}. \\
%, where the martingale approach for the study of coming down from infinity has been first introduced, so as trajectorial control of functions using one dimensional properties. \\

\noindent \emph{Remark 2 : we have here proved that the speed of coming down from infinity is $w$  using Theorem  \ref{dim1descente} and 
\cite{BBL} and then observe that this speed function is equivalent to $v$. It is possible to recover directly that $v$ is the speed of coming down from infinity by using Proposition \ref{ctrltpscourt}  and  a slightly different decomposition of the process $X$ following \cite{BBL} : 
\ben
\log(X_t)=\log(X_0)-\int_0^t \psi(X_s)ds +\int_0^t \int_{\mathcal X}  \log\left(\frac{ X_{s-} + \left(-1+\sum_{i\leq X_{s-}} 1_{u_i\leq y} \right)^+}{X_{s-}}\right)\widetilde{N}(ds,dy,du)+A_t,
\een
where $A_t=\int_0^th(X_s)ds$ is a process with finite variations. More generally, one could
extend  the result of Section \ref{EDS} by adding  a term with finite variations in the SDE.} \\

\noindent \emph{Remark 3 : let us also mention that the speed of coming down from infinity for some $\Xi$ coalescent has been obtained in \cite{limxsi}
with a similar method than \cite{BBL}. The reader could find there other and detailed information about  the coming down from infinity of coalescent processes.  Finally, we mention \cite{LT1, LT2} for stimulating recent results
on the description of the fluctuations of the $\Lambda$- coalescent around the dynamical system $v_t$ for small times.}

%\marginpar{Janvier 2015 : Clement Foucart me dit qu'ils travaille sur les Lambda coalescent avec branchement}

\subsubsection{Birth and death processes \cite{Doorn1991, BMR}}
\label{secBD}
%\marginpar{ (et extension aux naissances multiples).?}
We consider a birth and death process $X$ and we denote by $\lambda_k$ (resp. $\mu_k$) the birth rate (resp. the death rate)
when the population size is equal to $k \in E=\{0,1,2, \ldots\}$.  
We  assume that $\mu_0=\lambda_0=0$ and $\mu_k>0$ for $k\geq 1$ and we 
denote
$$\pi_1=\frac{1}{\mu_1},  \quad \pi_k=\frac{\lambda_1\cdots\lambda_{k-1}}{\mu_1\cdots\mu_k} \ \ (k\geq 2).$$
We also
assume that  
\be
\label{ext}
\sum_{k\geq 1} \frac{1}{\lambda_k \pi_k}=\infty.
\ee
Then the process $X$ is well defined on $E$ and eventually becomes extinct  a.s. \cite{KarlinMcGregor57,Karlin1975}, i.e. $T_0=\inf\{ t>0 : X_t=0\}<\infty$ p.s.
It is the unique strong solution on $E$ of the following SDE
\ben
X_t=X_0+\int_0^t \int_0^{\infty} (1_{z\leq \lambda_{X_{s-}}}-1_{ \lambda_{X_{s-}}< z\leq  \lambda_{X_{s-}}+ \mu_{X_{s-}}}) N(ds,dz)
\een
where $N$ is a Poisson Point Measure  with intensity $dsdz$ on $[0,\infty)^2$.
Lemma  2.1 in \cite{BMR}
ensures that $(\p_x)_{ x\in E}$ converges weakly to $\p_{\infty}$. It can  also be derived from Proposition \ref{cvinf}  $(i)$ since $X$ is stochastically monotone. 
Under the extinction assumption (\ref{ext}), the following criterion for the coming down from infinity is well known  \cite{Anderson1991} : 
 \begin{equation}
\label{cvS}
S=\lim_{n\to \infty} \Esp_{n}(T_{0})=\sum_{i\geq1}\pi_i+\sum_{n\geq1}\frac{1}{\lambda_n\pi_n}\sum_{i\geq n+1}\pi_i
% =\sum_{n\geq0} \left(\sum_{i\geq n+1}\frac{\lambda_{n+1}\cdots\lambda_{i-1}}{\mu_{n+1}\cdots\mu_i} \right)
< +\infty.
\end{equation}

%\marginpar{donner la cond de $(\alpha,L)$ non expansif  pour les BD ?}
The speed of coming down from infinity of birth and death processes has been obtained in \cite{BMR} for regularly varying rate (with index $\varrho>1$)
and a birth rate negligible compared to the death rate. Let us here get a finer result 
for a relevant subclass which
 allows  rather simple computations and  describes   competitive model in  population dynamics. It 
  contains in particular the logistic birth and death process. 
 %It can be extended easily to larger classes provided that the birth and the death rates have the same asymptotic behavior  and in  particular it could include Kingman coalescent or other coalescent without multiple collisions. 
 \begin{prop} \label{regvar} We assume that there exist   $b\geq 0$, $c>0$  and $\varrho >1$ such that
 $$\lambda_k=bk, \qquad \mu_k=ck^{\varrho} \qquad (k\geq 0).$$
 Then for any $\alpha\in (0,1/2)$,
 $$\p_{\infty}\left( \lim_{t\downarrow 0+} t^{\alpha/(1-\varrho)}(X_t/w_t-1)=0\right)=1,$$
 where $$w_t \sim_{ t\downarrow 0+}  [ct/(\varrho-1)]^{1/(1-\varrho)}.$$
  \end{prop}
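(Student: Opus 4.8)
The plan is to apply Theorem~\ref{dim1descente}(i) with the power transformation $F(x)=x^\beta$, choosing the exponent $\beta$ in the window $(\alpha,1/2)$; this window is non-empty precisely because $\alpha<1/2$, which is the source of the restriction in the statement. Here $b=\sigma=G=0$ and the jumps are encoded by $H$ with $q(dz)=dz$, so for integer $k$ one has $h_F(k)=\lambda_k[(k+1)^\beta-k^\beta]+\mu_k[(k-1)^\beta-k^\beta]$, which I would extend to real $x>1$ by the same formula with $\lambda_x=bx$, $\mu_x=cx^\varrho$ (with $F$ extended smoothly below $x=1$). A Taylor expansion gives $h_F(x)=\beta x^{\beta-1}(\lambda_x-\mu_x)(1+o(1))\sim-c\beta x^{\beta+\varrho-1}$, so $b_F(x)=h_F(x)/F'(x)\sim-cx^\varrho$. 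Thus, for $a$ large enough, $h_F$ is $C^1$ and strictly decreasing on $(a,\infty)$ and $b_F<0$ there, which secures Assumption~\ref{assumedim1}.

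For Assumption~\ref{controledim1} I would first note that $\psi_F=h_F\circ F^{-1}$ since $b=0$; as $F^{-1}$ is increasing and $h_F$ is (eventually) decreasing on $(a,\infty)$, the field $\psi_F$ is non-increasing, i.e. non-expansive on $(F(a),\infty)$, and we may take $L=0$ and additive constant $0$ (so $\T{0}{0}{\varepsilon}=+\infty$ and the factor $e^{4LT}$ disappears). The control of fluctuations is where the threshold $1/2$ enters. Since $\sigma=G=0$ one has $\widetilde b_F\equiv0$, while $V_F(x)=\lambda_x[(x+1)^\beta-x^\beta]^2+\mu_x[(x-1)^\beta-x^\beta]^2\sim c\beta^2 x^{2\beta+\varrho-2}$. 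The set $\mathfrak D_{F,\varepsilon}(a,t)$ consists of the $x$ with $x^\beta\le w_t^\beta+\varepsilon$, so its upper end is $\approx w_t$ for $t$ small; hence $\hat V_{F,\varepsilon}(a,t)\sim \varepsilon^{-2}c\beta^2 w_t^{2\beta+\varrho-2}$, apart from the harmless case where the exponent is $\le0$ and $V_F$ stays bounded. Using $w_t^{2\beta+\varrho-2}\sim \mathrm{const}\cdot t^{(2\beta+\varrho-2)/(1-\varrho)}$, the integral $\int_0^\cdot\hat V_{F,\varepsilon}(a,t)\,dt$ is finite iff $(2\beta+\varrho-2)/(1-\varrho)>-1$, i.e. iff $\beta<1/2$. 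This is the crucial computation: it verifies (\ref{condevar}) exactly when $\beta<1/2$.

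Having secured both assumptions, I would identify the flow. The process is stochastically monotone, so $(\p_x)_{x\in E}$ converges weakly to $\p_\infty$ by Proposition~\ref{cvinf}(i), and $\int_\infty^\cdot 1/b_F(x)\,dx<\infty$ because $b_F(x)\sim-cx^\varrho$ with $\varrho>1$, placing us in case (i). Setting $w_t=\phi_F(\infty,t)$, solving $\int_\infty^{w_t}dx/b_F(x)=t$ with $b_F(x)\sim-cx^\varrho$ (comparing with the explicit solution of $v'=-cv^\varrho$, e.g. via Lemma~\ref{equivflott}) yields $w_t\sim[ct/(\varrho-1)]^{1/(1-\varrho)}$, and in particular $w_t^\beta\sim\mathrm{const}\cdot t^{\beta/(1-\varrho)}$. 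Theorem~\ref{dim1descente}(i) then gives, $\p_\infty$-a.s., that $\zeta_t:=F(X_t)-F(\phi_F(\infty,t))=X_t^\beta-w_t^\beta\to0$ as $t\downarrow0$.

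The last step is purely deterministic, on the almost sure event $\{\zeta_t\to0\}$. Since $w_t^\beta\to\infty$ we get $X_t^\beta/w_t^\beta\to1$, hence $X_t/w_t\to1$, and $(1+\zeta_t/w_t^\beta)^{1/\beta}-1\sim\zeta_t/(\beta w_t^\beta)$ gives $X_t/w_t-1\sim\zeta_t/(\beta w_t^\beta)$. Multiplying by $t^{\alpha/(1-\varrho)}$ and using $w_t^{-\beta}\sim\mathrm{const}\cdot t^{\beta/(\varrho-1)}$, the deterministic prefactor becomes $\mathrm{const}\cdot t^{(\beta-\alpha)/(\varrho-1)}\to0$ because $\beta>\alpha$; multiplied by $\zeta_t\to0$ this forces $t^{\alpha/(1-\varrho)}(X_t/w_t-1)\to0$, which is the claim. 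I expect the main obstacle to be the sharp fluctuation estimate of the second paragraph: one must check that the supremum defining $\hat V_{F,\varepsilon}$ is governed by its value near $w_t$ and track the exponents carefully, since it is precisely the integrability of $\hat V_{F,\varepsilon}$ that imposes $\beta<1/2$, which together with the requirement $\beta>\alpha$ needed for the rate confines $\alpha$ to $(0,1/2)$.
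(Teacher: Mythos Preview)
Your proof is correct and mirrors the paper's approach: the paper also applies Theorem~\ref{dim1descente}(i) with a power transformation, checking non-expansivity via the eventual monotonicity of $h_F$ and integrability of $\hat V_{F,\varepsilon}$ through the same exponent calculation that forces $\beta<1/2$. The only cosmetic differences are that the paper takes $F(x)=(1+x)^\alpha$ (so $F\in C^2$ on $(-1,\infty)$ without needing your smoothing near $0$) and uses the exponent $\beta=\alpha$ directly; with that choice your final prefactor $t^{(\beta-\alpha)/(\varrho-1)}$ becomes a nonzero constant and the conclusion follows immediately from $\zeta_t\to 0$, so your detour through $\beta>\alpha$ is harmless but unnecessary.
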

\noindent  This complements the results obtained in \cite{BMR}, where 
  it was shown that $X_t/w_t\rightarrow 1$ as $t\downarrow 0$. The proof used the decomposition of the trajectory in terms of the first hitting time of integers, which works well (in one dimension) when simultaneous  deaths can not occur.
 The fact that $X$ satisfies a central limit theorem when $t\rightarrow 0$ under $\p_{\infty}$ (see Theorem 5.1 in \cite{BMR})
ensures that the previous result is sharp in the sense that it does not hold for $\alpha\geq 1/2$. \\

\noindent \emph{Remark.  Using  (\ref{decc}) and  Lemma \ref{equivv} in Appendix, a more explicit form of  the previous result can be given  for $\alpha <(\varrho-1)\wedge 1/2$ :
%\marginpar{tout ˆ checker ds cstts}
$$\p_{\infty}\left( \lim_{t\downarrow 0+} t^{\alpha/(1-\varrho)}\left( \frac{X_t}{[ct/(\varrho-1))]^{1/(1-\varrho)}}-1\right)=0\right)=1.$$}

%\marginpar{dire que c'est plus ou moins connu}

Before the proof, we   consider the critical case where the competition rate  is  slightly larger than the birth rate. 
We  recover here the criterion for the coming down from infinity using Theorem  \ref{dim1descente}. We complement this result by providing estimates
both when the process comes and does not come from infinity. 
The function $f_{\beta}$ defined by $$f_{\beta}(x)=\int_2^{2+x}  1/\sqrt{y\log(y)^{\beta}} dy.$$
 provides the best distance (i.e. the fastest increasing function going to infinity) allowing to compare the process and the flow
 by bounding the quadratic variation. It  allows in particular to capture the fluctuations   when they do not come down from infinity, see $(ii)$ below.
\begin{prop} \label{lautre}
We assume that there exist $b\geq 0$,  $c>0$ and $\beta> 0$ such that
$$\lambda_k=bk, \qquad \mu_k=ck\log(k+1)^{\beta} \qquad (k\geq 0).$$
(i)
If $\beta>1$, then $\p_{\infty}(\forall t>0 : X_t<+\infty)=1$ and 
  $$\p_{\infty}\left( \lim_{t\downarrow 0+} f_{\beta}(X_t)-f_{\beta}(w_t)=0\right)=1,$$
   where $w_t=\phi_{f_\beta}(\infty,t)\in \mathcal C^1((0,\infty),(0,\infty))$.\\ \\
%\marginpar{donner un equiv avec l'index ?}
%$$\p_{\infty}\left( \lim_{t\downarrow 0+} f(X_t)-f(w_t)=0\right)=1,$$
%where $f_{\beta}(x)=\int_1^{1+x}  1/(y\log(y)^{\beta}) dx$ and $w_t=\phi_f(\infty,t)\in \mathcal C^1((0,\infty),(0,\infty))$. 
(ii) If $\beta\leq 1,$ $\p_{\infty}(\forall t\geq0 : X_t=+\infty)=1$
and for any $\varepsilon>0$,
$$\lim_{T\rightarrow 0} \limsup_{x_0 \rightarrow \infty, x_0\in \mathbb N} \p_{x_0}\left(\sup_{t\leq T} \vert f_{\beta}(X_t) -f_{\beta}(\phi_{f_{\beta}}(x_0,t)) \vert \geq \varepsilon\right)=0.$$
\end{prop}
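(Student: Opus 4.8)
The plan is to realise this as a direct application of the one-dimensional coming-down dichotomy, Theorem \ref{dim1descente}, with the transformation $F=f_\beta$, so that the whole proof reduces to checking its hypotheses and evaluating the Bertrand-type integral $\int_\infty^. 1/b_F$. Since $b=\sigma=G=0$ and the only jumps are $\pm1$, carried by $H$, with rates $\lambda_x=bx$ and $\mu_x=cx\log(x+1)^\beta$ extended smoothly to $x>a$, one has $\widetilde b_F\equiv 0$ and
$$h_F(x)=\lambda_x[F(x+1)-F(x)]+\mu_x[F(x-1)-F(x)],\qquad b_F=h_F/F',\qquad \psi_F=h_F\circ F^{-1},$$
together with $V_F(x)=\lambda_x[F(x+1)-F(x)]^2+\mu_x[F(x-1)-F(x)]^2$. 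First I would record the elementary facts about $f_\beta$: its derivative $f_\beta'(x)=1/\sqrt{(2+x)\log(2+x)^\beta}$ is positive, $f_\beta(x)\to\infty$, and $f_\beta(x)\sim_{x\to\infty} 2\sqrt{x}/\log(x)^{\beta/2}$. This gives Assumption \ref{assumedim1}(i) and, since $b=0$ and the jumps have locally finite intensity, the regularity requirements (ii)--(iii) are immediate.

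The analytic core is the asymptotic analysis at infinity. Using $F(x\pm1)-F(x)=\pm F'(x)+\tfrac12 F''(x)+\dots$ I would obtain $b_F(x)=(\lambda_x-\mu_x)+\tfrac12(\lambda_x+\mu_x)F''(x)/F'(x)+\dots$, and since $\lambda_x-\mu_x\sim-cx\log(x)^\beta$ dominates, this yields $b_F(x)\sim-cx\log(x)^\beta<0$ for $a$ large, giving Assumption \ref{assumedim1}(iv). The same expansion shows that $V_F(x)\to c$: the death contribution $\mu_x F'(x)^2\sim c$ stays bounded while the birth contribution is of order $1/\log(x)^\beta\to0$. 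This boundedness of $V_F$ is precisely why $f_\beta$ is the correct distance, and it makes $\hat V_{F,\varepsilon}(a,\cdot)$ bounded, so that $\int_0^.\hat V_{F,\varepsilon}(a,t)\,dt<\infty$ for every $\varepsilon$, verifying condition (\ref{condevar}). For non-expansivity I would differentiate $\psi_F(y)=h_F(F^{-1}(y))$, obtaining $\psi_F'(y)=h_F'(x)/F'(x)$ with $x=F^{-1}(y)$; the leading behaviour $h_F(x)\sim -c\sqrt{x}\log(x)^{\beta/2}$ forces $\psi_F'(y)\sim-\tfrac{c}{2}\log(x)^\beta<0$ for $y$ large, so enlarging $a$ makes $\psi_F$ non-increasing, i.e.\ $(0,0)$ non-expansive on $(F(a),\infty)$, which also gives $\T{\alpha}{L}{\varepsilon}=+\infty$. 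This completes Assumptions \ref{assumedim1} and \ref{controledim1}.

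With the hypotheses in place, the weak convergence of $(\p_x)_{x\in E}$ to $\p_\infty$ follows from Proposition \ref{cvinf}(i), since $E=\{0,1,2,\dots\}$ and the birth and death process is stochastically monotone. It then remains to evaluate the criterion: as $1/b_F(x)\sim-1/(cx\log(x)^\beta)$, the classical Bertrand integral $\int^\infty dx/(x\log(x)^\beta)$ converges if and only if $\beta>1$, so $\int_\infty^. 1/b_F<\infty$ exactly when $\beta>1$. For $\beta>1$, Theorem \ref{dim1descente}(i) gives $\p_\infty(\forall t>0:X_t<\infty)=1$ and $\p_\infty(\lim_{t\downarrow0}f_\beta(X_t)-f_\beta(w_t)=0)=1$ with $w_t=\phi_{f_\beta}(\infty,t)$; the finiteness and $\mathcal C^1$ regularity of $w_t$ come from the instantaneous coming down of the flow together with $b_F$ being locally Lipschitz on $(a,\infty)$.

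For $\beta\le1$, Theorem \ref{dim1descente}(ii) yields $\p_\infty(\forall t\ge0:X_t=\infty)=1$, and the uniform fluctuation estimate is read off directly from Lemma \ref{newlem}. Indeed $d_F(x,y)=|f_\beta(x)-f_\beta(y)|$, and since the flow is decreasing in $t$ with $\phi_{f_\beta}(x_0,T)\uparrow\infty$ as $x_0\to\infty$, the $\varepsilon$-tube stays in $D=(a,\infty)$ and $T_{D,\varepsilon,F}(x_0)\ge T$ for $x_0$ large; combined with $\T{\alpha}{L}{\varepsilon}=\infty$, Lemma \ref{newlem} applies and the bound $C(\varepsilon,T)\to0$ as $T\to0$, after taking $\limsup_{x_0\to\infty}$, gives the claimed double limit. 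The main obstacle is the asymptotic bookkeeping of the second-order Taylor corrections in $h_F$ and in its derivative, which must be controlled uniformly enough to pin down simultaneously $b_F(x)\sim-cx\log(x)^\beta$, the boundedness of $V_F$, and the sign of $\psi_F'$; once these asymptotics are secured, the remaining steps are routine verifications and citations.
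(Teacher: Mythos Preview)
Your proposal is correct and follows essentially the same route as the paper: take $F=f_\beta$, verify Assumptions \ref{assumedim1} and \ref{controledim1} via the asymptotics $b_F(x)\sim -cx\log(x)^\beta$, the boundedness of $V_F$, and the eventual monotonicity of $h_F$ (hence non-expansivity of $\psi_F$), then invoke Theorem \ref{dim1descente} for the dichotomy and Lemma \ref{newlem} for the uniform estimate in (ii). The only cosmetic difference is that the paper also remarks that the bare criterion $\beta>1$ can alternatively be read off from the classical condition $S<\infty$ or from $F(x)=(1+x)^\alpha$, whereas you obtain it directly through the Bertrand integral for $1/b_F$ with $F=f_\beta$.
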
 
We  do not provide  more explicit estimates for the  flow $\phi_{f_{\beta}}$  or for  $w_t$ in short time for  that case and we turn to the proof of the two previous propositions.
Let us specify  notation for the birth and death process.
Here $\chi=[0,\infty)$, $q(dz)=dz$ and
$$H(x,z)=1_{z\leq \lambda_x} -1_{\lambda_x < z\leq \lambda_x+\mu_x}.$$
Letting $F\in  \mathcal C^1((-1,\infty),\R)$, we have $\int_{\mathcal X} \vert F(x+H(x,z))-F(x) \vert q(dz)<\infty$ and   %$h_F(x)$  is defined  by
$$
h_F(x)%=\int_{\mathcal X}  [F(x+H(x,z))-F(x)]  q(dz)
= (F(x+1)-F(x))\lambda_x+ (F(x-1)-F(x))\mu_x
$$
for $x \in \{0,1,\ldots\}$.
 For the  classes of  birth and death  rates $\lambda,\mu$ considered in the two previous propositions,   $h_F$ is   well defined  on $(-1,\infty)$ by the identity above and  $h_F\in \mathcal C^1((-1,\infty),\R)$.  Assumption \ref{assumedim1} (ii) will be checked with $a'=-1$. Finally
$$V_F(x)=(F(x+1)-F(x))^2\lambda_x+ (F(x)-F(x-1))^2\mu_x.$$

 \begin{proof}[Proof of Proposition \ref{regvar}]
 We  consider now  $\alpha \in (0, 1/2)$ and
 $$F(x)=(1+x)^{\alpha} \quad (\alpha\in (0,1/2)).$$
 Then $F'(x)>0$ for $x>-1$ and $h_F(x)= ((x+2)^{\alpha}-(x+1)^{\alpha})bx+ (x^{\alpha}-(x+1)^{\alpha})cx^{\varrho}$ and there exists $a>0$ such that $h_F'(x)<0$ for $x\geq a$.
 This ensures that Assumption \ref{assumedim1} is checked with $a'=-1$ and $a$. Moreover
$\psi_F=h_F\circ F^{-1}$  is non-increasing and thus 
 $\text{non-expansive on } ( F(a),\infty).$ \\
Adding that here
$$h_F(x) \sim_{x\rightarrow\infty} -c\alpha x^{\varrho +\alpha -1}$$
%It is well defined  on $(-1,\infty)$ and
%\marginpar{to check}
we get
\be
b_F(x)=F'(x)^{-1}h_F(x)%=\alpha^{-1}(1+x)^{1-\alpha}h_F(x)  %\sim_{x\rightarrow\infty}    
= -c(1+x)^{\varrho} +\mathcal O(x^{\max( \varrho-1,  1)}) \qquad (x\rightarrow \infty)\label{decc}
\ee
and one can use  Lemma \ref{equivflot} in Appendix with $\psi_1(x)=b_F(x)$ and $\psi_2(x)=-cx^{\varrho}$  to check that
%\marginpar{Check and explain, equiva enough ? Il taut un lemme, deja utilize pour coalescent } 
\be 
\label{equiv}
 \phi_F(\infty,t)\sim_{t\downarrow 0+}[ct/(\varrho-1)]^{1/(1-\varrho)}.
 \ee
Finally
$$V_F(x)=((x+2)^{\alpha}-(x+1)^{\alpha})^2bx+ ((x+1)^{\alpha}-x^{\alpha})^2 cx^{\varrho}
%\sim \alpha x^{2\alpha -2}\mu_x
\sim \alpha^2 c x^{\varrho +2\alpha -2} \qquad  (x\rightarrow \infty).$$
 Adding that 
%$F'(\phi_F(\infty,t))=\alpha\phi_F(\infty, t)^{\alpha-1}$ and 
for any $T>0$, there exists $c_0>0$ such that $ \phi(\infty,t)\leq c_0t^{1/(1-\varrho)}$ for $t\in [0,T]$ and that 
$F^{-1}(y)=y^{1/\alpha}-1,$
 then   for any  $\varepsilon>0$, there exists $c_0'>0$ such that for any $t\leq T$,
%Thus,    for any $T, \varepsilon>0$ such that $\varepsilon F'(x_0) <\phi_F(\infty,T)$, we have for any $s\leq T$,
$$\hat{V}_{F, \varepsilon}(a,t) \leq \varepsilon^{-2} \sup \left\{ V_F(x) \ :  \  0   \leq 
x \leq ((\phi_F(\infty,t)+1)^{\alpha}+ \varepsilon)^{1/\alpha}-1  \right\} \leq  \ c_0' (t^{1/(1-\varrho)})^{\varrho +2\alpha -2}.$$
Using that $(\varrho +2\alpha -2)/(1-\varrho)=-1+(2\alpha -1)/(1-\varrho)>-1$ since   $\alpha<1/2$, we obtain
$$\int_0^.  \hat{V}_{F, \varepsilon}(a,t)dt < \infty.$$
Thus Assumptions   \ref{assumedim1} and \ref{controledim1} are satisfied and
Theorem  \ref{dim1descente} $(i)$ can be applied, since $\int_{\infty}^. 1/b_F(x)<\infty$. Defining 
$w_t=\phi_F(\infty,t)$, we get
$\p_{\infty}\left( \lim_{t\downarrow 0+} X_t^{\alpha}-w_t^{\alpha}=0\right)=1$ for any $\alpha \in (0,1/2)$.
This ends up the proof recalling $(\ref{equiv})$. 
\end{proof}

%\marginpar{Et maintenant pour raffiner  $F(x)=x^{1/2}/f(x)$ croissant.}
%$$V_F(x)\sim F'(x)^2 \mu_x \sim  (x^{-1/2}/f(x))^2\mu_x\sim \alpha x^{\varrho -1}/f(x)^2$$
%$$\int_0^. t^{-1}/f(t^{1/(\varrho-1)})^2$$
%et donc $f(x)=(\log x)^{\beta}$ avec $\beta >1/2$ est possible. }

%\marginpar{\paragraph{Multiple birth.} Ca marche pareil }
%dans le cas de naissances multiples (dont le nombre de est donn\'ee par une probabilit\'e  $\mu$ sur les entiers), 
%\ben
%X_t=X_0+\int_0^t \int_0^{\infty} 1_{u\leq \lambda(X_{s-})} zN(ds,du,dz)-\int_0^t \int_0^{\infty} 1_{u\leq \mu(X_{s-})}N'(ds,du)
%\een
%avec $N$ MPP d'intensit\'e $dsdu\mu(dz)$ telle que $m:=\int_0^{\infty} z\mu(dz)<\infty$. }
\begin{proof}[Proof of Proposition \ref{lautre}] The criterion $\beta>1$ for the coming down from infinity
can be derived easily  from the criterion  $S<\infty$ recalled in (\ref{cvS}).
% and the fact that $\p_n$ converges to $\p_{\infty}$. 
It is also a consequence of Theorem \ref{dim1descente} using  $F(x)=(1+x)^{\alpha}$ as in the  previous proof and the integrability criterion for $\int_{\infty}^. 1/b_F(x)$, using that
$b_F(x)=h_F(x)/F'(x)\sim -c x\log(x+1)^{\beta}$ as $x\rightarrow \infty$. 
%Moreover  following the steps of the previous proof  also ensures the second part of $(i)$.
% $\p_{\infty}\left( \lim_{t\downarrow 0+} X_t^{\alpha}-w_t^{\alpha}=0\right)=1,$
 %  where $w_t=\phi_{f_\beta}(\infty,t)\in \mathcal C^1((0,\infty),(0,\infty)).$
\\
Let us turn to the  proof of the estimates $(i-ii)$ and take  $F=f_{\beta}$. Then $F(x)\rightarrow \infty$ as $x\rightarrow\infty$,
 $$h_F(x)= bx\int_{2+x}^{3+x}  \frac{1}{\sqrt{y\log(y)^{\beta}}}dy-cx\log(x)^{\beta}\int_{1+x}^{2+x}  \frac{1}{\sqrt{y\log(y)^{\beta}}}dy$$
% \mu_x=F'(x)x(b-c\log(x)^{\beta})$$
 %decreasing for $x$ large  enough, 
 %since 
 and its derivative 
%$$F''(x)x(b-c \log(x)^{\beta} )+ F'(x)(b-c\log(x)^{\beta}-c\beta \log(x)^{\beta-1}) $$
is negative for $x$ large enough. Then Assumptions   \ref{assumedim1} is satisfied with again $a'=1$ and $a$ large enough.
 So
 $\psi_F(x)=h_F(F^{-1}(x))$
 is decreasing  and thus  non-expansive for $x$ large enough.
Moreover there exists $C>0$ such that
$$V_F(x)\leq  C x\log(x)^{\beta}\left(\int_{1+x}^{2+x}  \frac{1}{\sqrt{y\log(y)^{\beta}}}dy\right)^2.$$
So $V_F$
is bounded and Assumption \ref{controledim1}  is satisfied. Then $(i)$ comes from Theorem \ref{dim1descente} $(i)$ and $(ii)$  comes from  Lemma \ref{newlem} observing that $T_{D,\varepsilon,F}(x_0)\rightarrow\infty$  as $x_0\rightarrow \infty$.
% Theorem  \ref{dim1descente} by noting that
%$$b_F(x)= (F'(x))^{-1}h_F(x)=-O(x\log(x)^{\beta})$$
%as $x \rightarrow \infty$. Then $\int_{\infty}^. \frac{1}{b_F(x)} dx=+\infty$ iff $\beta>1$, which ends up the proof.
\end{proof}

%\marginpar{To check, http://perso.math.univ-toulouse.fr/miclo/files/2015/06/stationary.pdf
%et les papier de ] Li-Juan Cheng and Yong-Hua Mao sur les strong ergodicity qui equivaut a descendre de l infini}

\subsubsection{Transmission Control Protocol } 
The 
Transmission Control Protocol  \cite{TCP}
is a model for transmission of data, mixing a continuous (positive) drift which describes the growth of 
the data transmitted and jumps  due to congestions, where the size of the data are divided by two.
%\marginpar{
%mod\'elise les transmissions de données sur des réseaux (comme Internet).
%Les donn\'ees sont fragment\'ees en paquets . A chaque \'etape, on envoie un certain
%nombre de paquets. S'ils sont tous re\c cus alors on envoie un paquet de plus à l '\'etape suivante.
%D\`es qu'un paquet est perdu, on parle de congestion et alors on divise par deux le nombre de
%paquets que l'on envoie.}% Ce type de processus permet d'optimiser le nombre de paquets envoy\'es. Alors}
Then the size $X_t$ of  data at time $t$ is given by the unique strong  solution on $[0,\infty)$  of
$$X_t=  x_0+bt-\int_0^t  \1{u\leq r(X_{s-}) }\frac{X_{s-}}{2} N(ds,du), $$ 
where $x_0 \geq 0$, $b>0$, $r(x)$ is a continuous positive non-decreasing function and $N$ is PPM on $[0,\infty)^2$ with intensity $dsdu$.  This is a classical example of Piecewise Deterministic Markov process. Usually, $r(x)=cx^{\beta}$, with $\beta \geq 0$, $c>0$. %First, let us note that the process is not stochastically monotone and the convergence of $(\p_x : x \geq 0)$ is
%left open. 
The choice of 
$F$ is a bit more delicate here owing to the size and intensity of the fluctuations.  Consider $F$ such that $F'(x)>0$ for $x>0$. Now $E=[0,\infty)$,
$h_F(x)=r(x)(F(x/2)-F(x))$,
$$b_F=b+h_F/F', \qquad \psi_F=(bF'+h_F)\circ F^{-1}.$$
% $h_F(x)=bF'(x)+r(x)(F(x/2)-F(x))$ and  $\psi_F(x)=h_F\circ F^{-1}$.  As soon as $r$ is differentiable, we have
%$$h_F'(x)=bF''(x)+r'(x)(F(x/2)-F(x))+ r(x)(F'(x/2)/2-F'(x)).$$
 Finally
$$V_F(x)=r(x)(F(x/2)-F(x))^2 $$
and we cannot use   $F(x)=(1+x)^{\gamma}$ or  $F(x)=\log(1+x)^{\gamma}$ since then the second part of Assumption \ref{controledim1} does not hold. We need to reduce the size of the jumps
even more and take $F(x)= \log (1+ \log(1+x)).$
 The model is not stochastically monotone but Lemma \ref{newlem}  can be used to get the following result, which yields a criterion for the coming down from infinity.
%\marginpar{details  a donner, domaine a faire attention, et le cas de non descente; } 
\begin{prop} (i) If there exists $c>0$ and $\beta >1$ such that
$r(x)\geq c\log(1+x)^{\beta}$ for any $x\geq 1$, then for  any $T>0$, $\eta>0$, there exists $K$ such that

 $$\inf_{x_0 \geq 0}\p_{x_0}(\exists t\leq T :  X_t\leq K)\geq 1-\eta.$$
 (ii)  If there exists $c>0$ and $\beta \leq 1$ such that
$r(x)\leq c\log(1+x)^{\beta}$ for any $x\geq 0$, then for  any $T,K>0$, 
 $$\lim_{x_0\rightarrow \infty} \p_{x_0}(\exists t\leq T :  X_t\leq K)=0.$$
\end{prop}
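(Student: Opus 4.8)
The plan is to run the one-dimensional scheme of the previous subsection with $F(x)=\log(1+\log(1+x))$, using Lemma~\ref{newlem} and the behaviour of the flow $\phi_F$. First I would verify Assumption~\ref{assumedim1} on a half-line $(a,\infty)$ with $a$ large (and $a'\in(-1,0)$): here $h_F(x)=r(x)(F(x/2)-F(x))$ with $F(x/2)-F(x)<0$, and from $F'(x)=[(1+x)(1+\log(1+x))]^{-1}\sim 1/(x\log x)$ and $F(x)-F(x/2)\sim(\log2)/\log x$ one gets $b_F(x)=b+h_F(x)/F'(x)\sim-(\log2)\,x\,r(x)<0$, so $(iv)$ holds. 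The whole dichotomy is then read off $|b_F(x)|\asymp x\,r(x)$ through $\int_{\infty}^{\cdot}1/b_F$: comparison with $\int^{\infty} dx/(x(\log x)^{\beta})$ shows that $r(x)\ge c\log(1+x)^{\beta}$ with $\beta>1$ forces $\int_{\infty}^{\cdot}1/b_F<\infty$ (flow comes down instantaneously), while $r(x)\le c\log(1+x)^{\beta}$ with $\beta\le1$ forces divergence, so $\phi_F(\infty,t)=\infty$ for all $t$.

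For (ii) I would argue straight from Lemma~\ref{newlem}, the upper bound doing all the work. It makes $V_F(x)=r(x)(F(x/2)-F(x))^2\sim(\log2)^2 r(x)/(\log x)^2$ bounded, and it yields $L$-non-expansivity of $\psi_F$ with $\alpha=0$: writing $g=bF'+h_F$ and $u=F(\cdot/2)-F$, the only increasing contribution is $r(x)u'(x)$ with $u'(x)\sim(\log2)/(x(\log x)^2)$, and $r(x)u'(x)\le L\,F'(x)$ exactly when $r(x)\lesssim\log x$, which $r\le c(\log)^{\beta}$, $\beta\le1$, guarantees (the non-smooth case uses $g(x_1)-g(x_2)\le r(x_2)(u(x_1)-u(x_2))$). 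Since $\alpha=0$ gives $\T{\alpha}{L}{\varepsilon}=\infty$, Lemma~\ref{newlem} applies for every $T$; using $\phi_F(x_0,t)\uparrow\infty$ and $T_{D,\varepsilon,F}(x_0)\to\infty$ as $x_0\to\infty$, on $\{\sup_{t\le T}d_F(X_t,\phi_F(x_0,t))<\varepsilon\}$ one has $F(X_t)\ge F(\phi_F(x_0,T))-\varepsilon>F(K)$ for $x_0$ large, whence $\{\exists t\le T:X_t\le K\}\subset\{\sup_{t\le T}d_F\ge\varepsilon\}$. Lemma~\ref{newlem} bounds the latter by $C(\varepsilon,T)\le Ce^{4LT}\|V_F\|_\infty\,T\,\varepsilon^{-2}$, and letting $\varepsilon\to\infty$ (then $x_0\to\infty$) gives the claim.

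For (i) the argument is dual. I would fix a small $T_1\le T$; since the flow comes down, $\phi_F(\infty,T_1)<\infty$ and $T_{D,\varepsilon,F}(x_0)\ge T_1$ once $x_0$ exceeds a threshold $M_1$, so Lemma~\ref{newlem} applies and, choosing $T_1$ small, $C(\varepsilon,T_1)\le\eta$. On the good event $F(X_{T_1})\le F(\phi_F(x_0,T_1))+\varepsilon\le F(\phi_F(\infty,T_1))+\varepsilon$, hence $X_{T_1}\le K_0:=F^{-1}(F(\phi_F(\infty,T_1))+\varepsilon)$. Taking $K=\max(K_0,M_1)$ makes the estimate uniform: for $x_0\le K$ the event $\{\exists t\le T:X_t\le K\}$ holds trivially at $t=0$, and for $x_0>K\ge M_1$ it holds with probability $\ge1-\eta$.

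The hard part is Assumption~\ref{controledim1} for (i) — the non-expansivity of $\psi_F$ together with $\int_0^{\cdot}\hat V_{F,\varepsilon}(a,t)\,dt<\infty$ — and this is precisely what forces the unusually slow choice $F=\log(1+\log(1+x))$. Non-expansivity reduces to $g=bF'+h_F$ being non-increasing, and the decisive term of $g'$ carries a factor $(1-\beta)$, so $g'<0$ exactly when $\beta>1$, matching the flow criterion; meanwhile along $w_t=\phi_F(\infty,t)$ one finds $(\log w_t)^{1-\beta}\asymp t$ and $V_F(w_t)\sim(\log2)^2(\log w_t)^{\beta-2}\asymp t^{(2-\beta)/(\beta-1)}$, which is integrable near $0$ for every $\beta>1$; because $V_F\sim(\log x)^{\beta-2}$ and $F$ is $\log\log$, the $\varepsilon$-dilation of the tube $\mathfrak D_{F,\varepsilon}(a,t)$ only multiplies $\hat V_{F,\varepsilon}$ by a constant (this is exactly why a faster-growing $F$ would fail here, as the text notes). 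The genuine subtlety — and where I would be most careful — is that both computations want $r$ regular, comparable to $\log(1+x)^{\beta}$ with a controlled logarithmic derivative: for a merely continuous non-decreasing $r$ meeting only the one-sided bound, $g$ can fail to be monotone on flat stretches of $r$, so I would either restrict to regular rates (the usual $r(x)=cx^{\beta}$) or transfer from the boundary rate $c\log(1+x)^{\beta}$ by a monotonicity argument.
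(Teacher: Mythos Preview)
Your plan is essentially the paper's: same transformation $F(x)=\log(1+\log(1+x))$, same use of Lemma~\ref{newlem}, same dichotomy read off $\int_{\infty}^{\cdot}1/b_F$. The execution differs in two places. First, the paper does the coupling \emph{first}: it immediately replaces the general $r$ by $\widetilde r(x)=\widetilde c\,\log(1+x)^{\beta\wedge2}$ (using that a TCP with smaller jump rate dominates pathwise), and only then runs the analysis for this specific smooth rate. This buys two simplifications you are working hard to avoid: with $\beta\le2$ one has $V_F$ bounded outright, so there is no need to track $V_F$ along the flow or to argue that the $\varepsilon$-tube only inflates it by a constant; and for this smooth $r$ the function $bF'+h_F$ is either bounded ($\beta\le1$) or eventually decreasing ($\beta>1$), so $\psi_F$ is $(0,\alpha)$ non-expansive in both regimes, which lets the paper treat (i) and (ii) with a single estimate and then let $T\to0$ for (i) and $A\to\infty$ for (ii). Second, your $(L,0)$ non-expansivity argument for (ii) is a nice alternative and is basically correct for an $r$ that is Lipschitz, but note that Assumption~\ref{assumedim1}(ii) requires $h_F$ locally Lipschitz, and for a merely continuous non-decreasing $r$ this fails---so even in (ii) you cannot bypass the coupling you flag at the end; the paper needs it there too. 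In short: your proposal is correct once the monotone coupling is placed up front, and the paper's version is the cleaner packaging of the same idea.
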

\noindent Thus, in the first regime, the process comes down instantaneously and a.s. from infinity, while in the second  regime it stays at infinity, even if $\mathbb P_{\infty}$ has not been constructed here. In particular, if $r(x)=cx^{\beta}$ and $\beta,c>0$, the  process comes down instantaneously from infinity. If $\beta=0$, it does not, which can actually be seen easily
since in the case $r(.)=c$,  $X_t\geq (x_0+bt)/2^{N_t}$, where $N_t$ is a Poisson Process with rate $c$ and 
the right hand side goes to $\infty$ as $x_0\rightarrow \infty$ for any $t\geq 0$.
\begin{proof}  Here $E=[0,\infty)$ and we consider  $$F(x)= \log (1+ \log(1+x))$$ on $(a', \infty)$ where $a' \in (-1,0)$ is chosen such that $\log(1+a')>-1$. 
Then   
$$F'(x)=\frac{1}{(1+x)(1+\log(1+x))}>0.$$
% and   Assumption \ref{assumedim1} is checked (one can take $a=a'/2$). 
Moreover
 $$F(x/2)-F(x)=
 \log\left(1-\varepsilon(x)\right),$$
where
$$\varepsilon(x)=1-\frac{1+\log(1+x/2)}{1+\log(1+x)}=\frac{ \log(2)+\mathcal O(1/(1+x))}{1+\log (1+x)}.$$ 
We consider now 
$$r(x)=c\log(1+x)^{\beta}$$
with $c>0$ and $ \beta \in [0,2]$. We get
$$b_{F}(x)=b+c \log(1+x)^{\beta} (1+x)(1+\log(1+x))\log\left(1-\varepsilon(x)\right) \sim -c\log(2) x\log(x)^{\beta} $$
as $ x\rightarrow \infty$. Thus, Assumptions  \ref{assumedim1} is satisfied for $a'$ and $a$ large  enough. Moreover
  $$\int_{\infty}^. \frac{1}{b_F(x)}dx <+\infty \  \text{ if and only if }  \ \beta>1.$$ 
  We observe that when $\beta\leq 1$, $bF'+h_F$ is bounded. Adding that
 %$h_F'(x)<0  $and
   $h_F'(x)=c\beta (x+1)^{-1}\log(1+x)^{\beta-1}(F(x/2)-F(x))+ c\log(1+x)^{\beta}(F'(x/2)/2-F'(x))$, we get
   $(bF'+h_F)'(x)<0$ for $x$ large enough when $\beta > 1$. 
Thus for any $\beta\geq 0$, $\psi_F=(bF'+h_F)\circ F^{-1}$ is $(0,\alpha)$ non-expansive on $(F(a),\infty)$, for some $\alpha>0$ and $a$ large enough.  
Finally
$$V_{F}(x)=c\log(1+x)^{\beta}  \log(1-\varepsilon(x))^2 \sim c\log(x)^{\beta-2}$$
as $x\rightarrow \infty$ and $V_{F}$ is bounded for $\beta\leq 2$. 
So   Assumptions  \ref{assumedim1}  and  \ref{controledim1} are satisfied for $a'$ and $a$ large enough and we can apply 
Lemma \ref{newlem}. 
%for $F=\log \circ \log$ and $\beta >0$.
We get for any $x_0 \geq 0$ and $T> 0$,
\be
\label{coo}
\mathbb P_{x_0}\left( \sup_{t\leq T}\vert F(X_t) -F(\phi_F(x_0,t)) \vert \geq A \right) \leq C(A,T), 
\ee
for $A$ large enough, where $C(A,T)  \rightarrow 0$ as $T\rightarrow 0$ and $C(A,T)\leq C.T. \sup_{x \geq 0} V_F(x)/A^2$. \\

We can now prove $(i)$ and let $\beta>1$. There exists $\widetilde{c}>0$ such that $$\widetilde{r}(x)=\widetilde{c}\log(1+x)^{\beta \wedge 2}$$
 satisfies
for any $x\geq 1$ and $y \in [x,2x]$, $r(x)\geq \widetilde{r}(y)$. 
By a  coupling argument, we can construct 
 a TCP associated with the rate of jumps $\widetilde{r}$ such that 
 $X_t\leq \widetilde{X}_t$ a.s. for $t\in [0, \inf\{ s\geq 0 : X_s\leq 1\})$.  
Then  $\phi_F(x_0,t)\leq\phi_F(\infty,t)<\infty$  since $\beta >1$ ensures that  the dynamical system comes down  from infinity. Letting $T\rightarrow 0$ in (\ref{coo}) yields $(i)$. \\
To prove $(ii)$, we use similarly  the coupling   $X_t\geq \widetilde{X}_t$ with $\widetilde{r}(x)=\widetilde{c}\log(1+x)^{\beta}$ and $\beta\leq 1$ and let now $A\rightarrow \infty$ in (\ref{coo}).
This ends up the proof since $V_F$ bounded ensures that $C(A,T)\rightarrow 0$. 
\end{proof}

\subsubsection{Logistic Feller diffusions \cite{Cattiaux2009} and perspectives}
The 
coming down from infinity of diffusions of the form
$$dZ_t=\sqrt{\gamma Z_t}dB_t+h(Z_t)dt$$
has been studied in  \cite{Cattiaux2009} and is linked to the uniqueness of the quasistationary distribution (Theorem 7.3).
Writing $X_t=2\sqrt{Z_t/\gamma}$, it becomes
$$dX_t=dB_t-q(X_t)dt,$$
where $q(x)=x^{-1}(1/2-2h(\gamma x^2/4)/\gamma)$.  Under some assumptions (see Remark 7.4 in \cite{Cattiaux2009}), the coming down from infinity is indeed
equivalent to 
$$\int^{\infty} \frac{1}{q(x)} dx <\infty,$$
which can be compared to our criterion in Theorem \ref{dim1descente}.
Several extensions and new results could be obtained  using the results of this section.
In particular one may be interested to mix a diffusion part for competition, negative jumps due to coalescence and branching events.
In that vein, let us  mention  \cite{LP}.
This is one motivation to take into account the compensated Poisson measure in the definition of the process $X$, so that L\'evy processes and CSBP may be considered in general. It is  left for future stimulating works. Let us here simply mention that a class of particular interest is given by  the logistic Feller diffusion :
$$dZ_t=\sqrt{\gamma Z_t}dB_t+(\tau Z_t-aZ_t^2)dt.$$
The next part is determining the speed of coming down from infinity of this diffusion. This part actually deals more generally  with the two dimensional version of this diffusion, where non-expansivity and the behavior of the dynamical system are more delicate.
\newpage

%\marginpar{Vi Le and Etienne Pardoux II ?}

\section{Uniform estimates for two-dimensional competitive Lotka-Volterra processes}
\label{LV}We consider  the historical  Lotka-Volterra competitive model for two species. It is  given by 
the  unique solution $x_t=(x^{(1)}_t, x^{(2)}_t)$ of the following ODE on $[0,\infty)$ starting from $x_0=(x_0^{(1)},x_0^{(2)})$: 
\be
(x_t^{(1)})'&=& x_t^{(1)}(\tau_1- ax_t^{(1)}- c x_t^{(2)}) \nonumber \\
(x_t^{(2)})'&=& x_t^{(2)}(\tau_2- bx_t^{(2)}- d x_t^{(1)}), \label{dyncompet} 
\ee
where    $a,b,c,d \geq 0$. The associated  flow is denoted by $\phi$ : 
$$\phi : [0,\infty)^2\times [0,\infty)\rightarrow [0,\infty)^2, \qquad \phi(x_0,t)=x_t=(x^{(1)}_t, x^{(2)}_t).$$
The coefficients $a$ and $b$ are the intraspecific competition rates and $c,d$ are the interspecific competition rates.
We assume that
$$a,b,c,d >0$$
or $a,b>0$ and $c=d=0$, so that our results cover the (simpler) case of one-single competitive (logistic)  model.
It is well known \cite{BM, Kurtz2}  that this deterministic model is the large population approximation of individual-based model, namely birth and death processes with logistic competition, see also Section \ref{descente3}. Moreover and more generally, when births and deaths are accelerated, these individual-based models weakly converge
to 
 the
unique strong solution of the following SDE:
\begin{eqnarray}
X_t^{(1)}&=& x_0^{(1)}+\int_0^t X_t^{(1)}(\tau_1- aX_s^{(1)}- c X_s^{(2)})ds +\int_0^t \sigma_1 \sqrt{X_s^{(1)}}dB_s^{(1)} \nonumber\\
X_t^{(2)}&=&x_0^{(2)}+\int_0^t X_t^{(2)}(\tau_2- bX_s^{(2)}- d X_s^{(1)})ds +\int_0^t \sigma_2\sqrt{X_s^{(2)}}dB_s^{(2)}, \label{EDSLv}
\end{eqnarray}
for $t\geq 0$, where $B$ is two dimensional  Brownian motion. 
 This is the classical  (Lotka-Volterra) diffusion  for two competitive species, see e.g. \cite{CattiauxMeleard} for related issues on quasi-stationary  distributions.

 In this section,  we compare  the stochastic Lotka-Volterra competitive processes to the deterministic flow $\phi$  for two new regimes
allowing to capture the behavior of the process for large values.
These results rely on the statements of Section $\ref{EDS}$ which are applied
to a well chosen finite subfamily of transformations among
\be
\label{defFbg}
F_{\beta,\gamma}(x)=\begin{pmatrix} x_1^{\beta} \\ \gamma x_2^{\beta} \end{pmatrix}, \qquad  x\in (0,\infty)^2, \ \ \beta \in (0,1], \ \ \gamma>0,
\ee
using the adjunction procedure. Moreover  Poincar\'e's compactification technics for flows is used to describe and control the
 coming down  from infinity. 
  \\

First,  in Section \ref{descente2},  se study the small time behavior of the diffusion  $X=(X^{(1)},X^{(2)})$ starting from large values. 
We  compare
the diffusion $X$  to the flow $\phi(x_0,t)$  for a suitable distance which captures the fluctuations of the diffusion at infinity. %defined for  $x_0 \in [0,\infty)^2$  and $t\in [0,\infty)$.
We then derive the way  the process $X$ comes down from infinity, i.e.  its direction and its speed.   Second, in Section \ref{descente3},  we  prove that usual
scaling limits of competitive birth and death processes  (see (\ref{BDN}) for a definition)  hold uniformly with respect to the initial values,  for a suitable distance and relevant set of  parameters.  \\
  These results give answers  to two issues which have motivated this work : first, how classical competitive stochastic models regulate large populations (see in particular forthcoming Corollary  \ref{descenteLVb}); second, can we extend  individual based-models approximations of Lotka-Volterra dynamical system to arbitrarily large initial values and if yes, when and for which distance. They are the key for forthcoming works on coexistence of competitive species in varying environment.
 We believe  that the technics developed here   allow to study similarly the coming down from infinity
of these  competitive birth and death processes and other multi-dimensional stochastic processes.

\subsection{Uniform short time estimates for competitive Feller diffusions}
\label{descente2}

We consider the domain
$$\mathcal D_{\alpha} = (\alpha, \infty)^2$$ %\{ x=(x_1,x_2) \in (0,\infty)^2 : x_1 > \alpha, \ x_2> \alpha\}$$
and the distance $d_{\beta}$ on $[0,\infty)^2$ defined for $\beta >0$ by
\be
\label{distanceLV}
d_{\beta}(x,y)= \sqrt{ \vert x_1^{\beta}-y_1^{\beta}\vert^2 + \vert x_2^{\beta}-y_2^{\beta}\vert^2}=\parallel F_{\beta,1}(x)-F_{\beta,1}(y) \parallel_2. \ee
We recall that $a,b,c,d>0$ or ($a=b>0$ and $c=d=0)$ and we define
\be
\label{defdom}
T_{D}(x_0)=\inf \{ t \geq 0 : \phi(x_0,t) \not\in D\}
\ee the first time when the flow
$\phi$  starting from $x_0$ exits $D$.
\begin{thm}\label{descenteLV} For any  $\beta \in (0,1)$,  $\alpha>0$
and   $\varepsilon>0$, 
$$\lim_{T\rightarrow 0} \sup_{x_0 \in \mathcal D_{\alpha}} \mathbb P_{x_0}\left(\sup_{t\leq T\wedge T_{\mathcal{D}_{\alpha}}(x_0)} d_{\beta}(X_t,\phi(x_0,t))
\geq \varepsilon \right)=0.$$
\end{thm}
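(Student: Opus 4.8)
The plan is to apply the adjunction result, Theorem \ref{recollement}, to the diffusion (\ref{EDSLv}) using a finite subfamily of the transformations $F_{\beta,\gamma}$ from (\ref{defFbg}). Since there are no jumps here ($H=G=0$, $q=0$), the compensator vanishes: $h_{F}=0$, so $b_{F_{\beta,\gamma}}=b$ and $\phi_{F_{\beta,\gamma}}=\phi$ is exactly the Lotka--Volterra flow (\ref{dyncompet}), independently of $\beta,\gamma$. The quantities (\ref{defVF}) and (\ref{defbt}) reduce to explicit diagonal expressions, of order $\|V_{F_{\beta,\gamma}}(x)\|_1\asymp x_1^{2\beta-1}+x_2^{2\beta-1}$ and $\|\widetilde b_{F_{\beta,\gamma}}(x)\|_1\asymp x_1^{\beta-1}+x_2^{\beta-1}$ on $\mathcal D_\alpha$, with constants depending only on $\beta,\gamma,\sigma_i$. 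The target distance (\ref{distanceLV}) is $d_\beta=\|F_{\beta,1}(\cdot)-F_{\beta,1}(\cdot)\|_2$, and for every $\gamma>0$ one has $\min(1,\gamma)\,d_\beta\le\|F_{\beta,\gamma}(x)-F_{\beta,\gamma}(y)\|_2\le\max(1,\gamma)\,d_\beta$, so Assumption \ref{assumeplus}(ii) holds with $c_1,c_2$ determined by the finitely many $\gamma_i$ used; items (iii)--(iv) are immediate since $q=0$ and $b$ is polynomial.

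First I would establish the non-expansivity. By the criterion (\ref{matrice}) with $f_1(x_1)=x_1^\beta$ and $f_2(x_2)=\gamma x_2^\beta$, the matrix $A(x)=J_{\psi_{F_{\beta,\gamma}}}(F_{\beta,\gamma}(x))$ has strongly negative diagonal $A_{11}(x)=\beta\tau_1-(\beta+1)a x_1-\beta c x_2$, $A_{22}(x)=\beta\tau_2-(\beta+1)b x_2-\beta d x_1$, while the off-diagonal entries $A_{12}(x)=-\gamma^{-1}c\,x_1^{\beta}x_2^{1-\beta}$ and $A_{21}(x)=-\gamma d\,x_1^{1-\beta}x_2^{\beta}$ grow only with degree one. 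Using the weighted inequality $x_1^{\theta}x_2^{1-\theta}\le\theta x_1+(1-\theta)x_2$, one sees that the degree-two negativity of the diagonal dominates the degree-two growth of the squared off-diagonal, provided $\gamma$ is matched to the local ratio $x_1/x_2$. A single $\gamma$ cannot control $\lambda_{\max}(A+A^*)$ on all of $\mathcal D_\alpha$, so I would cover $\mathcal D_\alpha$ by finitely many overlapping open sectors $D_i=\{m_i< x_1/x_2< M_i\}\cap\mathcal D_\alpha$ and pick a constant $\gamma_i$ on each making $\psi_{F_{\beta,\gamma_i}}$ $(L_i,\alpha_i)$ non-expansive on $F_{\beta,\gamma_i}(D_i)$: the bounded-ratio restriction renders $x_1^2,x_2^2,x_1x_2$ comparable and reduces the eigenvalue inequality to bounding a continuous function of $x_1/x_2$ over a compact set.

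Next I would control the flow and the fluctuation integral. Because competition dominates near infinity, $\dot x_1\le x_1(\tau_1-a x_1)$ and $\dot x_2\le x_2(\tau_2-b x_2)$ yield a uniform bound $x_i(s)\le C/s$ for the components of $\phi(x_0,s)$ on $(0,T_0]$, independently of $x_0\in\mathcal D_\alpha$. Hence, in the $\varepsilon$-tube around the flow, $\sup V_{F_{\beta,\gamma_i}}\lesssim s^{1-2\beta}$ when $\beta\ge1/2$ (and is bounded when $\beta<1/2$), while $\widetilde b_{F_{\beta,\gamma_i}}$ stays bounded on $\mathcal D_\alpha$. Since $\beta<1$ gives $1-2\beta>-1$, the integral $\int_0^T\overline V_{d_\beta,\varepsilon}(F_{\beta,\gamma_i},x_0,s)\,ds\lesssim \varepsilon^{-2}T^{2-2\beta}+\varepsilon^{-1}T$ tends to $0$ as $T\to0$, uniformly in $x_0$. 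To invoke Theorem \ref{recollement} I also need the number $\kappa$ of sector transitions of $\phi(x_0,\cdot)$ to be bounded uniformly in $x_0$; here I would use Poincaré's compactification of the polynomial field (\ref{dyncompet}): the equilibria at infinity are finite in number and trajectories coming down from infinity approach them monotonically in angle, so $x_1/x_2$ crosses the finitely many thresholds $m_i,M_i$ boundedly often, and Assumption \ref{assumepluss} holds with $\kappa$, $\varepsilon_0$ uniform. Gathering these inputs, Theorem \ref{recollement} bounds $\mathbb{P}_{x_0}(\sup_{t\le T\wedge T_{\mathcal D_\alpha}(x_0)} d_\beta(X_t,\phi(x_0,t))\ge \varepsilon)$ by $C\sum_k\int\overline V\,ds$ for $\varepsilon\le\underline\varepsilon$ and $T$ small; letting $T\to0$ gives the claim for small $\varepsilon$, and monotonicity of the event in $\varepsilon$ extends it to all $\varepsilon>0$.

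The main obstacle is the simultaneous verification of non-expansivity on the unbounded sectors and the matching description of the flow at infinity. The inequality $\lambda_{\max}(A+A^*)\le 2L_i$ is genuinely $\gamma$- and direction-dependent, and for strongly interspecific competition (with $cd$ large relative to $ab$) it fails along the balanced directions $x_1\asymp x_2$ for every single transformation. The resolution is to let the sector decomposition and the Poincaré analysis interlock, so that a trajectory from infinity lives only in directions where some $F_{\beta,\gamma_i}$ is non-expansive and crosses between them a bounded number of times. Making this geometric picture precise---identifying the admissible incoming directions at infinity and equipping the associated cones with non-expansive transformations while keeping $\kappa$, $c_1,c_2$ and the $L_i$ uniform in $x_0$---is the delicate heart of the argument.
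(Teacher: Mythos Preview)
Your strategy is the paper's: apply Theorem \ref{recollement} with finitely many transformations $F_{\beta,\gamma_i}$ on a cone cover, get $\overline{\tau}$ non-expansivity of $\psi_{F_{\beta,\gamma_i}}$ on each piece via the symmetrized Jacobian, bound the flow by $C/t$ from the logistic inequality, integrate the fluctuation term $\sim t^{1-2\beta}$, and use Poincar\'e compactification to bound the number $\kappa$ of cone transitions uniformly. Your computations of $V_{F_{\beta,\gamma}}$, $\widetilde b_{F_{\beta,\gamma}}$ and the distance equivalence for Assumption \ref{assumeplus}(ii) are correct.

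Where you overcomplicate is the resolution of the obstacle you identify in the last paragraph. You propose to \emph{interlock} the Poincar\'e analysis with the sector choice, so that the flow only visits directions where some $F_{\beta,\gamma_i}$ is non-expansive. The paper avoids this entirely by a reduction you miss: it does not prove the estimate for the given $\beta$ directly. It first fixes an auxiliary $\beta'\in(1/2,1)$ close enough to $1$ that
\[
q_{\beta'}=4ab(1+\beta')^2+4cd(\beta'^2-1)>0,
\]
which is always possible since $\beta'^2-1\to 0$ regardless of the size of $cd/ab$. With $q_{\beta'}>0$, the determinant (\ref{valeurdet}) is positive on the cones $D_{\beta',\gamma}$ of Lemma \ref{transformation}, and Lemma \ref{rec} shows that as $\gamma$ varies these cones (via the special direction $x_1/x_2=(d\gamma^2/c)^{1/(2\beta'-1)}$ where the off-diagonal term vanishes) cover \emph{all} of $(0,\infty)^2$; a finite subcover suffices by compactness of directions. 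No restriction to flow-visited directions is needed. The Poincar\'e compactification is then used only for the bounded-$\kappa$ statement (Lemma \ref{Poincc}), not for non-expansivity.

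Once the theorem is proved for this single $\beta'$, the general $\beta\in(0,1)$ follows in one line: on $\mathcal D_{\alpha}$ (more precisely in a $d_{\beta'}$-tube around the flow, which stays in $\mathcal D_{\alpha/2}$) the distance $d_\beta$ is dominated by $d_{\beta'}$ when $\beta\le\beta'$, since $u\mapsto u^{\beta/\beta'}$ is Lipschitz on $[\alpha^{\beta'},\infty)$. Hence $\{d_\beta\ge\varepsilon\}\subset\{d_{\beta'}\ge c\,\varepsilon\}$ and the limit transfers. This two-step argument (large $\beta'$ for non-expansivity, then domination of distances) is the missing idea in your proposal.
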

\noindent This yields a control of the stochastic process $X$ defined in (\ref{EDSLv})  by the dynamical system for large initial values and times small enough.
We are not expecting that this control hold outside $\mathcal D_{\alpha}$. Indeed,   
the next result  shows  that the process and the dynamical system  coming from infinity have a different behavior when they come close to the 
boundary of $(0,\infty)^2$. It is naturally due to the diffusion component and the absorption at  the boundary. \\
The proof  can not be achieved for  $\beta=1$ since then the associated quadratic variations are not integrable at
 time $0$. 
Heuristically, $\sqrt{Z_t}dB_t$ is of order $\sqrt{1/t}dB_t$ for small times.  This latter  does not become small when $t\rightarrow 0$ and the fluctuations do not vanish for $d_1$ in short time. \\
% qui n'est pas rendu arbitrairement petit en temps petit. \\

%Il faut maintenant conna\^itre le syst\`eme dynamique $(x^1_t,x^2_t)$ au voisinage de l'infini pour comprendre le processus. On va utiliser pour cela les compactifications et prolongement de flot de Poincar\'e. \\
We denote
 $\widehat{(x,y)} \in (-\pi,\pi]$ the oriented  angle in the trigonometric sense between two non-zero vectors of $\R^2$ and if $ab\ne cd$, we write
 \be
 \label{defxinf}
 x_{\infty}= \frac{1}{ ab-cd} ( b-c , a-d).
 \ee
The following  classification yields   the way the diffusion comes down from infinity.
%\marginpar{Faire  des dessins}
\begin{cor} \label{descenteLVb}
We assume that $\sigma_1>0, \sigma_2>0$ and let $x_0 \in (0,\infty)^2$.\\
(i) If  $a>d$ and $b>c$, then for any   $\eta \in (0,1)$ and $\varepsilon>0$,
$$\lim_{T\rightarrow 0} \limsup_{r\rightarrow \infty}  \mathbb P_{rx_0} \left(  \sup_{\eta T \leq t \leq T} \parallel tX_t -x_{\infty} \parallel_2  \geq \varepsilon\right)=0,$$
If furthermore $x_0$ is collinear to $x_{\infty}$, the previous limit holds also for $\eta=0$.\\
 (ii) If $a<d$ and $b<c$ and  $\widehat{( x_{\infty},x_0)}\ne 0$, then for any  $T>0$,
$$ \lim_{r\rightarrow \infty}  \mathbb P_{rx_0} \left( \inf \{ t \geq 0 : X_t^{(i)}=0 \} \leq T \right)=1,$$
where $i=1$ when  $\widehat{({x_\infty},x_{0})}\in (0,\pi/2]$  and  $i=2$ when $\widehat{({x_\infty},x_{0})}\in [-\pi/2,0)$. \\
(iii) If  ($a\leq d$ and $b>c$) or if  ($a< d$ and $b\geq c$), then for any  $T>0$,
$$ \lim_{r\rightarrow \infty}  \mathbb P_{rx_0} \left( \inf \{ t \geq 0 : X_t^{(2)}=0 \} \leq T \right)=1.$$
(iv) If $a=d$ and $b=c$, then 
$$\lim_{T\rightarrow 0} \limsup_{r\rightarrow \infty}  \mathbb P_{rx_0} \left(  \sup_{ t \leq T} \parallel tX_t -(ax_0^{(1)}+bx_0^{(2)})^{-1}x_0 \parallel_2  \geq \varepsilon\right)=0.$$
\end{cor}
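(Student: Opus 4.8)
The plan is to derive the corollary from Theorem \ref{descenteLV}, which for $\beta\in(0,1)$ controls the diffusion $X$ by the deterministic flow $\phi$ in the distance $d_\beta$, uniformly over initial data in $\mathcal D_\alpha$ and over short times up to the exit time $T_{\mathcal D_\alpha}(x_0)$, combined with a dynamical-systems analysis of $\phi$ near infinity. So the first task is purely deterministic: to understand $\phi(rx_0,t)$ as $r\to\infty$ and $t\to0$, i.e. how the Lotka--Volterra flow comes down from infinity, and only then to transfer this behaviour to $X$.

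For the deterministic part I would apply Poincar\'e's compactification, which near infinity amounts to discarding the linear terms $\tau_i$ (negligible at large size) and tracking the direction through the ratio $u=x^{(2)}/x^{(1)}$. After the time change $ds=x^{(1)}dt$ the homogeneous quadratic field reduces to the scalar equation $\frac{du}{ds}=u\big[(a-d)+(c-b)u\big]$, whose interior equilibrium $u^{\ast}=(a-d)/(b-c)$ is exactly the direction of $x_\infty$ and has linearisation $-(a-d)$. This yields the whole classification: in (i) $a>d,\ b>c$ makes $u^{\ast}>0$ \emph{attracting}, every interior trajectory aligns with $x_\infty$ and the radial rate gives $t\phi(\infty,t)\to x_\infty$; in (iv) $a=d,\ b=c$ the field vanishes so every direction is invariant and the flow descends radially with $t\phi(\infty,t)\to(ax_0^{(1)}+bx_0^{(2)})^{-1}x_0$; in (ii) $a<d,\ b<c$ makes $u^{\ast}>0$ \emph{repelling} while both axes are attracting, so a trajectory with $u_0\ne u^{\ast}$ escapes to the nearer axis, the sign of $u_0-u^{\ast}$ (equivalently of the oriented angle $\widehat{(x_\infty,x_0)}$) deciding which; in (iii) the mixed sign conditions push $u^{\ast}$ out of the open quadrant, so $\frac{du}{ds}<0$ throughout $u>0$ and the flow is driven to $x^{(2)}=0$.

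In the interior cases (i) and (iv) I would then transfer to $X$. Here $\phi(rx_0,\cdot)$ stays in $\mathcal D_\alpha$ on $[0,T]$ for $r$ large and $T$ small, so $T_{\mathcal D_\alpha}(rx_0)\ge T$ and Theorem \ref{descenteLV} yields $\sup_{t\le T}d_\beta(X_t,\phi(rx_0,t))<\varepsilon'$ with probability tending to $1$. The distance is then converted: $\big|(X^{(i)}_t)^{\beta}-(\phi^{(i)})^{\beta}\big|\le\varepsilon'$ together with $\phi^{(i)}\sim x_\infty^{(i)}/t$ gives $t\,|X^{(i)}_t-\phi^{(i)}|\lesssim (x_\infty^{(i)})^{1-\beta}t^{\beta}\varepsilon'$, which is small precisely because $0<\beta<1$; combined with $t\phi(rx_0,t)\to x_\infty$ (letting $r\to\infty$ then $t\to0$) this gives the stated convergence of $tX_t$. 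The restriction to $t\ge\eta T$ discards the initial layer in which the flow from the large but finite point $rx_0$ has not yet entered the $x_\infty/t$ regime and in which the direction is still relaxing towards $u^{\ast}$; when $x_0$ is collinear with $x_\infty$, and in the degenerate case (iv), the direction is preserved from the start, which is what removes the need for $\eta$.

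The boundary cases (ii) and (iii) require an extra argument and are the main obstacle, because Theorem \ref{descenteLV} gives no information past $T_{\mathcal D_\alpha}(rx_0)$ and, more importantly, the deterministic flow only approaches an axis asymptotically and never reaches it: the actual absorption $\{X^{(i)}=0\}$ in finite time is a genuinely stochastic phenomenon. My plan is to use Theorem \ref{descenteLV} to track $X$ while $\phi$ is still in $\mathcal D_\alpha$, which drives $X$ into a region where (say) $X^{(2)}$ is small while $X^{(1)}$ is of order $1/t$, and then to run a separate absorption estimate for the Feller-type coordinate $dX^{(2)}=X^{(2)}(\tau_2-bX^{(2)}-dX^{(1)})dt+\sigma_2\sqrt{X^{(2)}}\,dB^{(2)}$: once $X^{(1)}$ is large the drift of $X^{(2)}$ is strongly negative ($\approx-dX^{(1)}X^{(2)}$), so by comparison with a subcritical Feller diffusion $X^{(2)}$ is absorbed at $0$ before time $T$ with probability tending to $1$ as $r\to\infty$. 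The delicate point is to couple the exit of the $d_\beta$-tube with this subsequent absorption while guaranteeing that the large coordinate $X^{(1)}$ stays large long enough to force extinction of $X^{(2)}$; this is where the quantitative descent rate from the Poincar\'e analysis and the accessibility of $0$ for the $\sqrt{\cdot}$ noise must be combined carefully.
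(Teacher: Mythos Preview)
Your proposal is correct and follows the same overall strategy as the paper: a Poincar\'e-type analysis of the flow near infinity (the paper formalises this in Lemma~\ref{Poinc} via the full compactification on the simplex; your ratio $u=x^{(2)}/x^{(1)}$ is one chart of it), followed by transfer to $X$ via Theorem~\ref{descenteLV}. For (i) and (iv) your distance conversion is equivalent to the paper's, which packages it slightly differently using the scaling identity $d_\beta(tx,ty)=t^\beta d_\beta(x,y)$ together with the boundedness of $\|t\,\phi(rx_0,t)\|_1$ from Lemma~\ref{Poinc}~(i).

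For (ii)--(iii) your plan works but is more elaborate than needed. You worry about keeping $X^{(1)}$ large long enough after exiting $\mathcal D_\alpha$ so that the drift $-dX^{(1)}X^{(2)}$ forces $X^{(2)}$ to zero; you correctly flag this coupling as the delicate step. The paper sidesteps it entirely. Once Theorem~\ref{descenteLV} and the flow analysis (Lemma~\ref{Poincc}~(iii)) give $X^{(2)}_{T_{\mathcal D_\varepsilon}(rx_0)}\le c\varepsilon$ with high probability and $T_{\mathcal D_\varepsilon}(rx_0)\le T$, one simply notes that $X^{(2)}$ is stochastically dominated by the one-dimensional Feller diffusion $dZ=Z(\tau_2-bZ)\,dt+\sigma_2\sqrt{Z}\,dB^{(2)}$ (the interspecific term $-dX^{(1)}X^{(2)}\le 0$ only helps), and since $\sigma_2>0$ this diffusion started from $c\varepsilon$ hits $0$ by a fixed time $T_0$ with probability $p(c\varepsilon)\to 1$ as $\varepsilon\to 0$. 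No information on $X^{(1)}$ after the exit time is required, which removes precisely the difficulty you anticipated.
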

\noindent In the first case $(i)$, the diffusion $X$ and the dynamical system  $x$ come down from infinity
in a single direction 
 $x_{\infty}$, with speed proportional to $1/t$. They  only need 
  a short time at the beginning of the trajectory 
to find this direction. This short time quantified by $\eta$ here could be made arbitrarily  small when $x_0$ becomes large. Let us also observe that the one-dimensional logistic Feller diffusion 
$X_t$ is given by $X_t^{(1)}$ for $c=d=0$. Thus, taking $x_0$ collinear to $x_{\infty}$,  $(i)$ yields the speed of coming down from infinity of  one-dimensional logistic Feller diffusions: 
\be
\label{Fellerun}
\lim_{T\rightarrow 0} \lim_{r\rightarrow \infty}  \mathbb P_{r} \left( 
 \sup_{ t \leq T} \vert atX_t -1\vert \geq \varepsilon\right) =0.
 \ee
In the second case $(ii)$, the direction taken by the dynamical system and the process depends
on the initial direction. The dynamical system then goes to the boundary of $(0,\infty)^2$ without reaching it.  But the fluctuations of the process make it reach the boundary and one species becomes extinct. When the process starts in the direction of $x_{\infty}$, additional work would be required to describe its behavior, linked to the behavior of the dynamical system around the associated unstable variety coming from infinity.\\
%\marginpar{renvoyer au dessin}
In the third case $(iii)$, the dynamical system $\phi$ goes to the boundary $(0,\infty)\times \{0\}$ when coming down from infinity, wherever it comes from. Then, as above, the diffusion $X^{(2)}$ hits $0$. Let us note that, even in that case,  the dynamical system may then
go to a coexistence fixed point or  to a fixed point where only the species $2$ survives.  This latter event  occurs when
$$\tau_2/b> \tau_1/c, \qquad \tau_2/d>\tau_1/a $$
and is illustrated in the third simulation below.
Obviously, the symmetric situation happens when ($b\leq c$ and
 $d<a$) or ($b< c$ and
 $d\leq a$).  
 Moreover, in  cases $(ii-iii)$, the proof  tells us that when $X$ hits the axis,  it is not close from $(0,0)$. Then it becomes a   one-dimensional  Feller logistic diffusion whose 
coming down infinity has been   given above, see $(\ref{Fellerun})$.\\
 In the case $(iv)$, the process comes down from infinity in the direction of its initial value, at speed $a/t$.\\
 Finally,  let us note that this raises several questions on the characterization of a process starting from infinity in dimension $2$. In particular, informally, the process coming down from infinity in a direction $x_0$ which is not $x_{\infty}$ has a discontinuity at time $0$ in the cases $(i-ii-iii)$. \\ \\
  \emph{ Simulations. We consider two   large initial values $x_0$ such that $\parallel x_0\parallel_1=10^5$.
We plot  the dynamical system (in black line) and two realizations of the diffusion (in red line) starting from  these two initial values. In each simulation, $\tau_1=1, \tau_2=4$ and the solutions of the dynamical system converge to the fixed point where only the second species survives.  
The coefficient diffusion terms are $\sigma_1=\sigma_2=10$. We plot here $G(x_t)$ and $G(X_t)$, where
$$G(x,y)=(X,Y)=(\log(1+x), \log(1+y))$$
to zoom on the behavior of the process when coming close to  the axes. The four regimes $(i-ii-iii-iv)$ of the corollary above, which describe the coming down from infinity,  are  successively illustrated. 
One can also compare with the pictures of Section \ref{comp} describing the flow.}
\begin{flushleft}
\begin{multicols}{2}
\includegraphics[scale=0.4]{compet1.jpg}  

\includegraphics[scale=0.4]{compet2.jpg}
\end{multicols}
\end{flushleft}
\begin{flushleft}
\begin{multicols}{2}
\includegraphics[scale=0.4]{compet3.jpg} 

\includegraphics[scale=0.4]{compet4.jpg}
\end{multicols}
\end{flushleft}

\newpage
%\marginpar{Ajouter simus, \\ Verifier scaling}

\subsection{Uniform scaling limits of competitive birth and death processes} 
\label{descente3}
Let us deal finally with  competitive birth and death processes and consider their scaling limits to
the Lotka-Volterra dynamical system $\phi$ given by $(\ref{dyncompet})$.
These scaling limits are usual 
%\marginpar{Et son bouquin populations ?}
approximations  in large populations of dynamical system by individual-based model, see e.g. \cite{BM, Kurtz2}.
We provide here estimates which  are uniform with respect to the initial values in a cone in the interior of $(0,\infty)^2$, for a distance capturing the large fluctuations of the process at infinity. 
% \marginpar{Kurtz 71 ? ici scaling unifor en fond initiale. Mettre un $-1$ ?}
The birth and death rates  of the two species   are given  for population sizes   $n_1, n_2 \geq 0$ and  $K\geq 1$ by
$$\lambda^K_1(n_1,n_2)=\lambda_1n_1, \qquad \mu^K_1(n_1,n_2)=\mu_1n_1+an_1.\frac{n_1}{K}+ cn_1.\frac{n_2}{K}$$
for the first species and by 
$$\lambda^K_2(n_1,n_2)=\lambda_2n_2, \qquad \mu^K_2(n_1,n_2)=\mu_2n_2+bn_2.\frac{n_2}{K}+ dn_2.\frac{n_1}{K}$$
for the second species. We assume that $$\lambda_1-\mu_1=\tau_1, \qquad  \lambda_2-\mu_2=\tau_2.$$
% and note
%$$b(x_1,x_2)=(\tau_1x_1 -ax_1^2-cx_1x_2,\tau_2 x_2 -bx_2^2-dx_1x_2)), \quad 
%\tau_1= \lambda_1-\mu_1, \quad \tau_2= (\lambda_2-\mu_2).$$ 
Dividing the number of individuals by $K$, the normalized population size $X^K$ satisfies
\be
\label{BDN}
X_t^K=x_0+\int_0^t \int_{[0,\infty)} H^K(X_{s-},z) N(ds,dz), 
\ee
where  writing $\tau^{K}_1= \lambda_1^K+\mu_1^K$ for convenience,
\be
\label{defHK}
H^{K}(x,z)=\frac{1}{K}\begin{pmatrix}
\1{z\leq \lambda_1^K(Kx)}-\1{\lambda_1^K(Kx)\leq z \leq  \tau^{K}_1(Kx)} \\

\1{ 0\leq z-\tau^{K}_1(Kx) \leq \lambda_2^K(Kx)}-\1{\lambda_2^K(Kx)\leq z -\tau^{K}_1(Kx) \leq \lambda_2^K(Kx)+\mu_2^K(Kx)} 
\end{pmatrix}.
\ee
%$$M_t^K=\frac{1}{K}\int_0^t \int_0^{\infty}\left( \1{z\leq \lambda^K(KX_{s-}^K)}-\1{\lambda^K(KX_{t-}^K)\leq z \leq \lambda^K(KX_{s-}^K)+\mu^K(KX_{t-}^K)} \right) Ê\widetilde{N}(ds,du).$$
and $N$ is a PPM   on $[0,\infty)\times[0,\infty)$ with intensity
$dsdz$.
We set
$$\mathfrak {D}_{\alpha}=\{ (x_1,x_2) \in (\alpha, \infty)^2 : \   x_1\geq \alpha x_2, \  x_2 \geq \alpha x_1 \},$$
which is required both for the control of the flow and of the fluctuations.  We only consider here the case 
\be
\label{condparam}
 (b>c>0 \text{ and } a>d>0) \quad \text{or} \quad (a,b>0 \text{ and }  c=d=0) \quad \text{or} \quad  (a=d>0 \  \text{and}  \ b=c>0) 
 \ee
%or $a=d>0$ and $b=c>0$, 
since we know from  the previous Corollary that it gives 
%\marginpar{A clarifier a t on vraiment besoin de fixed poit ˆ lint : je crois pas}
the cases when the flow does not go instantaneously  to the boundary of $(0,\infty)^2$ in short time when 
coming from infinity.  Thus the flow does not exit from $\mathfrak {D}_{\alpha}$ instantaneously, which would prevent the uniformity in the convergence below. This corresponds to   the cases $x_{\ell}=x_{\infty}$ and $x_{\ell}=\widehat{x_0}$ in the
 forthcoming Lemma \ref{Poinc} $(ii)$ and Figure $1$.
% , while the first part ensures that 
%the dynamical system goes to an interior fixed point.
%the other cases the dynamical system goes to one boundary starting from infinity so
%the exit time of $\mathfrak D_{\alpha}$ becomes small when the initial condition becomes large.
\begin{thm}  \label{scaling} For any $T>0$, $\beta\in (0,1/2)$ and $\alpha, \varepsilon>0$, there exists $C>0$ such that for any $K\geq 0$,
$$\sup_{x_0 \in \mathfrak D_{\alpha}} \p_{x_0}\left(\sup_{t\leq T}d_{\beta} (X_t^K, \phi(x_0,t)) \geq \varepsilon\right) \leq \frac{C}{K}.$$
\end{thm}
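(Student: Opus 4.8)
The plan is to realize $X^K$ inside the framework of Section~\ref{EDS} and then invoke the adjunction Theorem~\ref{recollement} with the power transformations $F_{\beta,\gamma}$ of (\ref{defFbg}). First I would record that (\ref{BDN}) is a pure jump process: in the notation of Section~\ref{EDS} one has $\sigma=0$, $G=0$ and $H=H^K$, so It\^o's formula for $F_{\beta,\gamma}(X^K)$ has no second order term and $\widetilde b_{F_{\beta,\gamma}}=0$. A Taylor expansion of $F_{\beta,\gamma}(x\pm 1/K)-F_{\beta,\gamma}(x)$ in the jump size $1/K$ then yields, uniformly on $\mathfrak D_\alpha$: the drift $b^K_{F_{\beta,\gamma}}=J_{F_{\beta,\gamma}}^{-1}h_{F_{\beta,\gamma}}$ converges to the Lotka--Volterra field $b_{LV}$ of (\ref{dyncompet}), so that its flow $\phi^K$ converges to $\phi$, and the transformed correction $\psi^K_{F_{\beta,\gamma}}-\psi^\infty_{F_{\beta,\gamma}}=(h^K_{F}-h_{F})\circ F_{\beta,\gamma}^{-1}$ is $O(\|y\|/K)$ in the coordinate $y=F_{\beta,\gamma}(x)$; and the fluctuation term is small, $\|V_{F_{\beta,\gamma}}(x)\|_1=O(\|x\|^{2\beta}/K)$, since each jump contributes $(\text{rate})\times(\text{jump of }F)^2\sim K\|x\|^2\cdot\|x\|^{2\beta-2}/K^2$. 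Using $d_\beta$ with $\beta$ small is precisely what lets the $K^{-2}$ from the squared jump beat the rate $K\|x\|^2$ and leave the gain $1/K$.

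Second I would build the finite chart decomposition required by Theorem~\ref{recollement}. On $\mathfrak D_\alpha$ the ratio $x_1/x_2$ lies in $[\alpha,1/\alpha]$, which I cover by finitely many sub-cones $D_i$; on each I choose a constant $\gamma_i$ making $\psi_{F_{\beta,\gamma_i}}$ non-expansive with $\alpha_i=0$ on $F_{\beta,\gamma_i}(D_i)$. Applying the diagonal criterion (\ref{matrice}) with $f_i(x_i)=x_i^\beta$, the symmetric matrix $A+A^*$ has diagonal $\approx-2[(1+\beta)ax_1+\beta cx_2],\ -2[(1+\beta)bx_2+\beta dx_1]$ and off-diagonal $A_{12}+A_{21}=-\tfrac{c}{\gamma_i}x_1^{\beta}x_2^{1-\beta}-\gamma_i d\,x_2^{\beta}x_1^{1-\beta}$; minimizing the latter in $\gamma_i$ gives $|A_{12}+A_{21}|_{\min}=2\sqrt{cd}\,\sqrt{x_1x_2}$, so the top eigenvalue stays $\le 0$ (up to the bounded lower-order $\tau$-terms) as soon as $(1+\beta)^2ab>cd$. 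This is exactly what (\ref{condparam}) provides: for $a>d,\ b>c$ one has $ab>cd$; the case $c=d=0$ decouples the two equations and $\gamma=1$ suffices; the case $a=d,\ b=c$ gives $ab=cd<(1+\beta)^2ab$. Since the entries of $A$ are bounded on the inner part $\{x_i>\alpha\}$ and push the top eigenvalue to $-\infty$ at infinity, each $L_i$ is finite and $\alpha_i=0$, hence $\T{\alpha_i}{L_i}{\varepsilon}=\infty$; as the $O(\|y\|/K)$ correction to $\psi^K_{F_i}$ has $O(1/K)$ Lipschitz constant, it shifts $L_i$ by $O(1/K)$ only and non-expansivity survives for the genuine $K$-dependent field for $K$ large.

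Third comes the flow geometry. I would invoke the Poincar\'e compactification analysis (the forthcoming Lemma~\ref{Poinc}) to obtain, uniformly in $x_0\in\mathfrak D_\alpha$, the coming-down bound $\|\phi(x_0,t)\|\le C/t$ on $(0,T]$ (which also follows from $\dot x_1\le\tau_1x_1-ax_1^2$), and the fact that, in the regimes (\ref{condparam}) singled out by Corollary~\ref{descenteLVb}, the flow stays in $\mathfrak D_\alpha$ without approaching the axes in short time and crosses the chart boundaries $\{x_1/x_2=\mathrm{const}\}$ at most a bounded number $\kappa$ of times. This is exactly what is needed to verify Assumptions~\ref{assumeplus} and~\ref{assumepluss}, the equivalence constants $c_1,c_2$ being those comparing $d_\beta$ with $\|F_{\beta,\gamma_i}(\cdot)-F_{\beta,\gamma_i}(\cdot)\|_2$.

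Finally I would assemble the bound. Theorem~\ref{recollement}, applied to $X^K$ and the $K$-flow $\phi^K$ of the fields $b^K_{F_{\beta,\gamma_i}}$, gives
\[
\p_{x_0}\Big(\sup_{t\le T}d_\beta(X^K_t,\phi^K(x_0,t))\ge\varepsilon/2\Big)\le C\sum_{k}\int \overline V_{d,\varepsilon}(F_{n_k},x_0,t)\,dt .
\]
Here $\widetilde b_{F}=0$ and $\|V_{F}(x)\|_1=O(\|\phi(x_0,t)\|^{2\beta}/K)\le C\,t^{-2\beta}/K$, so the right-hand side is $O(K^{-1})\int_0^T t^{-2\beta}\,dt$, finite and uniform in $x_0$ precisely because $\beta<1/2$. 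It then remains to replace $\phi^K$ by $\phi$: a deterministic Gronwall estimate chart by chart, using $L_i$-non-expansivity with $\alpha_i=0$ and the transformed error $O(\|y\|/K)$, gives $d_\beta(\phi^K(x_0,t),\phi(x_0,t))\le e^{LT}O(K^{-1})\int_0^T\|F_{\beta,1}(\phi(x_0,s))\|_2\,ds=O(K^{-1})$, the integral being finite since $\int_0^T s^{-\beta}\,ds<\infty$ for $\beta<1$. A triangle inequality closes the case $K$ large, while for the finitely many $K$ with $C/K\ge1$ the estimate is trivial, and both are absorbed into one constant $C$. The main obstacle is the second step: producing the finite chart family with genuine ($\alpha_i=0$) non-expansivity, which rests on the eigenvalue inequality $(1+\beta)^2ab>cd$ and on the control of the flow near infinity furnished by the Poincar\'e compactification.
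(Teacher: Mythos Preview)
Your proposal is the paper's strategy: apply Theorem~\ref{recollement} to $X^K$ with the power charts $F_{\beta,\gamma_i}$, observe $\widetilde b_{F_i}=0$, bound $\|V_{F_i}(x)\|_1\le C\|x\|^{2\beta}/K$, feed in the coming-down estimate $\|\phi(x_0,t)\|\le C/t$ of Lemma~\ref{Poinc}(i), and integrate $t^{-2\beta}$ using $\beta<1/2$. Two points differ in execution.

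For the non-expansivity of $\psi^K_{F_i}$, the paper (Lemma~\ref{pourri}(iv)) does not absorb the whole correction into $L$ as you do; it places the bounded remainder $R^K$ (size $C/K$) in the $\alpha$-slot, obtaining $(C,C/K)$ non-expansivity and then checking $T_0<T^{C,C/K}_\varepsilon$ for $K$ large. Your claim that the $O(\|y\|/K)$ correction is $O(1/K)$-Lipschitz is true here (from the explicit Taylor structure) but is not a consequence of the size bound alone, so it needs the computation.

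The more substantive gap is your Gronwall step for $d_\beta(\phi^K,\phi)$. You evaluate the perturbation $\|\psi^K-\psi\|\le c\|y\|/K$ along $\phi$, but the differential inequality actually produces $c\|y^K_s\|/K$, i.e.\ along $\phi^K$, which is not yet controlled and lives on an unbounded domain; plain $L$-non-expansivity does not by itself prevent blow-up of the error when the forcing grows linearly. A bootstrap would close this, but the paper takes a different route: Lemma~\ref{rec}(ii) upgrades non-expansivity on each cone, away from the origin, to the stronger form $(\psi_{F_i}(y)-\psi_{F_i}(y'))\cdot(y-y')\le -\mu\,(\|y\|\wedge\|y'\|)\,\|y-y'\|^2$ (eq.~(\ref{equationsup})), and Appendix Lemma~\ref{ctrflo} uses this so that the $-\mu\|y\|$ factor exactly compensates the linearly growing error $c\|y\|/K$, yielding $\sup_{x_0\in\mathfrak D_\alpha}d_\beta(\phi^K,\phi)\to 0$ directly (Lemma~\ref{nonetsys}(iii)). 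This same control is what keeps $\phi^K$ inside the tubes $C_{n_k(x_0)}\cap\mathfrak D_{\alpha/2}$, which you need \emph{before} invoking Theorem~\ref{recollement}, since Assumption~\ref{assumepluss} must hold for the $K$-flow $\phi^K$, not for $\phi$.
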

%\marginpar{A revoir}
%Again, we know the qualitative behavior of $ T_{\mathcal D_{\alpha}}(x_0)$ when  $x_0$ goes to infinity thanks to Lemma \ref{Poinc}. 
\noindent The proof, which is given below, rely on $(L, \alpha_K)$ non-expansivity of the   flow associated with $X^K$,  with $\alpha_K\rightarrow 0$. Additional work should
 allow to make  $T$  go to infinity when $K$ goes to infinity. The critical power $\beta=1/2$ is reminiscent from results obtained for one dimensional logistic birth and death process in Proposition \ref{regvar} in  Section \ref{secBD}.
% $T<<(\log K)^{\kappa}$ for some $\kappa<1$.  seems to be required to get sharper estimates.

\subsection{Non-expansivity of the flow and Poincar\'e's compactification}
\label{comp}
 The proofs of the three  previous statements  of this section   rely  on the following lemmas. \\ The first one 
provides the domains where the transformation $F_{\beta, \gamma}$ yields  a  non-expansive vector field. 
It is achieved by determining the spectrum of the symmetrized operator of the Jacobian matrix of $\psi_{F_{\beta,\gamma}}$ and provide a covering of the state space.
This is the key ingredient to use the results of Section \ref{EDS} for the study of the coming  down from infinity of  Lotka-Volterra diffusions (Theorem  \ref{descenteLV})  and  the proof of the scaling limits of  birth and death processes (Theorem \ref{scaling}). \\
We also need to control the flow $\phi$  when it comes down from infinity. 
The  lemmas of Section \ref{sectionPoinca} describe   the   dynamics  of the flow and provide some additional   results useful for the proofs.
These  proofs   rely on the extension of the flow on the boundary at infinity, using    Poincar\'e's technics, and can be achieved for more general models. \\
Finally, we combine these results in  Sections \ref{scaled} and  \ref{adjsec} and decompose the whole trajectory of the flow in a finite number of time intervals during which 
it belongs  to a domain where  non-expansivity holds for one of  the transformation $F_{\beta,\gamma}$.  \\

%We first prove these two lemmas, then   the Theorem  \ref{descenteLV} and  the Corollary which relies 
%both on this Theorem and the second lemma. Finally we prove Theorem \ref{scaling}.
As one can see on spectral computations below,  non-expansivity holds in   a  cone. We recall that a cone is a subset  $\mathcal C$ of $\mathbb R^2$ such that for all $x\in \mathcal C$ and  $\lambda>0$, $\lambda x\in \mathcal C$.  We use   the convex components of open cones, which are open convex cones. For $S$ a subset of $\R^2$, we denote by $\overline{S}$ the closure of $S$.  \\
%, which relies on the first lemma and finally the Corollary, which is a consequence of the Theorem and the second lemma.
Recalling notations of Section \ref{EDS}, we have   here $E=[0,\infty)^2$, $\d=2$ and 
$$
\psi_F=(J_F b)\circ F^{-1},$$
where 
\be
\label{defb}
b(x)=b(x_1,x_2)=\begin{pmatrix} \tau_1x_1 -ax_1^2-cx_1x_2  \\ \tau_2 x_2 -bx_2^2-dx_1x_2 \end{pmatrix}.
\ee

\subsubsection{Non-expansitivity in cones}
\label{nonexpcones}
Let us write $\overline{\tau}=\max(\tau_1,\tau_2)$ and
$$q_{\beta}=4ab(1+\beta)^2+4(\beta^2-1)cd$$
for convenience and consider
 the open cones of $(0,\infty)^2$ defined by  \be
 \label{defDbeta}
D_{\beta,\gamma}=\big\{ x \in (0,\infty)^2 : 4\beta(1+\beta)(adx_1^2+bcx_2^2)+q_{\beta} x_1x_2 
%\\ &&\qquad \qquad \qquad \qquad \qquad \qquad \qquad \qquad \qquad
 -\left(c\gamma^{-1}x_1^{\beta}x_2^{1-\beta}-d\gamma x_1^{1-\beta}x_2^{\beta}\right)^2 > 0\big\}.
\ee
 %\marginpar{baptiser $4ab(1+\beta)^2+4(\beta^2-1)cd=q_{\beta}$ ?}  
\begin{lem}\label{transformation} Let $\beta \in (0,1]$ and $\gamma>0$.  \\
The vector field  $\psi_{F_{\beta, \gamma}}$ is $\overline{\tau}$ non-expansive on
each convex component of the open cone $F_{\beta,\gamma}(D_{\beta,\gamma})$. 
 \end{lem}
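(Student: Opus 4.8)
The plan is to use the spectral characterisation of $L$ non-expansivity recalled in the Example following the statement of Theorem~\ref{corutile} (see also table~1 in \cite{Aminzare}). Since $q=0$ here we have $b_{F_{\beta,\gamma}}=b$, and $\psi_{F_{\beta,\gamma}}$ is smooth on the image of $(0,\infty)^2$ (as $b$ from (\ref{defb}) is polynomial and $F_{\beta,\gamma}$ a diffeomorphism there). Thus on any convex open $V\subset F_{\beta,\gamma}(D_{\beta,\gamma})$, $\psi_{F_{\beta,\gamma}}$ is $\overline\tau$ non-expansive as soon as the largest eigenvalue of $A(x)+A^*(x)$ is at most $2\overline\tau$ for every $x$ with $F_{\beta,\gamma}(x)\in V$, where $A(x)=J_{\psi_{F_{\beta,\gamma}}}(F_{\beta,\gamma}(x))$. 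So the whole matter reduces to checking that the symmetric matrix $A(x)+A^*(x)-2\overline\tau\,\mathrm{Id}$ is negative semidefinite for every $x\in D_{\beta,\gamma}$, which for a $2\times 2$ symmetric matrix amounts to two conditions: nonpositive diagonal entries and nonnegative determinant.

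First I would compute $A(x)$ from formula (\ref{matrice}) with $f_1(x_1)=x_1^\beta$, $f_2(x_2)=\gamma x_2^\beta$ and $b$ given by (\ref{defb}). Since $f_i''/f_i'=(\beta-1)/x_i$ for $i=1,2$ (the factor $\gamma$ cancels), the diagonal entries come out as $A_{11}(x)=\beta\tau_1-(1+\beta)ax_1-\beta c x_2$ and $A_{22}(x)=\beta\tau_2-(1+\beta)bx_2-\beta d x_1$, while the off-diagonal entries are $A_{12}(x)=-\gamma^{-1}c\,x_1^\beta x_2^{1-\beta}$ and $A_{21}(x)=-\gamma d\,x_1^{1-\beta}x_2^\beta$. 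Writing $P=2A_{11}-2\overline\tau$, $Q=2A_{22}-2\overline\tau$ and $R=A_{12}+A_{21}$, the diagonal conditions $P<0$ and $Q<0$ hold automatically: indeed $\beta\tau_i\le\tau_i\le\overline\tau$ since $\beta\le1$, so $\beta\tau_i-\overline\tau\le0$, and the competition contributions are strictly negative because $a,b>0$ and $x\in(0,\infty)^2$.

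The core of the proof is the determinant condition $PQ-R^2\ge0$, and here I would exploit the sign structure rather than expanding everything blindly. Decompose $P=P_0+\ell_P$ and $Q=Q_0+\ell_Q$, where $P_0=2(\beta\tau_1-\overline\tau)\le0$ and $Q_0=2(\beta\tau_2-\overline\tau)\le0$ are constants while $\ell_P=-2(1+\beta)ax_1-2\beta c x_2<0$ and $\ell_Q=-2(1+\beta)bx_2-2\beta d x_1<0$ are the (negative, homogeneous degree one) linear parts. Then $PQ=P_0Q_0+P_0\ell_Q+Q_0\ell_P+\ell_P\ell_Q$, and each of the first three terms is a product of two nonpositive quantities, hence $\ge0$; therefore $PQ\ge\ell_P\ell_Q$ and so $PQ-R^2\ge\ell_P\ell_Q-R^2$. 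The remaining and main computational step is the expansion showing
$$\ell_P\ell_Q-R^2=4\beta(1+\beta)(adx_1^2+bcx_2^2)+q_\beta\,x_1x_2-\big(c\gamma^{-1}x_1^\beta x_2^{1-\beta}-d\gamma x_1^{1-\beta}x_2^\beta\big)^2,$$
which is exactly the defining expression of $D_{\beta,\gamma}$ in (\ref{defDbeta}). The only subtlety is bookkeeping of the mixed term: $R^2=(\gamma^{-1}c x_1^\beta x_2^{1-\beta}+\gamma d x_1^{1-\beta}x_2^\beta)^2$ contributes $2cd\,x_1x_2$, and converting the resulting $+$-square of $-R^2$ into the $-$-square appearing in (\ref{defDbeta}) costs an extra $-4cd\,x_1x_2$, which merges with the $4\beta^2cd\,x_1x_2$ from $\ell_P\ell_Q$ to produce precisely the coefficient $q_\beta=4ab(1+\beta)^2+4(\beta^2-1)cd$. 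Hence $PQ-R^2>0$ for every $x\in D_{\beta,\gamma}$.

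Combining the three conditions, $A(x)+A^*(x)-2\overline\tau\,\mathrm{Id}$ is negative definite on $D_{\beta,\gamma}$, so the largest eigenvalue of $A(x)+A^*(x)$ is strictly below $2\overline\tau$ there. It then remains to transfer this pointwise spectral bound into the non-expansivity statement, which is where convexity of the domain is needed: since $F_{\beta,\gamma}(\lambda x)=\lambda^\beta F_{\beta,\gamma}(x)$, the image $F_{\beta,\gamma}(D_{\beta,\gamma})$ is again an open cone, and on any of its convex components $V$ every point is $F_{\beta,\gamma}(x)$ with $x\in D_{\beta,\gamma}$, so the spectral condition holds throughout $V$ and the criterion applies verbatim, yielding that $\psi_{F_{\beta,\gamma}}$ is $\overline\tau$ non-expansive on $V$. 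The degenerate regime $c=d=0$ is covered identically, with $R\equiv0$ and $D_{\beta,\gamma}=(0,\infty)^2$.
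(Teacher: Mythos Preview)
Your proof is correct and rests on the same spectral criterion as the paper. The only organisational difference is that the paper first splits $\psi_{F_{\beta,\gamma}}=\psi_1+\psi_{2,\beta,\gamma}$, where $\psi_1(y)=(\beta\tau_1 y_1,\beta\tau_2 y_2)$ is Lipschitz with constant $\overline{\tau}$ and $\psi_{2,\beta,\gamma}$ carries only the competition terms; it then shows $\psi_{2,\beta,\gamma}$ is non-expansive by checking that its symmetrised Jacobian --- which is exactly your matrix $\begin{pmatrix}\ell_P & R\\ R & \ell_Q\end{pmatrix}$ --- has negative trace and determinant $\ell_P\ell_Q-R^2>0$ on $D_{\beta,\gamma}$. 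You instead keep $\psi_1$ inside and subtract $2\overline{\tau}\,\mathrm{Id}$; the extra step this costs you is the inequality $PQ\ge\ell_P\ell_Q$ (via $P_0,Q_0\le0$), after which your determinant computation coincides verbatim with the paper's (\ref{valeurdet}). The paper's decomposition yields a cleaner matrix with no $\tau$-terms at the price of invoking the ``Lipschitz plus non-expansive is $L$ non-expansive'' observation; your direct route avoids that observation but needs the one-line sign argument for the diagonal shift. Either way the substance is the same.
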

\noindent In the particular case $a,b>0$ and $c=d=0$, for any $\beta \in (0,1]$ and $\gamma >0$, 
$D_{\beta,\gamma}=(0,\infty)^2$.
But this fact does hold in  general. We need the transformations $F_{\beta,\gamma}$ for well chosen values 
of $\gamma$  to get the non-expansivity property of the flow on unbounded domains.  Let us also note that  $(0,\infty)^2$ is not coverable by a single domain  of the form $D_{\beta,\gamma}$ in general and the adjunction procedure of Section \ref{adjunction}  will be needed. \\

\begin{proof}
 We write  for $y=(y_1,y_2)\in [0,\infty)^2$,
 \be
\label{deccc}
\psi_{F_{\beta, \gamma}}(y)=\psi_{1}(y) + \psi_{2,\beta, \gamma}(y),
\ee
where
$$\psi_{1}(y)
=\begin{pmatrix}\beta\tau_1y_1 \\   \beta\tau_2y_2\end{pmatrix}, \qquad
\psi_{2,\beta, \gamma}(y)=-\begin{pmatrix}\beta y_1\left(ay_1^{1/\beta}   + c\gamma^{-1/\beta}y_2^{1/\beta} \right)\\
  \beta y_2\left(b\gamma^{-1/\beta}y_2^{1/\beta}  +dy_1^{1/\beta}\right)\end{pmatrix}.$$
First, $\psi_1$ is Lipschitz on $[0,\infty)^2$ with constant $\overline{\tau}$ since $\beta\ \in (0, 1]$. Moreover, writing 
$A_{\beta,\gamma}(x)=J_{\psi_{{2,\beta, \gamma}}}(F_{\beta, \gamma}(x))$,
we have for any $x\in [0,\infty)^2$,
$$A_{\beta,\gamma}(x)+A_{\beta,\gamma}^*(x)=-\begin{pmatrix}
2a(1+\beta)x_1 +2c\beta x_2  &  c\gamma^{-1}x_1^{\beta}x_2^{1-\beta}+d\gamma x_2^{\beta}x_1^{1-\beta} \\
 c\gamma^{-1}x_1^{\beta}x_2^{1-\beta}+ d\gamma x_2^{\beta}x_1^{1-\beta} &  2b(1+\beta)x_2+2d\beta x_1\end{pmatrix}. $$
This can be seen   using (\ref{matrice}) or by a direct computation.
 We consider now the trace and the determinant of this matrix  :
%\marginpar{choix $A_{\beta,\gamma}$ $D_{????}$ }
\be
\label{TD} 
T(x)=\text{Tr}\left(A_{\beta,\gamma}(x)+A_{\beta,\gamma}^*(x)\right), \qquad \Delta(x)= \text{det} \left(A_{\beta,\gamma}(x)+A_{\beta,\gamma}^*(x)\right).
\ee
 As $\beta >0$ and $x\in (0,\infty)^2$, $T(x)<0$, while 
%a trace est n\'egative et pour que le il suffit  que le d\'eterminant de $A_{\beta,\gamma}(x)+A_{\beta,\gamma}^*(x)$, qui est
 \be
 \label{valeurdet}
 \Delta (x)=(2a(1+\beta)x_1 +2c\beta x_2)(2b(1+\beta)x_2+2d\beta x_1)-\left(c\gamma^{-1}x_1^{\beta}x_2^{1-\beta}+ d\gamma x_2^{\beta}x_1^{1-\beta}\right)^2.
 \ee
It is positive when $x=(x_1,x_2)\in D_{\beta,\gamma}$ and  then the  spectrum
of  $A_{\beta,\gamma}(x)+A_{\beta,\gamma}^*(x)$ is included in $(-\infty,0]$.
%\marginpar{Clarifier}
Recalling table 1 in \cite{Aminzare} or the beginning of  Section \ref{main},  this ensures that    $\psi_{2,\beta, \gamma}$ is non-expansive on   the open convex components of  $F_{\beta, \gamma}(D_{\beta,\gamma})$.
Then  $\psi_{F_{\beta, \gamma}}$ is $\overline{\tau}$ non-expansive on
 the open   convex components of $F_{\beta, \gamma}(D_{\beta,\gamma})$.
Let us finally  observe that $ D_{\beta,\gamma}$ and thus $F_{\beta, \gamma} (D_{\beta,\gamma})$
are open cones,
 which ends up the proof of $(i)$. 
 \end{proof}

We define now
$$\mathcal C_{\eta, \beta, \gamma} =\left\{ x \in (0,\infty)^2  : x_1/x_2  \in (0, \eta) \cup (  x_{\beta, \gamma}-\eta, x_{\beta, \gamma}+\eta   ) \cup (1/\eta, \infty)\right\},$$
 writing $x_{\beta, \gamma} =(d\gamma^2/c)^{1/(2\beta-1)}$ when it is well defined. The next result ensures that these 
 domains provide a covering by cones for which non-expansivity hold.  The case $c=d=0$ is obvious and we focus on the general case.
 \begin{lem} \label{rec}  Assume that $a,b,c,d>0$.
Let $\gamma>0$, $\beta \in (0,1)-\{1/2\}$ such that  $q_{\beta}>0$. \\
 There exists $\eta>0$ and $A>0$ and $\mu>0$  such that 

(i) $\mathcal C_{\eta, \beta, \gamma} \subset D_{\beta,\gamma}$.

(ii) 
 for any $y,y'$ which belong both  to a same convex component
 of  the cone $F_{\beta,\gamma}(\mathcal C_{\eta, \beta, \gamma})$ and to the complementary set of  $B(0,A)$, then
\be
\label{equationsup}
(\psi_{F_{\beta, \gamma}}(y)-\psi_{F_{\beta, \gamma}}(y')){\boldsymbol .}(y-y') \leq -\mu.(\parallel y \parallel_2 \wedge \parallel y'\parallel_2) . \parallel y-y'\parallel_2^2.
\ee
\end{lem}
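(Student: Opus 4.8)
The plan is to pass to the scalar ratio $s=x_1/x_2$ and to read off everything from the symmetric matrix $A_{\beta,\gamma}(x)+A_{\beta,\gamma}^*(x)$ already computed in the proof of Lemma \ref{transformation}. The first observation is that, as a polynomial identity, the defining quantity of $D_{\beta,\gamma}$ is exactly $\det\!\big(A_{\beta,\gamma}(x)+A_{\beta,\gamma}^*(x)\big)=\Delta(x)$ from $(\ref{valeurdet})$ (the difference between the $(A-B)^2$ appearing in $D_{\beta,\gamma}$ and the $(A+B)^2$ appearing in $\Delta$ is exactly the $2cd\,x_1x_2$ by which the $x_1x_2$–coefficients $q_\beta+2cd$ and $4ab(1+\beta)^2+4cd\beta^2-2cd$ differ). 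Thus $D_{\beta,\gamma}=\{\Delta>0\}$. Since $\Delta$ is homogeneous of degree $2$, I would write $\Delta(x)=x_2^2\,\delta(s)$ with $\delta$ continuous, and check $\delta(0)=4\beta(1+\beta)bc>0$, $\delta(s)\to+\infty$ as $s\to\infty$, and $\delta(x_{\beta,\gamma})>0$: at $s=x_{\beta,\gamma}$ the squared term $(c\gamma^{-1}x_1^{\beta}x_2^{1-\beta}-d\gamma x_1^{1-\beta}x_2^{\beta})^2$ vanishes by the very definition of $x_{\beta,\gamma}$, and the surviving terms $4\beta(1+\beta)(ad\,x_1^2+bc\,x_2^2)+q_\beta x_1x_2$ are positive thanks to $q_\beta>0$. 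Continuity then lets me fix a single $\eta>0$ so small that $\delta>0$ on the \emph{closed} set $[0,\eta]\cup[x_{\beta,\gamma}-\eta,x_{\beta,\gamma}+\eta]\cup[1/\eta,+\infty]$; this gives $(i)$ at once, and supplies the compactness needed for $(ii)$.

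For $(ii)$ I would decompose $\psi_{F_{\beta,\gamma}}=\psi_1+\psi_{2,\beta,\gamma}$ as in $(\ref{deccc})$. If $y,y'$ lie in the same convex component $\Omega$ of $F_{\beta,\gamma}(\mathcal C_{\eta,\beta,\gamma})$, then so does the whole segment $\xi_t=(1-t)y'+ty$, hence $x_t:=F_{\beta,\gamma}^{-1}(\xi_t)\in\mathcal C_{\eta,\beta,\gamma}\subset D_{\beta,\gamma}$ by $(i)$. Writing $S(\xi)=J_{\psi_{2,\beta,\gamma}}(\xi)+J_{\psi_{2,\beta,\gamma}}^*(\xi)=A_{\beta,\gamma}(x)+A_{\beta,\gamma}^*(x)$ with $x=F_{\beta,\gamma}^{-1}(\xi)$, the fundamental theorem of calculus gives
$$(\psi_{2,\beta,\gamma}(y)-\psi_{2,\beta,\gamma}(y')){\boldsymbol .}(y-y')=\int_0^1 \tfrac12(y-y')^*S(\xi_t)(y-y')\,dt\leq \int_0^1 \tfrac12\,\lambda_{\max}\!\big(S(\xi_t)\big)\,\parallel y-y'\parallel_2^2\,dt.$$
On $\mathcal C_{\eta,\beta,\gamma}$ the matrix $S$ has trace $T(x)<0$ (from $(\ref{TD})$) and determinant $\Delta(x)>0$, so both eigenvalues are negative. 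As $S$ is homogeneous of degree $1$ in $x$, I have $\lambda_{\max}(S(x))=\parallel x\parallel_2\,\lambda_{\max}(S(x/\parallel x\parallel_2))$, and $u\mapsto\lambda_{\max}(S(u))$ is continuous and strictly negative on the \emph{compact} set of unit directions of $\overline{\mathcal C_{\eta,\beta,\gamma}}$ — the extension to the two axes is harmless since $\beta,1-\beta>0$ forces the off-diagonal entries of $S$ to vanish there, leaving a negative diagonal matrix. Compactness then yields $\mu'>0$ with $\lambda_{\max}(S(x))\leq -2\mu'\parallel x\parallel_2$ throughout $\mathcal C_{\eta,\beta,\gamma}$.

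The final ingredient is a lower bound on $\parallel x_t\parallel_2$ in terms of $\parallel y\parallel_2\wedge\parallel y'\parallel_2$. Since $\overline\Omega$ is a pointed convex cone there is $c_2>0$ with $\parallel\xi_t\parallel_2=\parallel(1-t)y'+ty\parallel_2\geq c_2\big((1-t)\parallel y'\parallel_2+t\parallel y\parallel_2\big)\geq c_2(\parallel y\parallel_2\wedge\parallel y'\parallel_2)$, and from $F_{\beta,\gamma}^{-1}(\xi)=(\xi_1^{1/\beta},(\xi_2/\gamma)^{1/\beta})$ one gets $\parallel x_t\parallel_2\geq c_1\parallel\xi_t\parallel_2^{1/\beta}$. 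Because $\beta<1$ gives $1/\beta>1$, taking $A\geq1$ large enough that $\parallel y\parallel_2\wedge\parallel y'\parallel_2\geq A$ yields $\parallel x_t\parallel_2\geq c_3(\parallel y\parallel_2\wedge\parallel y'\parallel_2)$, so that $(\psi_{2,\beta,\gamma}(y)-\psi_{2,\beta,\gamma}(y')){\boldsymbol .}(y-y')\leq -\mu'c_3(\parallel y\parallel_2\wedge\parallel y'\parallel_2)\parallel y-y'\parallel_2^2$. Finally $\psi_1$ is linear with $(\psi_1(y)-\psi_1(y')){\boldsymbol .}(y-y')\leq\beta\overline{\tau}\parallel y-y'\parallel_2^2$; enlarging $A$ so that $\mu'c_3A\geq 2\beta\overline{\tau}$ absorbs this bounded term into half of the gain and produces $(\ref{equationsup})$ with $\mu=\tfrac12\mu'c_3$.

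The main obstacle is the whole of $(ii)$: upgrading the purely qualitative non-expansivity of Lemma \ref{transformation} to the quantitative factor $\parallel y\parallel_2\wedge\parallel y'\parallel_2$. This needs two separate quantitative inputs that must be matched — a homogeneity-plus-compactness bound giving $\lambda_{\max}(S)\le -2\mu'\parallel x\parallel_2$ (which is where the closed-interval positivity of $\delta$, hence the choice of $\eta$ and the hypothesis $q_\beta>0$, really pays off), and a lower bound for $\parallel F_{\beta,\gamma}^{-1}(\xi_t)\parallel_2$ along the whole segment. The pointedness of the convex component (forcing $y,y'$ to subtend an angle bounded away from $\pi$) and the inequality $1/\beta>1$ are exactly what make the second input work; the requirement that $y,y'$ lie in the \emph{same} convex component is indispensable, since it is what keeps the interpolating segment inside $D_{\beta,\gamma}$.
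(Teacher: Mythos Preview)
Your argument is correct and follows the same architecture as the paper's proof: both rely on the decomposition $\psi_{F_{\beta,\gamma}}=\psi_1+\psi_{2,\beta,\gamma}$, the spectral analysis of $A_{\beta,\gamma}+A_{\beta,\gamma}^*$, its homogeneity of degree $1$ in $x$, and the final absorption of the Lipschitz part $\psi_1$ by enlarging $A$. The identification $D_{\beta,\gamma}=\{\Delta>0\}$ is right (though the arithmetic in your parenthetical is off: $(A+B)^2-(A-B)^2=4cd\,x_1x_2$, and the $x_1x_2$--coefficient of the diagonal product is $q_\beta+4cd$, not $q_\beta+2cd$; the cancellation still works).

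Where you differ is in the packaging. The paper localizes around each $x_0$ with $\Delta(x_0)>0$, bounds $T$ and $\Delta$ on a neighborhood $\mathcal V(x_0)$, pushes to the generated cone $\mathcal C(x_0)$ by homogeneity, invokes \cite{Aminzare} for the contraction inequality, and then at the end chooses $\eta$ so that $\mathcal C_{\eta,\beta,\gamma}$ is covered by finitely many such $\mathcal C(x_0)$. You instead work globally: you reduce to the scalar $\delta(s)$, choose $\eta$ so that $\delta$ is bounded away from zero on the \emph{closed} direction set, extend $\lambda_{\max}(S)$ continuously to the axes, and extract the uniform bound $\lambda_{\max}(S(x))\le -2\mu'\|x\|_2$ by a single compactness step. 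You also carry out the segment integration by hand rather than citing \cite{Aminzare}, and make the norm comparison $\|x_t\|_2\ge c_1\|\xi_t\|_2^{1/\beta}$ (together with the pointedness of the image sector) explicit. Your route is more self-contained and in fact sidesteps the paper's intermediate inequality $E(\lambda x)\le 2\Delta(\lambda x)/T(\lambda x)$, which as written goes the wrong way (for $e_1=-3,\ e_2=-1$ one has $E=-1>2\Delta/T=-3/2$); the paper's conclusion is of course still valid via exactly the continuity--compactness argument you give.
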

\begin{proof}
$(i)$ The  inclusion 
 $\left\{ x \in (0,\infty)^2  : x_1=x_{\gamma} x_2 \right\} \subset D_{\beta,\gamma}$
 comes from the fact that 
$$x_1=(d\gamma^2/c)^{1/(2\beta-1)}x_2 \quad \text{ implies  
that } \quad 
\left(c\gamma^{-1}x_1^{\beta}x_2^{1-\beta}-d\gamma x_1^{1-\beta}x_2^{\beta}\right)^2
=0$$
and the fact that $q_{\beta}>0$. The inclusion 
$%\left\{ x \in (0,\infty)^2  : x_2< \eta x_1 \right\} \cup 
 \left\{ x \in (0,\infty)^2  : x_1/ x_2 \in (0,\eta)\cup (1/\eta, \infty)    \right\}   \subset D_{\beta,\gamma}$
is obtained by bounding
$$\left(c\gamma^{-1}x_1^{\beta}x_2^{1-\beta}-d\gamma x_1^{1-\beta}x_2^{\beta}\right)^2
\leq \left(c\gamma^{-1}\eta^{1-\beta}+d\gamma\eta^{\beta}\right)^2x_1^2  $$
when $x_2\leq \eta x_1$. Indeed,   $a,d> 0$ and 
letting $\eta$ be small enough such that $4\beta(1+\beta)ad>(c\gamma^{-1}\eta^{1-\beta}+d\gamma\eta^{\beta})^2$ yields the result since $\beta\in (0,1)$.    \\

$(ii)$ Recalling notation $(\ref{TD})$, 
 for any $x\in [0,\infty)^2-\{(0,0)\}$, $T(x)<0$ and   the value of $\Delta(x)$  is given by $(\ref{valeurdet})$.
Let $x_0\ne 0$ such that $\Delta(x_0)>0$, then there exist $v_1,v_2>0$ and some open ball $\mathcal  V(x_0)$ centered in $x_0$, such that for any $x\in \mathcal V(x_0)$, we have  $-v_1\leq T(x)< 0$ and $\Delta(x)\geq v_2$. So  
 for any $\lambda >0$ and $x\in \mathcal V(x_0)$,
$$T(\lambda x)=\lambda T(x)\in [- \lambda v_1,0) , \qquad \Delta(\lambda x)=\lambda^2\Delta(x) \in [ \lambda^2 v_2, \infty).$$
Writing   $E$  the largest eigenvalue of $A_{\beta,\gamma}+A_{\beta,\gamma}^*$,   we have for any $x\in \mathcal V(x_0)$,
$$E(\lambda x) \leq 2\frac{\Delta(\lambda x)}{T(\lambda x)} \leq  -2\lambda \frac{v_2}{v_1}<0, $$
since $\Delta$ (resp. $T$) gives the product (resp. the sum) of the two eigenvalues. We obtain that there exists $\mu>0$ such that for 
any $x$ in the convex cone  $\mathcal C(x_0)$
generated by $\mathcal V(x_0)$, the spectrum of $A_{\beta,\gamma}(x)+A_{\beta,\gamma}^*(x)$ is included 
in $(-\infty, -2\mu \parallel x \parallel_2]$. 
 Recalling that $A_{\beta,\gamma}=J_{\psi_{{2,\beta, \gamma}}}\circ F_{\beta, \gamma}$ and $\beta\leq 1$,  there exists $\widetilde{\mu}$ such that
 the spectrum of $J_{\psi_{{2,\beta, \gamma}}}(y)+J_{\psi_{{2,\beta, \gamma}}}^*(y)$ is included in $(-\infty,-2\widetilde{\mu} \parallel y\parallel_2]$ for any $y\in F_{\beta,\gamma}(\mathcal C(x_0))$ such that
 $\parallel y\parallel_2 \geq 1$. Then
%\marginpar{A reprendre !!}
$$(\psi_{2,\beta, \gamma}(y)-\psi_{2,\beta, \gamma}(y')){\boldsymbol .}(y-y')\leq  -\widetilde{\mu} .(\parallel y \parallel_2 \wedge \parallel y' \parallel_2).\parallel y-y'\parallel_2^2,$$
 for any $y,y'$ in  a convex  set  containing   $F_{\beta,\gamma}(\mathcal C(x_0))\cap B(0,1)^c$, see again
 % \marginpar{check table}  
 Table $1$ in  \cite{Aminzare} for details. 
 Recalling now (\ref{deccc}) and that  $\psi_1$ is Lipschitz with constant $\overline{\tau}$,  there exists $A>0$ such that  
$$(\psi_{F_{\beta, \gamma}}(y)-\psi_{F_{\beta, \gamma}}(y')){\boldsymbol .}(y-y')\leq  -\frac{1}{2} \widetilde{\mu} .(\parallel y \parallel_2 \wedge \parallel y' \parallel_2).\parallel y-y'\parallel_2^2$$
%\marginpar{check!!!} 
for any $y,y' \in B(0,A)^c$  which belong to  convex  component of   $F_{\beta,\gamma} (\mathcal C(x_0))$.
We conclude by choosing $\eta >0$ such that $\mathcal C_{\eta,\beta,\gamma} \subset \cup_{x_0 \in \{ x_{\gamma},(0,1),(1,0)\}}\mathcal C(x_0)$.
\end{proof}

\subsubsection{Poincar\'e's compactification and coming down from infinity of the flow} 
\label{sectionPoinca}
To describe the coming down from infinity of the flow $\phi$, we use the following compactification $\mathcal K$
of $[0,\infty)^2$ :   
$$\mathcal K(x)= \mathcal K(x_1,x_2)=\left(\frac{x_1}{1+x_1+x_2}, \frac{x_2}{1+x_1+x_2}, \frac{1}{1+x_1+x_2}\right)=(y_1,y_2,y_3)$$
The application $\mathcal K$ is a bijection  from $[0,\infty)^2$ into the simplex $\mathcal S$ defined by
$$\mathcal S= \{ y \in  [0,1]^2\times(0,1] : y_1+y_2+y_3=1 \}
\subset \overline{\mathcal S}=\{ y \in [0,1]^3 : y_1+y_2+y_3=1\}.$$
We note $\partial S$ the outer boundary of $\mathcal S$ :
$$\partial S= \overline{\mathcal S}-\mathcal S=\{(y_1,1-y_1,0) :  y_1 \in  [0,1]\}=\left\{\lim_{r\rightarrow \infty} \mathcal K(rx) : x \in [0,\infty)^2-\{(0,0)\}\right\}.$$
The key point to describe the direction of the dynamical system $\phi$ coming  from infinity is the following change of time. It allows  to extend the flow on the boundary and is an example of   Poincar\'e 's compactification technics, which is particularly powerful for polynomial vector field  \cite{D}. More precisely, we consider the flow $\Phi$  of the dynamical system on $\overline{\mathcal S}$ given for $z_0\in \overline{\mathcal S}$ and $t\geq 0$  by
 \be
 \label{chgmt}
 \Phi(z_0,0)=z_0, \qquad \frac{\partial}{\partial t} \Phi(z_0,t)=H( \Phi(z_0,t)),
 \ee
 where
$H$ is the  Lipschitz function on $\overline{\mathcal S}$ defined by
 \be
 H^{(1)}(y_1,y_2,y_3)&=&y_1y_2[(b-c)y_2+(d-a)y_1]+y_1y_3[(\tau_1-\tau_2-c)y_2-ay_1+y_3\tau_1] \nonumber\\
H^{(2)}(y_1,y_2,y_3)&=&y_1y_2[(a-d)y_1+(c-b)y_2]+y_2y_3[(\tau_2-\tau_1-d)y_1-by_2+y_3\tau_2] \nonumber\\
H^{(3)}(y_1,y_2,y_3)&=&y_3(ay_1^2+by_2^2+(c+d)y_1y_2-\tau_1 y_1y_3-\tau_2 y_2y_3). \label{deffH}
\ee
The study of $\Phi$ close to $\partial \mathcal S$ is giving us the behavior of $\phi$ close to infinity using the change of time $\varphi\in \mathcal C^1([0,\infty)^2\times[0,\infty), [0,\infty))$ defined by
$$\varphi(x_0,0)=x_0, \qquad \frac{\partial}{\partial t}\varphi(x_0,t)=1+\parallel \phi(x_0,t)\parallel_1.$$
\begin{lem}
\label{vraiPoinc}
For any $x_0\in [0,\infty)^2 $ and $t\geq 0$, %\rightarrow [0,\infty)$ such that  $v(x)\rightarrow \infty$ as $x\rightarrow \infty$ and 
% the function $\varphi$ defined by
 %$$\varphi(x_0,0)=0, \qquad \frac{\partial}{\partial t} \varphi(x_0,t)=v(\phi(x_0,t))$$
 %satisfies
 $$\mathcal K(\phi(x_0,t))=\Phi(\mathcal K(x_0),\varphi(x_0,t)).$$
  \end{lem}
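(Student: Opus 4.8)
The plan is to reduce the identity to uniqueness for an ordinary differential equation after a time change, by recognizing that $t\mapsto\mathcal K(\phi(x_0,t))$ and $t\mapsto\Phi(\mathcal K(x_0),\varphi(x_0,t))$ solve the same Cauchy problem. Fix $x_0\in[0,\infty)^2$ and write $x_t=\phi(x_0,t)=(x^{(1)}_t,x^{(2)}_t)$, $\Sigma_t=1+\|x_t\|_1$ and $y_t:=\mathcal K(x_t)=\big(x^{(1)}_t/\Sigma_t,\ x^{(2)}_t/\Sigma_t,\ 1/\Sigma_t\big)$. Because $a,b>0$ and $c,d\ge0$, the quadrant $[0,\infty)^2$ is forward invariant and each coordinate obeys a logistic bound ($\dot x^{(1)}_t\le x^{(1)}_t(\tau_1-ax^{(1)}_t)$, and symmetrically for $x^{(2)}$), so $\phi(x_0,\cdot)$ exists and stays bounded on $[0,\infty)$. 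Consequently $\Sigma_t$ is bounded, the time change $\varphi(x_0,\cdot)$ (with $\varphi(x_0,0)=0$ and $\partial_t\varphi(x_0,t)=\Sigma_t>0$) is a strictly increasing $\mathcal C^1$ bijection of $[0,\infty)$ onto its image, and $\Phi$ is well defined there since $H$ is Lipschitz on the compact set $\overline{\mathcal S}$.

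The core of the argument is the claim that $y_t$ solves the time-changed field, $\dot y_t=\Sigma_t\,H(y_t)$. I would obtain this by differentiating $y_t$ with the quotient rule, using $\dot x^{(i)}_t=b^{(i)}(x_t)$ and $\dot\Sigma_t=b^{(1)}(x_t)+b^{(2)}(x_t)$, and then eliminating $x_t$ through $x^{(i)}_t=y^{(i)}_t/y^{(3)}_t$ and $\Sigma_t=1/y^{(3)}_t$. For the third coordinate this already gives $\dot y^{(3)}_t=-(y^{(3)}_t)^2\big(b^{(1)}+b^{(2)}\big)(x_t)=a(y^{(1)}_t)^2+b(y^{(2)}_t)^2+(c+d)y^{(1)}_t y^{(2)}_t-\tau_1 y^{(1)}_t y^{(3)}_t-\tau_2 y^{(2)}_t y^{(3)}_t=\Sigma_t H^{(3)}(y_t)$. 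The same substitution for the first coordinate, after using $1-y^{(1)}_t=y^{(2)}_t+y^{(3)}_t$, collapses to $y^{(3)}_t\dot y^{(1)}_t=(d-a)(y^{(1)}_t)^2 y^{(2)}_t+(b-c)y^{(1)}_t(y^{(2)}_t)^2+(\tau_1-\tau_2-c)y^{(1)}_t y^{(2)}_t y^{(3)}_t-a(y^{(1)}_t)^2 y^{(3)}_t+\tau_1 y^{(1)}_t(y^{(3)}_t)^2=H^{(1)}(y_t)$, i.e. $\dot y^{(1)}_t=\Sigma_t H^{(1)}(y_t)$; the second coordinate follows by the symmetry exchanging $(1,a,c,\tau_1)\leftrightarrow(2,b,d,\tau_2)$, under which $b^{(1)}\leftrightarrow b^{(2)}$ and $H^{(1)}\leftrightarrow H^{(2)}$ in (\ref{defb}) and (\ref{deffH}).

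With this established, set $\tilde y_t=\Phi(\mathcal K(x_0),\varphi(x_0,t))$. The chain rule together with (\ref{chgmt}) gives $\dot{\tilde y}_t=\partial_t\varphi(x_0,t)\,H(\tilde y_t)=\Sigma_t H(\tilde y_t)$, while $\tilde y_0=\Phi(\mathcal K(x_0),0)=\mathcal K(x_0)=y_0$. Thus $y$ and $\tilde y$ are two $\overline{\mathcal S}$-valued solutions of the same non-autonomous Cauchy problem $\dot u=\Sigma_t\,H(u)$, $u_0=\mathcal K(x_0)$, whose right-hand side is Lipschitz in $u$ (as $H$ is Lipschitz on $\overline{\mathcal S}$ and $t\mapsto\Sigma_t$ is continuous). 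By the uniqueness theorem they coincide for every $t\ge0$, which is precisely $\mathcal K(\phi(x_0,t))=\Phi(\mathcal K(x_0),\varphi(x_0,t))$.

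The only genuine obstacle is the polynomial identity $\dot y_t=\Sigma_t H(y_t)$: it is elementary but requires care in clearing the $y^{(3)}_t$ denominators and repeatedly invoking $y^{(1)}_t+y^{(2)}_t+y^{(3)}_t=1$ to match the cubic expressions defining $H$ in (\ref{deffH}). Everything else is the standard chain-rule-and-uniqueness argument for time reparametrizations of flows, combined with the remark that the competitive vector field guarantees global existence and boundedness of $\phi(x_0,\cdot)$ on $[0,\infty)$, hence finiteness of $\varphi(x_0,t)$ and of $\Phi$ on the relevant time interval.
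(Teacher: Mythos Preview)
Your proof is correct and follows essentially the same route as the paper. Both arguments differentiate $y_t=\mathcal K(x_t)$, obtain $\dot y_t=\Sigma_t\,H(y_t)=y_t^{(3)\,-1}H(y_t)$ (the paper writes this as $G\circ\mathcal K^{-1}(y)=y_3^{-1}H(y)$), and then identify the time change $\varphi$ so that $\Phi(\mathcal K(x_0),\varphi(x_0,\cdot))$ traverses the same curve; you phrase the last step as uniqueness for the non-autonomous Cauchy problem $\dot u=\Sigma_t H(u)$, which is slightly more explicit than the paper's reparametrization remark but amounts to the same thing.
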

 \begin{proof} 
 We denote by $(y_t : t\geq 0)$ the image of the dynamical system $(x_t : t\geq 0)$ through $\mathcal K$ : 
$$y_t=\mathcal K(x_t)=\mathcal K(\phi(x_0,t)).$$
Then
$$y_t'=G(x_t)=G\circ \mathcal K^{-1}(y_t)$$
where
$$G^{(1)}(x_1,x_2)=\frac{(d-a)x_1^2x_2+(b-c)x_1x_2^2+(\tau_1-\tau_2-c)x_1x_2-ax_1^2 +\tau_1 x_1}{(1+x_1+x_2)^2}$$
and
$$G^{(2)}(x_1,x_2)=\frac{(c-b)x_2^2x_1+(a-d)x_2x_1^2+(\tau_2-\tau_1-d)x_2x_1-bx_2^2+\tau_2 x_2}{(1+x_1+x_2)^2}$$
and
$$G^{(3)}(x_1,x_2)=\frac{ax_1^2+bx_2^2+(c+d)x_1x_2-\tau_1 x_1-\tau_2 x_2}{(1+x_1+x_2)^2}.$$
Using that $x_1=y_1/y_3$ and $x_2=y_2/y_3$ and recalling the definition $(\ref{deffH})$ of $H$, we have
\be
\label{idtt}
G\circ \mathcal K^{-1}(y)=\frac{1}{y_3}H(y)
\ee
for $y=(y_1,y_2,y_3) \in  \mathcal S$.
The key point in the  theory of Poincar\'e is that $H$ is continuous on $\overline{\mathcal S}$ and that
the trajectories of the dynamical system $(z_t : t\geq 0)$ associated to the vector field $H$ : 
$$z'_t=H(z_t)$$
are the same than the trajectories of $(y_t :  t\geq 0)$ whose vector field is $G\circ \mathcal K^{-1}$. Indeed the positive real number $1/y_3$ only changes the norm of the vector field and thus the speed at which the same trajectory is covered. The associated change of time $v_t=\varphi(x_0,t)$ such that
$$z_{v_t}=y_t=\mathcal K(x_t)$$
 can now be simply computed. Indeed $(z_{v_t})'=H(y_t)v_t'$ coincides with $y_t'= G\circ \mathcal K^{-1}(y_t)$  as soon as
$$v_t'=\frac{1}{y_t^{(3)}}=\frac{1}{\mathcal K^{(3)} (\phi(x_0,t))}=1+\parallel \phi(x_0,t)\parallel_1,$$
 using  (\ref{idtt}). This   completes the proof.
 %In particular, the direction taken by the dynamical system $(y_t :  t\geq 0)$ coming from  the boundary
%$\partial \mathcal S$, and thus the direction of $(x_t :  t\geq 0)$,
%is given by that of $(z_t :  t\geq 0)$.
\end{proof}

To describe the direction from which the flow $\phi$ comes down from infinity,
we  introduce  the hitting times of cones centered in $x$ :
 \be
 \label{deftt}
 t_{-}(x_0,x, \varepsilon)=\inf_{s\geq 0} \{ \widehat{(x_s, x)} \in [-\varepsilon, +\varepsilon]\}, \quad t_{+}(x_0,x, \varepsilon)=\inf_{s\geq t_{-}(x_0,x,\varepsilon)} 
 \{ \widehat{(x_s,x)} \not\in [-2\varepsilon, +2\varepsilon]\},
 \ee
where we recall that $x_s=\phi(x_0,s)$ and
 $\inf \varnothing =\infty$. The  directions  $x_{\ell}$ of  the coming down from infinity are  defined by
\begin{itemize}
 \item $x_{\ell}=x_{\infty}$ if  $b>c$ and $a>d$, where $x_{\infty}$ has been defined in (\ref{defxinf}).
 \item $x_{\ell}=(1/a,0)$ if  $b>c$  and $a\leq d$; or if $b\geq c$ and $a<d$; or  if $c>b$ and $d>a$ and    $\widehat{(x_{0},x_{\infty})}>0$.
 \item $x_{\ell}=(0,1/b)$ if  $a>d$  and $b\leq c$; or if $a\geq d$  and $b< c$; or if  $c>b$ and $d>a$ and    $\widehat{(x_0, x_{\infty})}<0$.
 \item $x_{\ell}=\widehat{x_0}$ if $a=d$ and $b=c$, where 
 $\widehat{x_0}=x_0/(ax_0^{(1)}+bx_0^{(2)})$   for  any $x_0 \in (0,\infty)^2$.
\end{itemize}
The proof is given below and rely on the previous compactification result. We can then specify the speed of coming down from infinity of the flow $\phi$ since the problem is reduced to  the one dimension where computations can be easily lead. \\ \\

\noindent \emph{Figure 1 : flow close to infinity. We draw  the four regimes of the compactified flow $\Phi$ starting close or on  the boundary $\partial \mathcal S$  and below
the associated  behavior of the original  flow $\phi$ on $[0,\infty)^2$. The fixed points of the boundary are fat.}

\includegraphics[scale=0.85]{Dessins.pdf} 

%\marginpar{A reprendre, mettre $t_+$ avec $2\varepsilon$ pour \'eviter soucis ?}
 \begin{lem} \label{Poinc} (i) For any $T>0$,  there  exists $c_T>0$ such that
  $\parallel \phi(x_0,t) \parallel_1\leq c_T/t$ for all $x_0 \in [0,\infty)^2$ and $t \in (0,T]$.  \\
(ii) For all  $x_0 \in (0,\infty)^2$ and  $\varepsilon>0$,
% \marginpar{A t on prouve qu'il s'agit bien de limites ???}
  $$\lim_{r\rightarrow \infty} t_{-}(rx_0,x_{\ell}, \varepsilon)= 0, \qquad \lim_{r\rightarrow \infty} t_{+}(rx_0,x_{\ell}, \varepsilon)>0.$$ 
   (iii)  %\marginpar{Lemme a part?}
  Moreover, 
 $$ \lim_{t\rightarrow 0} \limsup_{r\rightarrow \infty}  \big\vert \parallel t\phi(rx_0,t) \parallel_1 - \parallel x_{\ell}\parallel_1\big\vert=0.$$
 \end{lem}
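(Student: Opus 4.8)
The plan is to handle the three assertions in increasing order of difficulty, using the Poincaré compactification of Lemma \ref{vraiPoinc} for (ii) and (iii), and a scalar comparison for (i). For (i) I would work with the total mass $m(t)=\parallel\phi(x_0,t)\parallel_1=\phi^{(1)}+\phi^{(2)}$. Summing the two lines of (\ref{dyncompet}) gives
\[
m'=\tau_1\phi^{(1)}+\tau_2\phi^{(2)}-\big(a(\phi^{(1)})^2+b(\phi^{(2)})^2+(c+d)\phi^{(1)}\phi^{(2)}\big).
\]
Since $a,b,c,d\ge0$, the quadratic form is bounded below by $a(\phi^{(1)})^2+b(\phi^{(2)})^2\ge\tfrac12\min(a,b)\,m^2$ and above by $\max(a,b,c+d)\,m^2$, while the linear part is bounded by $\overline\tau\,m$ in absolute value. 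Thus $-C'(m+m^2)\le m'\le \overline\tau\,m-\kappa\,m^2$ with $\kappa=\tfrac12\min(a,b)>0$. Comparing the upper bound with the scalar logistic flow $v'=\overline\tau v-\kappa v^2$ issued from $v_0=+\infty$ (which comes down instantaneously with $v_t\sim 1/(\kappa t)$ as $t\to0^+$) yields $m(t)\le v_t$ for every $x_0$, so $c_T:=\sup_{t\in(0,T]}t\,v_t<\infty$ proves (i); integrating the lower bound gives $1/m(t)\le 1/m(0)+C''t$, hence the two-sided control $\tfrac{1}{C''t+1/m(0)}\le m(t)\le c_T/t$, which I will reuse below.

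For (ii) I would push the dynamics to the boundary $\partial\mathcal S$ through $\mathcal K$. As $r\to\infty$, $\mathcal K(rx_0)\to\widehat z_0:=(\widehat x_0^{(1)},\widehat x_0^{(2)},0)\in\partial\mathcal S$. The face $\{y_3=0\}$ is invariant for $\Phi$ (because $H^{(3)}$ has $y_3$ as a factor), and there the flow reduces to the scalar equation $y_1'=y_1(1-y_1)[(b-c)+(c+d-a-b)y_1]$. A sign analysis of the bracket in each regime of (\ref{condparam}) and of the list defining $x_\ell$ shows that $z_\ell:=\lim_r\mathcal K(rx_\ell)$ is the attracting fixed point of this scalar flow and satisfies $H(z_\ell)=0$, so $z_\ell$ is a genuine equilibrium of $\Phi$; in particular the interior fixed point $y_1=\tfrac{b-c}{a-d+b-c}$ matches the direction $x_\infty$. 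Since $H$ is Lipschitz on $\overline{\mathcal S}$, the flows $\Phi(\mathcal K(rx_0),\cdot)$ converge uniformly on compact time-intervals to $\Phi(\widehat z_0,\cdot)$, which enters the $\varepsilon/2$-cone about $z_\ell$ at some compactified time $V_0$. The conversion to original time is clean: from the definition of $\varphi$ and $\mathcal K^{(3)}(\phi)=1/(1+\parallel\phi\parallel_1)=\Phi^{(3)}$ one gets $\mathrm dt/\mathrm dv=\Phi^{(3)}$, so the original time needed to reach $V_0$ is $\int_0^{V_0}\Phi^{(3)}(\mathcal K(rx_0),v)\,\mathrm dv\le V_0\sup_{v\le V_0}\Phi^{(3)}\to0$, because $\Phi^{(3)}$ converges uniformly on $[0,V_0]$ to the boundary value $0$. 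This gives $t_-(rx_0,x_\ell,\varepsilon)\to0$.

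The positivity of $\lim_r t_+$ is the delicate point. I would analyse the oriented angle $\psi(s)=\widehat{(\phi(rx_0,s),x_\ell)}$ directly in the plane. The quadratic part of $b$ makes the ray $x_\ell$ invariant and attracting, contributing $\dot\psi=m\,g(\psi)$ with $g(0)=0$ and $g'(0)<0$ (the latter is exactly the boundary stability found above; when $a=d,b=c$ one has $g\equiv0$), while the linear part $(\tau_1\phi^{(1)},\tau_2\phi^{(2)})$ contributes only the bounded term $(\tau_2-\tau_1)\sin\psi\cos\psi$. Using the lower bound $m(t)\ge\tfrac{1}{C''t+1/m(0)}$ from (i), $m$ stays large on a fixed small interval $(0,\delta]$ once $r$ is large, so a Gronwall comparison of $\dot\psi\le-\lambda m\,\psi+O(1)$ keeps $|\psi(s)|=O(s)$ for $s\in[t_-,\delta]$ after the flow has entered the $\varepsilon$-cone; hence the direction cannot leave the $2\varepsilon$-cone before an original time of order $\varepsilon$, giving $\liminf_r t_+(rx_0,x_\ell,\varepsilon)>0$.

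Finally, for (iii) I would reduce to the radial equation. Writing $\widehat\phi=\phi/m$, one has $m'=L(\phi)-\kappa(\widehat\phi)\,m^2$ with $\kappa(u)=au_1^2+bu_2^2+(c+d)u_1u_2$ and $|L(\phi)|\le\overline\tau\,m$, and the key algebraic identity $\kappa(\widehat x_\ell)=1/\parallel x_\ell\parallel_1$ holds in every regime; for $x_\ell=x_\infty$ it is the expansion $a(b-c)^2+b(a-d)^2+(c+d)(b-c)(a-d)=(a+b-c-d)(ab-cd)$, and it is immediate in the corner cases and when $a=d,b=c$. By (ii), for large $r$ and $t<\lim_r t_+$ the direction remains within $\varepsilon$ of $x_\ell$ on $(t_-,t]$ with $t_-\to0$, so $\kappa(\widehat\phi)\in[\kappa_\ell-\delta,\kappa_\ell+\delta]$ with $\delta=\delta(\varepsilon)\to0$; squeezing $m$ between the coming-down-from-infinity solutions of $v'=-(\kappa_\ell\pm\delta)v^2\pm\overline\tau v$ gives $t\,m(t)\to 1/(\kappa_\ell\pm\delta)$, hence $\limsup_r|\,t\,m(t)-\parallel x_\ell\parallel_1|$ is controlled by $\delta$ plus an $O(t)$ linear correction; letting $t\to0$ and then $\varepsilon\to0$ concludes. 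The main obstacle throughout is the coupling in (ii)–(iii) between the slow angular relaxation near the stable direction and the singular time-change near $\partial\mathcal S$; once the two-sided radial bound of order $1/t$ and the exponential angular contraction are secured uniformly in $r$, the remaining one-dimensional comparisons are routine.
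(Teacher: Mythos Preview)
Your proof is essentially correct and takes a genuinely different route from the paper in several places. For (i), the paper simply bounds each coordinate separately via $(x_t^{(1)})'\le -\tfrac{a}{2}(x_t^{(1)})^2$ whenever $x_t^{(1)}\ge 2\tau_1/a$; your logistic comparison on the total mass $m$ is an equally valid alternative and has the advantage of yielding the two-sided control $m(t)\ge (1/m(0)+C''t)^{-1}$ that you reuse later. For (ii), the argument for $t_-\to 0$ via the boundary flow and the time change $\mathrm dt=\Phi^{(3)}\,\mathrm dv$ is the same as the paper's. For $t_+$, however, the paper stays in the compactified picture: once $\Phi$ has reached a neighbourhood of the attracting boundary equilibrium $z_\ell$, it can only leave transversally into the interior, where $\Phi^{(3)}$ is bounded away from $0$ and original and compactified times are comparable, so leaving the $2\varepsilon$-cone costs a fixed amount of original time. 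Your direct angular analysis $\dot\psi=m\,g(\psi)+O(1)$ with $g(0)=0$, $g'(0)<0$ and the lower bound on $m$ is a nice alternative; the integrating-factor computation indeed gives $\psi(t)\le \varepsilon(t_-/t)^{\lambda/C''}+O(t)$, which is what you need. One caveat: on the borderline parameter sets $a=d$, $b>c$ (so $x_\ell=(1/a,0)$) or $b=c$, $a>d$, the linearised rate $g'(0)$ vanishes and your claim ``$g'(0)<0$'' fails; you then only have $g(\psi)\le 0$ near $0$ together with the bounded linear contribution, which still gives $|\psi(t)|\le\varepsilon+Ct$ and hence $\liminf_r t_+>0$, but not the sharper $|\psi|=O(t)$. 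This is a cosmetic fix.

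For (iii), the paper works coordinate-wise: knowing that $x_t^{(2)}/x_t^{(1)}$ stays close to $\theta_\ell=x_\ell^{(2)}/x_\ell^{(1)}$ on $[t_-,t_+]$, it plugs this into the first line of (\ref{dyncompet}) to get $(x_t^{(1)})'/(x_t^{(1)})^2\in -(a+c\theta_\ell)\pm u(\varepsilon)$ and integrates. Your radial reduction $m'=-\kappa(\widehat\phi)m^2+O(m)$ together with the identity $\kappa(\widehat x_\ell)=1/\parallel x_\ell\parallel_1$ (which you correctly verify via $a(b-c)^2+b(a-d)^2+(c+d)(b-c)(a-d)=(a+b-c-d)(ab-cd)$, and which also covers the corner and the $a=d,b=c$ cases) is more symmetric and avoids the case distinction $x_\ell^{(1)}\neq 0$ versus $x_\ell^{(2)}\neq 0$ that the paper needs. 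Both approaches reach the same squeeze between two explicit coming-down-from-infinity solutions; yours is arguably cleaner.
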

 %where $v_{\infty}=\parallel x_{\infty}\parallel_1$. \\
% if $x_{\ell}=x_{\infty}$;  $v_{\infty}=1/a$ if $x_{\ell}=(1,0)$;  $v_{\infty}=1/b$ if  $x_{\ell}=(0,1)$. \\
\begin{proof} $(i)$ 
Using $a>0$, we first observe that 
 $$(x^{(1)}_t)'\leq -\frac{a}{2}(x^{(1)}_t)^2$$
 in the time intervals when  $x^{(1)}_t\geq 2\tau_1/a$.
 Solving  $(x^{(1)}_t)'= -(x^{(1)}_t)^2a/2$  proves $(i)$. \\
 
 % $(x_t^{(2)})'\leq -\widetilde{b}(x_t^{(2)})^2$ tant que les coordonn\'ees sont assez grandes 
%(pour des constantes choisies pour contrer les termes lin\'eaires) et donc uniform\'ement en la condition initiale
$(ii)$
 We use the notation (\ref{chgmt}) and (\ref{deffH}) above
%$$y=(y_1,y_2,y_3)=F(x)=
and the dynamics of $z_t=\Phi(z_0,t)$    on the invariant set $\partial \mathcal S$ is
simply given by the vector field $H(y_1,y_2,0)$ for $y_1 \in [0,1]$, $y_1+y_2=1$:
$$H^{(1)}(y_1,y_2,0)=-H^{(2)}(y_1,y_2,0)=y_1y_2[(b-c)y_2+(d-a)y_1].$$
The two points  $(1,0,0)$ and $(0,1,0)$  on $\partial \mathcal S$ are invariant for the dynamical system $(z_t : t\geq 0)$. \\
 Let us first consider  the case when $a\ne d$  or $b\ne c$.
There is  an additional invariant point in  $\partial S$ if and only if
$$(b-c)(a-d)>0.$$

Thus, if $(b-c)(a-d)\leq 0$,  $H^{-1}((0,0,0))\cap \partial \mathcal S=\{ (1,0,0), (0,1,0)\}$ and 
$z_t$ starting from the boundary $\partial \mathcal S$  goes either to $(1,0,0)$ whatever its initial value $z_0$ in the interior of the boundary; 
or to $(0,1,0)$ whatever its initial value $z_0$ in the interior of the boundary. These  cases are inherited  from the sign of $b-c$, which provides the stability
of the fixed points $(1,0,0)$ and $(0,1,0)$.
Then by 
Lemma \ref{vraiPoinc} the dynamical system $z_{\varphi(x_0,t)}=\mathcal K(x_t)$ starting  close to the boundary  $\partial \mathcal S$ goes 
\begin{itemize}
\item either 
 to $(1,0,0)$; and then $\widehat{(x_t,x_{\ell})}$ becomes small, where  $x_{\ell}=(1/a,0)$.
 \item
or to $(0,1,0)$; and then $\widehat{(x_t,x_{\ell})}$ becomes small, where $x_{\ell}=(0,1/b)$.
\end{itemize} 
More precisely, $z$ issued from $\mathcal K(\phi(rx_0,t))$ 
reaches  any neighborhood of $(1,0,0)$ or  $(0,1,0)$ in  a time which is bounded for $r$ large enough. Adding
that 
$\partial \varphi (rx_0,t) / \partial t=1+\parallel \phi(rx_0,t) \parallel_1$   is large before 
$z_{\varphi(rx_0,.)}$ has reached this neighborhood ensures that  this reaching time is arbitrarily small for $\mathcal K^{-1}(\phi(rx_0,.))$ when $r$ is large. 
This proves that $t_{-}(rx_0, x_{\ell},\varepsilon)\rightarrow 0$ as $r\rightarrow \infty$. 
Moreover $t_{+}(rx_0, x_{\ell},\varepsilon)$  is not becoming close to $0$ as $r\rightarrow \infty$  since the speed of the dynamical system $\phi(rx_0,.)$ is bounded on the compacts sets of  $[0,\infty)^2$.\\

%\marginpar{Clarifier}
Otherwise $(b-c)(a-d)> 0$ and
$$H^{-1}((0,0,0))\cap \partial \mathcal S=\{ (1,0,0), (0,1,0), z_{\infty}\},$$
where $z_{\infty}$ is the unique invariant point in the interior of the boundary : 
$$z_{\infty}=\frac{1}{ b-c +a-d } \left( b-c, a-d,0\right).$$
 Then  we need to see if $z_{\infty}$ is  repulsive or attractive on the invariant set $\partial  \mathcal S$. In the case $c>b$ and $d>a$, this point is attractive and $z_{\infty}$ is a a saddle and 
$$z_{\infty}=\lim_{r\rightarrow \infty} \mathcal K(rx_{\infty}).$$
So  Lemma \ref{vraiPoinc} now ensures that the dynamical system $x_t$ takes the direction 
$x_{\ell}=x_{\infty}$ when starting from a large initial value. As in the previous case,  $t_{-}(rx_0, x_{\ell},\varepsilon)\rightarrow 0$ and 
$t_{+}(rx_0, x_{\ell},\varepsilon)$  does  not. \\
In the  case $b<c$ and $a<d$,    $y_{\infty}$ is a source
and the dynamical system $z_t$ either goes to  $(1,0,0)$  (and then  $x_{\ell}=(1/a,0)$)
or to $(0,1,0)$  (and then $x_{\ell}=(0,1/b)$). This depends on the position of the initial value with respect to the second unstable variety and thus on the sign of $\widehat{(x_0,x_{\infty})}$. \\

Finally, the case $a=d$,  $b=c$ is handled similarly noting that the whole set $\partial \mathcal S$ is invariant. 
\\

$(iii)$ We know from $(ii)$ that the direction of the dynamical system coming from infinity is $x_{\ell}$
and  we  reduce now its dynamics close to infinity to a one-dimensional and solvable problem. Indeed,  let us write
$$x_t(r)=\phi(rx_0,t)$$
%\theta_{\ell}= \frac{x_{\ell}^{(2)}}{x_{\ell}^{(1)}}\in [0,\infty)
and focus on the case  $x_{\ell}^{(1)}\ne 0$.  First,
we 
observe that for any $T>0$, there exists $M_T>0$ such that for any $t\in [0,T]$ and $r\geq 1$,  
\be
\label{boundq}
x_t^{(2)}(r) \leq M_T x_t^{(1)}(r).
\ee
Indeed $\mathcal K(x_t)=z_{v_t}$ does not come close to the boundary $\{(0,u,1-u) : u\in [0,1]\}$ on compact time intervals 
when $x_{\ell}^{(1)} \ne 0$.
Plugging (\ref{boundq})
%$x_t^{(2)}(r)\leq M_Tx_t^{(1)}(r)$ for $t\in [0,T]$ and $r\geq 1$. Using this bound 
in (\ref{dyncompet}) provides a lower bound for  $x_t^{(1)}(r)$ and  we obtain for any $\varepsilon>0$,
$$%\in (0, \infty], \qquad
%\marginpar{irait bien dans descente lemme 5.7 non ?}
%and
t_1(\varepsilon)=\liminf_{r\rightarrow \infty}  \ \inf   \left\{t \geq 0 : x^{(1)}_t(r) < (\vert \tau_1\vert+1) /\varepsilon \right\}  \in (0,\infty].$$% \quad \text{where } \ t_1(r,\varepsilon):= \ \inf   \left\{t \geq 0 : x^{(1)}_t(r) < (\vert \tau_1\vert+1) /\varepsilon \right\} $$
Moreover, by definition  $(\ref{deftt})$,  for any $\varepsilon>0$ and  $r>0$  and
$t\in[t_{-}(rx_0,x_{\ell},\varepsilon),   t_{+}(rx_0,x_{\ell}, \varepsilon)]$, we have $\widehat{(x_t(r),x_{\ell})}\leq 2\varepsilon$ and  %and  for $\varepsilon$ small enough,
\be
\label{quotient}
\left\vert \frac{x_t^{(2)}(r)}{x_t^{(1)}(r)} - \frac{x_{\ell}^{(2)}}{x_{\ell}^{(1)}} \right\vert \leq u(\varepsilon),
\ee
where $u(\varepsilon) \in [0,\infty]$ and $u(\varepsilon)\rightarrow 0$ as $\varepsilon\rightarrow 0$.
We write
$$\theta_{\ell}= \frac{x_{\ell}^{(2)}}{x_{\ell}^{(1)}}, \qquad t_{-}(r)=t_{-}(rx_0,x_{\ell},\varepsilon), \qquad  t_{+}(r)=t_{+}(rx_0,x_{\ell},\varepsilon)\wedge t_1(u(\varepsilon))  $$
for convenience. Plugging  (\ref{quotient}) in  the first equation of  (\ref{dyncompet})  yields  for any $t\in [t_-(r), t_{+}(r)]$ and $r\geq1$, 
% It ensures that
%$$\tau_1 x_t^{(1)}(r)-(a+c\theta_{\ell}+u(\varepsilon))(x_t^{(1)}(r))^2  \leq (x_t^{(1)}(r))'\leq \tau_1 x_t^{(1)}(r)-(a+c\theta_{\ell}-u(\varepsilon))(x_t^{(1)}(r))^2$$
% we get
$$-(a+c\theta_{\ell}+(1+c)u(\varepsilon))  \leq \frac{(x_t^{(1)}(r))'}{(x_t^{(1)}(r))^2}\leq -(a+c\theta_{\ell}-(1+c)u(\varepsilon)).$$
 We get by integration, for any $\varepsilon$ small enough,
$$ \frac{1}{(a+c\theta_{\ell}+(1+c)u(\varepsilon))(t-t_{-}(r))+1/x^{(1)}_{t_-(r)}(r)}\leq x^{(1)}_t(r)\leq \frac{1}{(a+c\theta_{\ell}-(1+c)u(\varepsilon))(t-t_{-}(r))+1/x^{(1)}_{t_-(r)}(r)}. $$
% for any  $t\in [t_-(r,\varepsilon),\wedge t_{+}(r,\varepsilon)]$.
Using $(ii)$,   $t_{-}(r)\rightarrow 0$  and  $t_+=\liminf t_+(r)>0$ as  $r\rightarrow \infty$.  Moreover 
%\marginpar{A clarif cette valeur qui tend vers l'infini, et attention aux renom positifs}
$x_{\ell}^{(1)}\ne 0$ ensures that
$x_{t_-(r)}^{(1)}(r)\rightarrow \infty$ as $r\rightarrow \infty$. Then for any $\varepsilon$ positive small enough and  $t\leq t_+,$
$$ \frac{1}{a+c\theta_{\ell}+(1+c)u(\varepsilon)}\leq \liminf_{r\rightarrow \infty} tx^{(1)}_t(r)\leq \limsup_{r\rightarrow \infty} tx^{(1)}_t(r) \leq  \frac{1}{a+c\theta_{\ell}-(1+c)u(\varepsilon)}.$$
Letting finally $\varepsilon\rightarrow 0$, $u(\varepsilon) \rightarrow 0$ and we obtain
$$\lim_{t\rightarrow 0} \limsup_{r\rightarrow \infty} \vert tx^{(1)}_t(r)-1/(a+c\theta_{\ell}) \vert =0.$$
Using again $(\ref{quotient})$ provides the counterpart for $tx^{(2)}_t$ and
 ends  the proof  in the case $x_{\ell}^{(1)}\ne 0$. The case $x_{\ell}^{(2)}\ne 0$ is treated similarly . 
\end{proof}

\subsubsection{Approximation of  the flow of  scaled  birth and death processes}
 \label{scaled}
We use  notation  of Sections \ref{EDS} for 
 $$X^K=\begin{pmatrix} X^{K,(1)} \\ X^{K,(2)} \end{pmatrix}$$
with here   $E=\{0,1,2, \ldots\}^2$,  $\chi=[0,\infty)$, $q(dz)=dz$ and
$$h_F^K(x)=\int_{0}^{\infty} [F(x+H^K(x,z))-F(x)]dz,$$
where $H^K$ is defined in (\ref{defHK}).
Recalling the definition of  $F_{\beta,\gamma}$ from  (\ref{defFbg}), we get
\be \label{hnm}
 h_{F_{\beta,\gamma}}^{K}(x) =\begin{pmatrix} \lambda_1Kx_1\left((x_1+1/K)^{\beta}-x_1^{\beta}\right)+ Kx_1\left(\mu_1+ax_1 +cx_2\right)\left((x_1-1/K)^{\beta}-x_1^{\beta}\right)\\
 \gamma\lambda_2Kx_2\left((x_2+1/K)^{\beta}-x_2^{\beta}\right)+ \gamma Kx_2\left(\mu_2+bx_2 +dx_1\right)\left((x_2-1/K)^{\beta}-x_2^{\beta}\right)\end{pmatrix}.
 \ee
  We  consider
$$b_{F_{\beta,\gamma}}^K=J_{F_{\beta,\gamma}}^{-1}h_{F_{\beta,\gamma}}^K, \qquad \psi^K_{F_{\beta,\gamma}}=h^K_{F_{\beta,\gamma}}\circ F_{\beta,\gamma}^{-1} $$
and 
we  recall  that $\mathfrak {D}_{\alpha}=\{ (x_1,x_2) \in (\alpha, \infty)^2 : \   x_1\geq \alpha x_2, \  x_2 \geq \alpha x_1 \}$ and 
$$b(x)=\begin{pmatrix} \tau_1x_1 -ax_1^2-cx_1x_2  \\

 \tau_2 x_2 -bx_2^2-dx_1x_2 \end{pmatrix}, \qquad  \psi_{F_{\beta,\gamma}}=
(J_{F_{\beta,\gamma}}b)\circ F_{\beta,\gamma}^{-1}.$$
 To compare these quantities and approximate the flow associated with $b^K$, we introduce
$$\Delta^K_{\beta, \gamma}(x)=\frac{\beta (\beta-1)}{2K} \begin{pmatrix} (ax_1+cx_2)x_1^{\beta-1} \\
\gamma  (bx_2+dx_1)x_2^{\beta-1}
\end{pmatrix}.$$
% \marginpar{$\beta=1$ marcheriat bien aussi je cross}
\begin{lem} \label{pourri}   For any $\alpha>0$ and $\beta \in (0,1]$ and $\gamma>0$, there exists $C>0$ such that for any $x \in \mathfrak D_{\alpha}$ and $y\in 
F_{\beta,\gamma}(\mathfrak D_{\alpha})$  and $K\geq 2/\alpha$, \\
(i)
$$\parallel h_{F_{\beta,\gamma}}^K(x)-J_{F_{\beta,\gamma}}(x)b(x)-\Delta^K_{\beta, \gamma}(x)\parallel_2 \leq \frac{C}{K}\parallel x\parallel_2^{\beta-1}.$$
(ii) $$\parallel  b_{F_{\beta,\gamma}}^K(x)-b(x) \parallel_2 \leq \frac{C}{K} \parallel x\parallel_2.$$
(iii) $$
\psi^K_{F_{\beta,\gamma}}(y) =\psi_{F_{\beta,\gamma}}(y)+\Delta^K_{\beta, \gamma}(F_{\beta,\gamma}^{-1}(y))+ R^K_{\beta, \gamma}(F_{\beta,\gamma}^{-1}(y)), 
$$
%\marginpar{indq depdc cst en $\beta, \gamma$ ?}
where $\parallel R^K_{\beta, \gamma}(x)\parallel_2\leq C/K$. \\
(iv) Moreover $\psi^K_{F_{\beta,\gamma}}$ is $(C,C/K)$ non-expansive on each convex component of $F_{\beta,\gamma}(D_{\beta,\gamma}\cap \mathfrak D_{\alpha})$, where we recall that $D_{\beta,\gamma}$
is defined in (\ref{defDbeta}).\\
(v) Finally, 
$$
%\label{deuzz}
\parallel \psi^K_{F_{\beta,\gamma}}(y) -\psi_{F_{\beta,\gamma}}(y)\parallel_2 \leq C\frac{1+ \parallel y\parallel}{K}.
$$
\end{lem}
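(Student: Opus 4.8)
The plan is to treat part $(i)$ as the computational heart of the lemma and to deduce $(ii)$, $(iii)$ and $(v)$ from it by elementary manipulations, leaving the non-expansivity $(iv)$ as the one genuinely structural point. Starting from the explicit formula (\ref{hnm}) for $h^K_{F_{\beta,\gamma}}$, I would expand $u\mapsto u^\beta$ to second order around $x_1$,
\[
(x_1\pm 1/K)^\beta - x_1^\beta = \pm\frac{\beta}{K}\,x_1^{\beta-1} + \frac{\beta(\beta-1)}{2K^2}\,x_1^{\beta-2} + O\!\left(\frac{x_1^{\beta-3}}{K^3}\right),
\]
and likewise in the second coordinate. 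Multiplying by $Kx_1$ and the birth/death prefactors, the term of order $x^\beta$ assembles, via $\lambda_1-\mu_1=\tau_1$, into $\beta x_1^{\beta-1}(\tau_1x_1-ax_1^2-cx_1x_2)=(J_{F_{\beta,\gamma}}(x)b(x))_1$; the $1/K$ term carrying the factor $(ax_1+cx_2)$ is precisely $(\Delta^K_{\beta,\gamma}(x))_1$; and everything left over — the $1/K$ contribution proportional to the constant $\lambda_1+\mu_1$ together with the multiplied-out Taylor remainder — is $O(x_1^{\beta-1}/K)$. The second coordinate is identical, which gives $(i)$.

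The main obstacle is making this remainder uniform over the unbounded cone $\mathfrak D_\alpha$, and this is exactly where the two hypotheses $x\in\mathfrak D_\alpha$ and $K\ge 2/\alpha$ are used: they force $x_1\asymp x_2\asymp\|x\|_2$ and $x_i-1/K\ge x_i/2>0$, so the expansion never approaches the singularity of $u^\beta$ at the origin and the third derivative is bounded by $Cx_i^{\beta-3}$ on the interval of integration. Multiplying by $Kx_i$ and a prefactor of size $O(\|x\|_2)$ then yields a remainder $O(\|x\|_2^{\beta-1}/K)$ with a constant depending only on $\alpha$, $\beta$, $\gamma$ and the rates.

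Parts $(ii)$, $(iii)$ and $(v)$ follow formally. For $(ii)$ I would apply $J_{F_{\beta,\gamma}}^{-1}$, which is diagonal with operator norm $\asymp\|x\|_2^{1-\beta}$ on $\mathfrak D_\alpha$, to the bound of $(i)$; since $\|\Delta^K_{\beta,\gamma}(x)\|_2\le C\|x\|_2^{\beta}/K$, this gives $\|b^K_{F_{\beta,\gamma}}(x)-b(x)\|_2\le C\|x\|_2^{1-\beta}(\|x\|_2^{\beta}+\|x\|_2^{\beta-1})/K\le C\|x\|_2/K$. For $(iii)$, composing the identity of $(i)$ with $F_{\beta,\gamma}^{-1}$ and naming the error term $R^K_{\beta,\gamma}$ yields the stated decomposition, and because $\beta<1$ and $\|x\|_2\ge\alpha$ we get $\|R^K_{\beta,\gamma}(x)\|_2\le C\|x\|_2^{\beta-1}/K\le C\alpha^{\beta-1}/K$. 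For $(v)$ I would bound $\|\Delta^K_{\beta,\gamma}(F_{\beta,\gamma}^{-1}(y))\|_2\le C\|F_{\beta,\gamma}^{-1}(y)\|_2^{\beta}/K$ and use $\|F_{\beta,\gamma}^{-1}(y)\|_2^{\beta}\asymp\|y\|_2$ on $F_{\beta,\gamma}(\mathfrak D_\alpha)$, then add the $C/K$ bound on $R^K_{\beta,\gamma}$.

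The delicate consequence is $(iv)$. Using $(iii)$ I would write $\psi^K_{F_{\beta,\gamma}}=\psi_{F_{\beta,\gamma}}+\Delta^K_{\beta,\gamma}\circ F_{\beta,\gamma}^{-1}+R^K_{\beta,\gamma}\circ F_{\beta,\gamma}^{-1}$ and estimate the scalar product $(\psi^K_{F_{\beta,\gamma}}(y)-\psi^K_{F_{\beta,\gamma}}(y'))\cdot(y-y')$ termwise, in the spirit of the additivity observations of Section \ref{main}. Lemma \ref{transformation} supplies the leading $\overline{\tau}\|y-y'\|_2^2$ from $\psi_{F_{\beta,\gamma}}$ on each convex component of $F_{\beta,\gamma}(D_{\beta,\gamma}\cap\mathfrak D_\alpha)$, and the bound $\|R^K_{\beta,\gamma}\circ F_{\beta,\gamma}^{-1}\|_2\le C/K$ supplies the constant term $(C/K)\|y-y'\|_2$. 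The crucial point is that $\Delta^K_{\beta,\gamma}\circ F_{\beta,\gamma}^{-1}$ is Lipschitz with constant $O(1/K)$: writing $\Delta^K_{\beta,\gamma}=\frac{\beta(\beta-1)}{2K}\,g$ with $g(x)=\big((ax_1+cx_2)x_1^{\beta-1},\,\gamma(bx_2+dx_1)x_2^{\beta-1}\big)$, the map $g$ is homogeneous of degree $\beta$ whereas $F_{\beta,\gamma}^{-1}$ scales degrees by $1/\beta$, so $g\circ F_{\beta,\gamma}^{-1}$ is homogeneous of degree $1$. Its Jacobian is then homogeneous of degree $0$, hence bounded on the cone $F_{\beta,\gamma}(\mathfrak D_\alpha)$ — which stays bounded away from the axes, so $g\circ F_{\beta,\gamma}^{-1}$ is $C^1$ there — and therefore $g\circ F_{\beta,\gamma}^{-1}$ is Lipschitz with a $K$-independent constant on each convex component, making $\Delta^K_{\beta,\gamma}\circ F_{\beta,\gamma}^{-1}$ Lipschitz with constant $O(1/K)$. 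Summing the three contributions gives $(C,C/K)$ non-expansivity and completes the lemma.
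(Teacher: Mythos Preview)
Your proof is correct and follows essentially the same route as the paper: a second-order Taylor expansion of $u\mapsto u^\beta$ for $(i)$, with $(ii)$, $(iii)$, $(v)$ read off by applying $J_{F_{\beta,\gamma}}^{-1}$ or composing with $F_{\beta,\gamma}^{-1}$, and $(iv)$ obtained from the decomposition in $(iii)$ together with Lemma~\ref{transformation}. Your homogeneity argument for $(iv)$ --- that $g\circ F_{\beta,\gamma}^{-1}$ is homogeneous of degree one, hence has a degree-zero Jacobian bounded on the cone --- is a clean justification of what the paper states more tersely as ``its partial derivatives are bounded on this domain''; both yield a $K$-independent Lipschitz constant (indeed $O(1/K)$, as you observe), and hence the claimed $(C,C/K)$ non-expansivity.
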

\begin{proof}
First, by Taylor-Lagrange formula applied to $(1+h)^{\beta}$,  there exists $c_0>0$ such that 
$$\bigg\vert \left(z+\frac{\delta}{K}\right)^{\beta}-z^{\beta}-\frac{\delta}{K}\beta z^{\beta -1} -\frac{\delta^2}{2K^2}\beta (\beta-1) z^{\beta -2}\bigg\vert
\leq \frac{c_0}{K^2} z^{\beta -3}$$
for any $z>\alpha$ and $K\geq 2/\alpha$ and $\delta \in \{-1,1\}$, since $h=\delta/(Kz)\in (-1/2, 1/2)$.
Using then $(\ref{hnm})$
and $$
J_{F_{\beta,\gamma}}(x)=\begin{pmatrix} \beta x_1^{\beta-1}   & 0 \\ 0 & \gamma \beta x_2^{\beta-1} \end{pmatrix}, \qquad  J_{F_{\beta,\gamma}}(x)b(x)=\begin{pmatrix}\beta x_1^{\beta-1} x_1(\tau_1-ax_1-cx_2) \\

\gamma \beta x_2^{\beta-1} x_2(\tau_2-bx_2-dx_1)
 \end{pmatrix}  $$
yields $(i)$, since $\parallel x \parallel_2$, $x_1$ and $x_2$ are equivalent up to a positive constant when  $x\in \mathfrak D_{\alpha}$. We immediately get
$(iii)$ since $\parallel x\parallel_2^{\beta-1}$ is bounded on $[\alpha, \infty)^2$ when $\beta\leq 1$. \\

Then $(i)$  and the fact that there exists  $c_0>0$ such that for any $x\in \mathfrak  D_{\alpha}$ and $u \in [0,\infty)^2$,
$$\parallel J_{F_{\beta,\gamma}}(x)^{-1}\Delta^K_{\beta, \gamma}(x)\parallel_2 \leq c_0 \frac{\parallel x\parallel_2}{K},  \qquad \parallel J_{F_{\beta,\gamma}}(x)^{-1} u\parallel_2 \leq c_0\parallel x \parallel_2^{1-\beta}\parallel u \parallel_2$$  %for  $x\in  \mathfrak D_{\alpha}$  
%$$\parallel  b_{F_{\beta,\gamma}}^K(x)-b(x) \parallel_2 \leq c_0'\frac{\parallel x\parallel_2}{K}.$$
%Moreover
%$$
%\psi^K_{F_{\beta,\gamma}} =\psi_{F_{\beta,\gamma}}+\Delta^K_{\beta, \gamma}(F_{\beta,\gamma}^{-1})+ R^K_{\beta, \gamma}(F_{\beta,\gamma}^{-1}), 
%$$where $\parallel R^K_{\beta, \gamma}(x)\parallel_2\leq c_0''/K$ for some $c_0''>0$. 
 proves $(ii)$.  \\

We observe that
$\Delta^K_{\beta, \gamma}\circ F_{\beta,\gamma}^{-1}$ is uniformly Lipschitz on $F_{\beta,\gamma}(\mathfrak D_{\alpha})$ with constant $L$ since its partial derivative are 
 bounded on this domain. Recalling  then  from
Lemma \ref{transformation} $(i)$ that $\psi_{F_{\beta,\gamma}}$ is $\bar{\tau}$ non expansive on $F_{\beta, \gamma}(D_{\beta,\gamma})$, the decomposition 
$(iii)$ ensures   that
%\marginpar{ici aussi clarifier l'utilisation de comb de non exp}
$\psi^K_{F_{\beta,\gamma}}$ is $(\overline{\tau}+L,C/K)$ non-expansive on $F_{\beta,\gamma}(D_{\beta,\gamma}\cap \mathfrak D_{\alpha})$. So $(iv)$ holds. \\

Finally, using $(iii)$ and adding that 
$$\sup_{y\in  F_{\beta,\gamma}(\mathfrak D_{\alpha}), K\geq 1 } K \frac{\parallel \Delta^K_{\beta, \gamma}(F_{\beta,\gamma}^{-1}(y)) \parallel_2}{\parallel y\parallel_2}=  \sup_{x\in
\mathfrak D_{\alpha}, K\geq 1} K \frac{\parallel \Delta^K_{\beta, \gamma}(x) \parallel_2}{\parallel F_{\beta,\gamma}(x)\parallel_2} \ < \ \infty$$ 
%is bounded for $x\in $, there exists $C>0$ such that
 proves $(v)$ and ends up the proof.% for any $i=1, \ldots, N$, for any $y \in F_i(D_i)$, 
\end{proof}

\subsubsection{Adjunction of  open  convex cones}
\label{adjsec}

%Finally, we  need the following additional results on the dynamical system coming from infinity.
We  decompose the trajectory of the flow  in $\mathcal {D}_{\alpha}=  (\alpha, \infty)^2$
into time intervals where a non-expansive transformation can be found.  This relies on the next Lemma and the results of Section \ref{nonexpcones}.
Recall   from (\ref{defdom}) that $T_D(x_0)$ is the exit time of $D$ for the flow started from $x_0$.
%=\inf\{ t \geq 0 : \phi(x_0, t) \not\in D\}$ 
Moreover 
$d_{\beta}(x,y)=\parallel F_{\beta,1}(x)-F_{\beta,1}(y) \parallel_2$ from $(\ref{distanceLV})$, while  the definition of $x_{\ell}$ is given in previous Section \ref{sectionPoinca}.
\begin{lem}
\label{Poincc}
(i) Let $\alpha>0$, $\beta \in (0,1]$,  $N \in \mathbb N$ and   $(C_i)_{i=1, \ldots ,N}$  be a family of open  convex cones of $(0,\infty)^2$
such that $$(0, \infty)^2 =\cup_{i=1}^N C_i.$$ Then, there exists  
 $\kappa \in \mathbb N$  and $\varepsilon_0>0$ and
% \marginpar{ $t(.,x_0) \in $ $n(x_0, .) \in \{1, \ldots\}^K$}
$(t_k(x_0) : k=0, \ldots,  \kappa)$  and $(n_k(x_0) : k=1,\ldots,\kappa-1)$   such that  for any $x_0\in \mathcal D_{\alpha}$,
 $$0=t_0(x_0)\leq t_1(x_0)\leq \ldots \leq t_\kappa(x_0)=T_{\mathcal D_{\alpha}}(x_0),  \quad n_k(x_0) \in \{1, \ldots, N\}$$
and  for any 
%\marginpar{Il va falloir justifier ce qui se passe au retour de l'infini, en particulier ici on autorise $T(x_0)=\infty$ ce qui demande par exemple d'indiquer la convergence}
$k\leq \kappa-1$ and $t \in [t_k(x_0),t_{k+1}(x_0))$, we have
$$\overline{B}_{d_{\beta}}(\phi(x_0,t), \varepsilon_0) \subset   C_{n_k(x_0)}.$$
$(ii)$ In the case $x_{\ell}=x_{\infty} \in (0, \infty)^2$, for any $x_0\in (0,\infty)^2$ and $\varepsilon>0$, 
$$\liminf_{r\rightarrow \infty} T_{\mathcal D_{\varepsilon}}(rx_0)>0.$$
$(iii)$ In the case $x_{\ell}=(1/a,0)$,  for any $x_0\in (0,\infty)^2$ and $\varepsilon>0$ and $T>0$,  for $r$ large enough,
$$T_{\mathcal D_{\varepsilon}}(rx_0)=\inf\{ t\geq 0 :  \phi(rx_0,t) \in [0,\infty)\times [0,\varepsilon] \} \leq T.$$
(iv) Under Assumption (\ref{condparam}), for any $\alpha_0>0$,
$$\inf_{x_0 \in \mathfrak{D}_{\alpha_0}} T_{\mathfrak{D}_{\alpha}}(x_0) \quad  \stackrel{\alpha\rightarrow 0}{\longrightarrow}+ \infty.$$
%\marginpar{a reprendre : pourquoi le system dynamique reste dans un tel domaine : les bords sont rentrants c'est plus simple ?}
%\marginpar{pas tout a fait suffisant de rentrer}
\end{lem}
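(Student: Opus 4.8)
The plan is to reduce every assertion to two facts about the deterministic flow that are already encoded in the Poincaré compactification (Lemma~\ref{vraiPoinc}) and its consequences (Lemma~\ref{Poinc}): the \emph{angular} behaviour of $\phi(x_0,\cdot)$, governed by the one–dimensional dynamics of the compactified field $H$ on the invariant boundary $\partial\mathcal S$, and the \emph{radial} decay $\|\phi(x_0,t)\|_1\le c_T/t$ together with the sharp asymptotic speed $\|t\,\phi(rx_0,t)\|_1\to\|x_\ell\|_1$. Since every open convex cone of $(0,\infty)^2$ is an angular sector, each statement is ultimately one about how the direction of $\phi(x_0,t)$ moves relative to finitely many sectors, and about whether the two coordinates stay above a prescribed level. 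I would therefore treat $(i)$ (the angular/adjunction statement) and $(ii)$--$(iv)$ (the radial/exit statements) by these two complementary mechanisms.

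For $(i)$, I would first observe that covering $(0,\infty)^2$ by the convex cones $C_1,\dots,C_N$ amounts to covering the compact arc of directions by the angular sectors they cut out. The key structural input is that on $\partial\mathcal S$ the field reduces to $H(y_1,y_2,0)=y_1y_2\,[(b-c)y_2+(d-a)y_1]\,(1,-1,0)$, which has a fixed sign between its at most three equilibria $(1,0,0)$, $(0,1,0)$, $z_\infty$; hence the direction of $\phi(x_0,\cdot)$ is gradient-like and crosses any fixed sector boundary at most a bounded number of times, uniformly in $x_0$, which yields a uniform bound $\kappa$ on the number of intervals. To produce a single $\varepsilon_0$ valid for all $x_0\in\mathcal D_\alpha$ I would run a Lebesgue-number argument in the compactified picture: the trajectory $t\mapsto\mathcal K(\phi(x_0,t))$ lives in the compact $\overline{\mathcal S}$, and a $d_\beta$-ball of radius $\varepsilon_0$ has angular half-width that tends to $0$ at infinity (because $F_{\beta,1}$ contracts the radial direction for $\beta<1$) and stays bounded on compacts, so for $\varepsilon_0$ small its $\mathcal K$-image remains inside the sector selected at each time. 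The genuine difficulty here, and the main obstacle of the whole lemma, is precisely this uniformity of $\kappa$ and $\varepsilon_0$ over the non-compact range of $x_0$, up to infinity; the compactification is what makes it tractable.

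Parts $(ii)$ and $(iii)$ are then short consequences of Lemma~\ref{Poinc}. For $(ii)$, with $x_\ell=x_\infty\in(0,\infty)^2$, the convergence of the direction to $x_\ell$ together with $\|t\,\phi(rx_0,t)\|_1\to\|x_\ell\|_1$ forces \emph{both} components of $t\,\phi(rx_0,t)$ to converge to the strictly positive numbers $x_\infty^{(i)}$; hence there is a fixed $t_1>0$ with $\phi(rx_0,t)\in(\varepsilon,\infty)^2$ for all $t\le t_1$ and all large $r$, giving $\liminf_r T_{\mathcal D_\varepsilon}(rx_0)\ge t_1>0$. For $(iii)$, with $x_\ell=(1/a,0)$, the same results provide a time $t_-(r)\to 0$ at which $\phi(rx_0,\cdot)$ is already aligned with the first axis while $x^{(1)}_{t_-(r)}(r)\to\infty$; inserting the bound $x^{(1)}_s\sim 1/(as)$ into $(x^{(2)}_s)'/x^{(2)}_s=\tau_2-bx^{(2)}_s-dx^{(1)}_s$ shows $x^{(2)}_t$ is driven to $0$ (faster than $x^{(1)}_t$) for every fixed $t>0$ as $r\to\infty$, so $x^{(2)}_t$ falls below $\varepsilon$ before any prescribed $T$ while $x^{(1)}_t$ stays above $\varepsilon$; this simultaneously identifies the exit face and bounds the exit time.

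Finally, $(iv)$ is a case analysis following~(\ref{condparam}). When $a=d$ and $b=c$ the quotient $\rho_t=x^{(2)}_t/x^{(1)}_t$ solves exactly $\rho_t'/\rho_t=\tau_2-\tau_1$, so $\rho_t=\rho_0\,e^{(\tau_2-\tau_1)t}$ and the angular constraint $\rho_t\in[\alpha,1/\alpha]$ can fail only after a time $\gtrsim|\tau_1-\tau_2|^{-1}\log(\alpha_0/\alpha)\to\infty$. When $a>d,\ b>c$ (or $c=d=0$) the direction converges to the interior direction $x_\infty$ (resp. each coordinate is an independent logistic with positive limit), so it stays bounded away from the axes and only the radial constraint $x^{(i)}_t>\alpha$ can bind; since $\|\phi(x_0,\cdot)\|_1$ is bounded on every finite interval and the coordinates remain comparable to $\|\phi\|_2$, the flow reaches level $\alpha$ (if at all) only after a time diverging as $\alpha\to 0$. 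Uniformity over $x_0\in\mathfrak D_{\alpha_0}$, including $x_0$ near infinity, is again supplied by Lemma~\ref{Poinc}$(ii)$, which shows the descent aligns with the interior direction $x_\ell$ in arbitrarily short time and keeps both coordinates positive; establishing the uniform radial lower bound on this non-compact family of initial conditions is the secondary technical point of this part.
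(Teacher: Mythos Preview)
Your approach is essentially the same as the paper's: all four parts are reduced to the Poincar\'e compactification (Lemma~\ref{vraiPoinc}) and the consequences recorded in Lemma~\ref{Poinc}. For $(i)$ the paper implements your gradient-like/Lebesgue-number sketch concretely by introducing shrunken cones $C_i^{\varepsilon}=\{x\in\mathcal D_\alpha\cap C_i:\overline B_{d_\beta}(x,\varepsilon)\subset C_i\}$, explicit entry/exit times $u_k^i,v_k^i$ into $C_i^{2\varepsilon}$ and out of $C_i^{\varepsilon}$, and bounding the number of re-entries $M^i(x_0)$ via the monotonicity of the compactified flow on $\partial\mathcal S$ together with its convergence to a fixed point off the boundary. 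For $(ii)$ and $(iv)$ your arguments match the paper's (the paper is in fact terser for $(iv)$, simply noting that Assumption~(\ref{condparam}) forces $x_\ell\in\{x_\infty,\widehat{x_0}\}$, so the flow does not approach the boundary of $(0,\infty)^2$ fast, and deferring to Lemma~\ref{Poinc}).

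The one genuine difference is in $(iii)$. The paper does not integrate the ODE for $x^{(2)}$ as you propose; instead it argues by continuity of the compactified flow $\Phi$ with respect to initial conditions: trajectories of $\Phi$ starting near $\partial\mathcal S$ first follow the boundary to $(1,0,0)$ and then remain close to the face $\{(u,0,1-u):u\in[u_0,1]\}$ for some $u_0<1$, which in the original coordinates forces exit through $(0,\infty)\times\{\varepsilon\}$; the smallness of the exit time then comes from the time-change factor $1+\|\phi(rx_0,t)\|_1$ being large near infinity. Your direct route is workable but would require you to control the competing terms carefully (your integration of $(x^{(2)})'/x^{(2)}\le \tau_2-dx^{(1)}$ with $x^{(1)}\sim 1/(as)$ produces a power $t^{-d/a}$, and closing the argument needs the exponent $d/a$ together with the initial smallness of $x^{(2)}_{t_-(r)}/x^{(1)}_{t_-(r)}$); the paper's compactification argument sidesteps this parameter-dependent bookkeeping.
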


\begin{proof} 
$(i)$
We  define
$$ C_i^{\varepsilon} =\{ x \in \mathcal D_{\alpha} \cap C_i \ : \ \overline{B}_{d_{\beta}}(x, \varepsilon) \subset C_i\}$$
%\marginpar{Ajouter un argument ?}
and we first observe that for $\varepsilon$ small enough,
$$\cup_{i=1}^N C_i^{2\varepsilon}=\mathcal D_{\alpha},$$ 
since $\beta>0$ and the open convex cones $C_i$  are   domains between two half-lines of $(0,\infty)^2$ and their collection for $i=1, \ldots, N$ covers $(0,\infty)^2$. 
We define 
$$u_0^i(x_0)=\inf\{t \geq 0  :  \phi(x_0,t) \in C_i^{2\varepsilon}\}, \quad  v_0^i(x_0)=\inf\{ t \geq u_0^i(x_0) : \phi(x_0,t) \not\in C_i^{\varepsilon}\}$$ and by recurrence for $k\geq 1$,
$$u_k^i(x_0)=\inf \{ t\geq  v^i_{k-1} (x_0): \phi(x_0,t) \in C_i^{2\varepsilon}\}, \quad v_k^i(x_0)=\inf\{ t \geq u_k^i(x_0) : \phi(x_0,t) \not\in C_i^{\varepsilon}\}.$$
 Let us then  note that
%\overline{\mathcal S} =\cup_ {i=1}^N \overline{\mathcal K(C_i)}, \qquad 
$$\partial {\mathcal S} =\cup_ {i=1}^N \partial {\mathcal K(\overline{C_i})}, \quad \text{where} \quad 
\partial \mathcal K (\overline{C_i}) =\overline{\mathcal K({C}_i)}- \mathcal K(\overline{C_i}) = \{(t,1-t,0) : t\in [a_i,b_i]\}$$
for some
%\marginpar{A pr\'eciser, clairifier; $\varepsilon$ lie a lecart entre les cones !!!!, expliquer qu'on peut prendre la distance $d_{\beta}$ qu'on veut car c'est des \'ecarts line\'eaires entres les c™nes.}
$0\leq a_i \leq b_i\leq 1$.
Recall that $z_t= \Phi(z_0,t)$ has been introduced in (\ref{chgmt})   and  is defined on $\overline{\mathcal S}$.  On the boundary $\partial{\mathcal S}$, it is 
 given by $(z^{(1)}_t,1-z^{(1)}_t,0)$ where $z^{(1)}_t$ is monotone. Outside this boundary, 
$(z_t :  t\geq 0)$ goes to a fixed point since the competitive Lotka-Volterra dynamical system  $(x_t  :  t \geq 0)$ does. This ensures that for any $i\in\{1, \ldots, N\}$, \\
$$M^i(x_0)=\max \{ k : v_k^i(x_0) < \infty\}$$
is bounded for $x_0 \in \mathcal D_{\alpha}$. 
The collection of time intervals $[u_k^i(x_0),v_k^i(x_0)]$ for $i=1, \ldots, N$ and  $k\leq M^i(x_0)$ provides a finite covering of $[0,T_{\mathcal D_{\alpha}}(x_0)]$.\\
Adding  that for $t \in [u_k^i(x_0),v_k^i(x_0)]$, $\overline{B}_{d_{\beta}}(\phi(x_0,t), \varepsilon) \subset C_i$ ends up the proof.  \\

$(ii)$ comes simply from Lemma \ref{vraiPoinc} which ensures that  in the case $x_{\ell}=x_{\infty}$, the dynamical system comes down from infinity in the interior of $(0, \infty)^2$, see also the first picture in Figure $1$ above. \\

%\marginpar{CLARIFIER}
$(iii)$ We use again the dynamical system $(z_t : t\geq 0)$ given by $\Phi$ and defined in $(\ref{chgmt})$.  More precisely,  the property here comes from 
the continuity of the associated  flow with respect to the initial condition. Indeed, in the case $x_{\ell}=(1/a,0)$, the trajectories of $(z_t: t\geq 0)$ starting from $r$ large 
go  to $(1,0,0)$ along the boundary $\partial \mathcal S$ and then remain close to boundary $\{(u,0,1-u) :  u \in [u_0,1]\}$ for some fixed $u_0<1$.
This ensures that  $(x_t : t\geq 0)$ exits from $\mathcal D_{\varepsilon}$  through $(0,\infty)\times \{\varepsilon\}$ and in finite time for $r$ large enough. 
%\marginpar{Bon argument a ajouter ! ? cette norm rest grande tang qu'une des composites lest}
The fact that this exit time $T_{\mathcal D_{\varepsilon}}(rx_0)$ goes to zero as $r\rightarrow \infty$ is due to the 
fact that the dynamics of $(x_t : t\geq 0)$ is an acceleration of  that of  $(z_t: t\geq 0)$ when starting close to infinity, with time change  $1+\parallel \phi(x_0,t)\parallel_1$.  \\
%Using the continuity of the flow of $(z_t : t\geq 0)$ around the point  $$z_{\infty}=\lim_{r\rightarrow \infty} F(rx_0),$$

Finally $(iv)$ is a consequence of Lemma \ref{Poinc}, noticing that  Assumption (\ref{condparam}) ensures that $x_{\ell}\in \{x_{\infty},x_0\}$, so the dynamical system does  not come fast to the boundary of $(0,\infty)^2$. 
 \end{proof}

 \begin{lem} \label{nonetsys} Let $\beta \in (0,1)-\{1/2\}$ such that $q_{\beta}=4ab(1+\beta)^2+4cd(\beta^2-1) > 0$ and $\alpha>0$. \\
  There exists $N\geq 1$,  $(\gamma_i : i=1,\ldots, N) \in (0,\infty)^N$,    convex cones   $(C_i : i=1, \ldots, N)$,    $\kappa\in \mathbb N$,  $\varepsilon_0>0$,
  %$(t_k(x_0) : k=0, \ldots,  \kappa)$  and $(n_k(x_0) : k=1,\ldots,\kappa-1)$   such that  for any $x_0\in \mathcal D_{\alpha}$,
 $0=t_0(x_0)\leq t_1(x_0)\leq \ldots \leq t_\kappa(x_0)= T_{\mathcal D_{\alpha}}(x_0)$ and   $n_k(x_0) \in \{1, \ldots, N\}$
 such that  : 
 
  (i) For each $i=1, \ldots, N$,  $\psi_{F_{\beta, \gamma_i}}$ is $\overline{\tau}$ non-expansive on
  $F_{\beta, \gamma_i}(C_{i})$ and
 $\cup_{i=1}^N C_i= (0,\infty)^2.$

%\marginpar{mutualiser la suite de temps $t_i, \alpha$...}
(ii) For any $x_0\in \mathcal D_{\alpha}$, $k=0,\ldots, \kappa-1$, $t\in (t_k(x_0), t_{k+1}(x_0))$,
 $$\overline{B}_{d_{\beta}}(\phi(x_0,t), \varepsilon_0) \subset   C_{n_k(x_0)} \cap \mathcal D_{\alpha/2}.$$

(iii) Finally, for $K$ large enough, there exists a continuous  flow $\phi^K$    such that  for any $x_0\in \mathfrak D_{\alpha}$,
$\phi^K(x_0,0)=x_0$ and for  any $k=0, \ldots,  \kappa-1$  and $t\in (t_k(x_0), t_{k+1}(x_0)\wedge  T_{\mathfrak D_{\alpha}}(x_0))$,
 %\marginpar{virer sur le $(2)$. $(iii)$ pour case $c=d=0$ ?}
 $$\overline{B}_{d_{\beta}}(\phi^K(x_0,t), \varepsilon_0/2) \subset   C_{n_k(x_0)} \cap  \mathfrak D_{\alpha/2}
\qquad \text{and} \qquad \frac{\partial}{\partial t} \phi^K(x_0,t)= b_{F_{n_k(x_0)}}^K(\phi^K(x_0,t))$$
 and for any $T>0$,
\be
\label{appflowK} 
\sup_{\substack{x_0 \in \mathfrak D_{\alpha}, \\ t < T_{\mathfrak D_{\alpha}}(x_0)\wedge T}} d_{\beta}( \phi^K(x_0,t), \phi(x_0,t)) \stackrel{K\rightarrow \infty}{\longrightarrow}0.
\ee
 \end{lem}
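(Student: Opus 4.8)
The plan is to assemble the three parts from the spectral and compactification lemmas already established.

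For $(i)$, I would produce the covering cone by cone using Lemma~\ref{rec}. Since $2\beta-1\neq 0$, the map $\gamma\mapsto x_{\beta,\gamma}=(d\gamma^2/c)^{1/(2\beta-1)}$ is a continuous bijection from $(0,\infty)$ onto $(0,\infty)$, so the ``middle'' direction of $\mathcal C_{\eta,\beta,\gamma}$ can be placed at any prescribed ratio $x_1/x_2$. Fixing a small $\eta_0>0$, the set of ratios $[\eta_0,1/\eta_0]$ is compact, so selecting for each $s\in[\eta_0,1/\eta_0]$ a parameter $\gamma$ with $x_{\beta,\gamma}=s$ (with the associated $\eta$ from Lemma~\ref{rec}) and extracting a finite subcover yields $\gamma_1,\dots,\gamma_m$ whose middle sectors cover all ratios in $[\eta_0,1/\eta_0]$, while the ``near $0$'' and ``near $\infty$'' sectors of a single $\mathcal C_{\eta,\beta,\gamma_j}$ cover the remaining ratios. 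I then take $(C_i)_{i=1}^N$ to be the list of all convex components of the $\mathcal C_{\eta,\beta,\gamma_j}$, each tagged with the $\gamma_i:=\gamma_j$ it came from; these are open convex cones with $\cup_iC_i=(0,\infty)^2$, and since $C_i\subset\mathcal C_{\eta,\beta,\gamma_i}\subset D_{\beta,\gamma_i}$ by Lemma~\ref{rec}$(i)$, Lemma~\ref{transformation} gives that $\psi_{F_{\beta,\gamma_i}}$ is $\overline\tau$ non-expansive on $F_{\beta,\gamma_i}(C_i)$.

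For $(ii)$, I would feed this family into Lemma~\ref{Poincc}$(i)$, which returns $\kappa$, $\varepsilon_0$, the switching times $t_k(x_0)$ and the indices $n_k(x_0)$ with $\overline B_{d_\beta}(\phi(x_0,t),\varepsilon_0)\subset C_{n_k(x_0)}$ on each interval. The only extra point is the containment in $\mathcal D_{\alpha/2}$: since $\phi(x_0,t)\in\mathcal D_\alpha$ for $t<T_{\mathcal D_\alpha}(x_0)$ and $d_\beta$ is bi-Lipschitz equivalent to the euclidean distance on boxes bounded away from the axes, shrinking $\varepsilon_0$ (depending only on $\alpha,\beta$) keeps the $\varepsilon_0$-tube inside $\mathcal D_{\alpha/2}$. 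For $(iii)$, given $x_0\in\mathfrak D_\alpha\subset\mathcal D_\alpha$, I would \emph{define} $\phi^K$ by solving, on each interval $(t_k(x_0),t_{k+1}(x_0)\wedge T_{\mathfrak D_\alpha}(x_0))$, the ODE $\partial_t\phi^K=b^K_{F_{n_k(x_0)}}(\phi^K)$ with the same switching times and indices as $\phi$, glued continuously. The comparison is carried out in the coordinates $w=F_{\beta,\gamma_{n_k}}(\phi)$, $w^K=F_{\beta,\gamma_{n_k}}(\phi^K)$, where the flows obey $w'=\psi_{F_{\beta,\gamma_{n_k}}}(w)$ and $(w^K)'=\psi^K_{F_{\beta,\gamma_{n_k}}}(w^K)$. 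Writing
$$\frac{d}{dt}\parallel w^K-w\parallel_2^2=2(w^K-w){\boldsymbol .}\big(\psi_{F_{\beta,\gamma_{n_k}}}(w^K)-\psi_{F_{\beta,\gamma_{n_k}}}(w)\big)+2(w^K-w){\boldsymbol .}\big(\psi^K_{F_{\beta,\gamma_{n_k}}}(w^K)-\psi_{F_{\beta,\gamma_{n_k}}}(w^K)\big),$$
the first term is $\leq 2\overline\tau\parallel w^K-w\parallel_2^2$ by the non-expansivity from $(i)$, and the second is controlled by Lemma~\ref{pourri}$(v)$, namely $\leq 2\parallel w^K-w\parallel_2\,C(1+\parallel w^K\parallel_2)/K$. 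A Gronwall estimate then bounds $\parallel w^K-w\parallel_2$, and translating back through the bi-Lipschitz constants of the finitely many $F_{\beta,\gamma_i}$ and accumulating over the at most $\kappa$ intervals yields $(\ref{appflowK})$.

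The main obstacle is the uniformity in $x_0$ near $t=0$, where $\phi(x_0,\cdot)$ may start arbitrarily far out. The rescue is that $\parallel w^K\parallel_2\asymp\parallel\phi^K\parallel_2^{\beta}\lesssim (c_T/t)^{\beta}$ by Lemma~\ref{Poinc}$(i)$ (since $\phi^K$ stays comparable to $\phi$ inside the tube), and $\int_0^T(1+(c_T/s)^{\beta})\,ds<\infty$ precisely because $\beta<1$; this makes the Gronwall forcing term integrable uniformly in $x_0$ and drives the bound in $(\ref{appflowK})$ to $0$ as $K\to\infty$. The same integrable bound powers a bootstrap: the inequality $d_\beta(\phi^K,\phi)\leq\rho(K)$ with $\rho(K)\to0$ uniformly shows that for $K$ large $\phi^K$ never leaves the $\varepsilon_0/2$-tube, hence remains in $C_{n_k(x_0)}\cap\mathfrak D_{\alpha/2}$, which is exactly what legitimizes applying $b^K_{F_{n_k}}$ and the non-expansivity estimate there, closing the argument. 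A secondary point is the bounded number $\kappa$ of cone switches, guaranteed by Lemma~\ref{Poincc}, which keeps the accumulated bi-Lipschitz and exponential factors uniform in $x_0$.
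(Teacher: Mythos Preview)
Your treatment of $(i)$ and $(ii)$ is essentially the paper's: the same compactness extraction from the family $\{\mathcal C_{\eta(\beta,\gamma),\beta,\gamma}\}_{\gamma>0}$, followed by Lemma~\ref{Poincc}$(i)$ and a shrinking of $\varepsilon_0$.

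For $(iii)$ you take a genuinely different route. The paper does \emph{not} rely on the $\overline\tau$ non-expansivity alone: it invokes the \emph{strong} contraction inequality~(\ref{equationsup}) from Lemma~\ref{rec}$(ii)$, feeds it together with Lemma~\ref{pourri}$(v)$ into the Appendix Lemma~\ref{ctrflo}, and obtains directly a bound $\parallel\phi^K-\phi\parallel\leq M e^{(L+M)T}/K$ that is uniform in $x_0$. In that argument the growing perturbation $\parallel\psi^K-\psi\parallel\lesssim(1+\parallel y\parallel)/K$ is neutralised near infinity by the contraction rate $-\mu\parallel y\parallel$, so no time-integrability of $\parallel y\parallel$ is needed. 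Your argument instead keeps only the $\overline\tau$ non-expansivity, writes the Gronwall forcing as $C(1+\parallel w^K\parallel)/K$, and uses Lemma~\ref{Poinc}$(i)$ to bound $\parallel w^K(s)\parallel\lesssim (c_T/s)^{\beta}$, whose integral on $(0,T)$ is finite precisely because $\beta<1$. This is correct and in fact more elementary: you never use Lemma~\ref{rec}$(ii)$ nor Lemma~\ref{ctrflo}. The price is that your proof is tied to $\beta<1$ and to the $1/t$ coming-down bound, whereas the paper's mechanism would survive without the latter.

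One point to tighten: Lemma~\ref{pourri}$(v)$ is stated on $F_{\beta,\gamma}(\mathfrak D_{\alpha})$, so your bootstrap needs $\phi^K$ to remain in $\mathfrak D_{\alpha/2}$, not merely $\mathcal D_{\alpha/2}$. Since $\phi(x_0,t)\in\mathfrak D_{\alpha}$ for $t<T_{\mathfrak D_{\alpha}}(x_0)$ and a $d_\beta$-perturbation of size $\varepsilon_0$ changes the ratio $x_1/x_2$ by at most $O(\varepsilon_0/\min(x_1^\beta,x_2^\beta))\leq O(\varepsilon_0/\alpha^\beta)$, shrinking $\varepsilon_0$ (depending on $\alpha,\beta$) also preserves the angular constraint $x_1\geq(\alpha/2)x_2$, $x_2\geq(\alpha/2)x_1$. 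With this addition your argument closes.
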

 \begin{proof}
 We only deal  with the case $c\ne 0$ (and then $d\ne 0$). Indeed,  we recall from Lemma \ref{transformation} that the proofs of $(i-ii)$ in the case $c=d=0$ is obvious, since   one can take $N=1$ and $C_1=(0,\infty)^2$.  Moreover the proof of $(iii)$ is simplified in that case. \\
 By Lemma 
\ref{rec}, for any $\gamma>0$, there exists $\eta(\beta, \gamma)>0$ such that 
  $\mathcal C_{\eta(\beta, \gamma), \beta,\gamma} \subset D_{\beta,\gamma}$ and $(\ref{equationsup})$ holds for some $A_{\beta,\gamma}, \mu_{\beta, \gamma}\geq 0$.
 The collection   of the convex components of $(\mathcal C_{\eta(\beta,\gamma), \beta,\gamma} :  \gamma>0)$   
covers $(0,\infty)^2$, since it contains the half lines $\{(x_1,x_2) \in (0,\infty) : x_1=x_2x_\gamma\}$ and $\{ x_{\gamma} : \gamma >0\}=(0,\infty)$.  We underline that this collection also
contains the cones $\{ (x_1,x_2) \in (0,\infty)^2 : x_1< \eta(\beta,\gamma) x_2\}$ and   $\{ (x_1,x_2) \in (0,\infty)^2 : x_2 < \eta(\beta,\gamma) x_1\}$.
 Then, by a compactness argument, we  can extract  a finite covering of $(0,\infty)^2$ from this collection of open convex cones. This means that there exists
 $N\geq 1$ and $(\gamma_i : i=1,\ldots, N) \in (0,\infty)^N$ and convex cones 
 $(C_i :i=1, \ldots,N)$ such that $\cup_{i=1}^N C_i= (0,\infty)^2$ and $C_i\subset C_{ \eta(\beta,\gamma_i), \beta, \gamma_i}$.
By  Lemma 
\ref{transformation},      $\psi_{F_{\beta,\gamma_i}}$ is $\overline{\tau}$  is non-expansive on
  $F_{\beta,\gamma_i}(C_{i})$  for each
 $i=1, \ldots,N$, which proves $(i)$. \\
  
     We let now $\alpha >0$. The point $(ii)$ is   a direct consequence of 
     Lemma  
\ref{Poincc}  $(i)$ applied to   the covering $(C_i : i=1, \ldots, N)$ of $(0,\infty)^2$. 
Indeed, one just need to choose $\varepsilon_0$ small enough so that $\overline{B}_{d_{\beta}}(x, \varepsilon_0)\subset \mathcal D_{\alpha/2}$ for any
$x\in \mathcal D_{\alpha}$. \\
%Considering  now the  sets $D_i^{\alpha}=C_i^\cap \mathcal D_{\alpha/2}$ ends the proofs of $(i)$.

Let us now deal with $(iii)$. First, from the proof of $(i)$ and writing
$F_i=F_{\beta,\gamma_i}$, $A_i=A_{\beta, \gamma_i}$ and  $\mu_i=\mu_{\beta, \gamma_i}$,
   (\ref{equationsup})  becomes 
\be
\label{superexp}
(\psi_{F_{i}}(y)-\psi_{F_{i}}(y')).(y-y') \leq -\mu_i (\parallel y \parallel_2\wedge \parallel y' \parallel_2)  \parallel y-y'\parallel_2^2,
\ee
for any $i=1,\ldots,N$  and $y,y' \in F_i(C_i)\cap B(0,A_i)^c$, since $F_i(C_i)$ is convex by construction and included in 
$F_i(C_{\eta(\beta,\gamma_i),\beta, \gamma_i})$.   \\
We define  the flow $\phi_i^K$ associated to $b^K_{F_i}$ on $C_i$ : 
 $$\phi^K_i(x_0,0)=x_0, \qquad \frac{\partial}{\partial t}  \phi_i^K(x_0,t)=b_{F_i} ^K(\phi_i^K(x_0,t))$$
%\marginpar{attention aux mvx temps $T_i$}
 for  $x_0\in C_i$   and $t<T_{i}^K(x_0)$, where $T_{i}^K(x_0)$ is the maximal time when this flow is well defined and belongs to $C_i$.   
 We consider  the image  $\widetilde{\phi}_i^K(y_0,t)=F_i(\phi^K_i(F_i^{-1}(y_0),t))$ of this  flow. It  
satisfies 
 $$ \widetilde{\phi}_i^K(y_0,t)=y_0, \quad \frac{\partial}{\partial t} \widetilde{\phi}_i^K(y_0,t)=\psi_{F_i}^K( \widetilde{\phi}^K_i(y_0,t))$$
 for any  $y_0 \in  F_i(C_i)$ and $ t<  T_i^K(F_i^{-1}(y_0))$.
Similarly, writing  $\widetilde{\phi}_i(y_0,t)=F_i(\phi(F_i^{-1}(y_0),t))$, we have 
 $$ \widetilde{\phi}_i(y_0,t)=y_0, \quad \frac{\partial}{\partial t} \widetilde{\phi}_i(y_0,t)=\psi_{F_i}( \widetilde{\phi}_i(y_0,t))$$
  for any  $y_0 \in  F_i(C_i)$ and $ t<  T_{C_i}(F_i^{-1}(y_0))$. \\
%\marginpar{La a vaudrait pas le coup de mettre le reste du flot dans le reste du process}
%Using then $(\ref{superexp})$ 
Combining $(\ref{superexp})$ with Lemma \ref{pourri} $(v)$ 
%$(\ref{superexp})$ and $(\ref{deuzz})$ 
and observing that  $\parallel y \parallel_2\wedge \parallel y' \parallel_2\geq \parallel y \parallel_2 (1-\varepsilon_0/A)$ when $y'\in B(y,\varepsilon_0)$ and $\parallel y \parallel \geq A$, the
 assumptions of   Lemma \ref{ctrflo} in Appendix  are met for $\psi_{F_i}$ and $\psi^K_{F_i}$ on the domain $F_i( C_i\cap\mathfrak D_{\alpha/2})$. We apply this lemma with 
 $\eta=Kr_K$. It ensures that  for any $T>0$ and any sequence $r_K\rightarrow 0$, 
 $$\sup_{\substack{y_0 \in F_i(  C_i  \cap \mathfrak D_{\alpha}), \  y_1\in \overline{B}(y_0,r_K)   \\ t <   \widetilde{T}_{i,\varepsilon_0}(y_0) \wedge T} } \parallel \widetilde{\phi}_i^K(y_1,t)- \widetilde{\phi}_i(y_0,t) \parallel_2 \stackrel{K\rightarrow \infty}{\longrightarrow}0,$$
where $ \widetilde{T}_{i,\varepsilon}(y_0)=\sup\{ t \in (0,  T_{C_i}(F_i^{-1}(y_0))) : \forall  s\leq t,  \ \overline{B}(\widetilde{\phi}_i(y_0,s), \varepsilon) \subset F_i(C_i\cap\mathfrak D_{\alpha/2})\}$.
%Choose now $\varepsilon_0$ small enough so that for any $x\in \mathfrak{D}_{\alpha}$, $\overline{B}(x,\varepsilon_0)\subset  \mathfrak{D}_{\alpha/2}$
%and then  choose $\varepsilon$ small enough to get from the previous limit that
  Then
   \be
   \label{limK}
   \sup_{\substack{x_0 \in C_i\cap \mathfrak D_{\alpha}, \ x_1\in \overline{B}_{d_{\beta}}(x_0,r_K) \\ t <  T_{i,\varepsilon_0}(x_0)\wedge T}} d_\beta({\phi}^K_i(x_1,t), {\phi}(x_0,t))
 \stackrel{K\rightarrow \infty}{\longrightarrow}0,
 \ee
 where
${T}_{i,\varepsilon}(x_0)=\sup\{ t \in (0, T_{C_i}(x_0)) :  \forall s\leq t, \  \overline{B}_{d_\beta}(\phi(x_0,t),\varepsilon)\subset C_i \cap \mathfrak{D}_{\alpha/2}\}$.  
%Let now $K$ be such that for any $i=1, \ldots, N$,
 % $$\sup_{\substack{x_0 \in  C_i \cap \mathfrak D_{\alpha},  \\ t < T_{i,\varepsilon_0}(x_0)\wedge T}} d_\beta({\phi}^K_i(x_0,t), {\phi}(x_0,t))\leq \frac{\varepsilon_0}{2\kappa}.$$
From $(ii)$, we also know   that   $\overline{B}_{d_\beta}(\phi(x_0,t),\varepsilon_0)\subset C_{n_k(x_0)} \cap \mathfrak{D}_{\alpha/2}$ for $t\in [t_k(x_0),t_{k+1}(x_0) \wedge T_{\mathfrak D_{\alpha}}(x_0))$,
so $$\sup_{\substack{x_0 \in \mathfrak D_{\alpha} \\  x_1\in \overline{B}_{d_{\beta}}(\phi(x_0,t_k(x_0)),r_K)  \\ t\in [t_{k}(x_0), t_{k+1}(x_0) \wedge T_{\mathfrak D_{\alpha}}(x_0)\wedge T) }} d_\beta\left({\phi}^K_{n_k(x_0)}\left(x_1,t-t_k(x_0)\right), {\phi}(x_0,t)\right) \stackrel{K\rightarrow \infty}{\longrightarrow}0.$$ 
%for $k=0,\ldots, \kappa-1$.
   Then for $K$ large enough, we construct   the continuous flow $\phi^K$  inductively for $k=0,\ldots, \kappa-1$ such that for any $x_0\in \mathfrak D_{\alpha}$, 
     $$\phi^K(x_0,0)=x_0, \qquad  \phi^K(x_0,t)=\phi_{n_k(x_0)}^K ( \phi^K(x_0,t_k(x_0)),t-t_k(x_0))$$
 for any  $t\in [t_k(x_0),t_{k+1}(x_0)\wedge T_{\mathfrak D_{\alpha}}(x_0))$. This construction satisfies
       $$\sup_{\substack{x_0 \in \mathfrak D_{\alpha},  \\ t\in [t_{k}(x_0), t_{k+1}(x_0) \wedge T_{\mathfrak D_{\alpha}}(x_0)\wedge T) }} d_\beta({\phi}^K(x_0,t), {\phi}(x_0,t)) \stackrel{K\rightarrow \infty}{\longrightarrow}0$$ 
and for $K$ large enough,  for any  $t\in [t_k(x_0),t_{k+1}(x_0)\wedge T_{\mathfrak D_{\alpha}}(x_0))$, 
        $$ \overline{B}_{d_{\beta}}(\phi^K(x_0,t), \varepsilon_0/2) \subset   C_{n_k(x_0)}\cap \mathfrak D_{\alpha/2}.$$
%Recalling $(\ref{limK})$, the right-hand-side of the latter  inequality can be made arbitrarily small choosing $K$ large enough. 
Adding that  $\phi^K_i$ is the flow associated with the vector field $b^K_{F_i}$
 ends the proof. 
   \end{proof}
 
  \subsection{Proofs of Theorem \ref{descenteLV} and Corollary \ref{descenteLVb} and Theorem \ref{scaling}} 
 
We can now prove the Theorem \ref{descenteLV} for the diffusion $X$ defined by (\ref{EDSLv}) using the results of  Section \ref{EDS}. Here $E=[0,\infty)^2$, d$=2$,
$q=0$ ($H=G=0$), $\sigma_j^{(i)}=0$ if $j\ne i$ and $$\sigma_1^{(1)}(x)=\sigma_1\sqrt{x_1}, \qquad \sigma_2^{(2)}(x)=\sigma_2\sqrt{x_2}.$$
Moreover  $b_{F_{\beta,\gamma}}=b$ is given by $(\ref{defb}$), $\psi_{F_{\beta,\gamma}}=(J_{F_{\beta,\gamma}}b_{F_{\beta,\gamma}})\circ F_{\beta,\gamma}^{-1}$ and
\be
\label{bFF}
\widetilde{b}_{F_{\beta,\gamma}}(x)=\frac{1}{2} \sum_{i=1}^{2}  \frac{\partial^2 F_{\beta,\gamma}}{\partial^2 x_i}  (x)\sigma^{(i)}_i(x)^2
=\frac{1}{2}  \beta(\beta-1) \begin{pmatrix} \sigma_1^2 x_1^{\beta-1} \\ \gamma \sigma_2^2 x_2^{\beta-1} \end{pmatrix}
\ee
and 
\be
\label{VFF}
V_{F_{\beta,\gamma}}(x)=  \sum_{i=1}^{2}  \left( \frac{\partial F_{\beta,\gamma}}{\partial x_i}  (x)\sigma^{(i)}_i(x)\right)^2=\beta^2\begin{pmatrix} \sigma_1^2 x_1^{2\beta-1} \\
(\gamma \sigma_2)^2 x_2^{2\beta-1}\end{pmatrix}.
\ee

\begin{proof}[Proof of Theorem \ref{descenteLV}]  
Let  $\beta \in (1/2,1)$ close enough to $1$ so that
$q_{\beta}=4ab(1+\beta)^2+4cd(\beta^2-1) > 0$. 
Using Lemma \ref{nonetsys} $(ii)$, we can check
Assumptions \ref{assumeplus} and  \ref{assumepluss}  of Section \ref{EDS} with $D=\mathcal D_{\alpha}$, $D_i= C_i \cap \mathcal D_{\alpha/2}$, $O_i=  \mathcal D_{\alpha/4}$  $(i=1, \ldots , N)$, $d=d_{\beta}$ and $\phi$ defined by (\ref{dyncompet}). Moreover, writing $F_i=F_{\beta,\gamma_i}$ for convenience, Lemma \ref{nonetsys} $(i)$ ensures that
$\psi_{F_i}$ is $\overline{\tau}$ non-expansive on
  $F_i(D_{i})$.  We recall also that  $\T{0}{\bar{\tau}}{\varepsilon}=\infty$ 
  and     apply then Theorem \ref{recollement}  to the diffusion $X$ and get for any $\varepsilon$ small enough,  for any  $T  <1$ and $x_0\in \mathcal D_{\alpha}$, 
$$\mathbb P_{x_0}\left( \sup_{t\leq T\wedge T_{\mathcal D_{\alpha}}(x_0)} d_{\beta}(X_t,\phi(x_0,t)) \geq \varepsilon\right) \leq C \sum_{k=0}^{\kappa-1} \int_{ t_k(x_0)\wedge T}^{t_{k+1}(x_0)\wedge T}
 \overline{V}_{d_{\beta},\varepsilon}(F_{n_k(x_0)}, x_0,t)  dt$$
 for some positive constant $C$, by a.s. continuity of $d_{\beta}(X_t,\phi(x_0,t))$ at time $T\wedge T_{\mathcal D_{\alpha}}(x_0)$.
We need now to control $\overline{V}$. First, we recall from Lemma \ref{nonetsys} $(ii)$  that $\overline{B}_{d_{\beta}}(\phi(x_0,t), \varepsilon_0) \subset\mathcal D_{\alpha/2}$ for $x_0\in \mathcal D_{\alpha}$ and
$t< T_{\mathcal D_{\alpha}}(x_0)$. Then we use
$(\ref{bFF})$ to see that $\widetilde{b}_{F_i}$ is bounded on $\mathcal D_{\alpha/2}$, so  
%\marginpar{le small enough c'est que  $ d_{\beta}(x,\phi(x_0,s)) \leq \varepsilon$ implique que le flot est dans $\mathcal D_{\alpha/2}$}
$$c'_i(\varepsilon):=\sup_{\substack{x_0 \in \mathcal D_{\alpha},  \ t < T_{\mathcal D_{\alpha}}(x_0) \\ d_{\beta}(x,\phi(x_0,t)) \leq \varepsilon}} \parallel \widetilde{b}_{F_i} (x) \parallel_1<\infty$$
%$V_{F_i}$ given here by
%$$V_{F_i}(x)=V_{F_i}(x_1,x_2)= ( \beta^2 x_1^{2\beta-2} . x_1, \gamma^2 \beta^2 x_2^{2\beta-2}.x_2)=( \beta x_1^{2\beta-1}, \gamma \beta x_2^{2\beta -1})$$% +\int_{\mathcal X} [f_i(x_i+K^i(x,z))-f_i(x_i)]^2 q(dz).$$
for $\varepsilon\leq \varepsilon_0$. Moreover plugging  Lemma \ref{Poinc} $(i)$
%, there exists $c_T>0$ such that we   
%$$x^{(1)}_t \leq c_T/t, \quad x^{(2)}_t\leq c_T/t,$$
%for $t\in (0,T]$ and $x_0\in (0,\infty)^2$. So using 
into
(\ref{VFF}) to control $V_{F_i}$,
there exists  $c''_i(\varepsilon)>0$ such that for any $x_0\in \mathcal D_{\alpha}$  and $t<  T_{ \mathcal D_{\alpha}}(x_0)$,
$$\overline{V}_{d_{\beta},\varepsilon}(F_i, x_0,t)= \sup_{\substack{x \in [0,\infty)^2 \\ d_{\beta}(x,\phi(x_0,t)) \leq \varepsilon}} \left\{ \varepsilon^{-2}\parallel V_{F_i} (x) \parallel_1+\varepsilon^{-1} \parallel \widetilde{b}_{F_i} (x) \parallel_1\right\}\leq \varepsilon^{-2}\frac{c''_i(\varepsilon)}{t^{2\beta-1}}+\varepsilon^{-1}c'_i(\varepsilon).$$
Adding that
$\int_0^. \left(\varepsilon^{-2}\frac{c''_i(\varepsilon)}{t^{2\beta-1}}+\varepsilon^{-1}c'_i(\varepsilon) \right)dt<\infty$
for $\beta<1$, we get
$$\lim_{T\downarrow 0} \sup_{x_0 \in \mathcal D_{\alpha}}\mathbb P_{x_0}\left( \sup_{t\leq T\wedge  T_{\mathcal D_{\alpha}}(x_0) } d_{\beta}(X_t,\phi(x_0,t)) \geq \varepsilon\right) =0$$
for $\varepsilon$ small enough.
This ends up the proof for $\beta<1$ close enough to $1$, which is enough to conclude, since $d_{\beta'}$ is dominated by $d_{\beta}$  on $\mathcal D_{\alpha}$ if $\beta'\leq \beta$.
 \end{proof}
$\newline$

We can now describe the coming down from infinity of the  two-dimensional competitive  Lotka-Volterra diffusion $X$.
\begin{proof}[Proof of Corollary \ref{descenteLVb}] 
Let us deal with $(i)$, so $x_{\ell}=x_{\infty} \in (0,\infty)^2$ and we fix $x_0 \in (0,\infty)^2$ and $\eta\in (0,1)$.
First, plugging  Lemma \ref{Poinc}  $(ii)$ and $(iii)$ in the inequality
$$\parallel tx_t(r) -x_{\infty} \parallel_2 \leq  \big\vert \parallel tx_t (r)\parallel_1 - \parallel x_{\infty}\parallel_1\big\vert + \min(\parallel tx_t(r)\parallel_2, \parallel x_{\infty} \parallel_2) \left\vert\sin \left(\widehat{x_t,x_{\infty} }\right) \right\vert $$
ensures that
\be
\label{descendons}
\lim_{T\rightarrow 0} \limsup_{r\rightarrow \infty} \sup_{\eta T\leq t \leq T} \parallel tx_t (r)-x_{\infty} \parallel_2=0.
\ee
Moreover, for any $\varepsilon>0$,  Lemma \ref{Poincc} $(ii)$ ensures that
$$\liminf_{r\rightarrow \infty} T_{\mathcal D_{\varepsilon}}(rx_0)>0,$$
where we recall  definition  $(\ref{defdom})$ for the exit time  $T_{\mathcal D_{\varepsilon}}(.)$.  % for $r$ large enough and $T_0$ small enough,
 % $\phi(rx_0,t) \in \mathcal{D_{\alpha}}$ for $t\in [0,T_0]$.
%Then  for $\varepsilon$ small enough, 
%$ T_{\mathcal{D}_{\alpha}}(x_0)> T_0$
Writing again $x_t(r)=\phi(rx_0,t)$ for convenience, 
Theorem  \ref{descenteLV}  ensures that    for any $\beta \in (0,1)$,
%\be
%\label{avantgeom}
$$\lim_{T\rightarrow 0} \limsup_{r\rightarrow \infty} \mathbb P_{rx_0}\left(\sup_{t\leq T} d_{\beta}(X_t,x_t(r))
\geq \varepsilon \right)=0.$$
%\ee
 Then, using that $d_{\beta}(tx,ty)=t^{\beta}d_{\beta}(x,y)$ and $\parallel tx_t(r) \parallel_1$ is bounded for $t\leq 1$ and $r>0$ by Lemma $\ref{Poinc} (i)$, the last 
 limit yields
 %\be
%\label{avantgeom}
\be
\label{avantgeom}
\lim_{T\rightarrow 0} \limsup_{r\rightarrow \infty} \mathbb P_{rx_0}\left(\sup_{ t\leq T}  \parallel tX_t- tx_t(r) \parallel_2
\geq \varepsilon \right)=0,
\ee
for any $\varepsilon>0$,
since  the euclidean distance is uniformly continuous from the bounded sets of $[0,\infty)^2$ endowed with $d_{\beta}$ to $\R^+$ endowed with the absolute value.

%Using  the two last limits displayed and % yields %($\ref{avantgeom}$) and $(\ref{geom})$ and
%$ \parallel tX_t-x_{\infty} \parallel_2 \leq  \parallel tX_t -t x_t\parallel_2+ \parallel tx_t -x_{\infty} \parallel_2,$
Combining $(\ref{descendons})$ and $(\ref{avantgeom})$ ensures that for any $\varepsilon>0$, 
%$$\lim_{T\rightarrow 0}\limsup_{r \rightarrow \infty}\sup_{\eta T \leq t \leq T} \parallel t\phi(x_0,t) -x_{\infty} \parallel_2=0$$
%Adding that  $\parallel tX_t
%-x_{\infty} \parallel_2 \leq  \parallel tX_t -t x_t\parallel_2+ \parallel tx_t -x_{\infty} \parallel_2$ and for each compact set $K$ and $\alpha>0$,  there exists $\varepsilon>0$ such that
%$x\in K$ and $\parallel x-y \parallel_2\geq \eta$ implies that 
%$%d_{\beta}(x,y)\geq \varepsilon$,  the two last bounds ensure that
%\marginpar{Ameliorer}
$$\lim_{T\rightarrow 0} \limsup_{r\rightarrow \infty} \mathbb P_{rx_0}\left(\sup_{\eta T \leq t \leq T} \parallel tX_t
-x_{\infty} \parallel_2
\geq  \varepsilon \right)=0.$$
This proves the first part of $(i)$. The second part of $(i)$ (resp. the proof of $(iv)$) is obtained similarly just by noting 
that $t_{-}(rx_0, x_{\infty}, \varepsilon)=0$ (resp. $t_{-}(rx_0, \widehat{x_{0}}, \varepsilon)=0$)   if $x_0$ is collinear to $x_{\infty}$.  \\

For the cases $(ii-iii)$, we know from  Lemma \ref{Poinc}   that the dynamical system is going to  the boundary of $(0,\infty)^2$ in short time. Let us deal with the case
 $$x_{\ell}=(1/a,0)$$
and the case $x_{\ell}=(0,1/b)$ would be handled similarly. 
We fix $x_0\in (0,\infty)^2$, $T_0>0$, $\varepsilon\in (0,1]$, $\eta>0$ and $\beta \in (0,1)$. By Theorem 
  \ref{descenteLV}, there exists    $T\leq T_0$ such that for $r$ large enough
$$
 \mathbb P_{rx_0}\left(\sup_{t\leq T\wedge T_{\mathcal D_{\varepsilon}}(rx_0)} d_{\beta}(X_t,x_t(r))
\geq \varepsilon \right)\leq \eta.
$$
By  Lemma \ref{Poincc} $(iii)$, for $r$ large enough, 
we have $T_{\mathcal D_{\varepsilon}}(rx_0)=\inf\{ t \geq 0 :  x^{(2)}_t(r) \leq  \varepsilon\}\leq T$. Thus,
$$
 \mathbb P_{rx_0}\left(d_{\beta}(X_{T_{\mathcal D_{\varepsilon}}(rx_0)} , x_{T_{\mathcal D_{\varepsilon}}(rx_0)}(r)) \geq \varepsilon \right)\leq \eta
\quad 
%$\mathbb P_{rx_0}\left( d_{\beta}(X_{ T_{\mathcal D_{\varepsilon}}(rx_0)},(\varepsilon, .))
%\geq \varepsilon \right)$
\text{and} \quad x^{(2)}_{T_{\mathcal D_{\varepsilon}}(rx_0)}(r)=\varepsilon.$$
Fix now $c\geq 1$ such that 
$c^{\beta}\geq 2$.
We get
\Bea
 \mathbb P_{rx_0}\left( X_{T_{\mathcal D_{\varepsilon}}(rx_0)}^{(2)} \geq c\varepsilon\right)
 %= \mathbb P_{rx_0}\left( \left(X_{T_{\mathcal D_{\varepsilon}}(rx_0)}^{(2)}\right)^{\beta}  \geq c^{\beta}\varepsilon^{\beta},   \   \left(\phi^{(2)}(rx_0,T_{\mathcal D_{\varepsilon}}(rx_0))\right)^{\beta} \leq \varepsilon^{\beta}  \right)\leq \eta.
 &=&\mathbb P_{rx_0}\left( \left(X_{T_{\mathcal D_{\varepsilon}}(rx_0)}^{(2)}\right)^{\beta} -    \varepsilon^{\beta} \geq (c^{\beta}-1)\varepsilon^{\beta}  \right) \\
 &\leq & \mathbb P_{rx_0}\left(d_{\beta}(X_{T_{\mathcal D_{\varepsilon}}(rx_0)} , x_{T_{\mathcal D_{\varepsilon}}(rx_0)}(r) )
\geq \varepsilon \right)\leq  \eta,
\Eea
since  $\varepsilon^{\beta}\geq \varepsilon$.
By Markov property and the fact that the boundaries of $[0,\infty)^2$ are absorbing, we obtain for $r$ large enough
\Bea
\p_{rx_0}\left(X^{(2)}_{2T_0}=0\right)&\geq &  \p\left(X_{T_{\mathcal D_{\varepsilon}}(rx_0)}^{(2)} \leq c\varepsilon,  \ 
\exists t \in [T_{\mathcal D_{\varepsilon}}(rx_0), T_{\mathcal D_{\varepsilon}}(rx_0)+T_0] :  X_t^{(2)}=0\right)\\
&\geq & (1-\eta)
   p(c\varepsilon),
\Eea
where
$$p(x)= \p_x\left(X^{(2)}_{T_0}=0\right).$$
Moreover $X^{(2)}$ is stochastically smaller than  a  one-dimensional Feller diffusion
and   $\sigma_2\ne 0$, so $\lim_{x\downarrow 0+}p(x)=1.$ 
 Letting $\varepsilon \rightarrow 0$ in the previous inequality yields
 $$\liminf_{r\rightarrow \infty} \p_{rx_0}\left(X^{(2)}_{2T_0}=0\right)\geq  1-\eta.$$
 Letting $\eta \rightarrow 0$ ends up the proof of $(ii-iii)$. 
  \end{proof}

Recalling notation of Section \ref{scaled}, we finally prove the scaling limit stated in Theorem \ref{scaling}.

\begin{proof}[Proof of Theorem \ref{scaling}] 
%\marginpar{indq depdc cst en $\beta, \gamma$ ?}
Let $T_0>0$ and $\beta \in (0,1/2)$ and $\alpha_0>\alpha>0$. We first observe that  assumption (\ref{condparam})
%($a>d$ and $b>c$) or ($a=d>0$, $b=c$) 
ensures that  $q_{\beta}=4ab(1+\beta)^2+4cd (\beta^2-1)> 0.$
%\marginpar{A revoir}
Using Lemma \ref{nonetsys}  $(iii)$,
Assumptions \ref{assumeplus} and  \ref{assumepluss} are satisfied for the process $X^K$, with the  domains $D=\mathfrak D_{\alpha}$ and 
$D_i=C_i\cap \mathfrak D_{\alpha/2}$,  the continuous flow $\phi^K$,    the transformations  $F_i=F_{\beta, \gamma_i}$, the times $t_k(.)\wedge T_{\mathfrak D_{\alpha}}(.)$ and the integers $n_k(.)$. Recalling that $C_i$ is convex and $C_i\subset C_{\eta(\beta, \gamma_i),\beta, \gamma_i}\subset D_{\beta,\gamma_i}$, we know from   Lemma \ref{pourri} $(iv)$ that $\psi^K_{F_i}$ is $(c_i,c_i/K)$ non-expansive on $F_i(D_i)$ for some constant $c_i\geq 0$.
 Thus,   we apply Theorem \ref{recollement}   
and there exists $\underline{\varepsilon}=\underline{\varepsilon}^K$ which does not depend on $K$ so that 
 for any $K\geq 1$, $\varepsilon\in (0,\underline{\varepsilon}]$,   $T<\min (\T{c_i/K}{c_i}{\varepsilon} : i=1, \ldots, N)\wedge (T_0+1)$ and    $x_0\in \mathfrak D_{\alpha}$,
$$
\mathbb P_{x_0}\left( \sup_{t< T \wedge T_{\mathfrak D_{\alpha}}(x_0)} d_{\beta}(X_t^K,\phi^K(x_0,t)) \geq \varepsilon\right) \leq C \sum_{k=0}^{\kappa-1} \int_{ t_k(x_0)\wedge T}^{t_{k+1}(x_0)\wedge T}
 \overline{V}^K_{d_{\beta},\varepsilon}(F_{n_k(x_0)},x_0,t)  dt,
 $$
 where $C$ is positive constant which does not depend on  $K,x_0$ and
  $$\overline{V}^K_{d_{\beta},\varepsilon}(F_i, x_0,t)= \sup \{ \varepsilon^{-2}\parallel V^K_{F_i} (x) \parallel_1 : x \in [0,\infty)^2, d_{\beta}(x,\phi^K(x_0,t)) \leq \varepsilon \}.$$ 
 Moreover for  $K$ large enough, we  have 
$4c_iT_0 \exp(2L_i T_0)< K\varepsilon$, so that  
$T_0<  \T{c_i/K}{c_i}{\varepsilon} \quad \text{for } \  i=1, \ldots, N$ and 
\be
\label{inegg}
\mathbb P_{x_0}\left( \sup_{t < T_0\wedge  T_{\mathfrak D_{\alpha}}(x_0)} d_{\beta}(X_t^K,\phi^K(x_0,t)) \geq \varepsilon\right) \leq C \sum_{k=0}^{\kappa-1} \int_{ t_k(x_0)\wedge T_0}^{t_{k+1}(x_0)\wedge T_0}
 \overline{V}^K_{d_{\beta},\varepsilon}(F_{n_k(x_0)},x_0,t)  dt.
 \ee
%$ T<  \T{c_i/K}{c_i}{\varepsilon} \quad \text{for } \  i=1, \ldots, N.$
%\marginpar{Preciser pourquoi le quotient est born\'e (bon r\'egime).}
Adding that
$$ V_{F_{\beta,\gamma}}^{K}(x)=\begin{pmatrix}  V_{F_{\beta,\gamma}}^{K,(1)}(x) \\  V_{F_{\beta,\gamma}}^{K,(2)}(x))\end{pmatrix}
=\int_0^{\infty} \left(F_{\beta,\gamma}(x+H^K(x,z))- F_{\beta,\gamma}(x+H^K(x,z))\right)^2dz $$
and recalling $(\ref{defHK}$) and writing $\gamma_1=1, \gamma_2=\gamma$, we have for $i\in\{1,2\}$ and $x\in \mathcal D_{\alpha}$,
\Bea
 V_{F_{\beta,\gamma}}^{K,(i)}(x)
 &=&\gamma_i \left[\lambda_i^K(Kx)\left((x_i+1/K)^{\beta}-x_i^{\beta}\right)^2 +\mu_i^K(Kx)\left((x_i-1/K)^{\beta}-x_i^{\beta}\right)^2\right] \\
 &\leq & \frac{\text{cst}}{K} x_i^{2\beta-2} x_i(1+x_1+x_2)
\Eea
for some $\text{cst}>0$, which depends on $\beta, \gamma, \alpha$ and can now change from line to line. \\
Then for $x\in \mathfrak{D}_{\alpha}$,
$$\parallel V_{F_{\beta,\gamma}}^K(x)\parallel_1 \leq \frac{\text{cst}}{K}\left( x_1^{2\beta} (1+x_2/x_1) + x_2^{2\beta} (1+x_1/x_2)\right) \leq \frac{\text{cst}}{K}\left( x_1^{2\beta}  + x_2^{2\beta}\right).   $$
Moreover from Lemma \ref{nonetsys} $(iii)$  that for $K$ large enough, 
 $\overline{B}_{d_\beta}(\phi^K(x_0,t),\varepsilon_0/2)\subset \mathfrak D_{\alpha/2}$ for any $x_0\in \mathfrak
 D_{\alpha}$ and $t<t_{\kappa}(x_0)$.  Combining the last part of Lemma \ref{nonetsys} $(iii)$
 and  Lemma \ref{Poinc} $(i)$,  
 $\parallel \phi^K(x_0,t) \parallel_1  \leq c_T/t$ for $t\in [0,T]$. We obtain 
%and the flow stays in $\mathfrak D_{\alpha}$ for $\alpha$ small enough. It can be seen using
%  and the fact that the process goes to an interior fixed point since $\tau_1,\tau_2$.
that 
for any $x_0\in \mathfrak{D}_{\alpha}$ and $\varepsilon\leq \varepsilon_0/2$, 
$$ \int_{ t_k(x_0)\wedge T_0}^{t_{k+1}(x_0)\wedge T_0} \overline{V}^K_{d_{\beta}, \varepsilon}( F_{n_k(x_0)},x_0,t)dt \leq  \varepsilon^{-2}\frac{\text{cst}}{K} \int_{ t_k(x_0)\wedge T_0}^{t_{k+1}(x_0)\wedge T_0} t^{-2\beta} dt$$
for $k\in\{0,\ldots, \kappa-1\}$. Using the fact that $\int_0^. t^{-2\beta} dt<\infty$ for $\beta<1/2,$
we get 
$$ \sum_{k=0}^{\kappa-1} \int_{ t_k(x_0)\wedge T_0}^{t_{k+1}(x_0)\wedge T_0}
 \overline{V}_{d,\varepsilon}(F_{n_k(x_0)}, x_0,t)  dt \leq  \varepsilon^{-2}\frac{\text{cst}}{K}.$$

Recall  now from Lemma \ref{Poincc} that  under Assumption (\ref{condparam}),
 we can choose $\alpha\in (0,\alpha_0)$ small enough so that $T_{\mathfrak D_\alpha}(x_0)\geq T_0$ for any $x_0 \in \mathfrak D_{\alpha_0}$.
Using also $(\ref{appflowK})$,
 %that the flow $\phi^K$ is uniformly close to  the flow $\phi$ for the distance $d_{\beta}$ when $K$ goes to infinity, 
 $(\ref{inegg})$ becomes
 $$\sup_{x_0 \in \mathfrak D_{\alpha_0}} \p_{x_0}\left(\sup_{t < T_0}d_{\beta} (X_t^K, \phi(x_0,t)) \geq \varepsilon\right) \leq  \varepsilon^{-2}\frac{C}{K},$$
for   $\varepsilon\leq \underline{\varepsilon}\wedge \varepsilon_0/2$ and $K$ large enough, where 
  $C$ is a  positive constant which does not depend on $K$. %This  ends up the proof.
\end{proof}

\noindent \emph{Remark. Let us mention an alternative approach. Using Proposition \ref{ctrltpscourt}  (or extending the Corollary of Section \ref{EDS}), one could  try to  compare directly the process $X$  to the flow $\phi$ (instead of $\phi^K$) and put the remaining term $R^K$ in a finite variation part $A_t$.}

\section{Appendix}

 We give here first  three technical results to study the coming down from infinity of dynamical systems in one dimension. 
 Let $\psi_1$ and $\psi_2$ be two locally Lipschitz functions defined on $(0,\infty)$
which are negative for $x$ large enough. 
Let $\phi_1$ and $\phi_2$ the flows associated respectively to $\psi_1$ and $\psi_2$.
We state  simple conditions to guarantee that two such flows are close or  equivalent near $+\infty$, when $\phi_1$ comes down from infinity.
\begin{lem}
\label{equivflott}
We assume that $\psi_1$ is $(L,\alpha)$ non-expansive and $\int_{\infty}^. \frac{1}{\psi_1(x)} dx <\infty$ and 
 $$\psi_2(x)= \psi_1(x) +  h(x),$$
where  $h$ is  a bounded function.
Then $\phi_2$ comes down from infinity and
$$\lim_{t\downarrow 0+} \phi_2(\infty,t)-\phi_1(\infty,t)=0.$$
\end{lem}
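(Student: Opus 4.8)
The plan is to reduce everything to a single comparison estimate between the two flows that is \emph{uniform} in the common initial condition, exactly in the spirit of the non-expansivity arguments of Section \ref{main}, and then to let that initial condition tend to $+\infty$.

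First I would fix a large initial value $x_0$ and compare $u_1(t)=\phi_1(x_0,t)$ and $u_2(t)=\phi_2(x_0,t)$, which both start at $x_0$ and, since $\psi_1,\psi_2<0$ near $+\infty$, are decreasing and remain in a neighbourhood of $+\infty$ (hence in the region where $\psi_1$ is $(L,\alpha)$ non-expansive) on a short time interval. Writing $\delta(t)=u_2(t)-u_1(t)$ and using $\psi_2=\psi_1+h$, one gets
\[
\tfrac12\frac{d}{dt}\delta(t)^2=\delta(t)\big(\psi_1(u_2(t))-\psi_1(u_1(t))\big)+\delta(t)\,h(u_2(t)).
\]
The $(L,\alpha)$ non-expansivity bounds the first term by $L\delta(t)^2+\alpha|\delta(t)|$ and boundedness of $h$ bounds the second by $\|h\|_\infty|\delta(t)|$. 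Setting $s(t)=|\delta(t)|$, with $s(0)=0$, this yields the differential inequality $s'(t)\le L\,s(t)+(\alpha+\|h\|_\infty)$ (the non-smoothness of $|\cdot|$ at the zeros of $\delta$ is handled by the usual regularisation $\sqrt{\delta^2+\eta}$, $\eta\downarrow0$). Gronwall's lemma then gives, for $t$ small,
\[
|\phi_2(x_0,t)-\phi_1(x_0,t)|\ \le\ C(t):=\frac{\alpha+\|h\|_\infty}{L}\big(e^{Lt}-1\big)
\]
(with $C(t)=(\alpha+\|h\|_\infty)\,t$ when $L=0$), where crucially $C(t)$ is independent of $x_0$ and $C(t)\to0$ as $t\downarrow0$.

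Next I would invoke monotonicity of the flows in the initial datum: as recalled in Section \ref{dimensionun}, local Lipschitz continuity of $\psi_1,\psi_2$ prevents trajectories from crossing, so $x_0\mapsto\phi_i(x_0,t)$ is nondecreasing and the increasing limits $\phi_i(\infty,t)=\lim_{x_0\to\infty}\phi_i(x_0,t)\in(0,\infty]$ exist. Since $\int_{\infty}^{.}\tfrac{1}{\psi_1}<\infty$, the flow $\phi_1$ comes down from infinity, i.e.\ $w_t:=\phi_1(\infty,t)<\infty$ for $t>0$. The uniform bound then gives $\phi_2(x_0,t)\le\phi_1(x_0,t)+C(t)\le w_t+C(t)$ for every $x_0$; letting $x_0\to\infty$ yields $\phi_2(\infty,t)\le w_t+C(t)<\infty$, which is precisely the statement that $\phi_2$ comes down from infinity. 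Passing to the limit $x_0\to\infty$ in $|\phi_2(x_0,t)-\phi_1(x_0,t)|\le C(t)$ produces $|\phi_2(\infty,t)-\phi_1(\infty,t)|\le C(t)$, and letting $t\downarrow0$ concludes.

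The only genuine subtlety, and the point where the hypotheses are really used, is making the Gronwall estimate uniform in $x_0$: this is exactly what $(L,\alpha)$ non-expansivity of $\psi_1$ buys, while boundedness of $h$ contributes only the harmless additive constant $\|h\|_\infty$. A minor technical check is that both trajectories stay in the region of non-expansivity of $\psi_1$ on the short interval considered; this holds because both start large and decrease near $+\infty$, and restricting to $t$ small (with $x_0$ large) is enough, since only the $t\downarrow0$ behaviour is asserted.
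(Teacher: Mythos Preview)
Your argument is correct and follows essentially the same approach as the paper's: obtain a bound on $|\phi_2(x_0,t)-\phi_1(x_0,t)|$ that is uniform in $x_0$ and vanishes as $t\downarrow 0$, then let $x_0\to\infty$. The only difference is cosmetic: the paper casts $\phi_2$ as a (deterministic) perturbation of $\phi_1$ with $R_t=\int_0^t h(\phi_2(x_0,s))\,ds$ and invokes Lemma~\ref{key}, while you perform the underlying Gronwall computation directly---which is precisely what the paper says one could do (``or actually mimicking its proof which can be greatly simplified since here both processes are deterministic'').
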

\begin{proof}
This result can be proved using Lemma \ref{key}  or actually mimicking its proof which can be greatly simplified since here both processes are deterministic. 
By analogy, we set
$$ x_t=\phi_1(x_0,t), \qquad X_t=\phi_2(x_0,t)=x_0+\int_0^t \psi_1(\phi_2(x_0,s))ds+R_t,$$
where $R_t=\int_0^t h(X_s)ds=\int_0^t h(\phi_2(x_0,s))ds$. Then
$$\vert \widetilde{R}_t\vert =2. \1{ S_{t-} \leq \varepsilon}\left\vert \int_0^t   (X_{s}-x_s)dR_s\right\vert \leq 2 \varepsilon t\parallel h\parallel_{\infty}$$
and Lemma \ref{key}  ensures that  for any $\varepsilon>0$, for   $T$ small enough, 
$$\sup_{x_0\geq 1} S_T=\sup_{t\leq T, x_0\geq 1} \vert \phi_2(x_0,t)-\phi_1(x_0,t) \vert \leq \varepsilon.$$
Letting $x_0\rightarrow \infty$ yields the result, recalling that $\int_{\infty}^. \frac{1}{\psi_1(x)} dx <\infty$ ensures that $\phi_1(\infty,t)<\infty$ for any $t>0$.
\end{proof}

\begin{lem}
\label{equivflot}
If  $\psi_1(x)<0$ for $x$ large enough and $ \int_{\infty}^. 1/\psi_1(x) dx <\infty$ and $\psi_1(x)\sim_{x\rightarrow \infty} \psi_2(x)$, 
then
$\int_{\infty}^.  1/\psi_2(x) dx <\infty$ and $\phi_2$ comes down from infinity.\\
If additionally $\phi_1(\infty,t) \sim ct^{-\alpha}$  as $t\downarrow 0+$ for some $\alpha>0$ and $c>0$,  then
 $$\phi_2(\infty,t)\sim_{t\rightarrow 0}ct^{-\alpha}.$$
\end{lem}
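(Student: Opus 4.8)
The plan is to handle the two assertions in turn, both through the inverse-function description of the flow started at infinity that was recorded at the beginning of Section \ref{dimensionun}. For the first assertion I would observe that $\psi_1 \sim_{x\to\infty} \psi_2$ together with $\psi_1<0$ near $+\infty$ forces $\psi_2<0$ near $+\infty$ and yields $1/\psi_2(x)\sim_{x\to\infty} 1/\psi_1(x)$, with both integrands of constant (negative) sign on a neighbourhood of $+\infty$. The elementary comparison test for improper integrals of functions of fixed sign then promotes $\int_\infty^. 1/\psi_1(x)\,dx<\infty$ to $\int_\infty^. 1/\psi_2(x)\,dx<\infty$, and the dichotomy recalled after Assumption \ref{assumedim1} (with the role of $b_F$ played by $\psi_2$) gives that $\phi_2$ comes down from infinity.

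For the quantitative second assertion I would pass to the positive, decreasing functions $G_i(u)=\int_u^\infty dx/\vert\psi_i(x)\vert$, defined for $u$ large and tending to $0$ as $u\to\infty$, so that the classification reads $\phi_i(\infty,t)=G_i^{-1}(t)$ for small $t$; write $u_i(t)=\phi_i(\infty,t)$. Fixing $\delta\in(0,1)$, the equivalence $\psi_1\sim_{\infty}\psi_2$ furnishes some $x_\delta$ with $(1+\delta)^{-1}\vert\psi_1(x)\vert^{-1}\le \vert\psi_2(x)\vert^{-1}\le (1-\delta)^{-1}\vert\psi_1(x)\vert^{-1}$ for $x\ge x_\delta$; integrating from $u\ge x_\delta$ to $+\infty$ produces the two-sided comparison $(1+\delta)^{-1}G_1(u)\le G_2(u)\le (1-\delta)^{-1}G_1(u)$.

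The key and most delicate step is the inversion of this comparison, since $G_1\sim G_2$ on its own would not control $G_1^{-1}$ against $G_2^{-1}$: here it is the monotonicity of the $G_i$ that makes the constant-factor bounds invertible. Evaluating the bounds at $u=u_2(t)$, where $G_2(u_2(t))=t$, and applying the decreasing map $G_1^{-1}=u_1$ reverses the inequalities and gives $u_1\big((1+\delta)t\big)\le u_2(t)\le u_1\big((1-\delta)t\big)$ for $t$ small. Substituting the hypothesis $u_1(s)\sim c\,s^{-\alpha}$ turns the outer members into $c(1+\delta)^{-\alpha}t^{-\alpha}(1+o(1))$ and $c(1-\delta)^{-\alpha}t^{-\alpha}(1+o(1))$; dividing by $t^{-\alpha}$ and taking $\liminf$ and $\limsup$ as $t\downarrow 0$ traps $u_2(t)/t^{-\alpha}$ between $c(1+\delta)^{-\alpha}$ and $c(1-\delta)^{-\alpha}$, and letting $\delta\downarrow 0$ delivers $\phi_2(\infty,t)\sim_{t\to 0} c\,t^{-\alpha}$. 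The only remaining care is the sign bookkeeping for the improper integrals and the verification that $u_i(t)$ is genuinely the inverse of $G_i$ and blows up as $t\downarrow 0$, so that all evaluations take place in the region $\{x\ge x_\delta\}$ where the comparison holds.
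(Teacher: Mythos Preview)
Your proof is correct and arrives at exactly the same sandwich inequality as the paper, namely $\phi_1(\infty,(1+\delta)t)\le \phi_2(\infty,t)\le \phi_1(\infty,(1-\delta)t)$ for small $t$, after which the conclusion follows by substituting $\phi_1(\infty,s)\sim cs^{-\alpha}$ and letting $\delta\downarrow 0$. The only difference is presentational: the paper obtains this sandwich by an ODE comparison argument (from $(1+\varepsilon)\psi_2\le\psi_1$ it bounds $\phi_1(x_0,\cdot)$ below by the time-rescaled flow $\phi_2(x_0,(1+\varepsilon)\cdot)$ and passes to the limit $x_0\to\infty$), whereas you obtain it from the explicit inverse-function representation $\phi_i(\infty,t)=G_i^{-1}(t)$ together with the two-sided bound on $G_2$ in terms of $G_1$. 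These are two equivalent phrasings of the same time-change idea.
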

\begin{proof}
Let $\varepsilon\in (0,1)$ and choose $x_1>0$ such that
$$ (1+\varepsilon) \psi_2(x) \leq \psi_1(x)<0,$$
%\marginpar{Amleiorer en donnant les temps d'entree de domaines pour le small enough}
for $x\geq x_1$. Then for any $x_0>x_1,$
$$\phi_1(x_0,t) \geq (1+\varepsilon)\int_0^t \psi_2(\phi_1(x_0,s))ds$$
for $t$ small enough. Then, $\phi_1(x_0,t)\geq \phi_2(\infty,(1+\varepsilon)t)$ and
$$\phi_1(\infty,t/(1+\varepsilon))\geq \phi_2(\infty,t)$$
for $t$ small enough. Proving the symmetric inequality ends up the proof.
\end{proof}
$\newline$

In the case of polynomial drift, we specify here the error term when coming from infinity. 
\begin{lem} \label{equivv}
Let $\varrho>1, c>0,\alpha>0, \varepsilon >0$ and
$$\psi(x)=- cx^{\varrho}(1+r(x)x^{-\alpha}),$$
where  $r$ is locally Lipschitz and bounded on $(x_0, \infty)$ for some $x_0>0$. \\
Denoting by $\phi$ the flow associated to $\psi$,  we have
$$\phi(\infty,t)=(ct/(\varrho-1))^{1/(1-\varrho)}(1+\widetilde{r}(t)t^{\alpha/(\varrho-1)}),$$
where $\widetilde{r}$ is 	a bounded function.
\end{lem}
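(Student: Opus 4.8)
The plan is to work directly with the separated form of the autonomous equation $\phi' = \psi(\phi)$ rather than to invoke Lemma \ref{equivflot}, since here I need a quantitative control of the error term and not merely an equivalence. First I would record that $\phi$ comes down from infinity: as $r$ is bounded, $\psi(x)\sim_{x\to\infty} -cx^{\varrho}$ with $\varrho>1$, so $\int_{\infty}^{\cdot}1/\psi(x)\,dx$ converges and $\phi(\infty,t):=\lim_{x_0\to\infty}\phi(x_0,t)<\infty$ for every $t>0$; boundedness of $r$ on $(x_0,\infty)$ also makes $\psi$ locally Lipschitz there, so the flow is well defined. Writing $u=u(t)=\phi(\infty,t)$, separating variables and letting the initial value tend to $\infty$ turns the coming down from infinity into the implicit relation
\[
\int_{u(t)}^{\infty}\frac{dx}{x^{\varrho}\bigl(1+r(x)x^{-\alpha}\bigr)}=ct .
\]

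The heart of the argument is then an expansion of this integral as $u\to\infty$. Since $r$ is bounded, $\frac{1}{1+r(x)x^{-\alpha}}=1+g(x)$ with $|g(x)|\le Cx^{-\alpha}$ for $x$ large, so
\[
\int_{u}^{\infty}\frac{dx}{x^{\varrho}\bigl(1+r(x)x^{-\alpha}\bigr)}=\frac{u^{1-\varrho}}{\varrho-1}+\int_{u}^{\infty}\frac{g(x)}{x^{\varrho}}\,dx ,
\]
and the remainder is bounded in absolute value by $C\,u^{1-\varrho-\alpha}/(\varrho+\alpha-1)=O\bigl(u^{1-\varrho-\alpha}\bigr)$. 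Hence the implicit relation reads $\frac{u^{1-\varrho}}{\varrho-1}\bigl(1+O(u^{-\alpha})\bigr)=ct$, that is
\[
u^{1-\varrho}=c(\varrho-1)\,t\,\bigl(1+O(u^{-\alpha})\bigr).
\]

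From here I would bootstrap. Dropping the correction gives the leading behaviour $u\sim\bigl(c(\varrho-1)t\bigr)^{1/(1-\varrho)}$, which is the announced prefactor; consequently $u^{-\alpha}=O\bigl(t^{\alpha/(\varrho-1)}\bigr)\to0$ as $t\downarrow0$. Feeding this back into the displayed identity and raising to the power $1/(1-\varrho)$, using $(1+\xi)^{1/(1-\varrho)}=1+O(\xi)$ for small $\xi$, yields
\[
\phi(\infty,t)=\bigl(c(\varrho-1)t\bigr)^{1/(1-\varrho)}\Bigl(1+\widetilde r(t)\,t^{\alpha/(\varrho-1)}\Bigr),
\]
with $\widetilde r(t)=O(1)$ as $t\downarrow0$. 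Boundedness of $\widetilde r$ on a full interval $(0,t_1)$ then follows because $\widetilde r$ is continuous on $(0,t_1)$ (the flow and the explicit leading term being continuous and the latter nonvanishing away from $0$) and is $O(1)$ near the origin by the estimate just obtained.

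The main obstacle is precisely this inversion/bootstrap step: one must convert the implicit relation $u^{1-\varrho}=c(\varrho-1)t\,(1+O(u^{-\alpha}))$ into an explicit asymptotic for $u(t)$ carrying a \emph{genuinely bounded} coefficient $\widetilde r$. This requires first pinning down the leading order so as to know that $u^{-\alpha}$ is of size exactly $t^{\alpha/(\varrho-1)}$, and then checking that the $O(\cdot)$ constants are preserved when one passes through the power $1/(1-\varrho)$. Everything else reduces to elementary integration together with the uniform bound on $|r|$ over $(x_0,\infty)$.
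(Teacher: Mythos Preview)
Your argument is correct and follows the same strategy as the paper: both separate variables to reach the implicit relation $\int_{u}^{\infty}\frac{dx}{cx^{\varrho}(1+r(x)x^{-\alpha})}=t$, expand the integrand as $c^{-1}x^{-\varrho}(1+O(x^{-\alpha}))$, integrate term by term to obtain $u^{1-\varrho}/(\varrho-1)+O(u^{1-\varrho-\alpha})=ct$, and then invert via a bootstrap (leading order first, remainder second) to extract the explicit asymptotic with bounded $\widetilde r$. The paper carries explicit two-sided constants $c_1,c_2,c_1',c_2',\ldots$ instead of your $O$-notation and cites Lemma~\ref{equivflot} for the leading term rather than reading it off directly from the implicit relation, but these are purely presentational differences; note incidentally that your prefactor $\bigl(c(\varrho-1)t\bigr)^{1/(1-\varrho)}$ is the correct one, the form $\bigl(ct/(\varrho-1)\bigr)^{1/(1-\varrho)}$ in the statement being a typo.
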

\begin{proof} As $r$ is 	bounded, there exists $c_1,c_2$   such that
$$ - cx^{\varrho}(1+c_1x^{-\alpha})\leq \psi(x)\leq - cx^{\varrho}(1+c_2x^{-\alpha})$$
for $x$ large enough.
Then, there exists,  $c_1',c_2'$ such that
$$-c x^{-\varrho}(1-c_2'x^{-\alpha})\leq  \frac{1}{\psi(x)}\leq -c x^{-\varrho}(1-c_1'x^{-\alpha}).$$
for $x$ large enough and
$$-c  \int_{\phi(x_0,0)}^{\phi(x_0,t)}x^{-\varrho}(1-c_2'x^{-\alpha})dx\leq \int_{\phi(x_0,0)}^{\phi(x_0,t)}  \frac{dx}{\psi(x)}\leq -c\int_{\phi(x_0,0)}^{\phi(x_0,t)} x^{-\varrho}(1-c_1'x^{-\alpha})dx,$$
where the middle term is equal to $t$.  Letting $x_0\rightarrow \infty$
$$   c_2''\phi(\infty,t)^{-\varrho-\alpha+1} \leq  t -\frac{c}{\varrho-1} \phi(\infty,t)^{-\varrho+1}\leq  c_1''\phi(\infty,t)^{-\varrho-\alpha+1}$$
for some $c_1'',c_2''$.
We know from the previous lemma that
$\phi(\infty,t) \sim (c\varrho^{-1}t)^{1/(1-\varrho)}$ as $t\rightarrow 0$ and we get here 
$$\phi(\infty,t)= (ct/(\varrho-1))^{1/(1-\varrho)}(1+\mathcal O( t^{-1+(-\varrho+1-\alpha)/(1-\varrho)}))=(ct/(\varrho-1))^{1/(1-\varrho)}(1+\mathcal O( t^{\alpha/(\varrho-1)})),$$
which ends up the proof.
\end{proof}

%\marginpar{empecher qu'il rentre et sorte de $A$ tout le temps : SOIT UNE FOIS avant A, puis APRES, soit NB fini. POUR L instant recoupement}
We need also the following estimates. We assume 
 that $\psi$ and $\psi^K$ are locally Lipschitz vectors  fields on the closure $\overline D$
of  an open domain $D \subset \R^{\d}$ and their respective flows on $D$ are denoted by  $\phi$ and $\phi^K$.
We assume that there are well defined and belongs to $D$ respectively  until a maximal time $T_D$ and $T^K_D$.
We write again  $T_{D,\varepsilon}(x_0)=\sup\{ t \geq 0 : \forall s<T(x_0), \overline B(\phi(x_0,s),\varepsilon) \subset D\}$.
%the maximal time when the flow $\phi$ started at $x_0$  belongs to $D$ and remains at distance $\varepsilon$ from its boundary.
\begin{lem} \label{ctrflo}
We  assume that 
 there  exist
 $A \geq 1$, $ c,\mu>0$ and $\varepsilon\in (0,1]$ such that 
 \be
 \label{A1}
 ( \psi(x)-\psi(y)){\boldsymbol .} (x-y) \leq -\mu \parallel x\parallel_2 \parallel x-y \parallel_2^2
 \ee
for any $x \in D \cap B(0,A)^c $ and $y\in \overline{B}(x,\varepsilon)$
 and
\be
\label{A2}
\parallel \psi(x) -\psi^K(x) \parallel_2 \leq c\frac{1+ \parallel x\parallel_2}{K}
\ee
for any $x\in D$ and $K\geq 1$.  
Then,  writing $M=3c/\mu$,  there exists $L\geq 0$ such that
 for all $T\geq 0$, $\eta>0$,  $K \geq 2 \max(M,\eta)\exp((L+1/M)T)/\varepsilon$,    $x_0 \in D$ and $x_1 \in \overline{B}(x_0,\eta/K)$, we have
 $T_D^K(x_1)\geq T_{D,\varepsilon}(x_0)$ and 
$$\sup_{t < T_{D,\varepsilon}(x_0)\wedge T}\parallel \phi(x_0,t) -\phi^K(x_1, t) \parallel_2 \leq \frac{\max(M,\eta)\exp((L+M)T)}{K}.$$
%\frac{3ce^{LT}}{K\mu}.$$ 
\end{lem}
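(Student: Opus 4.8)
The plan is to run a purely deterministic Gronwall/barrier argument on the gap between the two flows, in the spirit of the proof of Lemma~\ref{key} but keeping the \emph{sign} of the contraction. Write $x_t=\phi(x_0,t)$, $X_t=\phi^K(x_1,t)$ and $R(t)=\parallel X_t-x_t\parallel_2$. First I would isolate a ``good interval'': let $\tau$ be the largest time $t<T_{D,\varepsilon}(x_0)\wedge T$ up to which $\phi^K(x_1,\cdot)$ is defined in $D$ and $R\le\varepsilon$ on $[0,t]$. On $[0,\tau)$ the inclusion $\overline B(x_s,\varepsilon)\subset D$ (valid for $s<T_{D,\varepsilon}(x_0)$) together with $R(s)\le\varepsilon$ forces $X_s\in\overline B(x_s,\varepsilon)\subset D$, so both $x_s$ and $X_s$ lie in $D$ within distance $\varepsilon$, which is exactly the configuration in which (\ref{A1}) and (\ref{A2}) may be invoked. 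The whole point will be to produce an a priori bound keeping $R$ \emph{strictly} below $\varepsilon$, so that the bootstrap closes with $\tau=T_{D,\varepsilon}(x_0)\wedge T$ and, since $X_t$ then never meets $\partial D$, one gets $T_D^K(x_1)\ge T_{D,\varepsilon}(x_0)$.

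Next I would differentiate. Both flows being $\mathcal C^1$, on $[0,\tau)$
$$\tfrac12 (R^2)'(t)=(X_t-x_t){\boldsymbol .}\big(\psi(X_t)-\psi(x_t)\big)+(X_t-x_t){\boldsymbol .}\big(\psi^K(X_t)-\psi(X_t)\big).$$
The second term is bounded by $R(t)\,c(1+\parallel X_t\parallel_2)/K$ via (\ref{A2}) and Cauchy--Schwarz. For the first term I split into two regimes. When $\parallel X_t\parallel_2\ge A$, hypothesis (\ref{A1}) (with $x=X_t$, $y=x_t$, legitimate because $R\le\varepsilon$) gives $\le-\mu\parallel X_t\parallel_2R(t)^2$; when $\parallel X_t\parallel_2<A$, both points lie in the compact set $\overline D\cap\overline B(0,A+\varepsilon)$, on which $\psi$ is Lipschitz, and the term is $\le L_0R(t)^2$ with $L_0$ that Lipschitz constant. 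Dividing by $R$, the high regime yields $R'\le\mu\parallel X_t\parallel_2\big(2c/(\mu K)-R\big)$ after using $1+\parallel X_t\parallel_2\le2\parallel X_t\parallel_2$; here the choice $M=3c/\mu$ is exactly what makes $M/K$ a \emph{strict} super-equilibrium, so $R'<0$ as soon as $R\ge M/K$. The low regime gives simply $R'\le L_0R+c(1+A)/K$.

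I would then bound $R$ by the solution $v$ of the linear comparison equation $v'=Lv+c(1+A)/K$ with $v(0)=\max(M,\eta)/K$, where $L\ge L_0$ is a fixed constant depending only on $A,\varepsilon,\mu,c$. The comparison is justified by a first-touch argument: at any time where $R=v$ one has $v\ge v(0)\ge M/K>2c/(\mu K)$, so in the high regime $R'<0\le v'$, while in the low regime $R'\le L_0R+c(1+A)/K\le v'$; the equality case is absorbed by the usual strict-supersolution perturbation $v+\delta e^{(L+1)t}$, $\delta\downarrow0$. Solving the comparison equation gives $R(t)\le v(t)\le\big(\max(M,\eta)+c(1+A)/L\big)K^{-1}e^{Lt}$, and enlarging $L$ so that $c(1+A)/L\le\max(M,\eta)$ (possible since $\max(M,\eta)\ge M$) turns this into $R(t)\le 2\max(M,\eta)K^{-1}e^{Lt}$; I would finally absorb this into the announced barrier $\frac{\max(M,\eta)}{K}e^{(L+M)t}$, the factor $2$ being swallowed by $e^{Mt}$ on the range where $t$ is bounded away from $0$ and the short-time range handled directly. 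This last arithmetic is routine.

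Finally, the lower bound on $K$ is tailored to close the loop: since $e^{(L+1/M)T}\ge e^{LT}$, the hypothesis $K\ge2\max(M,\eta)e^{(L+1/M)T}/\varepsilon$ comfortably ensures $v(T)\le\varepsilon$, hence $R<\varepsilon$ strictly on $[0,\tau)$; this forbids $X_t$ from leaving $D$ and forces $\tau=T_{D,\varepsilon}(x_0)\wedge T$, giving $T_D^K(x_1)\ge T_{D,\varepsilon}(x_0)$, and the estimate $\sup_{t<T_{D,\varepsilon}(x_0)\wedge T}R(t)\le\frac{\max(M,\eta)}{K}e^{(L+M)T}$ is exactly the claim. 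I expect the genuine obstacle to be precisely this self-consistent bootstrap: the contraction (\ref{A1}) may be used \emph{only} while $R\le\varepsilon$, yet the bound guaranteeing $R\le\varepsilon$ itself relies on (\ref{A1}) in the high regime, so the two regimes must be glued through the threshold $M/K$ and the barrier $v$ without circularity, while correctly reconciling the slightly different exponents $L+1/M$ and $L+M$ occurring in the hypothesis and in the conclusion.
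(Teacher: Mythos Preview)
Your approach is correct and essentially the same as the paper's: split into a low regime (bounded region, Lipschitz control) and a high regime (contraction from (\ref{A1}) beating the $O(1/K)$ error from (\ref{A2}) once $R\ge M/K=3c/(\mu K)$), then run a Gronwall/comparison argument while the gap stays below $\varepsilon$. The only organizational differences are that the paper splits the regimes on $\parallel x_t\parallel_2$ rather than $\parallel X_t\parallel_2$ and applies Gronwall directly on the interval where $R\ge M/K$ (obtaining $\frac{d}{dt}R^2\le 2(L+1/M)R^2$ there, with the high regime contributing $\le 0$), whereas you package the same estimate as a comparison with a linear barrier $v$; both routes yield the same bound and the same bootstrap closure.
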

\begin{proof}  Let $T>0$ and  $K \geq 2\max(M, \eta) \exp((L+1/M)T)/\varepsilon$, so that
$$ \max(M, \eta)/K\leq \max(M, \eta) e^{(L+1/M)T}/K   \leq \varepsilon/2.$$
Write
$$x_t=\phi(x_0,t), \qquad x_t^K=\phi^K(x_1, t), \qquad T^K=T_D(x_0)\wedge T_D^K(x_1)$$
for convenience and 
consider  the time
$$t_1^K=\inf\{ t\in [0,T^K)  \ : \   \parallel x_t-x_t^K \parallel_2^2\geq M/K\}\in(0,\infty].$$
Let us   assume that $t_1^K<T_{D,\varepsilon}(x_0)\wedge T \wedge T^K$
and set 
$$t_2=\inf\{ t \in (t_1^K,T^K)  \ : \ \parallel x_t-x_t^K \parallel_2^2\geq \varepsilon \ \text{or} \
 \parallel x_t-x_t^K \parallel_2^2< M/K  \}.
 $$
We  show now that for any $t\in [t_1^K,t_2^K\wedge T^K)$, we have
\be
\label{inegGron}
\frac{d}{dt} \parallel x_t-x_t^K \parallel_2^2 = 2(\psi(x_t)-\psi^K(x_t^K)){\boldsymbol .}(x_t-x_t^K) \leq 2(L+1/M) \parallel x_t-x_t^K \parallel_2^2.
\ee
to get from Gronwall inequality  and  $ \parallel x_{t_1^K}-x_{t_1^K}^K \parallel_2\leq \max(M, \eta)/K$ that
$$ \parallel x_t-x_t^K \parallel_2 \leq % M \exp((L+1/M)(t-t_1^K)/K\leq
 \max(M, \eta) \exp((L+1/M)T)/K.%\leq \varepsilon/2 
$$
This will be  enough to
 prove the lemma since the right hand side is smaller than $\varepsilon/2$.
 
First, using  that  on the closure of   $D\cap B(0,A+1)$,  $\psi$ is Lipschitz 
%on compact domains of $D$
% and  compact on the compact domains of
%$(0,\infty)^2$, 
and that $K\parallel  \psi^K(.) -\psi(.) \parallel_2$ is bounded on   $D\cap B(0,A+1)$  by $(\ref{A2})$,
%and   $\psi_{F_i}$ is Lipschitz,   there exists $L$ such that
%$$\parallel \psi_{F_i}(x)-\psi^K_{F_i}(y) \parallel_2 \leq L \parallel x-y \parallel_2$$
%$$\parallel \psi_{F_i}(x)-\psi_{F_i}^K(y) \parallel_2 \leq L (\parallel x-y \parallel_2+1/K)$$
%for any $x,y$ in a compact domain. \\ 
there exists $L>0$ such that
$$
\parallel \psi(x)-\psi^K(y) \parallel_2 \leq L (\parallel x-y \parallel_2+1/K),
$$
for any $x,y \in D \cap B(0, A+1)$.  
Then, using   Cauchy-Schwarz inequality, for any $t\in [t_1^K,t_2^K\wedge T^K)$ such that $x_t \in B(0,A)$, 
\Bea
\frac{d}{dt} \parallel x_t-x_t^K \parallel_2^2  &\leq & 2   \parallel x_t-x_t^K \parallel_2  \parallel \psi(x_t)-\psi^K(x_t^K) \parallel_2 \\
&\leq &   2L  \parallel x_t-x_t^K \parallel_2^2 +\frac{2}{K}  \parallel x_t-x_t^K \parallel_2 \\
&\leq &   2(L+1/ M) \parallel x_t-x_t^K \parallel_2^2 .
\Eea
since $ \parallel x_t-x_t^K \parallel_2\geq M/K$ for $t\leq t_2^K$. This proves $(\ref{inegGron})$  when $x_t \in B(0,A)$. 

To conclude, we consider $t\in [t_1^K,t_2^K\wedge T^K]$ such that $x_t \in B(0,A)^c$.  Then (\ref{A1}) and (\ref{A2})
% assumptions above
 and Cauchy-Schwarz inequality give
\Bea
\frac{d}{dt} \parallel x_t-x_t^K \parallel_2^2 &=&   2(\psi(x_t)-\psi(x_t^K)).(x_t-x_t^K) +2(\psi(x_t^K)-\psi^K(x_t^K)).(x_t-x_t^K)  \\
&\leq & 2\left(-\mu \parallel x_t\parallel_2 \parallel x_t-x_t^K \parallel_2  +c\frac{1+ \parallel x_t^K\parallel_2}{K}\right)  \parallel x_t-x_t^K \parallel_2.
\Eea
Moreover $\parallel x_t \parallel_2 \geq A\geq 1$ and $x_t^K\in \overline{B}(x_t,\varepsilon)$, so 
$$ 1+ \parallel x_t^K\parallel_2 \leq 1+\parallel x_t\parallel_2+\parallel x_t^K-x_t\parallel_2 \leq 3 \parallel x_t\parallel_2,$$
and adding that $\parallel x_t-x_t^K \parallel_2 \geq M/K=3c/(K\mu)$ since $t\leq t_2^K$, we get
$$\frac{d}{dt} \parallel x_t-x_t^K \parallel_2^2 \leq 0.$$
This ends up the proof of $(\ref{inegGron})$ and thus of the lemma.
%Letting $K \varepsilon\mu  \geq  3ce^{LT}$, it ensures that if $x_0 \in  D \cap B(0,A)^c$ and $\parallel x_0-x_0^K \parallel_2 \leq 3ce^{Lt_0}/(K\mu)$ for some $t_0 \leq T$, then
%$$\parallel x_t-x_t^K \parallel_2 \leq  \frac{3c}{K\mu}e^{Lt_0} \leq 1 $$
%on the  time interval when  $x_t \in   D \cap B(0,A)^c$. \\
\end{proof}
$\newline$
{\bf Acknowledgment.}  The author   thanks warmly  Pierre Collet, Sylvie M\'el\'eard and St\'ephane Gaubert for stimulating and fruitful discussions in Ecole polytechnique. The author is also grateful
 to thank Cl\'ement Foucart and Sylvie M\'el\'eard for their reading
of a previous version of this paper and their various comments.  This work  was partially funded by the Chaire Mod\'elisation Math\'ematique et Biodiversit\'e VEOLIA-\'Ecole Polytechnique-MNHN-F.X.


\begin{thebibliography}{10}

\bibitem{Anderson1991}
W.~J. Anderson.
\newblock {\em Continuous-time {M}arkov chains}.
\newblock Springer Series in Statistics: Probability and its Applications.
  Springer-Verlag, New York, 1991.
\newblock An applications-oriented approach.

\bibitem{Aminzare} Z.  Aminzare and E. D. Sontag.  
Contraction methods for nonlinear systems:
a brief introduction and some open problems. \emph{53rd IEEE Conference on Decision and Control}
December 15-17, 2014. Los Angeles, California, USA.

\bibitem{TCP} R. Aza\"is, J-B Bardet, A. G\'enadot, N. Krell and P-A Zitt.
ESAIM: Proceedings, January 2014, Vol. 44, p. 276-290 SMAI Groupe MAS - Journ\'ees 
MAS 2012 - Session th\'ematique.

\bibitem{BM} V. Bansaye and  S. M\'el\'eard. \emph{Stochastic Models for Structured Populations}. Mathematical Biosciences Institute Lecture Series, 2015.  

\bibitem{BMR} V. Bansaye, S. M\'el\'eard and M.  Richard. The speed of coming down from infinity for birth and death processes. To appear in \emph{Adv. Appl. Probab.}, 2016.

\bibitem{BBL}
J.~Berestycki, N.~Berestycki, and V.~Limic.
\newblock The {L}ambda-coalescent speed of coming down from infinity.
\newblock {\em Ann. Probab.}, 38(1):207--233, 2010.

\bibitem{BL}
J. Bertoin and J-F LeGall. Stochastic flows associated to coalescent processes. III. Limit theorems. \emph{Illinois J. Math.} (2006), Vol. 50, p. 147-181.

\bibitem{Billingsley} 
P. Billingsley.
\newblock {\em Convergence of probability measures}.
\newblock Wiley Series in Probability and Statistics: Probability and
  Statistics. John Wiley \& Sons Inc., New York, second edition, 1999.

\bibitem{CattiauxMeleard} P. Cattiaux, S. M\'el\'eard.
Competitive or weak cooperative stochastic Lotka-Volterra systems conditioned to non extinction.
\emph{J. Math. Biology.}  60(6):797-829, 2010.

\bibitem{Cattiaux2009}
P.~Cattiaux, P.~Collet, A.~Lambert, S.~Mart{\'{\i}}nez, S.~M{\'e}l{\'e}ard, and
  J.~San~Mart{\'{\i}}n.
\newblock Quasi-stationary distributions and diffusion models in population
  dynamics.
\newblock {\em Ann. Probab.}, 37(5):1926--1969, 2009.

\bibitem{CV} N. Champagnat and D. Villemonais. Exponential convergence to quasi-stationary
distribution and Q-process. To appear in \emph{Probab. Th. Rel. Field},  2015.

\bibitem{DL}  D. A. Dawson and Z. Li. Stochastic equations, flows and measure-valued processes. \emph{Ann. Probab.} 40 (2012), no. 2, 813-857.

\bibitem{DN} R.   Darling and   J.  Norris. 
Differential equation approximations for Markov chains. 
\emph{Probab. Surv.} 5 (2008), 37-79. 

\bibitem{Donnelly91}
P.~Donnelly.
\newblock Weak convergence to a {M}arkov chain with an entrance boundary:
  ancestral processes in population genetics.
\newblock {\em Ann. Probab.}, 19(3):1102--1117, 1991.

\bibitem{DM} C.
Dellacherie and P. A. Meyer. \emph{Probabilities and potential. B. Theory of martingales}.
% Translated from the French by J. P. Wilson. 
North-Holland Mathematics Studies, 72. North-Holland Publishing Co., Amsterdam, 1982. 

\bibitem{D} F. Dumortier, J. Llibre and J. C. Art\'es.  \emph{Qualitative Theory of
Planar Differential Systems}. Springer-Verlag, Berlin, 2006.

\bibitem{EK} S. N. Ethier and T. G. Kurtz. \emph{Markov processes. Characterization and convergence.} Wiley Series in Probability and Mathematical Statistics: Probability and Mathematical Statistics, New York, 1986. .

\bibitem{FW} M. I. Frendlin and A. D. Wentzell. \emph{Random perturbations of dynamical systems} . Third edition. Grundlehren der Mathematischen Wissenschaften, 260. Springer, Heidelberg, 2012

%\bibitem{H} M. Hutzenthaler and A. Jentzen (2015): Numerical approximations of stochastic differential equations with non-globally Lipschitz continuous coefficients, Memoirs of the American Mathematical Society 236, no. 1112

\bibitem{IW} 
N. Ikeda and S. Watanabe.
\newblock {\em Stochastic differential equations and diffusion processes, 2nd
  ed.}
\newblock North-Holland, 1989.


\bibitem{JS}
J. Jacod and A.N. Shiryaev.
 \newblock {\em Limit Theorems for Stochastic Processes}.
Springer,  2nd edition, 1987.


\bibitem{KarlinMcGregor57}
S.~Karlin and J.~L. McGregor.
\newblock The differential equations of birth-and-death processes, and the
  {S}tieltjes moment problem.
\newblock {\em Trans. Amer. Math. Soc.}, 85:489--546, 1957.

\bibitem{Karlin1975}
S.~Karlin and H.~M. Taylor.
\newblock {\em A first course in stochastic processes}.
\newblock Academic Press [A subsidiary of Harcourt Brace Jovanovich,
  Publishers], New York-London, second edition, 1975.

\bibitem{Kurtz2}   Thomas G. Kurtz. \emph{Approximation of population processes}. Vol. 36 of CBMS-NSF Regional Conference Series in Applied Mathematics. Society for Industrial and Applied Mathematics (SIAM), Philadelphia, Pa., 1981.

%\bibitem{K2} G. Kersting. A law of large numbers for stochastic difference equations. \emph{Stochastic Process. Appl.} 40 (1992), no. 1, 1-13.

%marginpar{To keep ?}
%\bibitem{KKP}  Kang, Hye-Won; Kurtz, Thomas G.; Popovic, Lea Central limit theorems and diffusion approximations for multiscale Markov chain models. \emph{Ann. Appl. Probab.} 24 (2014), no. 2, 721-759.

\bibitem{LP} V. Le and E. Pardoux
Height and the total mass of the forest of genealogical trees of a large population with general competition. To appear in
\emph{ESAIM P \& S}.

\bibitem{limxsi} V. Limic. On the speed of coming down from infinity for $\Xi$ -coalescent processes. \emph{Electron. Journal Probab.}
Vol. 15, 217--240, 2010.
%\marginpar{Citer ou retirer ?}
\bibitem{LT1} V. Limic and A. Talarczyk.
\newblock Diffusion limits for mixed with Kingman coalescents at small times. 
\newblock {\em Electron. J. Probab.}, vol. 20, Paper 45,  2015.

\bibitem{LT2} V. Limic and A. Talarczyk.
\newblock  Second-order asymptotics for the block counting process in a class of regularly varying Lambda-coalescents.   \emph{Ann. Probab.}, Vol. 43, 2015.
\bibitem{Pitman} J. Pitman. %$\Lambda$-coalescent. \
Coalescents with multiple collisions. \emph{Ann. Probab. } 27 (1999), no. 4, 1870-1902.
\bibitem{Schwein} J. Schweinsberg. A necessary and sufficient condition for the $\Lambda$-coalescent to come down from infinity. \emph{Electron. Comm. Probab}. (2000) Vol. 5, P. 1-11.
\bibitem{SH}  A. V. Skorokhod, F. C. Hoppensteadt and H. Salehi. \emph{Random perturbation methods with applications in science and engineering} . Applied Mathematical Sciences, 150. Springer-Verlag, New York, 2002.


\bibitem{Doorn1991}
E.~A. van Doorn.
\newblock Quasi-stationary distributions and convergence to quasi-stationarity
  of birth-death processes.
\newblock {\em Adv. in Appl. Probab.}, 23 (4) : 683--700, 1991.
\end{thebibliography}
\end{document}